\numberwithin{equation}{section}
\theoremstyle{plain}
\newcounter{intro}
\newtheorem{introthm}[intro]{Theorem}
\newtheorem{conj}{Conjecture}
\newtheorem{theorem}[equation]{Theorem}
\newtheorem{proposition}[equation]{Proposition}
\newtheorem{lemma}[equation]{Lemma} 
\newtheorem{corollary}[equation]{Corollary}
\theoremstyle{definition}
\newtheorem{example}[equation]{Example}
\newtheorem{chunk}[equation]{}
\newtheorem{case}[equation]{Case}
\newtheorem*{claim}{Claim}
\newcommand{\setcasetag}[1]{
	\let\oldtheequation\theequation
	\renewcommand{\theequation}{#1}}
\g@addto@macro\endcase{
	\global\let\theequation\oldtheequation
	\addtocounter{equation}{-1}
}
\theoremstyle{remark}
\newtheorem{remark}[equation]{Remark}
\newcommand{\acat}{\operatorname{C}_{\mathcal{O}}}
\newcommand{\ann}{\operatorname{ann}}
\newcommand{\aqh}[4]{\operatorname{D}_{#1}({#2}/{#3};#4)}
\newcommand{\cat}[1]{\mathsf{#1}}
\newcommand{\con}[1]{\Phi_{#1}}
\newcommand{\Coker}{\operatorname{Coker}}
\newcommand{\cmod}[1]{\Psi_{#1}}
\newcommand{\rmod}{\operatorname{mod}}
\newcommand{\dcat}[1]{\cat D(#1)}
\newcommand{\dbcat}[1]{\cat{D}^{\mathrm{b}}(\operatorname{mod}\, #1)}
\newcommand{\rdcat}[2]{{\cat D}_{#2}(#1)}
\newcommand{\ddual}[1]{\omega_{#1}}
\newcommand{\depth}{\operatorname{depth}}
\newcommand{\edim}{\operatorname{edim}}
\newcommand{\End}{\operatorname{End}}
\newcommand{\Ext}{\operatorname{Ext}}
\newcommand{\tExt}{\operatorname{\rlap{$\smash{\underline{\mathrm{Ext}}}$}\phantom{\mathrm{Ext}}}}
\newcommand{\ecoh}{\operatorname{F}}
\newcommand{\grade}{\operatorname{grade}}
\newcommand{\height}[2][{}]{\operatorname{height}_{#1}{#2}}
\newcommand{\hh}{\operatorname{H}}
\newcommand{\HH}[2]{\operatorname{H}_{#1}(#2)}
\newcommand{\Hom}{\operatorname{Hom}}
\newcommand{\RHom}{\operatorname{RHom}}
\newcommand{\Ker}{\operatorname{Ker}}
\newcommand{\la}{\langle}
\newcommand{\ra}{\rangle}
\newcommand{\length}{\operatorname{length}}
\newcommand{\lotimes}{\otimes^{\operatorname{L}}}
\newcommand{\pos}[1]{[\![{#1}]\!]}
\newcommand{\rank}{\operatorname{rank}}
\newcommand{\Tor}{\operatorname{Tor}}
\newcommand{\tors}{\operatorname{tors}}
\newcommand{\tfree}[1]{{#1}^{\operatorname{tf}}}
\newcommand{\vf}{\varphi}
\newcommand{\ve}{\varepsilon}
\newcommand{\fa}{\mathfrak{a}} 
\newcommand{\fb}{\mathfrak{b}} 
\newcommand{\fm}{\mathfrak{m}} 
\newcommand{\fn}{\mathfrak{n}} 
\newcommand{\fp}{\mathfrak{p}}
\newcommand{\uf}{\mathfrak{F}}
\newcommand{\fD}{\mathfrak{D}}
\newcommand{\lra}{\longrightarrow}
\newcommand{\xra}{\xrightarrow}
\newcommand{\bs}{\boldsymbol}
\newcommand{\mco}{\mathcal{O}}
\newcommand{\iso}{\xrightarrow{\raisebox{-.4ex}[0ex][0ex]{$\scriptstyle{\sim}$}}}
\newcommand{\longiso}{\xrightarrow{\ \raisebox{-.4ex}[0ex][0ex]{$\scriptstyle{\sim}$}\ }}
\newcommand{\PatchSys}{\operatorname{PaSy}}
\newcommand{\perfs}{\mathfrak}
\newcommand{\patch}{\mathcal{P}}
\newcommand{\patcht}{\widehat{\patch}}
\newcommand{\base}[1]{#1^{{\scriptscriptstyle\square}}}
\newcommand{\mbb}{\mathbb}
\newcommand{\rhobar}{\overline{\rho}}
\newcommand{\GL}{\mathrm{GL}}
\newcommand{\PGL}{\mathrm{PGL}}
\newcommand{\SL}{\mathrm{SL}}
\DeclareMathOperator{\fl}{fl}
\DeclareMathOperator{\ur}{ur}
\DeclareMathOperator{\loc}{loc}
\DeclareMathOperator{\Nm}{Nm}
\DeclareMathOperator{\tr}{tr}
\DeclareMathOperator{\Frob}{Frob}
\newcommand{\sm}{\smallsetminus}
\newcommand{\es}{\varnothing}
\newcommand{\into}{\hookrightarrow}
\newcommand{\onto}{\twoheadrightarrow}
\newcommand{\isomto}{\xrightarrow{\sim}}
\newcommand{\CNLO}{{\mathrm{CNL}_\mco}}
\newcommand{\CNL}{{\mathrm{CNL}}}
\newcommand{\Sets}{{\mathrm{Sets}}}
\newcommand{\mca}{\mathcal{A}}
\newcommand{\mcd}{\mathcal{D}}
\newcommand{\mcn}{\mathcal{N}}
\newcommand{\X}{\mathcal{X}}
\newcommand{\hecke}[3]{\mathcal{H}\left(#2\backslash#1/#3\right)}
\newcommand{\mcohat}{\widehat{\mco}}
\newcommand{\Zhat}{\widehat{\mathbb Z}}
\newcommand{\Art}{{\mca_\mco}}
\newcommand{\TT}{\mathbb{T}}
\newcommand{\QQ}{\mathbb{Q}}
\newcommand{\ZZ}{\mathbb{Z}}
\newcommand{\CC}{\mathbb{C}}
\newcommand{\RR}{\mathbb{R}}
\newcommand{\A}{\mathbb{A}}
\newcommand{\fullT}{\widetilde{\mathbb T}}
\newcommand{\half}{\mathcal{H}}
\newcommand{\St}{\mathrm{St}}
\newcommand{\disc}{\fD}
\begin{document}

\title[Congruence modules in higher codimensions]{Congruence modules and the Wiles-Lenstra-Diamond numerical criterion  \\ in higher codimensions}

\author[S.~B.~Iyengar]{Srikanth B.~Iyengar}
\address{Department of Mathematics,
University of Utah, Salt Lake City, UT 84112, U.S.A.}
\email{iyengar@math.utah.edu}

\author[C.~B.~Khare]{Chandrashekhar  B. Khare}
\address{Department of Mathematics,
University of California, Los Angeles, CA 90095, U.S.A.}
\email{shekhar@math.ucla.edu}

\author[J.~Manning]{Jeffrey Manning}
\address{Max Planck Institute for Mathematics,
Vivatsgasse 7, 53111 Bonn, Germany}
\email{manning@mpim-bonn.mpg.de }

\date{\today}

\keywords{congruence module, complete intersection, freeness criterion, modularity lifting, patching, Wiles defect}
\subjclass[2020]{ 11F80 (primary); 13C10, 13D02  (secondary)}

\dedicatory{to Andrew Wiles}

\begin{abstract} 

We define a congruence module  $\Psi_A(M)$ associated to  a  surjective $\mco$-algebra morphism $\lambda\colon A \to \mco$, with $\mco$ a discrete valuation ring,  $A$ a complete noetherian local $\mco$-algebra regular at $\mathfrak{p}$, the kernel of $\lambda$, and  $M$ a finitely generated $A$-module.  We establish a numerical criterion for $M$ to have a free direct  summand  over $A$ of positive rank. It is in terms of the lengths of $\Psi_A(M)$ and the torsion part of $\mathfrak{p}/\mathfrak{p}^2$. It generalizes results of Wiles, Lenstra, and Diamond, that deal with the case when the codimension  of $\mathfrak{p}$ is zero. 

Number theoretic applications include integral  (non-minimal) $R=\mathbb T$ theorems in situations of positive defect  conditional on certain standard conjectures. Here $R$ is a deformation ring parametrizing certain Galois representations and $\mathbb T$ is a Hecke algebra.  An example of a positive defect situation is that of proving modularity lifting for 2-dimensional $\ell$-adic  Galois representations over an imaginary quadratic field. The proofs combine our  commutative algebra results with   a generalization  due to Calegari and Geraghty  of the patching method of Wiles and Taylor--Wiles   and level raising arguments that go back to Ribet. The results  provide new evidence in favor of the  intriguing, and  as yet fledgling,   torsion  analog of the classical Langlands correspondence.   

We also prove unconditional integral $R=\mathbb T$  results for   Hecke algebras  $\mathbb T$ acting on weight one cohomology of Shimura curves over $\mathbb Q$.  This leads to a torsion Jacquet--Langlands correspondence comparing integral Hecke algebras acting on weight one cohomology of Shimura curves and modular curves. In this case the cohomology has abundant torsion and so our correspondence cannot be deduced by means of the classical Jacquet--Langlands correspondence.

\end{abstract}

\maketitle

\setcounter{tocdepth}{1}
\tableofcontents

\section{Introduction}
\label{se:intro}

The \emph{raison d'etre} of this paper is to develop new commutative algebra inspired by number theoretic applications. The main contribution of our work is a generalization of the theory of congruence modules and the numerical criteria of Wiles, Lenstra, and Diamond to higher codimension. As a demonstration of the utility of our work, we prove new (integral) $R={\TT}$ theorems at non-minimal level in  certain  positive defect situations that  arise, for instance, when proving modularity of elliptic curves over imaginary quadratic fields.    We also prove unconditional  (integral) $R=\TT$ theorems at non-minimal level   when $\TT$ is a Hecke algebra acting on the weight one cohomology of Shimura curves. This also gives instances of an  analog of the Jacquet-Langlands correspondence that relates torsion in cohomology arising from weight one sheaves on Shimura curves and modular curves. 

Another motivation for our work  is to define congruence modules attached to  augmentations of (big) ordinary deformation rings arising from cohomological cuspidal automorphic representations. These number-theoretic  applications provided  the intuition  that  guided us to our  commutative algebra results. 

To set the stage to present our work, we fix a prime number $\ell$, and $\mco$ the ring of integers of a finite extension of $\mathbb Q_\ell$ with residue field $k$ uniformizer $\varpi$.  We consider the category $\CNL_\mco$ of complete noetherian local $\mco$-algebras with residue field $k$, with morphisms the  maps of local $\mco$-algebras inducing the identity on $k$. 

The patching method introduced  in the work of Wiles \cite{Wiles:1995}  and Taylor--Wiles \cite{Taylor/Wiles:1995}  on modularity lifting theorems, and instrumental in Wiles' proof of Fermat's Theorem, has undergone intense development. In the original approach of Wiles, patching was combined with level raising results, and a numerical criterion \cite[Appendix, Proposition]{Wiles:1995},   extended by Lenstra \cite{Lenstra},  for a surjective map between rings  $R \to {\TT}$, with ${\TT}$ finite flat over $\mco$, to be an isomorphism of complete intersections. In applications in \cite{Wiles:1995},  the ring $R$ is a deformation ring for Galois representations and ${\TT}$ is a Hecke algebra.  Diamond~\cite[Theorem 2.4]{Diamond:1997} developed the results of Wiles and Lenstra by proving a numerical criterion for freeness of $R$-modules $M$ finite flat over $\mco$,  in terms of an augmentation $\lambda\colon R \to \mco$ supported on $M$, that is to say, $M_{\fp} \neq 0$ for  $\fp$ the kernel of $\lambda$. The work of Wiles and Lenstra corresponds to the case when $M$ is a cyclic $A$-module.  Diamond’s criterion is in terms of the cotangent module $\fp/\fp^2$ of $\lambda$, assumed to be a finite abelian group,   and the congruence module  for $M$ at $\lambda$, defined to be: 
\[
\frac{M}{M[\fp] \oplus M[\ann \fp]}.
\]
In \cite[\S 3]{Diamond:1997}, Diamond applies this in the case where $R$ is a  suitable deformation  ring, acting on $M\colonequals \hh_1(X_0(N),\mco)_{\fm}$ with $\fm$ a maximal ideal corresponding to the Galois representation $\rhobar_{\fm}$ whose deformations are parametrized by $R$. The  action of $R$  on $M$ factors through the corresponding Hecke algebra $\TT$. Thus if $M$ is free over $R$, the map $R\to \TT$ is injective, and hence an isomorphism. In this way Diamond recovers the results of \cite{Wiles:1995}.

The patching method has been  extended   by Calegari and Geraghty \cite{Calegari/Geraghty:2018} to situations of {\it positive defect} in which one patches complexes rather than modules: this is the situation one finds oneself   in  when  proving   modularity of elliptic curves over number fields which are not totally real, such as imaginary quadratic fields. 

The positive defect case presents a number of additional complications compared to the situation of Wiles, which have prevented the numerical criterion from being generalized in the same way as the patching method. First, the rings $R$ and ${\TT}$ are no longer expected to be complete intersections in general.  Secondly, they will typically have torsion, in contrast to the defect zero case where $R$ and ${\TT}$ are expected to be finite flat over $\mco$. in the positive defect case. Moreover, the torsion plays an important role in the theory (and a careful understanding of the torsion is necessary for generalizing the patching method). This is an issue as the numerical criterion cannot easily account for torsion in ${\TT}$ or $M$. To make things even more complicated,  $R$ and ${\TT}$ can be entirely torsion which would mean that the map $\lambda\colon R\to \mco$ used in the numerical criterion may not even exist. Lastly, replacing the module $M$ with a complex makes it somewhat unclear how to correctly generalize Diamond's criterion.

The main contribution of our work is to give a generalization of the numerical criterion to the positive defect case. The key idea is to combine the numerical criterion with patching by applying the numerical criterion directly to the patched objects rather than to the rings $R$ and ${\TT}$. This sidesteps the issues raised above, as the patched version, $R_\infty$, of the ring $R$ will automatically be a complete intersection and flat over $\mco$, and the patched version, $M_\infty$, of the complex will be a (maximal Cohen--Macaulay) $R_\infty$-module rather than a complex.

The complication that arises in this approach is that $\Ker(R_\infty\to \mco)$ is no longer be a minimal prime of $R_\infty$, and $M_\infty$ is no longer be finite over $\mco$.  We thus consider a larger families of rings $R$ in $\CNL_\mco$ than those discussed above, placing no restrictions on the height (also known as \emph{codimension}) of $\Ker(R\to \mco)$, and requiring only that $M$ is finitely generated and has sufficient depth. For such $R$ and $M$, we establish a criterion for $M$ to have a non-zero finite free summand and $R$ to be a complete intersection. Applying this to the patched objects $R_\infty$ and $M_\infty$, we show that $M_\infty$ has a free direct summand, which in turn implies that the ring $R$ acts faithfully on the original complex, giving $R={\TT}$. We elaborate on this  later.

The proof of the new numerical criterion takes up the first part of this paper, and involves only commutative algebra.  This is applied in the third part to prove modularity lifting theorems. The second part of the paper establishes a result concerning the compatibility of patching and duality. 

In the remainder of the introduction we present the results in commutative algebra and modularity lifting in greater detail, and explain how  they  fit in the line of development  of modularity lifting theorems pioneered  by Wiles, and continued in the work  of Taylor--Wiles,  Diamond, Kisin, and Calegari--Geraghty.

\subsection*{Commutative algebra results}
Consider  pairs $(A, \lambda)$, with $A$ a complete local noetherian $\mco$-algebra and $\lambda\colon A \to \mco$  a surjective map of $\mco$-algebras   with kernel $\fp_A$; we treat a somewhat larger class of rings in the text. The category consisting of pairs $(A,\lambda)$ for which the local ring  $A_{\fp_A}$ is regular, of dimension $c$, is denoted  $\acat(c)$. The regularity hypothesis is tantamount to the condition that the conormal module $\fp_A/\fp_A^2$ of $\lambda$, viewed as an $\mco$-module, has rank equal to $\dim A_{\fp_A}$, the Krull dimension of $A_{\fp_A}$. In what follows its torsion-submodule 
\[
\con A\colonequals \tors{(\fp_A/\fp_A^2)}\,,
\] 
also plays a key role. Here $\tors(U)$ denotes the torsion-submodule of any finitely generated $\mco$-module $U$.  We also need to consider the torsion-free quotient of $U$, namely $\tfree U \colonequals U/\tors(U)$. 

For any finitely generated $A$-module $M$ the natural map $M \lra  M/\fp_AM$ induces a map 
\[
\tfree{\Ext^c_A(\mco, M)} \lra \tfree{\Ext^c_A(\mco, M/\fp_AM)}
\]
of finitely generated  torsion-free  $\mco$-modules.  We define the \emph{congruence} module of $M$, denoted $\cmod A(M)$, to be the cokernel of this map. When $c=0$ this coincides with Diamond's definition recalled above; see Proposition~\ref{pr:old-and-new}. The definition is reminiscent of the evaluation map in rational homotopy theory and local algebra defined in exactly the same way for the augmentation $A\to k$ to the residue field of $A$, and with $M\colonequals A$. In particular, the localization of the map above (again for $M=A$) at $\fp_A$ is the evaluation map of the local ring $A_{\fp_A}$, and the fact this map is non-zero precisely when $A_{\fp_A}$ is regular---this is a result of Lescot~\cite{Lescot:1983}---is critical to all that follows. It implies  that the $\mco$-module $\cmod A(M)$ is torsion, and hence of finite length.  The \emph{Wiles defect} of $M$ is the integer
\[
\delta_A(M)\colonequals (\rank_{A_{\fp_A}}\! M_{\fp_A}) \length_{\mco}{\con A} - \length_{\mco}{\cmod A(M)}\,.
\]

We view this definition of the congruence module and Wiles defect, which extends  to all $c\ge 0$ the usual definition for $c=0$, as one of the key contributions of this work. We expect these  will play a role in number theoretic applications, like  analyzing  the structure of  deformation rings or eigenvarieties at classical points. One piece of evidence in support of their utility is Theorem~\ref{th:intro-2} below that extends the results of Wiles, Lenstra, and Diamond. Building on the results in this paper in \cite[Theorem~2.6]{Iyengar/Khare/Manning/Urban:2024} we prove any $A$ in $\acat$ is regular if and only if  $\cmod A(A)=0$.

Guided by the intended number theoretic applications, for $A$ in $\acat(c)$, we consider finitely generated $A$-modules $M$ with
\[
  \depth_A M\ge c+1 \quad\text{and}\quad  M_{\fp_A}\ne 0\,.
\]
Some of our results require weaker hypothesis on the depth of $M$; see Section~\ref{se:depth}, and in particular \ref{ch:summary}, for a discussion of these various conditions. By the Auslander-Buchsbaum criterion, the properties above imply that the  $A_{\fp_A}$-module $M_{\fp_A}$ is free.

\begin{introthm}
\label{th:intro-2}
With $A$ and $M$ as above, set $\mu \colonequals \rank_{A_{\fp_A}}(M_{\fp_A})$.  The following statements hold.
\begin{enumerate}[\quad\rm(1)]
\item 
One has $\delta_A(M) \geq 0$, with equality iff $A$ is complete intersection and there is an isomorphism of $A$-modules  $M \cong A^\mu\oplus W$, where  $W_{\fp_A}=0$.
\item If  $\delta_A(M)=0$  and $e_A(M)\le \mu \cdot e(A)$, then $M\cong A^\mu$.
\end{enumerate}
\end{introthm}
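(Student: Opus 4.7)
The overall plan is to prove (1) by first localizing at $\fp_A$, comparing $M$ with a free module $A^\mu$, and reducing to a structural computation in the case $M = A$, where the defect measures how far the evaluation map $\tfree{\Ext^c_A(\mco,A)}\to\tfree{\Ext^c_A(\mco,\mco)}$ is from being onto; part (2) then follows from (1) by a Hilbert--Samuel multiplicity argument.

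For (1), the depth hypothesis plus Auslander--Buchsbaum makes $M_{\fp_A}$ a free $A_{\fp_A}$-module of rank $\mu$. Localizing the map
\[
\alpha\colon \tfree{\Ext^c_A(\mco,M)}\lra \tfree{\Ext^c_A(\mco,M/\fp_AM)}
\]
at $\fp_A$ turns it into a morphism of $E$-vector spaces both of dimension $\mu$; a Koszul computation, using that $A_{\fp_A}$ is regular of dimension $c$ with residue field $E$, shows it to be an isomorphism. Hence $\alpha$ is a map of rank-$\mu$ torsion-free $\mco$-modules and $\cmod A(M)$ is of finite length. To compare $M$ with the free model, I would choose a map $\beta\colon A^\mu\to M$ that is an isomorphism after localization at $\fp_A$, by lifting any $A_{\fp_A}$-basis of $M_{\fp_A}$. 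Since $\Ker\beta$ and $\Coker\beta$ are supported away from $\fp_A$, the torsion-free quotient of $\Ext^c_A(\mco,-)$ vanishes on them (its localization at $\fp_A$ is already zero). Naturality of $\alpha$ in $M$ and the four-term exact sequence associated to $\beta$ should reduce the verification of $\delta_A(M)\geq 0$ to the case $M=A^\mu$, which by additivity of $\delta_A(-)$ in direct sums further reduces to $M=A$.

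The core case $M=A$ amounts to an analysis of the evaluation map
\[
\tfree{\Ext^c_A(\mco,A)}\lra \tfree{\Ext^c_A(\mco,\mco)}
\]
of torsion-free $\mco$-modules of rank $1$, induced by $\lambda$. The target records, up to duality, information about $\fp_A/\fp_A^2$, with $\con A=\tors(\fp_A/\fp_A^2)$ governing the failure of the evaluation map to be surjective modulo torsion. The key input, obtained through a cotangent-complex analysis of $\lambda$ in the spirit of Lescot's non-vanishing theorem~\cite{Lescot:1983}, should identify
\[
\length_\mco \cmod A(A)\leq \length_\mco \con A,
\]
with equality exactly when $A$ is a complete intersection. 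Granting this, the reductions yield $\delta_A(M)\geq 0$ in general; in the equality case they force $A$ to be complete intersection and the map $\beta$ to split off a free summand of rank $\mu$, giving $M\cong A^\mu\oplus W$ with $W=\Coker\beta$ and $W_{\fp_A}=0$. I expect the main technical obstacle to be making this evaluation-map input precise---identifying $\Ext^c_A(\mco,\mco)$ up to controlled torsion in terms of $\fp_A/\fp_A^2$, and extracting from the equality of lengths the vanishing of the Ext class that governs the splitting of $\beta$. This is the point where the $c=0$ case of Wiles--Lenstra--Diamond required the most care, and the higher-codimension generalization should be correspondingly subtle.

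Part (2) follows directly from (1). Since $A$ is a complete intersection, it is Cohen--Macaulay; together with $A/\fp_A=\mco$ having Krull dimension $1$ and $\height{\fp_A}=c$, this forces $\dim A=c+1$. The depth hypothesis on $M$ passes to its direct summand $W$, so $\depth_A W\geq c+1=\dim A$; hence $W$ is maximal Cohen--Macaulay, or zero. Every nonzero maximal Cohen--Macaulay module over a Cohen--Macaulay local ring has all associated primes minimal, so $\dim W=\dim A$ and $e_A(W)>0$. Additivity of Hilbert--Samuel multiplicity yields $e_A(M)=\mu\cdot e(A)+e_A(W)$, and the hypothesis $e_A(M)\leq \mu\cdot e(A)$ forces $e_A(W)=0$, hence $W=0$ and $M\cong A^\mu$.
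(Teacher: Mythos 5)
The outer layers of your plan --- localizing, the Auslander--Buchsbaum computation, the reduction to $M=A$, and the multiplicity argument in part (2) --- are sound and broadly match the paper. (The paper's reduction to $M=A$ is done a bit more cleanly, via the ``defect formula'' $\delta_A(M)=\mu\cdot\delta_A(A)+\length_\mco\Coker(\eta_M)$ with $\eta_M\colon \ecoh^c_A(A)\otimes_\mco\tfree{(M/\fp_AM)}\to\ecoh^c_A(M)$ the K\"unneth map; this avoids choosing an explicit $\beta$ and the bookkeeping of its kernel and cokernel under the non-exact functor $\tfree{(-)}$, which in your sketch requires choosing $\beta$ so that its image already generates $\tfree{(M/\fp_AM)}$ and then a snake-lemma argument that you do not carry out.) The genuine gap is the core case $M=A$. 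You correctly isolate what must be proved --- $\length_\mco\cmod A\le\length_\mco\con A$ with equality iff $A$ is complete intersection --- but propose to prove it for arbitrary $c$ directly, ``through a cotangent-complex analysis in the spirit of Lescot,'' and then flag this as the part that would require the most care. The paper does not attack this directly at all. Its key technical input is a result of a different nature: the deformation invariance of the defect (Theorem~\ref{th:reduction-delta}), which asserts $\delta_A(M)=\delta_{A/fA}(M/fM)$ for any $M$-regular $f\in\fp_A\setminus\fp_A^{(2)}$, with $A/fA\in\acat(c-1)$. Iterating this via Lemma~\ref{le:prime-avoidance} reduces everything to $c=0$, where the inequality is Wiles' and the complete-intersection characterization is \cite[Proposition~A.1]{FKR}. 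Establishing the invariance is itself delicate; it rests on Proposition~\ref{pr:xi-algebra} --- the identification of $\ecoh^*_A(\mco)$ as the exterior algebra on $\ecoh^1_A(\mco)$, proved through Tate's construction --- together with an order-ideal computation (\ref{ch:order}), plus the invariance-of-domain Theorem~\ref{th:invariance-of-domain} to remove the hypothesis that $f$ be $A$-regular. Without some mechanism of this kind, a direct higher-codimension analogue of Wiles' conductor--discriminant inequality is not available, and nothing in your sketch supplies it.

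A second unaddressed step is the passage from $\delta_A(M)=0$ to the summand $M\cong A^\mu\oplus W$. You assert that equality ``forces $\beta$ to split off a free summand of rank $\mu$,'' but this is not a formal consequence of the numerical identity, and no argument is offered. In the paper this is a separate piece of machinery (Theorem~\ref{th:gorenstein} via Lemmas~\ref{le:summands-gor} and \ref{le:summands}): the vanishing of the defect, combined with the defect formula, shows that the trace-type map $\Ext^c_A(\mco,A)\otimes_AM\to\Ext^c_A(\mco,M)$ is surjective; one then cuts down by a maximal regular sequence to a zero-dimensional Gorenstein quotient, uses the socle to exhibit a free summand there, and propagates the summand back up via a trace-ideal argument. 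This Gorenstein-theoretic input is essential and is not subsumed by your evaluation-map analysis even if that analysis were completed. Part (2) as you state it is fine and agrees with the paper's argument.
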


The invariant $e_A(M)$ is the multiplicity of $M$; see \cite[\S4.6]{Bruns/Herzog:1998}. The non-negativity of the defect, which follows by a simple computation involving Fitting ideals when $c=0$, is proved in Corollary~\ref{co:tate}; the remaining assertions are contained in Theorems \ref{th:wiles} and \ref{th:diamond}. One way to think about our result is that the vanishing of the Wiles defect allows one to propagate the local property that  $M$ is free at $\fp_A$ of rank $\mu$  to  the  global property that $M$ has a free direct summand $A^\mu$.

As in the proof of \cite[Theorem~2.4]{Diamond:1997} the first step in the proof of Theorem~\ref{th:intro-2} is to deduce that $A$ is complete intersection. This involves a ``defect formula" that relates the defect of $M$ to that of $A$; see Lemma~\ref{le:defect-formula}.
At this stage, one has the following more general criterion for detecting free summands in $M$, without the complete intersection property as a consequence.
 
\begin{introthm}
\label{th:intro-1}
Let $A$ be a Gorenstein local ring in $\acat$ and $M$ a maximal Cohen--Macaulay $A$-module with $\mu\colonequals \rank_{\fp_A}(M_{\fp_A})\ne 0$. One has $\delta_A(M)=  \mu \cdot \delta_A(A)$, if and only if
\[
M\cong A^\mu \oplus W \qquad\text{where $W_{\fp_A}=0$.}
\]
If moreover $e_A(M)\le \mu \cdot e(A)$, then $M$ is free.
\end{introthm}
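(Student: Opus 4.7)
The plan is to prove both implications by tracking $\cmod A$ and $\delta_A$ under direct sum decompositions and under a short exact sequence $0 \to A^\mu \to M \to W \to 0$ arising from lifting a basis of $M_{\fp_A}$.

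For the \emph{if} direction, $\rank_{A_{\fp_A}}(-_{\fp_A})$, $\Ext^c_A(\mco,-)$, the torsion-free quotient, and length are all additive on direct sums; hence so are $\cmod A$ and $\delta_A$. It therefore suffices to show $\delta_A(W)=0$ when $W_{\fp_A}=0$. In that case $\ann_A W \not\subseteq \fp_A$, so any $s\in\ann_A W\smallsetminus\fp_A$ has $\lambda(s)\ne 0$ in $\mco$, and this nonzero scalar annihilates both $\Ext^c_A(\mco,W)$ and $\Ext^c_A(\mco,W/\fp_A W)$. Both are therefore $\mco$-torsion, their torsion-free quotients vanish, so $\cmod A(W)=0$ and (since $\rank_{A_{\fp_A}}(W_{\fp_A})=0$ as well) $\delta_A(W)=0$.

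For the \emph{only if} direction, the MCM hypothesis combined with Auslander--Buchsbaum over the regular local ring $A_{\fp_A}$ gives $M_{\fp_A}\cong A_{\fp_A}^\mu$. Lifting a basis (and, if needed, adjusting the lift to kill associated primes of $A^\mu$ that are minimal primes of $A$ not contained in $\fp_A$) produces an injection $\vf\colon A^\mu\hookrightarrow M$ whose localization at $\fp_A$ is an isomorphism, yielding a short exact sequence $0\to A^\mu\xrightarrow{\vf} M\to W\to 0$ with $W_{\fp_A}=0$. Dualizing by $(-)^\ast:=\Hom_A(-,A)$ and using $\Ext^i_A(M,A)=0$ for $i\ge 1$ (MCM over Gorenstein $A$) produces
\[
0 \to W^\ast \to M^\ast \xrightarrow{\vf^\ast} A^\mu \to \Ext^1_A(W,A) \to 0.
\]
If $\Ext^1_A(W,A)=0$, then $\vf^\ast$ is a split surjection onto the projective module $A^\mu$; applying $(-)^\ast$ again and using $M^{\ast\ast}\cong M$ for MCM $M$ over Gorenstein $A$ yields $M\cong A^\mu\oplus W^{\ast\ast}$ with $(W^{\ast\ast})_{\fp_A}=0$, as desired.

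The crux is therefore to deduce $\Ext^1_A(W,A)=0$ from $\delta_A(M)=\mu\cdot\delta_A(A)$. For this I plan to invoke the defect formula (Lemma~\ref{le:defect-formula}): the commutative square on $\tfree{\Ext^c_A(\mco,-)}$ induced by $\vf$ and its reduction $\bar\vf\colon\mco^\mu\to M/\fp_A M$ consists of injective maps of $\mco$-torsion-free modules of rank $\mu$, and multiplicativity of determinants of the resulting $\mu\times\mu$ matrices yields the identity
\[
\delta_A(M) - \mu\cdot\delta_A(A) \;=\; \length_\mco(\Coker f) - \length_\mco(\Coker g),
\]
where $f,g$ are the vertical maps induced by $\vf$ and $\bar\vf$. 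The long exact sequences of $\Ext^\ast_A(\mco,-)$ applied to the short exact sequence above (and to its mod-$\fp_A$ reduction) express $\Coker f$ and $\Coker g$ as subquotients of the $\mco$-torsion modules $\Ext^\ast_A(\mco,W)$ and $\Ext^\ast_A(\mco,W/\fp_A W)$; Gorenstein local duality (applicable since $\dim A=c+1$) translates these into $\Ext^1_A(W,A)$, a module of finite length because it is supported in $V(\fp_A+\ann_A W)\subsetneq V(\fp_A)$, and the equality of defects forces $\Ext^1_A(W,A)=0$---this is the main technical obstacle. Finally, for the \emph{moreover} statement: additivity of multiplicity on $M\cong A^\mu\oplus W$ gives $e_A(M)=\mu\cdot e(A)+e_A(W)$, so $e_A(M)\le\mu\cdot e(A)$ forces $e_A(W)=0$, hence $\dim_A W<\dim A$; but $W$ is a direct summand of the MCM module $M$ and so is itself MCM, with $\depth_A W=\dim A\le \dim_A W$---a contradiction unless $W=0$.
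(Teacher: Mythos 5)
Your treatment of the \emph{if} direction and of the \emph{moreover} statement is correct and in the same spirit as the paper's. The additivity of $\cmod A$ and $\delta_A$ on direct sums, together with the observation that both $\Ext^c_A(\mco,W)$ and $\Ext^c_A(\mco,W/\fp_A W)$ are killed by $\lambda(s)$ for any $s\in\ann_A W\setminus\fp_A$, gives $\delta_A(W)=0$ cleanly. The multiplicity argument at the end is also fine.

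The \emph{only if} direction, however, has a genuine gap at the very first step: the injection $A^\mu\hookrightarrow M$ need not exist. If $A$ has a minimal prime $\fq$ not contained in $\fp_A$ with $M_\fq=0$, then \emph{no} map $A^\mu\to M$ can be injective (localize at $\fq$: one gets $A_\fq^\mu\to 0$, which has nonzero kernel). Your parenthetical about ``adjusting the lift to kill associated primes of $A^\mu$ not contained in $\fp_A$'' cannot fix this, since one cannot steer the kernel away from a prime at which the target vanishes. A concrete instance: $A=\mco\pos{a,b}/(ab)$ with $\lambda(a)=0$, $\lambda(b)=\varpi$ (so $A\in\acat(1)$ and is a complete intersection), and $M=A/(a)=\mco\pos{b}$. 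This $M$ is maximal Cohen--Macaulay with $\mu=1$, yet $M_{(b)}=0$, so no injection $A\hookrightarrow M$ exists. Of course in this example $\delta_A(M)>0=\delta_A(A)$, consistent with the theorem---but to run your argument you would need to prove \emph{a priori} that the defect equality forces $M$ to be supported on all of $\operatorname{Spec} A$, which is essentially as hard as the theorem itself (indeed, it is exactly what follows once one has the free summand). Separately, the ``crux'' step---deducing $\Ext^1_A(W,A)=0$ from the defect equality via long exact sequences and local duality---is only sketched; the passage from $\Coker f$, $\Coker g$ (which live inside $\Ext^c_A(\mco,W)$-type groups) to $\Ext^1_A(W,A)$ is not actually carried out, and it is not clear the bookkeeping works, especially since $\bar\vf$ need not be injective so the reduction mod $\fp_A$ of your short exact sequence is only right exact.

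The paper sidesteps both issues. By Lemma~\ref{le:defect-formula} the identity $\delta_A(M)-\mu\,\delta_A(A)=\length_\mco\Coker(\eta_M)$ holds outright (no choice of $\vf$ needed), so the hypothesis immediately gives surjectivity of the natural evaluation map $\widetilde\eta_M\colon\Ext^c_A(\mco,A)\otimes_A M\to\Ext^c_A(\mco,M)$. Then Lemma~\ref{le:summands-gor}, iterated, extracts the free summand of rank $\mu$: it chooses $x$ with $\lambda(x)=\varpi$ regular on $R$ and $M$, reduces to Krull dimension zero, identifies the relevant map with multiplication by the socle of an Artinian Gorenstein quotient, and lifts the resulting free summand up through Lemma~\ref{le:summands} via the trace-ideal criterion. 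This direct, intrinsic route avoids having to construct (or even assert the existence of) a specific embedding $A^\mu\hookrightarrow M$.
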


This is the content of Theorem~\ref{th:gorenstein}. For $A$ and $M$ as above, the condition that $\delta_A(M)=  \mu \cdot \delta_A(A)$ is equivalent to the condition that the Tate cohomology module, in the sense of Buchweitz~\cite{Buchweitz:2021}, of the pair $(\mco,M)$ in degree $c+1$ vanishes; see Proposition~\ref{pr:eta-Gorenstein}. This gives yet another perspective on the Wiles defect of $M$. 

We prove Theorems~\ref{th:intro-2} and \ref{th:intro-1} by a reduction to the case $c=0$. For this a key result, recorded below and contained in Theorem~\ref{th:reduction-delta}, is that the Wiles defect remains invariant on going modulo certain $M$-regular sequences. Here $\fp_A^{(2)}$ denotes the second symbolic power of $\fp_A$.

\begin{introthm}
\label{th:intro-3}
Let $A$ be  in $\acat(c)$ and $M$ a finitely generated $A$-module such that $\depth_AM\ge c+1$. Let $f$ in $\fp_A\setminus \fp_A^{(2)}$ be such that it is not a zero-divisor on $M$. Then the ring $B\colonequals A/fA$ is in  $\acat(c-1)$, and 
\[
\delta_A(M) = \delta_{B}(M/fM)\,.
\]
\end{introthm}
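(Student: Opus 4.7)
The plan is to show that passing from $(A,M)$ to $(B, M/fM)$ increases both $\mu \cdot \length_\mco \con$ and $\length_\mco \cmod$ by exactly the same integer $\mu\ell$, where $\ell$ is the $\varpi$-adic valuation of the image of $f$ in $\tfree{(\fp_A/\fp_A^2)}$; these increases then cancel in $\delta$. Establishing that $B\in\acat(c-1)$ is a quick preliminary: the hypothesis $f\notin\fp_A^{(2)}$ says that $f$ has nonzero image in $\fp_A A_{\fp_A}/(\fp_A A_{\fp_A})^2$, so $f$ is part of a regular system of parameters of the regular local ring $A_{\fp_A}$; whence $B_{\fp_B}=A_{\fp_A}/(f)$ is regular of dimension $c-1$. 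The same hypothesis guarantees that $(M/fM)_{\fp_B}=M_{\fp_A}/fM_{\fp_A}$ is a free $B_{\fp_B}$-module of the same rank $\mu:=\rank_{A_{\fp_A}} M_{\fp_A}$.

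For the conormal comparison, the natural surjection $\fp_A/\fp_A^2\onto\fp_B/\fp_B^2$ of $\mco$-modules has cyclic kernel generated by the image $\bar f$ of $f$. Since $\varpi$ is a unit in $A_{\fp_A}$, the hypothesis $f\notin\fp_A^{(2)}$ forces $\bar f$ to be $\mco$-torsion free, producing a short exact sequence
\[
0 \to \mco \to \fp_A/\fp_A^2 \to \fp_B/\fp_B^2 \to 0.
\]
Writing the image of $\bar f$ in $\tfree{(\fp_A/\fp_A^2)}$ as $\varpi^\ell u$ with $u$ primitive and invoking the structure theorem for modules over the DVR $\mco$ yields $\length_\mco \con B = \length_\mco \con A + \ell$.

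The crux is the parallel formula $\length_\mco \cmod B(M/fM) = \length_\mco \cmod A(M) + \mu \ell$ for the congruence modules. Since $f\in\fp_A$ acts as zero on $\Ext^*_A(\mco,-)$, the long exact sequence arising from $0\to M\xrightarrow{f} M\to M/fM\to 0$ collapses to short exact sequences
\[
0 \to \Ext^i_A(\mco,M) \to \Ext^i_A(\mco,M/fM) \to \Ext^{i+1}_A(\mco,M) \to 0,
\]
and the depth hypothesis kills everything below degree $c$. Combining with a change-of-rings isomorphism (using that $f$ is a non-zero-divisor on $M$, so that $\RHom_A(B,M)\simeq M/fM[-1]$), one identifies $\tfree{\Ext^c_A(\mco,M)}\cong \tfree{\Ext^{c-1}_B(\mco,M/fM)}$, and analogously for $M/\fp_A M=(M/fM)/\fp_B(M/fM)$. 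Tracing the defining map of $\cmod$ through these identifications, the $A$-side and $B$-side maps differ by multiplication by $\varpi^\ell$ on each of the $\mu$ free generators of the torsion-free quotient, which accounts for the $\mu\ell$ excess length in $\cmod B(M/fM)$ over $\cmod A(M)$. Summing with the conormal comparison gives $\delta_A(M)=\delta_B(M/fM)$.

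The main obstacle is this Ext comparison: $f$ may not be a non-zero-divisor on $A$, so one must either pass to the quotient $A/\ann_A(f^\infty)$ on which $f$ does become a non-zero-divisor (noting that the action on $M$ factors through this quotient, since $f$ is a non-zero-divisor on $M$), or work directly at the level of bounded complexes and derived categories so as to keep track of the appropriate Koszul-type resolutions without a regularity hypothesis on $f$ in $A$.
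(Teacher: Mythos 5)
Your overall architecture lines up with the paper's: establish $B\in\acat(c-1)$ via regularity at $\fp$, track the change in $\con$ via the exact sequence $0\to\mco\to\fp_A/\fp_A^2\to\fp_B/\fp_B^2\to 0$, use the change‑of‑rings isomorphism $\RHom_A(\mco,X)\simeq\RHom_B(\mco,(B\lotimes_A X)[-1])$ to compare the $\Ext$'s, and reduce to the case where $f$ is $A$-regular by replacing $A$ with a suitable quotient. The integer $\ell$ you introduce is indeed $\nu_A(f)$ in the paper's notation.

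The genuine gap is the unsupported assertion that ``the $A$-side and $B$-side maps differ by multiplication by $\varpi^\ell$ on each of the $\mu$ free generators.'' Unwinding the definitions, this amounts to claiming that the change‑of‑rings map $\ecoh^{c-1}_B(B\lotimes_A\mco)\to\ecoh^{c-1}_B(\mco)$ --- equivalently, the connecting map $\eth^c\colon\ecoh^c_A(\mco)\to\ecoh^{c-1}_B(\mco)$ --- is injective with cokernel of length exactly $\nu_A(f)$. This is the technical heart of the theorem, not a routine diagram chase. All that comes for free from the exact sequence $\eqref{eq:order}$ is that the \emph{degree-one} piece $\eth^1\colon\ecoh^1_A(\mco)\to\ecoh^0_B(\mco)$ has image the order ideal $O(f)=\varpi^{\nu_A(f)}\mco$; propagating this to degree $c$ requires knowing that $\ecoh^*_A(\mco)$ is the exterior algebra on $\ecoh^1_A(\mco)$ (Proposition~\ref{pr:xi-algebra}, proved via Tate's DG-algebra techniques), that $\ecoh^{c-1}_B(\mco)\cdot\ecoh^1_A(\mco)=\ecoh^c_A(\mco)$, and that $\eth^*$ is $\ecoh^*_B(\mco)$-linear. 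Without the exterior algebra structure the cokernel of $\eth^c$ could a priori have any length, and your defect comparison would then read $\delta_A(M)-\delta_B(M/fM)=\mu(\ell-m)$ with $m$ undetermined.

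A secondary, smaller issue: passing to $A'\colonequals A/\ann_A(f^\infty)$ (or, as the paper does, to $A/\ann_A(M)$) changes $\fp_A/\fp_A^2$ and hence can shrink $\con{A}$. It is not enough that the $A$-action on $M$ factors through $A'$; one must also verify that the passage does not alter the quantity being proved. The paper handles this by showing that the \emph{differences} $\length\con B-\length\con A$ and $\length\cmod_B(N)-\length\cmod_A(M)$ are preserved under the replacement, using Lemma~\ref{le:faithful}, Lemma~\ref{le:order-ideal} (which gives $\nu_{A'}(f)=\nu_A(f)$), and Theorem~\ref{th:invariance-of-domain}. You flag the need for this reduction but do not address the bookkeeping it entails.
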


In the passage from $A$ to $B$,  the lengths of both the cotangent module and the congruence module change; the crux is that they change by the same amount. Tracking the change in the cotangent module  is relatively straightforward. The other, more subtle problem, is to control the change in the congruence module of $M$. A key result that permits this is the following structure theorem for the torsion-free quotient of Yoneda Ext-algebra $\Ext^*_{A}(\mco,\mco)$.

\begin{introthm}
\label{th:intro-serre}
For $A$ in $\acat(c)$ there is a natural isomorphism of $\mco$-algebras
\[
\bigwedge_{\mco}\Hom_{\mco}(\fp_A/\fp_A^2,\mco) \cong \tfree{\Ext^*_A(\mco,\mco)} \,.
\]
\end{introthm}

This is the content of Theorem~\ref{th:xi-algebra}. It may be seen as an integral analogue of Serre's result that for a regular local ring $R$, with maximal ideal $\fm$ and residue field $k$, the $k$-algebra $\Ext^*_R(k,k)$ is the exterior algebra on $\Hom_k(\fm/\fm^2,k)$. Implicit in  Theorem~\ref{th:intro-serre} is the fact that $\Ext_A^1(\mco,\mco)$ is the dual of the cotangent module of $\lambda$; this is not difficult to verify. The theorem above implies that the torsion-free quotient of $\Ext^*_A(\mco,\mco)$ is finite and graded-commutative, which is striking because the Ext-algebra $\Ext^*_{A}(\mco,\mco)$ itself is typically infinite and non-commutative. The proof of the theorem above uses techniques that grew out of Tate's work~\cite{Tate:1957}, which introduced methods of differential graded algebras into commutative algebra.

Another noteworthy feature of  congruence modules, is an ``invariance of domain" property, Theorem~\ref{th:invariance-of-domain}, extending \cite[Lemma~3.4]{Brochard/Iyengar/Khare:2021b} which is the case $c=0$. It too is an important ingredient in the proof of Theorems~\ref{th:intro-1} and \ref{th:intro-3}.

\begin{introthm}
\label{th:intro-4}
Let $\vf\colon A\to B$ be a surjective map of local rings in $\acat(c)$. For any finitely generated $B$-module $M$ with $\depth_BM\ge c$, one has 
\[
\cmod A(M)\cong \cmod B(M)\,.
\]
Thus $\delta_A(M)\ge \delta_B(M)$ with equality if and only if $\length_{\mco}\con A =\length_{\mco}\con B$.
\end{introthm}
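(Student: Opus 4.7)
The strategy is to prove $\cmod B(M)\cong \cmod A(M)$ via a change-of-rings argument; the defect comparison then follows formally from the definition.

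First I observe that since $\vf\colon A\to B$ is a surjection of rings in $\acat(c)$, the induced map $A_{\fp_A}\to B_{\fp_B}$ is a surjection between regular local rings of the same dimension $c$, hence an isomorphism. Consequently $I:=\ker\vf$ satisfies $IA_{\fp_A}=0$, so $I$ is $\mco$-torsion; and since $M$ is a $B$-module we also have $IM=0$, whence $\fp_AM=\fp_BM$ and so $\bar M := M/\fp_AM = M/\fp_BM$. Both congruence modules are thus built from the same pair $(M,\bar M)$, with only the ambient $\Ext$ computations differing. Moreover the induced surjection $\fp_A/\fp_A^2\onto \fp_B/\fp_B^2$ has $\mco$-torsion kernel $(\fp_A^2+I)/\fp_A^2 \cong I/(I\cap\fp_A^2)$, which will control the change in $\length_{\mco}\con$ at the end.

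For any $B$-module $N$, adjunction gives $\RHom_A(\mco,N)\cong \RHom_B(\mco\lotimes_A B,N)$. The natural augmentation $\mco\lotimes_A B\to\mco$ in $\D(B)$ has fiber $\tau$ with $H_i(\tau)=\Tor^A_i(\mco,B)$ for $i\ge 1$, each finitely generated as a $B$-module and $\mco$-torsion (since it vanishes after localization at $\fp_A$). A standard filtration argument then shows that $\Ext^\ast_B(\tau,N)$ is $\mco$-torsion in every degree, so the long exact sequence
\[
\cdots\to \Ext^{c-1}_B(\tau,N)\to \Ext^c_B(\mco,N)\to \Ext^c_A(\mco,N)\to \Ext^c_B(\tau,N)\to\cdots
\]
shows the natural comparison map has $\mco$-torsion kernel and cokernel. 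On torsion-free quotients this yields a natural injection $\tfree{\Ext^c_B(\mco,N)}\into \tfree{\Ext^c_A(\mco,N)}$ with $\mco$-torsion cokernel, functorial in $N$. Specializing to $N=M$ and $N=\bar M$ produces a commutative ladder whose rows are the defining short exact sequences of $\cmod B(M)$ and $\cmod A(M)$, and whose first two vertical arrows are the above injections with torsion cokernels $T_M$ and $T_{\bar M}$. The snake lemma then reduces the desired isomorphism $\cmod B(M)\cong\cmod A(M)$ to showing that the induced map $T_M\to T_{\bar M}$ is itself an isomorphism.

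This last identification is the main obstacle. I would attack it by showing that both $T_M$ and $T_{\bar M}$ admit canonical descriptions---as subquotients of $\Ext^\ast_B(\tau,-)$ functorial in $N$---that match under the natural map. The depth hypothesis $\depth_B M\ge c$ should be used to simplify the relevant $\Ext$ groups for $N=M$, while for $N=\bar M$, where $\fp_B$ kills $\bar M$, one computes via the structural result on the torsion-free Yoneda Ext-algebra: $\tfree{\Ext^\ast_A(\mco,\mco)}$ is the exterior algebra on $\tfree{\Ext^1_A(\mco,\mco)}$, the torsion-free $\mco$-dual of $\fp_A/\fp_A^2$, which matches that of $\fp_B/\fp_B^2$ by the first paragraph, forcing the needed compatibility. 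Once $\cmod B(M)\cong\cmod A(M)$ is established, the defect comparison is immediate from $\delta(M)=\mu\cdot\length_{\mco}\con -\length_{\mco}\cmod(M)$ together with the relation between $\length_{\mco}\con A$ and $\length_{\mco}\con B$ coming from the surjection $\fp_A/\fp_A^2\onto \fp_B/\fp_B^2$ of the first paragraph.
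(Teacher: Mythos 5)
You have identified the right ingredients, and the overall roadmap (use adjunction to compare $\Ext^c_B(\mco,-)$ and $\Ext^c_A(\mco,-)$, observe the comparison maps have $\mco$-torsion kernel and cokernel, then compare the cokernels $T_M$ and $T_{\bar M}$) matches the paper's. However, the proof is incomplete: where the paper actually closes the argument, you write ``this last identification is the main obstacle'' and only sketch an attack. The missing content is precisely what makes the theorem nontrivial.

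The paper does not try to match $T_M$ and $T_{\bar M}$; it shows both are zero, i.e.\ that both horizontal maps in diagram \eqref{eq:invariance-diagram} are \emph{isomorphisms}. The two cases use entirely different mechanisms, which is why the proof cannot be done uniformly in $N$ as your sketch hopes. For $N=\bar M$, by the natural identification $\ecoh^c_B(\bar M)\cong \ecoh^c_B(\mco)\otimes_\mco\tfree{\bar M}$ from \eqref{eq:ecoh-MO}, the question reduces to bijectivity of $\ecoh^c_{\vf}(\mco)$, and this is Proposition~\ref{pr:xi-iso}: since $\ecoh^*_A(\mco)$ is the exterior algebra on $\ecoh^1_A(\mco)\cong\Hom_\mco(\fp_A/\fp_A^2,\mco)$ (Proposition~\ref{pr:xi-algebra}), bijectivity in degree $c$ follows once one checks bijectivity in degree $1$, which uses only that $\vf_{\fp_A}$ is an isomorphism. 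Crucially, this needs the multiplicative structure of the torsion-free Yoneda algebra, which your filtration argument on $\Ext^*_B(\tau,N)$ alone does not see. For $N=M$, the depth hypothesis enters: writing $J$ for the homotopy fiber of $B\lotimes_A\mco\to\mco$, each $\HH_i J$ is $\mco$-torsion (since $\vf_{\fp_A}$ is an isomorphism) and concentrated in degrees $\ge 1$, hence $\fm_B$-power-torsion; since $\depth_B M\ge c$ forces $\Ext^i_B(W,M)=0$ for every $\fm_B$-power-torsion $W$ and $i\le c-1$, a spectral sequence gives $\Ext^c_B(J,M)=0$, whence the comparison map $\Ext^c_B(\mco,M)\to\Ext^c_A(\mco,M)$ is surjective. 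Without this vanishing argument---which is where the depth hypothesis is used, and only here---the cokernel $T_M$ need not match $T_{\bar M}$, and your proposed ``compatibility'' has no reason to hold.
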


These results above, especially the invariance of domain property, and the fact that the element $f$ in Theorem~\ref{th:intro-3} need only be $M$-regular and not $A$-regular, convinces us that the definition of Wiles defect we have is the correct one.

\subsection*{Torsion Langlands correspondence and integral $R=\TT$ theorems for Betti cohomology in positive defect}

We restrict ourselves to considering 2-dimensional representations of $G_F$ with $F$ a number field, and assume that the prime $\ell$ does not ramify in $F$. The relevant cohomology groups arise from complexes which are conjectured to be of length $\ell_0$ where $\ell_0=r_2$ is the defect  (see Conjecture \ref{vanishing conj}). Here $[F:\mbb Q]=r_1+2r_2$ with $r_1$ and $r_2$ the number of real and complex places of $F$, respectively. The strategy to use our commutative algebra results  for proving  (integral, non-minimal)  modularity  lifting theorems  in positive defect  is to  apply  them for suitable augmented rings and modules that arise  after carrying out patching.  Thus we prove that in a   patched non-minimal situation a module $M_\infty$  over a ring $R_\infty$ has a free direct summand $R_\infty^\mu$ for a positive integer $\mu$,  and $R_\infty$ is a complete intersection.  

These  applications to modularity lifting  assume Conjectures \ref{rhobar conj}, \ref{R->T conj}, \ref{vanishing conj} and \ref{Ihara conj} listed in Section~\ref{sec:Hecke}, and are similar to those in \cite{Calegari/Geraghty:2018}.  Conjectures \ref{rhobar conj}, \ref{R->T conj}  are about existence of Galois representations attached to Hecke eigenclasses arising from the cohomology of symmetric manifolds  and resulting maps $R \to \TT$.  Conjecture \ref{vanishing conj}  is about  concentration of integral cohomology of these manifolds in a certain range $\ell_0$, and Conjecture \ref{Ihara conj}  is a version of Ihara’s lemma.

Patching methods of \cite{ACC+:2018}   can be  used   in principle to  prove  that a certain patched module $M_\infty$ is maximal Cohen--Macaulay as a module over a patched deformation ring $R_\infty$. Using Taylor's  Ihara avoidance \cite[\S 6.3]{ACC+:2018} one   may in principle  deduce (even when  considering non-minimal deformations, so $R_\infty$ need not be a domain) that $M_\infty$ is faithful as a $R_\infty$-module and $M_\infty[1/\ell]$ is free over $R_\infty[1/\ell]$. After passing to a quotient ring by a sequence that may not be regular when $l_0>0$,  one gets a deformation ring $R$ acting on  the homology module $M$ in lowest degree $q_0=r_1+r_2$ of an arithmetic manifold,  and one may  deduce that $M[1/\ell]$ is  a free $R[1/\ell]$-module. However, by these methods (that rely primarily on patching) it does not seem possible to get that $M$ itself is a faithful $R$-module. 

Our work  is an integral refinement of such results (conditional on the conjectures referred to above)  and when applied  in the situation of 2-dimensional Galois representations over imaginary quadratic fields $K$, and with $X$ a Bianchi 3-manifold, yields that   $M\colonequals \HH 1{X,\mco}_{\fm}$ has a free  direct summand  $R^\mu$ for some $\mu \geq 1$ for a suitable  (possibly non-minimal) deformation ring $R$ that acts on $M$.  Here $\fm$ is the maximal ideal of a Hecke algebra $\TT$ that acts faithfully on $M$ with associated residual Galois representation $\rhobar_{\fm}\colon G_K \to \GL_2(\TT/{\fm})$. (In the statement of Theorem \ref{th:intro-5}  below  the non-minimality of the deformation ring $R_\Sigma$ which acts on  the homology of an arithmetic manifold $Y_0(\mcn_\Sigma)$  via Conjectures   \ref{rhobar conj}, \ref{R->T conj}  is reflected in the fact that the level $\mcn_\Sigma$ might be strictly divisible by the conductor of the residual representation $\rhobar_{\fm}$.) The  resulting integral  $R=\TT$ theorems that follow  produce new evidence towards an  emerging  \emph{torsion} Langlands correspondence. This addresses torsion  Galois representations $\rho\colon G_K \to \GL_2(\mco/\omega^n)$   that may not lift to characteristic 0. Our results show that such a $\rho$ is automorphic in the sense that it  arises from specialising the universal modular Galois deformation $\rho_\TT\colon G_K \to \GL_2(\TT)$ lifting $\rhobar_{\fm}$, arising  thus from torsion Hecke eigenclasses in  $\HH{1}{X,\mco}_{\fm}$.

With $\rhobar_\fm$, $\Sigma$ and $\mcn_\Sigma$ as in Section \ref{sec:Hecke}, our main number theory application, Theorem \ref{R=T theorem},  is the following result. In the main text we define $Y_0(\mcn_\Sigma)$ to have additional level structure at an auxiliary prime $t\nmid \mcn_\Sigma$, in order to make the subgroups neat.

\begin{introthm}
\label{th:intro-5}
Assume Conjectures \ref{rhobar conj}, \ref{R->T conj}, \ref{vanishing conj} and \ref{Ihara conj} hold for $F$ and assume $\ell>3$. Let $\fm$ be a non-Eisenstein maximal ideal of ${\TT}(K_0(\mcn_\es))$ such that $N(\rhobar_\fm) = \mcn_\es$ and $\rhobar_\fm|_{G_{F(\zeta_\ell)}}$ is absolutely irreducible. There exists an integer $\mu\ge 1$ such that for all $\Sigma$  there is an isomorphism of $R_\Sigma$-modules
\[
\hh_{r_1+r_2}(Y_0(\mcn_\Sigma),\mco)_{\fm_\Sigma}\cong R_\Sigma^\mu \oplus W_\Sigma
\]
for some $R_\Sigma$-module $W_\Sigma$. In particular, $R_\Sigma$ acts faithfully on $\hh_{r_1+r_2}(Y_0(\mcn_\Sigma),\mco)_{\fm_\Sigma}$ and so the map $R_\Sigma\onto {\TT}_\Sigma$ from Conjecture \ref{R->T conj} is an isomorphism for all $\Sigma$.
\end{introthm}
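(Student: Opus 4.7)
The plan is to establish Theorem \ref{th:intro-5} by applying the commutative algebra criterion of Theorem \ref{th:intro-2}(1) to suitable patched objects, and then descending. First, I would adapt the Calegari--Geraghty patching machinery to the present setting, using Conjectures \ref{rhobar conj}, \ref{R->T conj} and \ref{vanishing conj} as substitutes for structural properties that are unconditional in \cite{Calegari/Geraghty:2018}. For each admissible $\Sigma$ this produces a complete local $\mco$-algebra $R_\infty^\Sigma$ together with a finitely generated $R_\infty^\Sigma$-module $M_\infty^\Sigma$, arranged so that $R_\infty^\Sigma$ is flat over $\mco$ (a quotient of a power series ring over $\mco$ by a regular sequence of controlled length) and $M_\infty^\Sigma$ is maximal Cohen--Macaulay over $R_\infty^\Sigma$. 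A fixed characteristic-zero lift of $\rhobar_\fm$ determines a compatible system of augmentations $\lambda_\Sigma\colon R_\infty^\Sigma \to \mco$, placing $(R_\infty^\Sigma,\lambda_\Sigma)$ in $\acat(c)$ for a codimension $c\ge 0$; the Cohen--Macaulay property together with Conjecture \ref{vanishing conj} ensures $\depth_{R_\infty^\Sigma} M_\infty^\Sigma \geq c+1$. The generic rank $\mu \colonequals \rank_{(R_\infty^\Sigma)_{\fp}}(M_\infty^\Sigma)_{\fp}$ at $\fp \colonequals \ker \lambda_\Sigma$ is independent of $\Sigma$, since it measures the multiplicity of the fixed lift in the characteristic-zero fiber. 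Finally, there is an ideal $\fa_\Sigma \subset R_\infty^\Sigma$ generated by patching variables such that $R_\infty^\Sigma/\fa_\Sigma = R_\Sigma$ and $M_\infty^\Sigma/\fa_\Sigma M_\infty^\Sigma \cong \hh_{r_1+r_2}(Y_0(\mcn_\Sigma),\mco)_{\fm_\Sigma}$.

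Next, I would show that $\delta_{R_\infty^\Sigma}(M_\infty^\Sigma)=0$ for every $\Sigma$. The base case $\Sigma = \es$ is handled by minimality $N(\rhobar_\fm)=\mcn_\es$ combined with Taylor's Ihara avoidance, which identifies $M_\infty^\es$ as free of rank $\mu$ over $R_\infty^\es$; the defect vanishes trivially. To propagate to general $\Sigma$, I would induct on $|\Sigma|$, adding one prime $q$ at a time. Each such addition produces a known local change in the cotangent module $\con{R_\infty^\Sigma}$ determined by the local deformation ring at $q$, while Conjecture \ref{Ihara conj} and the compatibility between patching and duality (the second part of the paper) produce the matching change in the congruence module $\cmod{R_\infty^\Sigma}(M_\infty^\Sigma)$. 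The key point is that these contributions cancel in the definition of the Wiles defect. To make the cancellation tractable, one reduces the comparison to the codimension-zero case via Theorem \ref{th:intro-3} (going modulo a maximal $M_\infty^\Sigma$-regular sequence inside $\fp$), and uses Theorem \ref{th:intro-4} to ensure invariance under the surjections comparing different auxiliary Taylor--Wiles levels.

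With the defect vanishing established, Theorem \ref{th:intro-2}(1) gives a decomposition $M_\infty^\Sigma \cong (R_\infty^\Sigma)^\mu \oplus W_\infty^\Sigma$ with $(W_\infty^\Sigma)_{\fp}=0$ and $R_\infty^\Sigma$ a complete intersection. Reducing modulo $\fa_\Sigma$ yields the claimed $\hh_{r_1+r_2}(Y_0(\mcn_\Sigma),\mco)_{\fm_\Sigma} \cong R_\Sigma^\mu \oplus W_\Sigma$. Since $R_\Sigma^\mu$ is a direct summand, the $R_\Sigma$-action on the left is faithful; because $\TT_\Sigma$ also acts faithfully on this module by definition, the surjection $R_\Sigma \twoheadrightarrow \TT_\Sigma$ of Conjecture \ref{R->T conj} has trivial kernel, giving $R_\Sigma = \TT_\Sigma$. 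The main obstacle is the inductive level-raising step in the second paragraph: one must precisely match the local Galois-cohomological change in $\con{R_\infty^\Sigma}$ against the Hecke-theoretic change in $\cmod{R_\infty^\Sigma}(M_\infty^\Sigma)$ at each new prime of $\Sigma$, and it is here that both Conjecture \ref{Ihara conj} and the patching-duality compatibility enter in an essential way.
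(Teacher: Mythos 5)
Your outline captures the broad arc of the paper's argument — patch, show the Wiles defect vanishes by a Ribet-style induction on $|\Sigma|$ using Ihara's lemma and the self-duality of the patched modules, invoke the numerical criterion, and descend to the global setting — but there is a genuine gap at the step the paper identifies as the central technical obstacle in positive defect: the choice of augmentation.

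You propose to define $\lambda_\Sigma\colon R_\infty^\Sigma \to \mco$ via ``a fixed characteristic-zero lift of $\rhobar_\fm$'' and then interpret $\mu$ as ``the multiplicity of the fixed lift in the characteristic-zero fiber.'' This is precisely the framing the paper flags as unavailable: when $\ell_0>0$ the deformation ring $R_\Sigma$ may have no characteristic-zero points at all, so there may be no automorphic or Galois-theoretic augmentation to anchor to, and no multiplicity-one theorem to control the generic rank. What the paper does instead is pick $\lambda\colon R_{T,\infty}\onto R_{\es,\infty}\onto\mco$ purely abstractly, using only that the patched ring $R_{\es,\infty}$ is a power series ring over $\mco$, subject to the combinatorial conditions $\lambda(A_v)=0$ and $\lambda(B_v)\ne 0$ for all $v\in T$. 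These conditions are essential: $\lambda(B_v)\ne 0$ is exactly what makes the level-lowering maps $\pi_v$ isomorphisms after localizing at $\fp=\ker\lambda$ (via $\pi_v^\vee\circ\pi_v = u'_v B_v$), which is how Lemma~\ref{lem:constant rank} proves $\rank_\fp M_{\Sigma,\infty}$ is independent of $\Sigma$; your multiplicity heuristic gives the right conclusion for the wrong reason and would not supply a proof. The same conditions also drive the explicit computation of $\con{R_{\Sigma,\infty}}$ as $\prod_{v\in\Sigma}\mco/\lambda(B_v)\mco$, which is the other half of the defect cancellation.

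Two smaller points of divergence. First, the base case $\Sigma=\es$ does not require Taylor's Ihara avoidance: $R_{\es,\infty}$ is formally smooth (a power series ring over the local flat deformation rings, which are all smooth at minimal level), so a maximal Cohen--Macaulay module over it is automatically free and both $\con{}$ and $\cmod{}$ vanish. Second, the inductive step in the paper does not pass to codimension zero via Theorem~\ref{th:intro-3}; instead it stays in codimension $c$ throughout and uses Proposition~\ref{pr:change-modules} (the $c>0$ analogue of Ribet's change-of-congruence-module argument) together with Theorem~\ref{th:intro-4} to compare $\cmod{R_{\Sigma\cup\{v\},\infty}}(M_{\Sigma,\infty})$ with $\cmod{R_{\Sigma,\infty}}(M_{\Sigma,\infty})$. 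Your proposed detour through Theorem~\ref{th:intro-3} is not wrong in principle, but it adds work (choosing a compatible family of regular sequences across different $\Sigma$ and tracking them through the level-raising maps) with no payoff, and you would still need the invariance-of-domain step, so nothing is saved.
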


In analogy with \cite[Theorem 3.4]{Diamond:1997}, one might expect our method to produce the stronger result that $\hh_{r_1+r_2}(Y_0(\mcn_\Sigma),\mco)_{\fm_\Sigma}\cong R_\Sigma^\mu$ (that is,  one would expect that $W_\Sigma =0$; if further $F$ is totally complex one would expect $\mu=1$). The reason for our slightly weaker result is that one cannot control the generic rank of the patched module $M_\infty$ unless the ring $R$ has enough characteristic $0$ points (in order to appeal to classical multiplicity one results for automorphic forms), which is not guaranteed in the positive defect case. Fortunately the weaker statement above is still enough to deduce $R={\TT}$. Building on these techniques, in subsequent work~\cite{Iyengar/Khare/Manning:2022b},  we prove that  $\hh_{r_1+r_2}(Y_0(\mcn_\Sigma),\mco)_{\fm_\Sigma}\cong R_\Sigma$  when $F$ has no real places and there are geometric characteristic 0 lifts of $\rhobar_\fm$.

When $F$ is a CM field and $\ell$ is sufficiently large\footnote{The precise bound is given in \cite[(1.2)]{NewtonThorne}, in  our situation, $\ell>20$ is sufficient if $[F:\QQ] = 2$.}, Conjecture \ref{rhobar conj} is known by \cite[Theorem 1.3]{NewtonThorne}. Moreover if $F$ is an imaginary quadratic field, then Conjectures \ref{vanishing conj} and \ref{Ihara conj}   are known as well (see the discussions after their  statements), and so Theorem \ref{th:intro-5} gives:

\begin{introthm}
	\label{th:intro-5 QI}
	Let $F$ be a  imaginary quadratic field. Assume Conjecture \ref{R->T conj} holds for $F$.  Let $\fm$ be a non-Eisenstein maximal ideal of ${\TT}(K_0(\mcn_\es))$ such that $N(\rhobar_\fm) = \mcn_\es$ and $\rhobar_\fm|_{G_{F(\zeta_\ell)}}$ is absolutely irreducible.
For $\ell>20$,  there exists an integer $\mu\ge 1$ such that for all $\Sigma$  there is an isomorphism of $R_\Sigma$-modules
	\[
	\hh_{1}(Y_0(\mcn_\Sigma),\mco)_{\fm_\Sigma}\cong R_\Sigma^\mu \oplus W_\Sigma
	\]
	for some $R_\Sigma$-module $W_\Sigma$. In particular, $R_\Sigma$ acts faithfully on $\hh_{1}(Y_0(\mcn_\Sigma),\mco)_{\fm_\Sigma}$ and so the map $R_\Sigma\onto {\TT}_\Sigma$ from Conjecture \ref{R->T conj} is an isomorphism for all $\Sigma$.
\end{introthm}

When $F$ is CM weaker versions of Conjecture \ref{R->T conj} were proven in \cite{ACC+:2018} (which primarily replaced $\TT_\Sigma$ with $\TT_\Sigma/I$ for a nilpotent ideal $I$, and also placed some restrictions on $F$). It is likely that a refinement of these methods could prove Conjecture \ref{R->T conj} for all CM fields, which would make Theorem \ref{th:intro-5 QI} hold unconditionally.

\subsection*{Torsion Jacquet--Langlands correspondence for weight one coherent cohomology of modular and Shimura curves}
We also have the following unconditional applications to integral modularity lifting of our  new results in commutative algebra. These applications arise when we consider coherent cohomology of the weight one line bundle $\omega_\mco$ on modular and Shimura curves.

We refer to \S \ref{sec:Hecke wt1} for the  unexplained notation used in the statement below, which is Theorem \ref{weightone:thm} of the main text.  In the case of modular curves (when $\disc=\es$) such a result was proven in \cite{Calegari} using $q$-expansion principle. One crucial place where $q$-expansions are used in   \cite{Calegari}  is to show  that, in the  relevant non-minimal patched situation, not only is $M_\infty$ faithful as a $R_\infty$-module, but that it is free over the (patched) Hecke algebra, and hence also free  as a $R_\infty$-module. This  argument does not work for Shimura curves for the compelling reason that these have no cusps. Our methods on the other hand do carry over to Shimura curves.
 
\begin{introthm}\label{th:intro-5b}
Let $X^\disc_\Sigma$ be a Shimura curve arising from an indefinite quaternion algebra $D_\disc$ over $\QQ$ that ramifies only at a finite set of primes $\disc$ and assume $\ell>3$. Let $\TT^\disc_\Sigma$ be the Hecke algebra acting on $\hh^1(X^\disc_\Sigma,\omega_\mco)_{\fm}$ and $R^\disc_\Sigma$ the corresponding deformation ring which parametrizes deformations of $\rhobar_\fm$ that are  in particular unramified at $\ell$.  The surjective map   $R^\disc_\Sigma \onto \TT^\disc_\Sigma$ is an isomorphism.
\end{introthm}

From this we deduce a Jacquet--Langlands correspondence  comparing the  degree 1  integral cohomology with coefficients in  the ``weight one’’ sheaf $\omega_\mco$   of modular and Shimura curves. As these cohomology groups typically have abundant torsion this  cannot be deduced from the classical Jacquet--Langlands correspondence and, as far as we know, this is  the first such correspondence in the literature relating torsion in  the coherent cohomology of   Shimura varieties that arise from inner forms. The possibility of such a result had been considered earlier  by  Boxer, Calegari and Gee. The following result is Theorem \ref{JLcorr} of the main text.

\begin{introthm}
\label{th:intro-5c}
Let $\disc\ne\es$ and take $\ell>3$.  Consider  a residual representation $\rhobar: G_\QQ 
	\to \GL_2(k)$ that  arises from a maximal ideal $\fm_\disc$ of the Hecke algebra acting on $\hh^1(X^\disc_\Sigma,\omega_\mco)$.  We assume that $\rhobar|_{\QQ(\zeta_\ell)}$ is irreducible. Then:
\begin{enumerate}[\quad\rm(1)]	
\item $\rhobar$ arises from a maximal ideal $\fm_\es$ of the Hecke algebra acting on $\hh^1(X^\es_{\Sigma\cup\disc},\omega_\mco)$.
\item If $\TT^\es_{ \Sigma\cup\disc}$ and   $\TT^\disc_\Sigma$ are the Hecke algebras acting on $\hh^1(X^\es_{\Sigma\cup\disc},\omega_\mco)$ and $\hh^1(X^\disc_{\Sigma},\omega_\mco)$, localized at $\fm_\es$ and $\fm_\disc$ respectively, then there is a natural surjective map $\TT_{\Sigma\cup\disc}^{\es}  \onto \TT_\Sigma^{\disc}$ with kernel generated by $q_vU_v^2 -\psi(\Frob_v)$ for $v \in \disc$.
\end{enumerate}
\end{introthm}

 \subsection*{Wiles defects of  cohomology groups of modular curves  and Shimura curves}
The deformation invariance proved in Theorem \ref{th:intro-3}  has applications to reproving and generalizing, by a different method, the results of \cite{Bockle/Khare/Manning:2021a}. Specifically an easy corollary of Theorem \ref{th:intro-3} is the following:

\begin{introthm}
\label{th:intro-6}
Let $R_\infty$ be a complete local noetherian $\mco$-algebra and  $\lambda\colon R_\infty\to\mco$ a surjective map with kernel $\fp$, such that $(R_\infty)_{\fp}$ is regular of dimension $c$. Let $M_\infty$ be an $R_\infty$-module with $\depth_{R_\infty}M_\infty \ge c+1$ and $(M_\infty)_{\fp}\ne 0$.

Let $(\bs y,\varpi)$ be an $M_\infty$-regular sequence, where ${\bs y} = \{y_1,\ldots,y_c\}\subseteq \fp$ and let $R_{\bs y} = R_\infty/(\bs y)$, $M_{\bs y} = M_\infty/(\bs y)$ and $\fp_{\bs y} = \fp/(\bs y)\subseteq R_{\bs y}$, so  $M_{\bs y}$ is flat over $\mco$.

Provided that $\fp_{\bs y}/\fp^2_{\bs y}$ has finite length, the Wiles defect $\delta_{R_{\bs y}}(M_{\bs y})$ depends only on $R_\infty$, $M_\infty$ and $\lambda\colon R_\infty\to\mco$, and not on the choice of $\bs y$.
\end{introthm}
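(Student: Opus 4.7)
The plan is to show the sharper statement $\delta_{R_{\bs y}}(M_{\bs y})=\delta_{R_\infty}(M_\infty)$ by iterated application of Theorem~\ref{th:intro-3}. Since the right-hand side manifestly depends only on $R_\infty$, $M_\infty$, and $\lambda$, this settles the claim.

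First I would extract an intrinsic characterization of $\bs y$ from the finite-length hypothesis. Since $(R_{\bs y})_{\fp_{\bs y}}\cong (R_\infty)_\fp/(\bs y)$, and the condition $\length_\mco(\fp_{\bs y}/\fp_{\bs y}^2)<\infty$ forces the $E$-vector space $\fp_{\bs y}/\fp_{\bs y}^2\otimes_\mco E$ to vanish, Nakayama identifies $(R_{\bs y})_{\fp_{\bs y}}$ with a field. Since $(R_\infty)_\fp$ is regular of dimension $c$, the $c$ elements $y_1,\ldots,y_c$ must form a regular system of parameters at $\fp$; in particular their images form an $E$-basis of the cotangent space $\fp(R_\infty)_\fp/\fp^2(R_\infty)_\fp$.

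Next I set up the induction. Put $A_0=R_\infty$, $M_0=M_\infty$, $\fp_0=\fp$, and inductively $A_i=A_{i-1}/(y_i)$, $M_i=M_{i-1}/y_i M_{i-1}$, $\fp_i=\fp_{i-1}/(y_i)$. To apply Theorem~\ref{th:intro-3} at the $i$-th step I verify: (i) $A_{i-1}\in\acat(c-i+1)$, which is a by-product of the previous application of the theorem (starting from $A_0\in\acat(c)$); (ii) since $\bs y$ is $M_\infty$-regular, $\depth_{A_{i-1}} M_{i-1}=\depth_{R_\infty}M_\infty-(i-1)\ge c-i+2$, matching the required bound for codimension $c-i+1$; (iii) $y_i$ is a nonzerodivisor on $M_{i-1}$, again from $M_\infty$-regularity of $\bs y$; and (iv) critically, $y_i\notin\fp_{i-1}^{(2)}$, because the images of $y_1,\ldots,y_c$ form an $E$-basis of $\fp(R_\infty)_\fp/\fp^2(R_\infty)_\fp$, so after localizing at $\fp_{i-1}$ and killing $y_1,\ldots,y_{i-1}$ the image of $y_i$ in $\fp_{i-1}(A_{i-1})_{\fp_{i-1}}/\fp_{i-1}^2(A_{i-1})_{\fp_{i-1}}$ remains nonzero. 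Theorem~\ref{th:intro-3} then yields $\delta_{A_{i-1}}(M_{i-1})=\delta_{A_i}(M_i)$; telescoping from $i=1$ to $i=c$ gives $\delta_{R_\infty}(M_\infty)=\delta_{R_{\bs y}}(M_{\bs y})$.

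The only nonroutine ingredient is (iv), which depends on translating the finite-length hypothesis into the statement that $\bs y$ is a genuine regular system of parameters at $\fp$; the rest is bookkeeping about how depth, $M_\infty$-regularity, and membership in $\acat(\cdot)$ propagate through successive quotients by a regular sequence. Note that the $\varpi$-regularity of the extended sequence $(\bs y,\varpi)$ is not needed for the defect-invariance statement itself—it only ensures the side remark that $M_{\bs y}$ is $\mco$-flat.
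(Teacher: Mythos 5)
Your proof is correct and takes the approach the paper intends: the paper presents Theorem~\ref{th:intro-6} as ``an easy corollary of Theorem~\ref{th:intro-3},'' and you have spelled out exactly the requisite iteration, including the key translation of the finite-length hypothesis on $\fp_{\bs y}/\fp_{\bs y}^2$ into the statement that $\bs y$ is a regular system of parameters at $\fp$ (which gives $y_i\notin\fp_{i-1}^{(2)}$ at each step). Your observation that the $\varpi$-regularity of $(\bs y,\varpi)$ is only used for the $\mco$-flatness of $M_{\bs y}$, not for the defect-invariance itself, is also accurate.
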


When $M_\infty = R_\infty$ (and hence $R_\infty$ is Cohen--Macaulay) this recovers \cite[Theorem 3.25]{Bockle/Khare/Manning:2021a}. Theorem \ref{th:intro-6} represents a significant generalization of this result, as it does not require that $R_\infty$ is Cohen--Macaulay.

Removing the requirement that $R_\infty$ is Cohen--Macaulay is particularly significant for number theoretic applications. In the context considered in \cite{Bockle/Khare/Manning:2021a}, the ring $R_\infty$ would be a power series ring over a completed tensor product of certain local Galois deformation rings, $M_\infty$ would be a `patched' module arising from the cohomology of certain Shimura curves and the augmentation $\lambda\colon R_\infty\to\mco$ would arise from a certain cuspidal Hilbert modular form (this is still a special case, the same commutative algebra setup would occur in any patching situation with defect $\ell_0=0$). Crucially, while the ring $R_\infty$ is only known to be Cohen--Macaulay in some rather restrictive cases, the module $M_\infty$ would automatically satisfy $\depth_{R_\infty}M_\infty = c+1$ in all cases, as a consequence of the patching argument.

The primary objects of number theoretic interest would be a Galois deformation ring $R$ and a cohomology module $M$, which would have the form $R = R_{\bs y}$ and $M = M_{\bs y}$ for some sequence ${\bs y}$ as above. Theorem \ref{th:intro-6} would thus imply that $\delta_R(M)$ (which was referred to as the \emph{cohomological Wiles defect} in \cite{Bockle/Khare/Manning:2021b} and \cite{Bockle/Khare/Manning:2021a}) depends only $R_\infty$, $M_\infty$ and $\lambda\colon R_\infty\to\mco$, and not on $R$ and $M$ directly.

As in \cite{Bockle/Khare/Manning:2021a}, both $R_\infty$ and $\lambda\colon R_\infty\to\mco$ are determined entirely by local Galois theoretic information. The module $M_\infty$ is not always known to be determined by local information, but it is reasonable to expect that it is. In fact it is plausible that one can prove that it is in some circumstances where $R_\infty$ cannot be proven to be Cohen--Macaulay. This would then imply that the cohomological Wiles defect, $\delta_R(M)$, is determined entirely by local information without requiring the assumption that $R_\infty$ is Cohen--Macaulay, allowing one to generalize \cite[Theorem 6.5]{Bockle/Khare/Manning:2021a}.

\subsection*{Links between the commutative algebra and modularity lifting}
 We outline where our results  fit in  the evolution of modularity lifting results.
 
The original papers of Wiles and Taylor--Wiles, \cite{Wiles:1995} and \cite{Taylor/Wiles:1995}, deduced modularity lifting from an $R={\TT}$ result which they first proved in the minimal level case via a patching argument, and then proved in non-minimal levels via an induction argument using the numerical criterion. The innovation of Diamond in \cite{Diamond:1997} reformulated the patching and numerical criterion for modules rather than rings, which allowed the argument to prove freeness results for certain cohomology groups (rather than requiring these results as inputs to the argument).

Kisin's work in \cite{Kisin} gave the patching method far greater flexibility and applicability by patching global deformation rings and Hecke  modules relative to local deformation rings. The work of Kisin almost erases the distinction between minimal and non-minimal cases, and removes the need for the numerical criterion. This comes at the expense of only proving $R[1/\ell] = {\TT}[1/\ell]$, rather than $R={\TT}$, which does not matter for proving modularity of $\ell$-adic Galois representations.
 
These works were in the case when the (global) defect $\ell_0$ is 0. The  insight of Calegari and Geraghty in  \cite{Calegari/Geraghty:2018} generalized the patching method to positive defect (subject to certain conjectures, which we must also assume for our work). Following the approach of Kisin, this gives modularity lifting results for $\ell$-adic representations in both minimal and non-minimal levels. However in non-minimal levels it again can only prove that $R[1/\ell] = {\TT}[1/\ell]$, or even only that $R^{\mathrm{red}} = {\TT}^{\mathrm{red}}$. In the $\ell_0>0$ case one typically expects $R$ and ${\TT}$ to have torsion, and so not having an approach to proving $R={\TT}$ is a noticeable shortcoming to the patching method. 

Our work generalizes the numerical criterion to positive defect, and thus gives us $R={\TT}$ results in non-minimal level by an inductive argument similar to the one in \cite{Diamond:1997}. Unlike in the $\ell_0=0$ case, when the patching and numerical criterion arguments were treated fairly independently of each other, when $\ell_0>0$ we are compelled to work at a patched level to apply our numerical criterion. This is one of the key insights of this paper: patching is a necessary prelude to using our numerical criterion for applications to modularity lifting in positive defect.

In the positive defect case patching is much more intrinsic to deducing  non-minimal $R={\TT}$ theorems from their minimal analogs than in the defect 0 case. In both the defect $0$ case and the positive defect case the augmentation $\lambda\colon R_\infty \to \mco$ is crucial for formulating the numerical criterion. In the positive defect case the augmentation which we consider does not need to factor though the global deformation ring $R_\infty\onto R$, and indeed may not have any relation to Galois representations or modular forms. This is necessary, as $R$  may not have any characteristic 0 points if the residual representation $\rhobar_{\fm}$ has no appropriate characteristic 0 lifts, and so the augmentation $\lambda$ may only exist after patching.

As an important step in our proof of Theorem \ref{th:intro-5} we have to  generalize the  classical change of congruence modules arguments of Ribet \cite{RibCong}  (in codimension $c=0$ setting) to the present higher ($c>0$) codimension  setting to deduce that  the Wiles defect  $\delta_{R_\infty’}(M_\infty’)$ vanishes, for patched rings $R_\infty’$  and modules $M_\infty’$ in non-minimal situations,  from  $\delta_{R_\infty}(M_\infty)=0$, for patched rings $R_\infty$  and modules $M_\infty$ in minimal situations. For this one uses the following results  in concert:
\begin{enumerate}
   \item patching in Theorem \ref{th:ultrapatching};
     \item Theorem \ref{th:duality}  which gives a self-duality statement for a patched module that arises from patching complexes with self-duality properties;
   \item a version of Ihara’s lemma for surjectivity of maps between patched modules of different levels  in \S \ref{sec:Ihara};
   \item   change of congruence module arguments for our congruence modules in higher codimensions which  is inspired by \cite{RibCong} and uses in addition the commutative algebra  results Lemma \ref{le:change-modules}  and Proposition \ref{pr:change-modules}, as well as Theorem \ref{th:invariance-of-domain}.
\end{enumerate}

\subsection*{Earlier work}
As noted before, our work is inspired by work of Wiles, Lenstra, and Diamond.  The result of Wiles and Lenstra has been extended by Huneke and Ulrich~\cite{Huneke/Ulrich:1996}, and Zarzuela~\cite{Zarzuela:1996}, to cover more general surjective maps of rings $A\to B$, where $B$ need not be a discrete valuation ring.  The direct  precursors  to  the  commutative algebra developed in this paper are  \cite{FKR},   \cite{Bockle/Khare/Manning:2021b}, \cite{Bockle/Khare/Manning:2021a} (and its appendix),   \cite{Brochard/Iyengar/Khare:2021b}, as well as the work of Tilouine and Urban in \cite{TiUr}.    We describe briefly the previous work, to provide historical  context. 

The results of \cite[Appendix]{FKR} and \cite{Brochard/Iyengar/Khare:2021b}  generalized results of Wiles, Lenstra and Diamond   in \cites{Wiles:1995, Lenstra, Diamond:1997} so that rather than asking that the  ring $A$ or the module $M$ be finite and  flat over $\mco$, it is only required that  $M$ has positive depth. Theorem \ref{th:intro-2} in this paper  also requires that $M$ have sufficient depth.

The papers   \cites{Bockle/Khare/Manning:2021b, Bockle/Khare/Manning:2021a, TiUr}, introduced the  notion of Wiles defect of rings, and showed that it has arithmetic significance. The appendix  to  \cite{Bockle/Khare/Manning:2021a}  (written by N. Fakhruddin and the second  author) established a formula for the Wiles defect proposed  by Venkatesh for $\mco$-algebras $A\in \acat(0)$ with $\dim A=1$. This involves the Andr\'e--Quillen cohomology module $\mathrm{D}^1(A/\mco,E/\mco)$, where $E$ is the fraction  field of $\mco$, and another invariant that  he  defined using suitable complete intersections that map onto $A$.  The main article   \cite{Bockle/Khare/Manning:2021a}  extended  the definition of Venkatesh  invariants, and Wiles defect, to higher dimensional rings $A$ that are Cohen--Macaulay and flat over $\mco$ and with an augmentation $\lambda\colon A \to \mco$ such that $A$ is smooth at $\lambda$, basically by proving that the invariants  remain unchanged on going modulo regular sequences, so that one can reduce to dimension one.

The paper \cite{Brochard/Iyengar/Khare:2021b} considered Wiles defect for $A \in \acat(0)$, and $M$ a finitely generated $A$-module. One of the main observations of that paper, \cite[Theorem 1.2]{Brochard/Iyengar/Khare:2021b},  is a formula  relating the Wiles defects of the module $M$ and ring $A$. This is generalized in our paper Lemma \ref{le:defect-formula}  and plays an important role in the proof of Theorem \ref{th:intro-1}. 

The present paper defines the Wiles defect for all rings $A$ in $\acat (c)$ for arbitrary $c$. This uses  invariants $\con A$ and $\cmod A$ that are more direct generalizations of the invariants of  Wiles; their definition does not use Andr\'e--Quillen homology of rings. 
They do not remain invariant individually on going modulo regular sequences, but the Wiles defect $\delta_A$ does; see  Theorem~\ref{th:intro-3} and the discussion around it. Our invariants are defined in a way that is more intrinsic to $A$ than the ones introduced by Venkatesh which use as a crutch complete intersections mapping to $A$. 
 
\subsection*{Further applications}

In a sequel \cite{Iyengar/Khare/Manning/Urban:2024}  to this work,    we have used  the  techniques  of this paper  to relate our work on congruence modules in positive codimension to ``zeta-elements’’   (see  \cite{Urban:2021}) for the adjoint motive of a newform $f$ of weight at least 2  which is ordinary at $p$. (For this paragraph we denote the  prime $\ell$ of the rest of the paper by $p$.)   This is contained in  \cite[Proposition 2.5, Theorem 3.7]{Iyengar/Khare/Manning/Urban:2024}  that we briefly describe.  The  $p$-stablization  of  $f$ induces  an augmentation  $\lambda_f : \TT^{\rm ord} \to \mco$   of Hida’s ordinary  $p$-adic Hecke algebra.  By Hida theory  ${\TT}^{\rm ord}$  is finite  flat over the Iwasawa algebra $\Lambda=\mco\pos{T}$ (and hence $\Ker \lambda_f$ has height  $1$),  and it is smooth at $\lambda_f$.  Let ${\rm Ad}_f$ be the adjoint motive of $f$, $L(s,{\rm Ad}_f)$  the corresponding degree 3 $L$-function and $\rho_f:G_\QQ \to \GL_2(\mco)$ the Galois representaiton associated to a $p$-adic place of the Hecke field $K_f$ of $f$. Our construction of the congruence module $\Psi_{\lambda_f}(\TT^{\rm ord})$ leads to ``zeta elements’’ in Galois cohomology $H^1_{\rm ord}(\mathbb Q,{\rm Ad} \rho_f)$ whose image in  quotients of the corresponding local cohomology groups at $p$ are related   to the congruence module $\Psi_{\lambda_f}(\TT)$. Here ${\TT}$ is a classical  Hecke algebra of fixed level that is finite flat over $\mco$ and $\Psi_{\lambda_f}(\TT)$ is its classical (codimension 0) congruence module.  Hida's work in \cite{Hida}  relates the length of the congruence module $\cmod {\TT}$ to an $L$-value  $L(1,{\rm Ad}_f)$.  

In   \cite{Bockle/Khare/Manning:2021a} we study  situations when we have $R=\TT$ without the rings being complete intersections and determine the Wiles defect $\delta_{\lambda_f}(R)$  at $\lambda_f: R \to \mco$;  we show it  is  given by  $\sum_{q} \delta_{\lambda_f}(R_q)$  of  Wiles defects of the corresponding local deformation rings $R_q$. The work of  \cite{Bockle/Khare/Manning:2021a} and the present paper lead to the intriguing possibility of proving the Bloch-Kato conjecture  for  $L(1,{\rm Ad}_f)$ in cases which will go beyond  the known results in \cite{DFG}  that are proved  when the  map $R \to \TT$ is an isomorphism of complete intersections. It also raises the intriguing possibility that $\length_\mco \Psi_{\lambda_f}(R_q)$ is related to ${\rm ord}_\varpi c_q$ where $c_q$ is the local Tamagawa number at $q$  which Bloch-Kato associate to the motive ${\rm Ad}_f$. This would be a local analog of the results in   \cite{Iyengar/Khare/Manning/Urban:2024}; we will pursue this in future work.

 The residual representations $\rhobar_{\fm}$ and non-minimal levels $\mcn_\Sigma$   that we allow in Theorem~\ref{th:intro-5} are restrictive: for instance  we assume  $q_v\not\equiv 1\pmod{\ell}$ if $v$ divides $\mcn_\Sigma$ and $\rhobar_\fm$ is unramified at $v$.  In forthcoming work  with Fred Diamond we will extend the  methods of this paper to prove a version  of Theorem \ref{th:intro-5} without such restrictions: the level raising arguments in Part \ref{part:deformation rings} become more complicated in the general case.   We will also prove a more general version of Theorem \ref{th:intro-5c} which will   include  proving under the necessary assumptions   the existence of  maximal ideals of  the Hecke algebra  acting on  $\hh^1(X^\disc_{\Sigma},\omega_\mco)$    that correspond   to maximal ideals $\fm_\es$ of the Hecke algebra acting on $\hh^1(X^\es_{\Sigma\cup\disc},\omega_\mco)$(see Remark \ref{JLremark} below); namely the converse to Theorem \ref{th:intro-5c} (1).

\subsection*{Comparison to the literature}

The question of what might be the right generalization of Wiles numerical criterion to positive defect has been considered before us. For example in \cite{Calegari} Calegari  proves some non-minimal modularity lifting theorems in the case of weight one forms. There he combines known multiplicity one theorems for the relevant Hecke modules with the patching argument from \cite{Calegari/Geraghty:2018}. He remarks in the introduction to the paper that in the weight one case he considers,  as the relevant Hecke rings  ${\TT}^1$ are not complete intersections in general, and  may  not be flat over  $\mco$, the arguments  of Wiles in \cite[Chapter 2]{Wiles:1995}  seem not to be applicable. The work of this paper gives  a work around such an obstruction. Our methods  apply  unconditionally, to  prove the non-minimal $R={\TT}$ theorems of \cite{Calegari}   without a priori using any multiplicity one theorems. As explained above we use our methods to prove similar results in the setting of Shimura curves (Theorems \ref{th:intro-5b} and \ref{th:intro-5c} above): the methods of \cite{Calegari} do not extend to this case.

The work of Calegari and Venkatesh in \cite{CV} considers level raising in the positive defect case---for example in the Bianchi case---and remarks that a form of Ihara’s lemma is true for homology in degree one of Bianchi manifolds. In our work we apply Ihara’s lemma at a patched level: as the patched complexes are then quasi-isomorphic to a self-dual module, Ihara’s lemma  gives that the growth of the congruence module we define is given by an Euler factor.

\part{Commutative algebra of congruence modules}\label{part:ca}

In this part we introduce congruence modules for a suitable class of local rings, and establish some of their fundamental properties. Although the focus is on the pure commutative algebraic aspects of congruence modules, the results we prove, and even their formulations, are dictated by applications to number theory, and indeed our intuition about congruence modules comes from that realm. Nevertheless, many of the statements are interesting in their own right, so with an eye towards possible applications, also to commutative algebra, the setting is more general and the development is more in-depth than is strictly needed for the number theory applications presented in this work.
 
\section{Congruence modules and cotangent modules}\label{se:congruence module}

Throughout  $\mco$ is a discrete valuation ring, with uniformizer $\varpi$. 

\begin{chunk}
\label{ch:tors-remark}
We write $\tors(U)$ for the torsion submodule of an $\mco$-module $U$. The \emph{torsion-free quotient} of $U$, denoted $\tfree U$,  is the cokernel of the inclusion $\tors(U)\subseteq U$. There is thus an exact sequence of $\mco$-modules
\[
0\lra \tors(U) \lra U \lra \tfree{U} \lra 0\,,
\]
functorial in $U$. When $U$ is finitely generated, so is $\tfree U$ and hence it is a free $\mco$-module. In particular, the sequence above splits. Though there is no  functorial splitting, the existence of a splitting does mean that for any additive functor, say $E(-)$, on finitely generated $\mco$-modules, the induced map 
\[
E(U) \lra E(\tfree{U})
\]
remains surjective. This remark will be used often, as will be the following ones: Consider  a map of $\mco$-modules
\[
\alpha \colon U\lra V
\]
and  the  map $\tfree \alpha\colon \tfree{U}\to \tfree{V}$. The induced map $\Coker\alpha \to \Coker (\tfree{\alpha})$ is surjective;  in particular, when $\alpha$ is onto, so is $\tfree{\alpha}$.  If $\Ker \alpha$ is torsion, then the induced map $\tfree{\alpha}$ is injective.  
\end{chunk}

\begin{chunk}
\label{ch:congruence-module}
Throughout this manuscript  $A$ is a noetherian local ring and $\lambda_A\colon A\to \mco$ a surjective local homomorphism. We set
\[
\fp_A\colonequals \Ker(\lambda_A)\qquad \text{and}\qquad c \colonequals \height \fp_A\,.
\]
Thus $\fp_A$ is a prime ideal in $A$ and $c$ is its height, also known as the codimension, whence the choice of notation. It is also the Krull dimension of the local ring $A_{\fp_A}$. Our focus will be on rings for which  $c$ is the rank of the cotangent module of $\lambda_A$; see Lemma~\ref{le:con-rank}. One has that
\[
\dim A \ge \height \fp_A + \dim(A/\fp_A) = c+1\,;
\]
we place no restrictions on $\dim A$.
\end{chunk}

\subsection*{Torsion-free quotients of Ext}
\label{pg:ecoh}
We write $\dcat A$ for the derived category of all $A$-modules; the suspension (also known as shift) functor is denoted $[-]$. As usual, a module is viewed as complex concentrated in degree $0$.   Although the focus of our work is on finitely generated modules, some of the basic definitions extend, and it is helpful in some arguments to do so, to complexes with finite homology. We write $\rmod A$ for the category of finitely generated $A$-modules.  Let $\dbcat A$ denote the full subcategory of $\dcat A$ consisting of $A$-complexes $M$ with $\HH iM$ finitely generated for each $i$ and equal to zero for $|i|\gg 0$. For such  $M$ the $\mco$-modules $\Tor^A_i(\mco,M)$ and $\Ext^i_A(\mco,M)$ are finitely generated  for each integer $i$, and equal to $0$ for $i\ll 0$.
For $M$ in $\dbcat A$ consider the $\mco$-module 
\[
\ecoh^i_A(M) \colonequals \tfree{\Ext^i_A(\mco,M)}\,,
\]
namely, the torsion-free quotient of $\Ext^i_A(\mco, M)$. Evidently $\ecoh^n_A(-)$ is a functor from $\dbcat A$, the derived category of $A$-modules, to $\mco$-modules.

\subsection*{Congruence modules}
\label{pg:cmod}
Let $M$ be a finitely generated $A$-module. The map
\[
\lambda_A(M)\colon M \lra \mco \otimes_A M = M/\fp_AM
\]
induces a map of $\mco$-modules
\[
\ecoh^c_A(\lambda_A(M)) \colon   \ecoh^c_A(M) \lra \ecoh^c_A(M/\fp_AM)\,;
\]
recall $c=\height\fp_A$.  The cokernel of this map is the \emph{congruence} module of $M$:
\[
\cmod A(M) \colonequals \Coker \ecoh^c_A(\lambda_A(M))\,.
\]
For a given $A$ there may be many choices of an augmentation $\lambda\colon A\to\mco$ and for some applications it is important to keep track of this dependency, and then it would be better to write $\cmod{\lambda}(M)$ for the congruence module. However, in this work the map $\lambda$ is fixed, so we stick to the notation above.

We write $\cmod A$, instead of $\cmod A(A)$, for the congruence module of $A$. See the discussion around Proposition~\ref{pr:old-and-new} and  \ref{ch:BKM-defn} for connections to other notions of congruence modules  in the literature. It is easy to check that the assignment $M\mapsto \cmod A(M)$ is a functor from $\rmod A$ to $\rmod \mco$; see \ref{ch:change-modules}.

Here is a simple but useful observation.

\begin{chunk}
For any finitely generated $\mco$-module $U$, viewed as an $A$-module via $\lambda_A$, the surjection $U\to \tfree U$ is split with torsion kernel as an $\mco$-map, and hence also as an $A$-map---see \ref{ch:tors-remark}---and so it induces the first isomorphism below: 
\[
\ecoh^c_A(U)\xra{\ \cong\ }\ecoh^c_A(\tfree U) \cong \ecoh^c_A(\mco)\otimes_\mco \tfree U\,.
\]
The second one holds because the $\mco$-module $\tfree U$ is finite free. In particular, for any finitely generated $A$-module $M$, there is a natural isomorphism:
\begin{equation}
\label{eq:ecoh-MO}
\ecoh^c_A(M/\fp_AM)\cong \ecoh^c_A(\mco)\otimes_\mco \tfree {(M/\fp_AM)}
\end{equation}
\end{chunk}

\subsection*{Regularity}\label{sse:regularity}
Given a prime ideal $\fp$ in  $A$, we say $A$ is \emph{regular at $\fp$} to mean that the local ring $A_\fp$ is regular.  The result below is the starting point of our work. 

\begin{theorem}
\label{th:catC2}
In the context of \ref{ch:congruence-module},  the ring $A$ is regular at $\fp_A$ if and only if the $\mco$-module $\cmod A$ is torsion. When this holds, so do the following statements:
\begin{enumerate}[\quad\rm(1)]
\item
The $\mco$-modules $\ecoh^c_A(A)$ and $\ecoh^c_A(\mco)$ are of rank one;
\item
The map $\ecoh^c_A(\lambda_A) \colon \ecoh^c_A(A) \to \ecoh^c_A(\mco)$ is injective;
\item
The $\mco$-module $\cmod A$ is torsion and cyclic;
\end{enumerate}
\end{theorem}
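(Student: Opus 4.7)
The plan is to localize at $\fp_A$ and invoke Lescot's theorem mentioned in the introduction, which asserts that the evaluation map from a local ring to its residue field is nonzero on Ext precisely when the ring is regular. Since $\mco=A/\fp_A$, the module $\Ext^c_A(\mco,M)$ is annihilated by $\fp_A$ and is naturally an $\mco$-module whose localization at $\fp_A$ coincides with $-\otimes_\mco E$, where $E$ is the fraction field of $\mco$. Because $\mco$ is finitely presented over the noetherian ring $A$, Ext commutes with this localization, so for any finitely generated $A$-module $M$ one has
\[
\Ext^c_A(\mco,M)\otimes_{\mco} E \;\cong\; \Ext^c_{A_{\fp_A}}(E, M_{\fp_A})\,.
\]
Taking $M=A$ and $M=\mco$, this identifies $\ecoh^c_A(\lambda_A)\otimes_\mco E$ with the evaluation map of the local ring $A_{\fp_A}$, which has Krull dimension $c$.

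Next I would observe that $\cmod A$ is torsion exactly when $\ecoh^c_A(\lambda_A)\otimes_\mco E$ is surjective. Its target $\Ext^c_{A_{\fp_A}}(E,E)$ is always nonzero: in the regular case of dimension $c$ a Koszul computation gives a one-dimensional space, and in the non-regular case the residue field has positive Betti numbers in every degree. Hence $\cmod A$ is torsion iff this evaluation map is nonzero, which by Lescot's theorem is equivalent to $A_{\fp_A}$ being regular. This establishes the stated equivalence.

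For the remaining assertions, assume $A_{\fp_A}$ is regular of dimension $c$. Koszul resolutions over $A_{\fp_A}$ yield $\Ext^c_{A_{\fp_A}}(E,A_{\fp_A})\cong E$ and $\Ext^c_{A_{\fp_A}}(E,E)\cong E$, so both $\ecoh^c_A(A)$ and $\ecoh^c_A(\mco)$ are torsion-free $\mco$-modules of rank one, proving (1). The map $\ecoh^c_A(\lambda_A)$ between these rank-one torsion-free modules is nonzero after $\otimes_\mco E$ by Lescot's theorem, hence is injective, proving (2). Finally, $\cmod A$ is a quotient of $\ecoh^c_A(\mco)\cong\mco$ by the nonzero image of $\ecoh^c_A(\lambda_A)$, hence of the form $\mco/\varpi^n\mco$ for some $n\ge 0$, which is torsion and cyclic, giving (3). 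The principal input is Lescot's theorem; the remaining content is routine bookkeeping with Koszul complexes, localization, and the interaction of both with torsion-free quotients.
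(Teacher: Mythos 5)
Your proof is correct and follows essentially the same route as the paper's: localize at $\fp_A$ to reduce to the evaluation map $\Ext^c_{A_{\fp_A}}(E,A_{\fp_A})\to\Ext^c_{A_{\fp_A}}(E,E)$, invoke Lescot's criterion for its non-vanishing, and use the Koszul computation in the regular case to get the rank-one statements and the cyclicity of $\cmod A$. The paper organizes the equivalence via an intermediate condition (the map $\Ext^c_A(\mco,\lambda_A)$ has nonzero rank), but the mathematical content is the same.
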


\begin{proof}
Write $\fp$ instead of $\fp_A$ and let $k(\fp)$ be the field of fractions of $\mco$; it is also the residue field of the local ring $A_\fp$. For any integer $i$ there is an isomorphism 
\[
\Ext_A^i(\mco,\mco)_\fp\cong \Ext_{A_\fp}^i(k(\fp),k(\fp)) 
\]
of $k(\fp)$-vector spaces. It is well-known that the vector space on the right is nonzero for each $0\le i\le c$, hence the $\mco$-module $\Ext_A^i(\mco,\mco)$ has nonzero rank for the same range of $i$; see, for instance, \cite[Corollary~1.3.2, Theorem~1.3.3]{Bruns/Herzog:1998}. This observation will be used in what follows. 

It helps to introduce one more property equivalent to the regularity of $A$ at $\fp$, and prove that the following conditions are equivalent:
\begin{enumerate}[\quad\rm(i)]
\item
$A$ is regular at $\fp$;
\item
The rank of the map $\Ext^c_A(\mco,\lambda_A)$ is nonzero;
\item
$\cmod A$ is torsion as an $\mco$-module.
\end{enumerate}

(i)$\Leftrightarrow$(ii): Condition (ii) holds if and only if the localization of the map $\Ext^c_A(\mco,\lambda_A)$ at $\fp$ is nonzero; namely the map
\[
\Ext^c_{A_{\fp}}(k(\fp),A_{\fp}) \lra \Ext^c_{A_{\fp}}(k(\fp),k(\fp))
\]
induced by the surjection $A_\fp\to k(\fp)$  is nonzero. It remains to recall Lescot's result~\cite[1.4]{Lescot:1983} that the map above is nonzero if and only if the local ring $A_\fp$ is regular; see also \cite[Theorem~2.4]{Avramov/Iyengar:2013}. 

(i)$\Rightarrow$(iii): Since $A_\fp$ is regular of Krull dimension $c$ one has 
\begin{align*}
&\Ext^c_{A}(\mco,A)_\fp \cong \Ext^c_{A_{\fp}}(k(\fp),A_{\fp}) \cong k(\fp)  \\
&\Ext^c_{A}(\mco,\mco)_\fp \cong \Ext^c_{A_{\fp}}(k(\fp),k(\fp)) \cong k(\fp)\,.
\end{align*}
Thus the source and target of the map $\Ext^c_A(\mco,\lambda_A)$ have rank one, and the rank of the map is nonzero, since we already know that (i)$\Rightarrow$(ii). It follows that its cokernel is torsion, justifying (iii). 

(iii)$\Rightarrow$(ii):  The rank of the $\mco$-module $\Ext^c_A(\mco,\mco)$ is nonzero, and $\cmod A$ is the cokernel of  $\ecoh^c_A(\lambda_A)$, so (ii) follows.

This completes the proof of the equivalences of conditions (i), (ii), and (iii). As to remaining claims, (1) and (2) were implicitly verified in that proof that (i)$\Rightarrow$(iii), whilst (3) is a consequence of (1).
\end{proof}

\begin{lemma}
\label{le:Etors}
Let $\lambda_A\colon A\to \mco$ be as in \ref{ch:congruence-module} with $A$ is regular at $\fp_A$, and $M$ a finitely generated $A$-module. Then for each integer $n\ge \height\fp_A+1$ one has $\ecoh^n_A(M)=0$, and  the $\mco$-module $\cmod A(M)$ is torsion.
\end{lemma}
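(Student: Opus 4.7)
The plan is to reduce both assertions to standard homological facts over the regular local ring $A_{\fp_A}$ via a localization-at-$\fp_A$ argument. The key observation is that since $\lambda_A$ carries $A\setminus \fp_A$ into $\mco\setminus\{0\}$, localizing an $\mco$-module $U$ at the prime $\fp_A$ of $A$ inverts the uniformizer $\varpi$. Consequently a finitely generated $\mco$-module $U$ is torsion if and only if $U_{\fp_A}=0$, and in that case its torsion-free quotient $\tfree U$ vanishes as well. Write $k$ for the common residue field of $A_{\fp_A}$ and $\mco$.

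For the vanishing of $\ecoh^n_A(M)$ when $n\ge c+1$, I would use the standard base-change identification
\[
\Ext^n_A(\mco,M)_{\fp_A}\cong \Ext^n_{A_{\fp_A}}(k,M_{\fp_A})\,.
\]
By hypothesis $A_{\fp_A}$ is regular of dimension $c$, so $k$ has projective dimension exactly $c$ over $A_{\fp_A}$ and the right-hand side vanishes for $n\ge c+1$. Hence $\Ext^n_A(\mco,M)$ is a torsion $\mco$-module in this range, and its torsion-free quotient $\ecoh^n_A(M)$ is zero.

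For the torsionness of $\cmod A(M)$, the crux of the argument, and the place where the regularity hypothesis really enters, is to show that $\Ext^c_A(\mco,\lambda_A(M))$ becomes surjective after localization at $\fp_A$. Applying $\Ext^\ast_{A_{\fp_A}}(k,-)$ to the short exact sequence
\[
0\lra \fp_A M_{\fp_A}\lra M_{\fp_A}\lra M_{\fp_A}/\fp_A M_{\fp_A}\lra 0
\]
yields a long exact sequence in which the term directly to the right of the relevant map is $\Ext^{c+1}_{A_{\fp_A}}(k,\fp_A M_{\fp_A})$, and this vanishes by the projective dimension bound established in the preceding step. Thus the localized map is surjective, so the cokernel of $\Ext^c_A(\mco,\lambda_A(M))$ is annihilated by some power of $\varpi$ and hence is $\mco$-torsion. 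Since this cokernel surjects onto $\cmod A(M)=\Coker(\ecoh^c_A(\lambda_A(M)))$, as recorded in \ref{ch:tors-remark}, the latter is torsion as well. The main (modest) obstacle is organizing the interplay between taking torsion-free quotients and localizing at $\fp_A$; once that bookkeeping is in place, the rest is a one-line application of the projective dimension bound.
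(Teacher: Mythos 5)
Your proposal is correct and takes essentially the same approach as the paper for the first assertion, and a slight variant for the second. Both arguments ultimately rest on the same input: the residue field $k(\fp)$ of the regular local ring $A_{\fp_A}$ has projective dimension exactly $c$. For the torsionness of $\cmod A(M)$, the paper exploits this to identify $\Ext^c_{A_{\fp_A}}(k(\fp),N)\cong k(\fp)\otimes_{A_{\fp_A}}N$ functorially (via the isomorphism $\RHom_{A_{\fp_A}}(k(\fp),A_{\fp_A})\simeq k(\fp)[-c]$ together with the finite-free-resolution base change map), and then observes that the induced map $k(\fp)\otimes M_{\fp}\to k(\fp)\otimes (M_{\fp}/\fp M_{\fp})$ is in fact an isomorphism. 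You instead deduce surjectivity of the localized map from the long exact Ext sequence associated to $0\to\fp M_{\fp}\to M_{\fp}\to M_{\fp}/\fp M_{\fp}\to 0$, using $\Ext^{c+1}_{A_{\fp_A}}(k(\fp),\fp M_{\fp})=0$. Your route is shorter and more elementary; the paper's route identifies the top Ext in closed form and shows the localized map is not merely onto but bijective, which is slightly stronger (though the extra strength isn't needed for this lemma). Both your reduction via \ref{ch:tors-remark}—namely that $\cmod A(M)$ is a quotient of $\Coker\Ext^c_A(\mco,\lambda_A(M))$, so torsionness of the latter suffices—and the initial observation that localization at $\fp_A$ inverts $\varpi$ are exactly the bookkeeping the paper relies on implicitly.
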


\begin{proof}
As before we write $\fp$ instead of $\fp_A$ and $k(\fp)$ for the fraction field of $\mco$. For each $i$ there is an isomorphism of $k(\fp)$-vector spaces
\[
\Ext^i_{A}(\mco,M)_{\fp} \cong \Ext^i_{A_\fp}(k(\fp),M_\fp)\,.
\]
As $A_\fp$ is regular of dimension $c\colonequals \height \fp$,  for $i\ge c+1$ one has   $\Ext^i_{A_\fp}(k(\fp),M_\fp)=0$. Thus  the $\mco$-module $\Ext^i_A(\mco,M)$ is torsion, and hence $\ecoh^i_A(M)=0$ for $i \geq c+1$.

It remains to verify that $\cmod A(M)$ is torsion; equivalently, with $\pi\colon M\to M/\fp M$ the natural surjection, the cokernel of $\Ext^c_A(\mco, \pi)$ is torsion; equivalently that
\[
\Ext^c_{A_\fp}(k(\fp), \pi_\fp)\colon  \Ext^c_{A_\fp}(k(\fp),M_\fp) \lra \Ext^c_{A_\fp}(k(\fp),  M_\fp/\fp M_\fp)
\]
is onto. Replacing $A_\fp$ by $R$ and $M_\fp$ by $N$, we find ourselves in the situation where $R$ is a regular local ring, with residue field $k(\fp)$ and Krull dimension $c$, and $N$ is an $R$-module. The desired result is that the map $\Ext^c_R(k(\fp),\lambda)$ is onto, where $\lambda\colon N\to N/\fm N$ is the natural map with $\fm$ the maximal ideal of $R$.

Since $R$ is regular, the $R$-module $k(\fp)$ has a finite free resolution, so for any $R$-module $N$ the natural map
\[
\RHom_R(k(\fp), R)\lotimes_R N \lra \RHom_R(k(\fp), N)
\]
 is a quasi-isomorphism. Again because $R$ is regular $\RHom_R(k(\fp),R)\simeq k(\fp)[-c]$; this fact yields a natural isomorphism
 \[
 k(\fp) \otimes_R N \xra{\ \cong\ } \Ext^c_R(k(\fp),N)\,.
 \]
We apply this also for the $R$-module $N/\fm N$.  Then the map $N\to N/\fm N$ induces a commutative diagram
\[
\begin{tikzcd}
 k(\fp)\otimes_R N  \arrow{d}[swap]{\cong}  \arrow{r} &  k(\fp)\otimes_R (N/\fm N)  \arrow{d}{\cong} \\
 \Ext^c_R(k(\fp) ,N) \arrow{r} & \Ext^c_R(k(\fp),N/\fm N) 
\end{tikzcd}
\]
It remains to note that the map in the top row, and hence in the bottom row, is even an isomorphism.
\end{proof}

\subsection*{Cotangent modules}
\label{sse:conormal}
We are also interested in the cotangent module $\fp_A/\fp_A^2$ of the map $\lambda_A$. This module too detects when $A$ is in the category $\acat$.

\begin{lemma}
\label{le:con-rank}
For $\lambda_A\colon A\to \mco$ as in \ref{ch:congruence-module} there is an equality
\[
\rank_{\mco}(\fp_A/\fp_A^2) = \edim A_{\fp_A}\,.
\]
Thus $\height \fp_A \le \rank_{\mco}(\fp_A/\fp_A^2)$; equality holds if and only if $A$ is regular at $\fp_A$.
\end{lemma}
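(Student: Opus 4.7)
The plan is to reduce the equality to a standard fact about cotangent spaces of local rings, and then deduce the second claim from general dimension theory.

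First I would make explicit the $\mco$-module structure on $\fp_A/\fp_A^2$: since $\fp_A$ acts trivially on this quotient, it carries a natural $A/\fp_A=\mco$-module structure. Because $\fp_A/\fp_A^2$ is finitely generated over $A$ (hence over $\mco$, as $\mco$ is noetherian and $A$ is noetherian), its $\mco$-rank can be computed as the $k(\fp_A)$-dimension of the fibre $(\fp_A/\fp_A^2)\otimes_\mco k(\fp_A)$, where $k(\fp_A)$ is the fraction field of $\mco$.

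Next I would identify this fibre with the cotangent space of the local ring $A_{\fp_A}$. Localization at $\fp_A$ gives
\[
(\fp_A/\fp_A^2)_{\fp_A}\ \cong\ \fp_A A_{\fp_A}/(\fp_A A_{\fp_A})^2,
\]
and since $\lambda_A$ induces an identification of $k(\fp_A)$ with the residue field of $A_{\fp_A}$, tensoring further with $k(\fp_A)$ changes nothing. Thus
\[
\rank_\mco(\fp_A/\fp_A^2)\ =\ \dim_{k(\fp_A)}\bigl(\fp_A A_{\fp_A}/(\fp_A A_{\fp_A})^2\bigr)\ =\ \edim A_{\fp_A},
\]
where the last equality is the definition of embedding dimension at the maximal ideal of $A_{\fp_A}$.

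For the second assertion I would invoke the standard inequality $\dim R \leq \edim R$ valid for any noetherian local ring $R$, with equality exactly when $R$ is regular (see e.g.\ \cite[Definition 2.2.4]{Bruns/Herzog:1998}). Applied to $R = A_{\fp_A}$, and noting $\dim A_{\fp_A} = \height \fp_A$ by definition, this immediately gives $\height\fp_A \leq \rank_\mco(\fp_A/\fp_A^2)$ with equality iff $A_{\fp_A}$ is regular, i.e.\ iff $A$ is regular at $\fp_A$. No step looks particularly hard here; the only point that requires a moment's care is that the rank of a finitely generated $\mco$-module is computed by tensoring with the fraction field, and that this fraction field coincides with the residue field of $A_{\fp_A}$ under $\lambda_A$.
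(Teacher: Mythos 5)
Your proof is correct and follows essentially the same route as the paper's: localize $\fp_A/\fp_A^2$ at $\fp_A$ to identify it with $\fm/\fm^2$ for the maximal ideal $\fm$ of $A_{\fp_A}$, note that the fraction field of $\mco$ is the residue field of $A_{\fp_A}$ so the $\mco$-rank equals the embedding dimension, and then invoke the characterization of regular local rings as those with $\dim = \edim$. The only superficial difference is notational (you phrase the rank computation as $-\otimes_\mco k(\fp_A)$ where the paper writes $(-)_{\fp_A}$, but these agree here).
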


\begin{proof}
We write $\fp$ instead of $\fp_A$, and let $e\colonequals \rank_{\mco}(\fp_A/\fp_A^2)$.   With $\fm$ the maximal ideal $\fp A_{\fp}$ of the local ring $A_{\fp}$, one has an isomorphism 
\[
\fm/\fm^2 = (\fp/\fp^2)_{\fp} \cong k(\fp)^e
\]
of $k(\fp)$-vector spaces,  where $k(\fp)$ is field of fractions of $\mco$, which is also the residue field of $A_{\fp}$. This justifies the first part of the statement. This also yields inequality $\height {\fp} \le e$. Since $\height {\fp}=e$ if and only if $A$  is regular at $\fp$, by definition~\cite[\S2.2]{Bruns/Herzog:1998}, the second part of the result follows as well.
\end{proof}

The preceding result has to do with the torsion-free quotient of the cotangent module of $\lambda_A$. In the sequel, its torsion submodule plays the major role. Set
\begin{equation}
\label{eq:conormal}
\con A\colonequals \tors{(\fp_A/\fp_A^2)}\,.
\end{equation}
When $c=0$ this is all of the cotangent module for the latter is torsion when $A$ is regular at $\fp_A$, by Lemma~\ref{le:con-rank}.

\subsection*{Wiles defect}
\label{pg:acat}
Let $\acat$ be the category of pairs $(A,\lambda_A)$ where $A$ is a complete noetherian local ring and $\lambda_A\colon A\to \mco$ is a surjective local homomorphism with $A$ regular at $\fp_A\colonequals \Ker(\lambda_A)$.  The morphisms in $\acat$ are the obvious ones. We impose the hypothesis that $A$ is complete only for ease of exposition, especially in Section~\ref{se:ci} and beyond. Often we speak of the ring $A$ being in the category, rather than the pair $(A,\lambda_A)$.  Let $\acat(c)$ be the subcategory of $\acat$  of rings $A$ with $\height \fp_A=c$; equivalently $\rank_{\mco}(\fp_A/\fp_A^2)=c$; see Lemma~\ref{le:con-rank}.

Fix $A$ in $\acat(c)$. The \emph{Wiles defect} of finitely generated $A$-module $M$ is the integer
\begin{equation}
\label{eq:defect}
\delta_A(M)\colonequals (\rank_{A_{\fp_A}}\! M_{\fp_A}) \length_{\mco}{\con A} - \length_{\mco}{\cmod A(M)}\,.
\end{equation}
For $c=0$ this invariant has been studied in \cites{Bockle/Khare/Manning:2021b, Brochard/Iyengar/Khare:2021b}; see the result below. For general $c$ but only for $M=A$ and Cohen--Macaulay,  an analogous invariant was introduced in \cite{Bockle/Khare/Manning:2021a}; see \ref{ch:BKM-defn} and \ref{ch:BKM2}. we  have a lot more to say about this invariant from Section~\ref{se:invariance-of-domain} onwards. 

The result below reconciles the definition of congruence modules introduced in this work with the one when  $c=0$ that has been discussed in the literature.

\begin{proposition}
\label{pr:old-and-new}
Suppose $A$ is in $\acat(0)$ and set $I_A\colonequals A[\fp_A]$. For any finitely generated $A$-module $M$ there is a natural surjection of $\mco$-modules
\[
\frac M{M[\fp_A] + M[I_A]}\twoheadrightarrow \cmod A(M)\,.
\]
with kernel a torsion $\mco$-module. This map is an isomorphism if $\depth_AM\ge 1$.
\end{proposition}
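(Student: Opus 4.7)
The plan is to identify the Ext modules concretely, construct the natural surjection, and then control its kernel. Since $c = 0$, one has $\ecoh^0_A(M) = \tfree{\Hom_A(\mco, M)} = \tfree{M[\fp_A]}$, while $\ecoh^0_A(M/\fp_A M) = \tfree{M/\fp_A M}$ because $\fp_A$ already annihilates $M/\fp_A M$. So $\cmod A(M)$ is the cokernel of the map $\tfree{M[\fp_A]} \to \tfree{M/\fp_A M}$ induced by $M[\fp_A] \hookrightarrow M \twoheadrightarrow M/\fp_A M$. The natural candidate surjection is then the composition
\[
\psi\colon M \twoheadrightarrow M/\fp_A M \twoheadrightarrow \tfree{M/\fp_A M} \twoheadrightarrow \cmod A(M),
\]
and the task reduces to computing $\ker(\psi)$.

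First I would check that $M[\fp_A] + M[I_A] \subseteq \ker(\psi)$. That $M[\fp_A]$ is killed is immediate: its image in $\tfree{M/\fp_A M}$ factors through $\tfree{M[\fp_A]}$ by \ref{ch:tors-remark}, which is exactly what is quotiented out in $\cmod A(M)$. The main new input is that $M[I_A]$ is also killed. For this I would exploit that $A_{\fp_A}$ is a field (since $A$ is regular at $\fp_A$ with $\height \fp_A = 0$), which forces $(I_A)_{\fp_A} = \ann_{A_{\fp_A}}(\fp_A A_{\fp_A}) = A_{\fp_A}$, so $I_A$ is not contained in $\fp_A$. Pick $s \in I_A \setminus \fp_A$, and write $\lambda(s) = u \varpi^k$ with $u$ a unit and $k \ge 1$ (the degenerate case $\fp_A = 0$ being trivial). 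For $m \in M[I_A]$ one has $sm = 0$, hence $\varpi^k \bar m = 0$ in $M/\fp_A M$, so $\bar m$ is $\mco$-torsion and maps to $0$ in $\tfree{M/\fp_A M}$. This yields the desired natural surjection $M/(M[\fp_A] + M[I_A]) \twoheadrightarrow \cmod A(M)$.

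For the kernel of this surjection, if $m$ represents a class mapping to $0$ then $\varpi^n(m - m') \in \fp_A M$ for some $m' \in M[\fp_A]$ and $n \ge 0$; since $\fp_A M \subseteq M[I_A]$ (as $I_A \cdot \fp_A = 0$), one gets $\varpi^n m \in M[\fp_A] + M[I_A]$, so the kernel is $\mco$-torsion as claimed. The main obstacle, and the role of the depth hypothesis, is upgrading this torsion bound to vanishing. The plan here is to exploit the identity $\fm_A = \fp_A + \varpi A$, which holds because $\lambda$ is a surjection of local rings with $\fm_\mco = (\varpi)$. It implies $M[\fp_A] \cap M[\varpi] \subseteq M[\fm_A] = \Hom_A(k, M) = 0$ under $\depth_A M \ge 1$, and a straightforward induction on $n$ gives $M[\fp_A] \cap M[\varpi^n] = 0$ for all $n$. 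Now for any $m$ with $\varpi^n m \in \fp_A M$ and any $t \in I_A$, the element $tm$ lies in $M[\fp_A]$ (since $\fp_A \cdot t = 0$) and is killed by $\varpi^n$ (since $\varpi^n tm = t(\varpi^n m) \in t \fp_A M = 0$), so $tm \in M[\fp_A] \cap M[\varpi^n] = 0$. Thus $I_A m = 0$, i.e.\ $m \in M[I_A]$, and the kernel collapses, proving the map is an isomorphism.
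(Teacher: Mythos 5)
Your argument is correct and follows essentially the same route as the paper's proof: both hinge on the inclusion $\fp_A M \subseteq M[I_A]$, on the torsionness of $M[I_A]/\fp_A M$ (the paper notes it is a $\cmod A$-module and hence torsion, while you pick an explicit $s\in I_A\setminus\fp_A$ with $\lambda(s)\in(\varpi)$), and on the same depth argument via $\fm_A=\fp_A+\varpi A$ to force $M[\fp_A]\cap M[\varpi^n]=0$. The paper packages this as the torsion-freeness of $M/M[I_A]$, which then identifies with $\tfree{(M/\fp_AM)}$; your version is element-wise but amounts to the same thing.
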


\begin{proof}
Given the equality
\[
\Hom_A(\mco, M/\fp_A M) = M/\fp_A M
\]
the congruence module of the $M$ is the cokernel of the composition
\[
M[\fp_A] \lra M/\fp_A M \to \tfree{(M/{\fp_A M})}\,.
\]
Consider the exact sequence of $\mco$-modules
\[
0\lra \frac{M[I_A]}{\fp_A M} \lra \frac{M}{\fp_A M} \lra  \frac{M}{M[I_A]} \lra 0
\]
The term on the left is $\mco$-torsion for it is annihilated by $\fp_A$ and $I_A$, and so a module over the ring $\cmod A$.  Thus there are natural surjections
\[
M/\fp_A M \twoheadrightarrow M/M[I_A] \twoheadrightarrow \tfree{(M/{\fp_A M})}
\] 
and the map on the right is bijective when $\depth_AM\ge 1$, for then the  term in the middle is torsion-free, being a submodule of $\Hom_A(I_A,M)$. This gives the desired result. Along the way we get that when $\depth_AM\ge 1$, one has
\[
M/M[I_A]  = \tfree{\left(M/\fp_A M\right)}
\]
 so one can recover the module on the left without recourse to $I_A$.
\end{proof}

Here is an example that shows that the map in the preceding proposition is not always an isomorphism.

\begin{example}
Let $A\colonequals \mco\pos{t}/(\varpi^2 t)$ and $M\colonequals A/(\varpi^3)$. Then $M$ is $\mco$-torsion so $\cmod A(M)=0$, whereas, in the notation of Proposition~\ref{pr:old-and-new} one has $I_A=(\varpi^2)$, so 
\[
M[\fp_A] = (\varpi^2)M \quad\text{and}\quad M[I_A]= (\varpi, t)M\,,
\]
and hence $M/(M[\fp_A]+M[I_A]) = \mco/(\varpi)$. 
\end{example}

\section{Depth conditions}
\label{se:depth}
Two invariants of a finitely generated $A$-module $M$ play a key role in what follows: its depth as an $A$-module, denoted $\depth_AM$, and the length of the longest $M$-regular sequence in $\fp_A$, denoted $\grade(\fp_A,M)$. Their value is closely connected to the property that $M$ is free at $\fp_A$. These interdependencies are clarified in the next two results; see also \ref{ch:summary}.

\begin{lemma}
\label{le:mcat-old}
Let $A$ be in $\acat(c)$ and $M\in \rmod A$.
\begin{enumerate}[\quad\rm(1)]
\item
If $M_{\fp_A}\ne 0$, then $\grade(\fp_A,M)\le c$ and $\depth_AM\le c+1$.
\item
If $\grade(\fp_A,M)\le c$ and $\depth_AM\ge c+1$, then in fact $\grade(\fp_A,M)=c$ and $\depth_AM= c+1$.
\item
If $\grade(\fp_A,M)\ge c$, then $M$ is free at $\fp_A$.
\end{enumerate}
In particular, $\depth A \le c + 1 \le \dim A$.
\end{lemma}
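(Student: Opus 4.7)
The plan is to prove (1) directly, then reduce (2) and (3) to it. Four standard tools drive everything: (i) any $M$-regular sequence in $\fp_A$ localizes to an $M_{\fp_A}$-regular sequence in $\fp_A A_{\fp_A}$, so $\grade(\fp_A,M)\le \depth_{A_{\fp_A}}M_{\fp_A}$ whenever $M_{\fp_A}\ne 0$; (ii) the Auslander--Buchsbaum formula applied to the regular local ring $A_{\fp_A}$ of Krull dimension $c$; (iii) the identity $\grade(I,M)=\grade(\sqrt{I+\ann_A M},M)$, which holds because for $x\in I+\ann_A M$ the $(\ann_A M)$-component acts as zero on $M$, reducing regularity in $I+\ann_A M$ to regularity in $I$, and because replacing generators by powers preserves $M$-regularity; and (iv) the standard depth inequality $\depth_A M\le \depth_{A_\fp}M_\fp+\dim(A/\fp)$ valid for any $\fp\in\operatorname{Supp}_A M$.

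For part (1), assume $M_{\fp_A}\ne 0$. The first inequality is immediate from (i) combined with $\depth_{A_{\fp_A}}M_{\fp_A}\le\dim A_{\fp_A}=c$. The second follows from (iv) applied to $\fp=\fp_A$, using $\dim(A/\fp_A)=\dim\mco=1$.

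For part (3), if $M_{\fp_A}=0$ freeness is trivial. Otherwise (i) gives the squeeze $c\le\grade(\fp_A,M)\le\depth_{A_{\fp_A}}M_{\fp_A}\le c$, so $\depth_{A_{\fp_A}}M_{\fp_A}=c=\depth A_{\fp_A}$, and (ii) yields $\operatorname{pd}_{A_{\fp_A}}M_{\fp_A}=0$, i.e.\ freeness over the local ring $A_{\fp_A}$.

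The main obstacle is part (2): once one knows $M_{\fp_A}\ne 0$, part (1) instantly supplies both equalities by bracketing against the hypotheses. I establish $M_{\fp_A}\ne 0$ by contradiction. First, $M\ne 0$ because the hypothesis $\grade(\fp_A,M)\le c<\infty$ rules out $M=0$. If $M_{\fp_A}=0$, then $\fp_A\notin\operatorname{Supp}_A M$, while $\fm_A\in\operatorname{Supp}_A M$; since $A/\fp_A=\mco$ is a DVR we have $V(\fp_A)=\operatorname{Spec}(A/\fp_A)=\{\fp_A,\fm_A\}$, so
\[
V(\fp_A+\ann_A M)=V(\fp_A)\cap\operatorname{Supp}_A M=\{\fm_A\},
\]
i.e.\ $\sqrt{\fp_A+\ann_A M}=\fm_A$. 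Invoking (iii) then gives
\[
\grade(\fp_A,M)=\grade(\sqrt{\fp_A+\ann_A M},M)=\grade(\fm_A,M)=\depth_A M\ge c+1,
\]
contradicting $\grade(\fp_A,M)\le c$. Finally, the ``in particular'' clause follows by applying (1) to $M=A$ (for which $A_{\fp_A}\ne 0$ is automatic) to obtain $\depth A\le c+1$, and by recalling from \ref{ch:congruence-module} that $\dim A\ge\height\fp_A+\dim(A/\fp_A)=c+1$.
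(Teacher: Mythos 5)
Parts (1) and (3) of your proof are correct and follow essentially the paper's route: for (1) you localize a regular sequence to bound grade by $\dim A_{\fp_A}=c$, and use a depth inequality to bound $\depth_AM$; for (3) you squeeze $\depth_{A_{\fp_A}}M_{\fp_A}=c$ and apply Auslander--Buchsbaum. The ``in particular'' clause is also handled the same way as in the paper. Your tool (iii), the grade-radical identity, is correctly stated and correctly justified, and the contradiction argument showing $M_{\fp_A}\ne 0$ under the hypotheses of (2) is a valid and somewhat clarifying observation.

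However, there is a genuine gap in (2). Once you have $M_{\fp_A}\ne 0$, you claim that ``part (1) instantly supplies both equalities by bracketing against the hypotheses.'' This works for the depth: (1) gives $\depth_AM\le c+1$ and the hypothesis gives $\depth_AM\ge c+1$, so $\depth_AM=c+1$. But for the grade, both (1) and the hypothesis of (2) supply only the \emph{upper} bound $\grade(\fp_A,M)\le c$; nothing in your argument produces the lower bound $\grade(\fp_A,M)\ge c$. The missing ingredient is the inequality
\[
\depth_AM \le \grade(\fp_A,M) + \dim(A/\fp_A),
\]
which is what the paper cites (Bruns--Herzog, Exercise 1.2.23) and uses to get $\grade(\fp_A,M)\ge \depth_AM - 1\ge c$. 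Your tool (iv), namely $\depth_AM\le \depth_{A_{\fp_A}}M_{\fp_A}+\dim(A/\fp_A)$, is a strictly weaker consequence of this (via $\grade(\fp_A,M)\le\depth_{A_{\fp_A}}M_{\fp_A}$) and only yields $\depth_{A_{\fp_A}}M_{\fp_A}=c$, not the grade lower bound. If you replace (iv) by the grade form of the inequality, then (2) follows at once by the paper's two-line sandwich $c+1\le\depth_AM\le\grade(\fp_A,M)+1\le c+1$, and the detour through $M_{\fp_A}\ne 0$ becomes unnecessary (though it remains a nice fact). Alternatively you could invoke the characterization $\grade(I,M)=\min\{\depth_{A_\fq}M_\fq:\fq\in V(I)\}$ from \cite[Proposition 1.2.10(a)]{Bruns/Herzog:1998}, which together with $V(\fp_A)=\{\fp_A,\fm_A\}$, $\depth_{A_{\fp_A}}M_{\fp_A}=c$, and $\depth_AM=c+1$ also gives $\grade(\fp_A,M)=c$.
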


\begin{proof}
(1) When $M_{\fp_A}$ is nonzero, its depth as an $A_\fp$-module is at most $\dim A_\fp$; this justifies the second inequality below:
\[
\grade(\fp_A,M) \le \depth_{A_{\fp_A}}M_{\fp_A} \le c\,.
\]
The first one is clear. This justifies the first inequality in (1). The second one follows, because for any finitely generated $A$-module $M$ there in an inequality
\[
 \depth_AM \le \grade(\fp_A,M) + 1\,.
\]
This holds by  \cite[Exercise~1.2.23]{Bruns/Herzog:1998}, since $\dim (A/\fp_A)=\dim\mco=1$.
 
(2) The hypotheses gives inequalities
\[
c+1\le \depth_AM \le \grade(\fp_A,M) + 1 \le c+1\,.
\]
Thus one has equalities everywhere.

(3) The hypothesis implies that the depth of $M_{\fp_A}$ as a module over $A_{\fp_A}$  is at least $c$. Since  $A_{\fp_A}$ is a regular local ring of dimension $c$, the equality of Auslander and Buchsbaum~\cite[Theorem~1.3.3]{Bruns/Herzog:1998} implies that $M$ is free at $\fp_A$. 

Finally, the inequality $\depth A\le c+1$ follows from (1), as  $A_{\fp_A}\ne 0$, whereas the upper bound on $\dim A$ is the special case $\fp\colonequals \fp_A$ of the inequality 
\[
\dim (A/\fp) + \height\fp \le \dim A
\]
that holds for any prime ideal $\fp$ in $A$.
\end{proof}

In what follows the focus will be on the modules whose depth is at least $c+1$.
One reason for this is the following result.

\begin{lemma}
\label{le:mcat}
Let $A$ be in $\acat(c)$ and $M\in\rmod A$ with $\depth_AM\ge c+1$.
\begin{enumerate}[\quad\rm(1)]
\item
If $M$ is supported at $\fp_A$, then $\grade(\fp_A,M)=c$ and $\depth_AM=c+1$.
\item
If $M$ is not supported at $\fp_A$, then $\grade(\fp_A,M)=\depth_AM$.
\item
$M$ is free at $\fp_A$.
\item
$\Ext^c_A(\mco, M)$ is torsion-free as an $\mco$-module.
\end{enumerate}
\end{lemma}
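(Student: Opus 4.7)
The plan is to handle the four claims in order; (3) is an immediate corollary of (1) plus Lemma~\ref{le:mcat-old}, and (4) follows from a short exact sequence argument. The main obstacle is (2), whose key input is the one-dimensionality of $A/\fp_A\cong \mco$ combined with a prime-avoidance argument.

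For (1), the hypothesis $M_{\fp_A}\ne 0$ together with Lemma~\ref{le:mcat-old}(1) forces $\grade(\fp_A,M)\le c$ and $\depth_A M\le c+1$; combined with the standing hypothesis $\depth_A M\ge c+1$ this gives $\depth_A M=c+1$, and then Lemma~\ref{le:mcat-old}(2) yields $\grade(\fp_A,M)=c$. Part (3) is then immediate: if $M_{\fp_A}=0$ there is nothing to show, and otherwise Lemma~\ref{le:mcat-old}(3) applies via (1).

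For (2), I first invoke the general bracketing $\grade(\fp_A,M)\le \depth_AM \le \grade(\fp_A,M)+1$; the first inequality is because $\fp_A\subseteq \fm_A$, and the second is the input from \cite[Exercise~1.2.23]{Bruns/Herzog:1998} (already used in Lemma~\ref{le:mcat-old}), valid because $\dim(A/\fp_A)=1$. It thus suffices to rule out $\grade(\fp_A,M)=\depth_A M-1$. Assume this for contradiction, set $d\colonequals \depth_A M$, pick a maximal $M$-regular sequence $\bs x\subseteq \fp_A$ of length $d-1$, and let $N\colonequals M/\bs x M$, so $\grade(\fp_A,N)=0$ while $\depth_A N=1$. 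Prime avoidance produces $\fq\in\operatorname{Ass}(N)$ with $\fp_A\subseteq \fq$; since $\depth_A N\ne 0$ we have $\fq\ne\fm_A$; and since the only primes containing $\fp_A$ are $\fp_A$ and $\fm_A$ (as $\dim A/\fp_A=1$), we conclude $\fq=\fp_A$. Hence $N_{\fp_A}\ne 0$, forcing $M_{\fp_A}\ne 0$ and contradicting the hypothesis of (2). This is the crux of the lemma; everything else is essentially bookkeeping.

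For (4), I apply $\Ext^*_A(-,M)$ to the short exact sequence $0\to \mco\xra{\varpi}\mco\to k\to 0$, where $k=\mco/(\varpi)$ is the residue field, annihilated by $\fm_A$ as an $A$-module. The resulting segment
\[
\Ext^c_A(k,M) \lra \Ext^c_A(\mco,M) \xra{\varpi} \Ext^c_A(\mco,M)
\]
shows that torsion-freeness of $\Ext^c_A(\mco,M)$ as an $\mco$-module (equivalently, injectivity of multiplication by $\varpi$) follows once $\Ext^c_A(k,M)=0$. That vanishing holds because $\grade(\fm_A, M)=\depth_A M\ge c+1$, so by the standard characterization of depth via Ext, $\Ext^i_A(k,M)=0$ for all $i\le c$. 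The only point that wants a sentence of justification is that the middle map in the displayed sequence is genuinely multiplication by $\varpi$, which is immediate from the functoriality of Ext applied to the definition of the short exact sequence.
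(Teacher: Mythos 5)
Your proof is correct, and parts (1), (3), and (4) are essentially identical to the paper's argument. Part (2) is where you diverge. The paper cites the formula $\grade(\fp_A,M)=\min\{\depth_{A_{\fp_A}}M_{\fp_A},\,\depth_AM\}$ from Bruns--Herzog (Proposition~1.2.10), noting that the only primes containing $\fp_A$ are $\fp_A$ and $\fm_A$; since $M_{\fp_A}=0$ gives $\depth_{A_{\fp_A}}M_{\fp_A}=\infty$, the conclusion is immediate. You instead assume for contradiction that $\grade(\fp_A,M)=\depth_AM-1$, pass to $N\colonequals M/\bs x M$ where $\bs x\subseteq\fp_A$ is a maximal $M$-regular sequence, and use prime avoidance plus the fact that $V(\fp_A)=\{\fp_A,\fm_A\}$ to locate an associated prime of $N$ equal to $\fp_A$, contradicting $M_{\fp_A}=0$. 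The two routes are close in spirit --- the cited Bruns--Herzog formula is proved by just this kind of prime-avoidance argument --- so you have in effect unfolded that proof in the special case at hand. Your version is more self-contained (it leans only on facts already invoked in Lemma~\ref{le:mcat-old}) at the cost of being a bit longer; the paper's is shorter by outsourcing to a general-purpose formula. Both are fine. One small point worth a word in your write-up: you should note that $N\ne 0$ (which follows from $M\ne 0$ and Nakayama since $\bs x\subseteq\fm_A$), as this is needed for $\depth_AN$ to be finite and for the associated-prime argument; the degenerate case $M=0$ is vacuous since both sides are $\infty$.
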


\begin{proof}
Part (1) is immediate from parts (1) and (2) of Lemma~\ref{le:mcat-old}.

(2) Since the only prime ideals containing $\fp_A$ are itself and the maximal ideal of $A$, by \cite[Proposition~1.2.10]{Bruns/Herzog:1998} there is an equality
\[
\grade(\fp_A,M) = \min\{\depth_{A_\fp}M_\fp,\depth_AM\}\,.
\]
Thus if $M$ is not supported at $\fp_A$, then the equality above gives
\[
\grade(\fp_A,M)=\depth_AM\,,
\]
since the depth of $0$ is infinity. 

(3) Since $\grade(\fp_A,M)\ge c$, by (1) and (2), one can invoke Lemma~\ref{le:mcat-old}(3).

(4) Let $k$ denote the residue field of $\mco$, and consider the exact sequence 
\[
0\lra \mco \xra{\ \varpi\ } \mco \lra k\lra 0\,.
\]
It induces the exact sequence of $\mco$-modules
\[
\cdots \lra \Ext^{c}_A(k,M) \lra \Ext^c_A(\mco,M) \xra{\ \varpi\ } \Ext^c_A(\mco,M) \lra \cdots
\]
Since $\depth_AM = c+1$ one has $\Ext^{c}_A(k,M)=0$; see \cite[Theorem~1.2.5]{Bruns/Herzog:1998}. Thus the sequence above implies that $\varpi$ is not a zero-divisor on $\Ext^c_A(\mco,M)$.
\end{proof}

\begin{chunk}
\label{ch:summary}
Perhaps it is helpful to summarize the results above as follows:
\[
\begin{tikzcd}
\depth_AM\ge c+1 \arrow[Rightarrow]{r} & \grade(\fp_A,M)\ge c \arrow[Rightarrow]{d} \arrow[Rightarrow]{r} &  \depth_AM \ge c \\
&\text{$M$ is free at $\fp_A$}
\end{tikzcd}
\]
The two conditions on the right are vacuous when $c=0$, but not so the left-most one.  This condition comes up in the criteria for freeness and complete intersection presented in Section~\ref{se:diamond-wiles}.  The condition on grade comes up in arguments involving reduction to the case $c=0$; see Section~\ref{se:deformations}. Only the  weaker lower bound on $\depth_AM$ is required for the invariance of domain property discussed in Section~\ref{se:invariance-of-domain}.

We round up this discussion on depth with the following observation that is needed for reduction to $c=0$. In what follows, given a prime ideal $\fp$ in a ring $A$, we write $\fp^{(2)}$ for the ideal $\fp^2A_\fp\cap A$; this is the second symbolic power of $\fp$.
\label{pg:symbolic}

\begin{lemma}
\label{le:symbolic}
Fix $A$  in $\acat(c)$ for  $c\ge 1$, and an $f\in \fp_A$ nonzero in $A_{\fp_A}$. The local ring $A/fA$ is in $\acat$ if and only if $f\not\in\fp_A^{(2)}$; in that case $A/fA$ is in $\acat(c-1)$.
\end{lemma}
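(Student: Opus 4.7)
The plan is to reduce everything to a statement about the regular local ring $A_{\fp_A}$ by localizing, and then invoke the standard characterization of when a hyperplane section of a regular local ring is regular.

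First I would set up notation and verify the easy structural properties: since $f\in\fp_A=\ker(\lambda_A)$, the map $\lambda_A$ factors through a surjection $\lambda_B\colon B\twoheadrightarrow\mco$ with $B=A/fA$ a complete noetherian local ring, and $\fp_B\colonequals\ker(\lambda_B)$ is the image of $\fp_A$, namely $\fp_A/fA$. This confirms $(B,\lambda_B)$ satisfies every hypothesis in \ref{ch:congruence-module} except possibly the regularity of $B$ at $\fp_B$, which is precisely what the lemma is characterizing.

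Next I would localize. Since $\fp_B$ pulls back to $\fp_A$ along $A\to B$, one has
\[
B_{\fp_B}\;\cong\;A_{\fp_A}\big/fA_{\fp_A}\,.
\]
By hypothesis $A\in\acat(c)$, so $R\colonequals A_{\fp_A}$ is a regular local ring of dimension $c$ with maximal ideal $\fm=\fp_AA_{\fp_A}$; and by hypothesis $f$ is a nonzero element of $\fm$. The standard fact I would invoke (see \cite[Proposition 2.2.4]{Bruns/Herzog:1998} or the equivalent) says: for such an $R$ and such a nonzero $g\in\fm$, the quotient $R/gR$ is regular of dimension $c-1$ if and only if $g\notin\fm^2$, and otherwise $R/gR$ is not regular. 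Applied with $g=f$, this reduces the question to whether $f\in(\fp_AA_{\fp_A})^2$.

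Finally I would translate the condition back to $A$. By definition $\fp_A^{(2)}=\fp_A^2A_{\fp_A}\cap A$, and $\fp_A^2A_{\fp_A}=(\fp_AA_{\fp_A})^2$, so for $f\in A$ one has
\[
f\in(\fp_AA_{\fp_A})^2 \iff f\in\fp_A^{(2)}\,.
\]
Combining the previous paragraph with this, $B_{\fp_B}$ is regular precisely when $f\notin\fp_A^{(2)}$, and in that case $\dim B_{\fp_B}=c-1$, so $\height\fp_B=c-1$ and $B\in\acat(c-1)$. Conversely, if $f\in\fp_A^{(2)}$, then $B_{\fp_B}$ fails to be regular, so $B\notin\acat$. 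There is no real obstacle here; the content is just unwinding the definition of $\fp_A^{(2)}$ and quoting the standard fact about hyperplane sections of regular local rings.
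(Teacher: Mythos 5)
Your proof is correct and takes essentially the same route as the paper: localize at $\fp_A$, apply the standard criterion for a quotient of a regular local ring by a nonzero element to be regular (Bruns--Herzog 2.2.4), and unwind the definition of the symbolic power $\fp_A^{(2)}$. The paper's proof is just a more compressed version of the same argument.
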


\begin{proof}
Set $B\colonequals A/fA$; since $f$ is in $\fp_A$ the map $\lambda_A\colon A\to \mco$ factors through $B$, yielding an augmentation $\lambda_B\colon B\to \mco$. Its kernel, $\fp_B$, equals $\fp_A/f$. Since the local ring $A_{\fp_A}$ is regular the quotient ring $B_{\fp_B}\cong A_{\fp_A}/f A_{\fp_A}$ is regular if and only if the image of $f$  is not in $\fp_A^2A_{\fp_A}$, that is to say, $f$ is not in $\fp^{(2)}$;  see \cite[Proposition~2.2.4]{Bruns/Herzog:1998}.  When this property holds $\dim B_{\fp_B} = \dim A_{\fp_A} - 1$, so $B$ is in $\acat(c-1)$. 
\end{proof}

\begin{lemma}
\label{le:prime-avoidance}
Let $A$ be a local ring in $\acat(c)$ and $M$ a finitely generated $A$-module with $\grade(\fp_A,M)\ge c$. Given any integer $1\le n\le c$, there exists a sequence $\bs g\colonequals g_1,\dots,g_n$ in $\fp_A$ such that:
\begin{enumerate}[\quad\rm(1)]
\item
The ring $A/{\bs g}A$ is in $\acat (c-n)$;
\item
The sequence $\bs g$ is $M$-regular and $\depth_{A/{\bs g}A}(M/{\bs g}M)=\depth_AM-n$.
\end{enumerate}
When $A$ is Cohen--Macaulay, $\bs g$ can be chosen to be regular on $A$ as well. 
\end{lemma}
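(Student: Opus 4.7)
The plan is to induct on $n$ and reduce everything, at each step, to a single prime avoidance argument. For the base case $n=1$, I want $g_1\in\fp_A$ such that (a) $g_1\notin\fp_A^{(2)}$, so that by Lemma~\ref{le:symbolic} the quotient $B\colonequals A/g_1A$ is in $\acat(c-1)$, and (b) $g_1$ is $M$-regular, i.e.\ $g_1$ avoids every $\fp\in\operatorname{Ass}(M)$. The ideal $\fp_A$ is not contained in $\fp_A^{(2)}$: otherwise $\fp_A A_{\fp_A}=(\fp_A A_{\fp_A})^2$, and Nakayama would force $\dim A_{\fp_A}=0$, contradicting $c\ge 1$. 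And since $\grade(\fp_A,M)\ge c\ge 1$, the ideal $\fp_A$ contains an $M$-regular element, so it is not contained in any $\fp\in\operatorname{Ass}(M)$. Now $\operatorname{Ass}(M)$ is finite, and the standard prime avoidance lemma permits one non-prime ideal in the union; hence there exists $g_1\in\fp_A$ avoiding $\fp_A^{(2)}$ and every associated prime of $M$, which is the element we want.

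For the inductive step, set $B\colonequals A/g_1A$, $M'\colonequals M/g_1M$, and $\fp_B\colonequals \fp_A/g_1A$. Any $M'$-regular sequence in $\fp_B$ lifts to a sequence in $\fp_A$ which, together with $g_1$, is $M$-regular, so
\[
\grade(\fp_B,M')\ge \grade(\fp_A,M)-1\ge c-1.
\]
By the inductive hypothesis applied to $(B,M')$ in $\acat(c-1)$, there exists a sequence $g_2,\dots,g_n\in\fp_B$ satisfying (1) and (2) of the lemma. Lifting $g_2,\dots,g_n$ to $\fp_A$, the concatenated sequence $\bs g=g_1,\dots,g_n$ is $M$-regular, and $A/\bs g A\cong B/(g_2,\dots,g_n)B\in\acat(c-n)$. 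The depth statement follows step-by-step: each $g_i$ is regular on the relevant module, so depth drops by exactly one at each step; and depth is computed with respect to the (unchanged) maximal ideal, so the value over $A/\bs gA$ equals the value over $A$.

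For the Cohen--Macaulay addendum, one only needs to adjoin $\operatorname{Ass}(A)$ to the list of primes to avoid when choosing $g_1$. Since $A$ is Cohen--Macaulay one has $\operatorname{Ass}(A)=\operatorname{Min}(A)$ and all minimal primes have height $0<c$, so $\fp_A$ is contained in none of them; prime avoidance still applies. Moreover $A/g_1A$ remains Cohen--Macaulay, so the inductive hypothesis is preserved through every step, and at each stage we additionally demand that $g_i$ avoid the associated primes of $A/(g_1,\dots,g_{i-1})A$.

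The only subtle point is the use of prime avoidance with the non-prime ideal $\fp_A^{(2)}$ in the union, but this is handled by the classical version of the lemma allowing one non-prime among the avoided ideals (see e.g.\ \cite[Lemma~3.3]{Eisenbud}-style statements). The rest is bookkeeping, and the induction terminates at $n=c$ exactly, which is why the constraint $n\le c$ appears.
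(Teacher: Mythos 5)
Your proof is correct and takes essentially the same route as the paper's: reduce to $n=1$ by induction, then apply prime avoidance (allowing the single non-prime ideal $\fp_A^{(2)}$ alongside the associated primes) to find the first element. The paper compresses the induction into one line and cites Kaplansky Theorem 81 for the avoidance step; you fill in the details — why $\fp_A\not\subseteq\fp_A^{(2)}$ via Nakayama, why $\fp_A$ escapes $\operatorname{Ass}(M)$ via the grade hypothesis, how the grade and depth conditions propagate through the induction, and why $\operatorname{Ass}(A)=\operatorname{Min}(A)$ handles the Cohen--Macaulay addendum — all of which is sound.
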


\begin{proof}
It suffices to verify the statement for $n=1$. Given Lemma~\ref{le:symbolic}, one has to find an element $g$ in $\fp_A$ that avoids the associated primes of $M$ and also $\fp_A^{(2)}$. This can be done by a standard prime avoidance argument; see \cite[Theorem~81]{Kaplansky:1974}.
\end{proof}

\end{chunk}

We illustrate the use of these conditions by establishing a link between the Wiles defect of a module to that of the underlying ring, and by tracking the change in the congruence modules under certain maps of $A$-modules.

\subsection*{A defect formula}\label{sse:defect}
The natural morphism of complexes
\[
\RHom_A(\mco, A) \lotimes_A M \lra \RHom_A(\mco, M)
\]
and the K\"unneth map induce the map of $\mco$-modules
\[
\Ext^c_A(\mco,A)\otimes_A M \lra \Ext^c_A(\mco, M)\,,
\]
and hence the map 
\[
\eta_M\colon \ecoh^c_A(A)\otimes_{\mco}\tfree{(M/\fp_AM)} \lra \ecoh^c_A(M)\,.
\]
This map fits into a commutative diagram of $\mco$-modules
\begin{equation}
\label{eq:ecoh-cd}
\begin{tikzcd}[column sep=huge]
\ecoh^c_A(A)\otimes_{\mco}\tfree{(M/\fp_AM)} \arrow{d}[swap]{\eta_M} \arrow{r}{\ecoh^c(\lambda_A)\otimes \textrm{id}} 
	& \ecoh^c_A(\mco)\otimes_{\mco}\tfree {(M/\fp_AM)}   \arrow[leftarrow]{d}{\cong}\\
\ecoh^{c}_A(M)   \arrow{r}{\ecoh^c{(\lambda_A(M))}}
	& \ecoh^{c}_A(M/\fp_AM) 
\end{tikzcd}
\end{equation}
The isomorphism on the right is from \eqref{eq:ecoh-MO}. In fact, one has such a diagram for each  cohomological index $i$. This leads to the following ``defect formula" for congruence modules. It generalizes \cite[Theorem~1.2]{Brochard/Iyengar/Khare:2021b}, which is the case $c=0$, for then $\Coker(\eta_M)=M[\fp_A]/I_AM$, where $I_A=A[\fp_A]$.  See Proposition~\ref{pr:eta-Gorenstein} for another point of view on $\Coker(\eta_M)$.

\begin{lemma}
\label{le:defect-formula}
When  $M$ is  free at $\fp_A$, the map $\ecoh^c{(\lambda_A(M))}$ is injective,  and there is an exact sequence of $\mco$-modules
\[
0\lra \Coker(\eta_M) \lra \cmod A\otimes_\mco{\tfree{(M/\fp_AM)}} \lra \cmod A(M)\lra 0
\]
and hence there is an equality
\[
\delta_A(M) = \rank_{A_{\fp_A}}(M_{\fp_A}) \cdot \delta_A(A) + \length_{\mco}\Coker(\eta_M)\,.
\]
\end{lemma}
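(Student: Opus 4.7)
The strategy is to exploit the commutative square \eqref{eq:ecoh-cd}: the right-hand vertical arrow is already known to be an isomorphism, and the cokernels of the two horizontal arrows are precisely $\cmod A\otimes_\mco T$ (with $T\colonequals \tfree{(M/\fp_AM)}$) and $\cmod A(M)$. So once the bottom horizontal arrow is shown to be injective, the snake lemma will produce the desired short exact sequence directly, and the defect formula will follow by a length count.

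First I would verify the injectivity of $\ecoh^c(\lambda_A(M))$. By \ref{ch:tors-remark} it suffices to show that the kernel of the underlying map $\Ext^c_A(\mco,M)\to \Ext^c_A(\mco,M/\fp_AM)$ is $\mco$-torsion; since the $\mco$-action on these Ext modules factors through $A\to\mco$, the localization at $\fp_A$ as $A$-modules agrees with the generic fibre as $\mco$-modules. Localizing and using the hypothesis $M_{\fp_A}\cong A_{\fp_A}^\mu$, the natural isomorphism $k(\fp_A)\otimes_{A_{\fp_A}}N\cong \Ext^c_{A_{\fp_A}}(k(\fp_A),N)$ established inside the proof of Lemma~\ref{le:Etors} identifies the localized map with the obvious isomorphism $k(\fp_A)^\mu\to k(\fp_A)^\mu$ induced by $A_{\fp_A}\to k(\fp_A)$. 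Hence the kernel is indeed $\mco$-torsion, and $\ecoh^c(\lambda_A(M))$ is injective.

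Next I would apply the snake lemma to \eqref{eq:ecoh-cd}. The top arrow is injective: it equals $\ecoh^c(\lambda_A)\otimes_\mco \mathrm{id}_T$, where $\ecoh^c(\lambda_A)$ is injective by Theorem~\ref{th:catC2}(2) and $T$ is a free $\mco$-module. The bottom arrow is injective by the previous step. The right vertical arrow is an isomorphism. The snake lemma (applied to the rows extended by their cokernels) therefore gives
\[
0\lra \Coker(\eta_M)\lra \cmod A\otimes_\mco T\lra \cmod A(M)\lra 0.
\]

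Finally, to derive the length formula, I would observe that $T$ is $\mco$-free of rank $\mu\colonequals \rank_{A_{\fp_A}}M_{\fp_A}$, as one sees by tensoring $M/\fp_A M$ with the fraction field of $\mco$ and identifying the result with $M_{\fp_A}/\fp_A M_{\fp_A}\cong k(\fp_A)^\mu$. Additivity of length on the short exact sequence above yields
\[
\length_\mco \cmod A(M) \;=\; \mu\cdot \length_\mco \cmod A \;-\; \length_\mco \Coker(\eta_M),
\]
and substituting into the definition \eqref{eq:defect} of $\delta_A(M)$ and $\delta_A(A)$ gives the claimed equation. The only non-formal step is the injectivity assertion in the first paragraph; the rest is a standard diagram chase and length bookkeeping.
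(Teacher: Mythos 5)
Your proof is correct and follows the same route as the paper: localize at $\fp_A$ and use freeness of $M_{\fp_A}$ to see that $\ecoh^c(\lambda_A(M))$ is injective, then extract the short exact sequence from the commutative square \eqref{eq:ecoh-cd} via the snake lemma, and finally count lengths. The paper states the injectivity step more tersely ("injective when localized at $\fp_A$, and hence injective") while you spell out the identification from the proof of Lemma~\ref{le:Etors}, but the underlying argument is identical.
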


\begin{proof}
The map $\ecoh^c_A(\lambda_A)\colon \ecoh^c_A(A)\to \ecoh^c_A(\mco)$ is injective, by Theorem~\ref{th:catC2}.  Since $M$ is free at $\fp_A$,  the map $\ecoh^c_A(\lambda_A(M))$ is injective when localized at $\fp_A$, and hence injective. Given this observation the stated exact sequence is immediate from \eqref{eq:ecoh-cd}.  Since the $A_\fp$-module $M_{\fp_A}$ is free, its rank is the rank of the $\mco$-module $\tfree{{(M/\fp_AM)}}$. This remark, and the given exact sequence,  yield the desired equality.
\end{proof}

\begin{chunk}
\label{ch:change-modules}
Let $A$ be in $\acat(c)$ and $\pi\colon M\to N$ a map of finitely generated $A$-modules. This data induces the commutative diagram
\begin{equation}
\label{eq:change-diagram}
\begin{tikzcd}[column sep=huge]
\ecoh^c_A(M) \arrow{d} \arrow{r}{\ecoh^c_A(\pi)} 
	& \ecoh^{c}_A(N) \arrow{d} \\
\ecoh^c_A(M/\fp_A M)  \arrow{r}{\ecoh^c_{A}(\overline{\pi})} 
	& \ecoh^{c}_A(N/\fp_AN) 
\end{tikzcd}
\end{equation}
where $\overline{\pi}\colonequals \pi\otimes_A\mco$. This induces a map  $\cmod A(M) \to \cmod A(N)$ of $\mco$-modules. Thus $\cmod A(-)$ is a functor from the category of finitely generated $A$-modules to $\mco$-modules.
\end{chunk}

\begin{lemma}
\label{le:change-modules}
Let $A$ be in $\acat(c)$ and $\pi\colon M\to N$ a  map in $\rmod A$ such that $M$ and $N$ are free at $\fp_A$ and of the same rank. 
\begin{enumerate}[\quad\rm(1)]
\item
The free-$\mco$-modules $\ecoh^c_A(M)$ and $\ecoh^c_A(N)$ have the same rank.
\item
When $\pi$ is surjective, $\ecoh^c_A(\pi)$ is injective and $\ecoh^c_A(\overline{\pi})$ is bijective, and hence
\[
\length_{\mco}\cmod A(M) = \length_{\mco}\cmod A(N)  +  \length_{\mco}\Coker \ecoh^c_A(\pi)\,.
\]
\item
When $\pi$ is injective with cokernel of depth $c+1$, the map $\ecoh^c_A(\pi)$ is bijective.
\end{enumerate}
\end{lemma}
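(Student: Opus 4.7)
The plan is to handle the three parts in turn, leaning on localization at $\fp_A$ and the torsion-vs-torsion-free bookkeeping in \ref{ch:tors-remark}. For (1), write $\mu$ for the common rank of $M_{\fp_A}$ and $N_{\fp_A}$; the natural identification
\[
\Ext^c_A(\mco,M)_{\fp_A}\cong \Ext^c_{A_{\fp_A}}(k(\fp_A),M_{\fp_A})
\]
reduces the $\mco$-rank computation to the case of a free module of rank $\mu$ over a regular local ring of dimension $c$, which yields dimension $\mu$ over $k(\fp_A)$, as in the proof of Theorem \ref{th:catC2}. The same holds for $N$, so both $\ecoh^c_A(M)$ and $\ecoh^c_A(N)$ are free of rank $\mu$.

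For (2), I would treat $\ecoh^c_A(\overline\pi)$ and $\ecoh^c_A(\pi)$ separately. Surjectivity of $\pi$ passes to $\overline\pi$ and, by \ref{ch:tors-remark}, to $\tfree{\overline\pi}$; the latter is a surjection between free $\mco$-modules both of rank $\mu$ (since $(M/\fp_A M)_{\fp_A}\cong k(\fp_A)^\mu\cong (N/\fp_A N)_{\fp_A}$), hence an isomorphism, and via the natural isomorphism \eqref{eq:ecoh-MO} this upgrades to bijectivity of $\ecoh^c_A(\overline\pi)$. For $\ecoh^c_A(\pi)$, put $K\colonequals\ker\pi$; since $\pi_{\fp_A}$ is a surjection of free $A_{\fp_A}$-modules of equal rank it is an isomorphism, so $K_{\fp_A}=0$ and every $\Ext^i_A(\mco,K)$ is $\mco$-torsion. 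The long exact sequence of Ext for $0\to K\to M\to N\to 0$ then exhibits $\ker\Ext^c_A(\mco,\pi)$ as a torsion submodule, and passing to the torsion-free quotient gives injectivity of $\ecoh^c_A(\pi)$ by \ref{ch:tors-remark}. For the length formula, Lemma \ref{le:defect-formula} ensures both vertical maps in \eqref{eq:change-diagram} are injective, so the square presents a chain of inclusions $\ecoh^c_A(M)\subseteq \ecoh^c_A(N)\subseteq \ecoh^c_A(N/\fp_A N)$ after identifying via $\ecoh^c_A(\overline\pi)$. The standard three-term sequence for nested submodules reads
\[
0\lra \Coker\ecoh^c_A(\pi)\lra \cmod A(M)\lra \cmod A(N)\lra 0,
\]
and taking $\mco$-lengths---everything here is torsion, hence of finite length by Lemma \ref{le:Etors}---yields the asserted identity.

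For (3), the strategy is to kill the relevant Ext groups on $C\colonequals\Coker\pi$. The long exact sequence of Ext for $0\to M\to N\to C\to 0$ reduces the bijectivity of $\Ext^c_A(\mco,\pi)$, and hence of $\ecoh^c_A(\pi)$, to the vanishing $\Ext^{c-1}_A(\mco,C)=0=\Ext^c_A(\mco,C)$. Since $\depth_A C=c+1$, one has $\Ext^i_A(k,C)=0$ for every $i\le c$; applying $\RHom_A(-,C)$ to $0\to\mco\xrightarrow{\varpi}\mco\to k\to 0$ then shows that multiplication by $\varpi$ is bijective on $\Ext^i_A(\mco,C)$ throughout that range, and Nakayama's lemma forces those groups to vanish. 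Each part is essentially a diagram chase built on results already in place; the only real subtlety sits in (2), where one must coordinate \ref{ch:tors-remark} in two modes---propagating surjectivity and then injectivity through the torsion-free quotient---and correctly identify $\cmod A(M)$ with the cokernel of the composite $\ecoh^c_A(M)\to\ecoh^c_A(N/\fp_A N)$ via the bottom isomorphism of \eqref{eq:change-diagram}.
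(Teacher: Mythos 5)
Parts (1) and (2) are correct and follow the same route as the paper, with minor cosmetic variations: you obtain bijectivity of $\tfree{(\overline\pi)}$ from a rank count rather than by localizing at $\fp_A$, and you see injectivity of $\ecoh^c_A(\pi)$ by tracking $K=\Ker\pi$ through the long exact sequence rather than localizing $\Ext^c_A(\mco,\pi)$ directly; both are fine, and the "nested submodules" sequence in (2) is exactly how the paper reads the length identity off of diagram \eqref{eq:change-diagram}.

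Part (3) has a genuine gap: you need $\Ext^c_A(\mco,C)=0$, and you try to extract it from $\depth_A C=c+1$ alone via a $\varpi$-Nakayama argument, but this breaks at the endpoint $i=c$. Applying $\Hom_A(-,C)$ to $0\to\mco\xrightarrow{\varpi}\mco\to k\to 0$ yields
\[
\Ext^c_A(k,C)\lra \Ext^c_A(\mco,C)\xra{\ \varpi\ }\Ext^c_A(\mco,C)\lra\Ext^{c+1}_A(k,C)\,,
\]
and the depth hypothesis gives $\Ext^c_A(k,C)=0$, hence injectivity of $\varpi$; but since $\depth_AC$ equals exactly $c+1$ one has $\Ext^{c+1}_A(k,C)\ne 0$, so you cannot conclude surjectivity of $\varpi$, and Nakayama does not apply. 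Injectivity of $\varpi$ on a finitely generated $\mco$-module certainly does not force it to vanish, and the conclusion is in fact false with only the depth input: $C=A$ satisfies $\depth_AA\ge c+1$ yet $\Ext^c_A(\mco,A)\ne 0$ (its torsion-free quotient has rank one by Theorem~\ref{th:catC2}).

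The ingredient you never invoke is that $C_{\fp_A}=0$. Since $M$ and $N$ are free at $\fp_A$ of the same rank and $\pi$ is injective, $\pi_{\fp_A}$ is an isomorphism, so $C$ is not supported at $\fp_A$. Combined with $\depth_AC= c+1$, Lemma~\ref{le:mcat}(2) gives $\grade(\fp_A,C)=c+1$, and the standard characterization of grade then yields $\Ext^i_A(\mco,C)=\Ext^i_A(A/\fp_A,C)=0$ for $i\le c$. This is how the paper argues, and it is the missing step; the depth with respect to the maximal ideal does not by itself control $\Ext^c_A(\mco,-)$.
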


\begin{proof}
(1) This follows from the fact that the rank of an $\mco$-module can be computed after localization at $\fp_A$, and the isomorphisms
\[
 \ecoh^c_A(M)_{\fp_A} \cong \Ext^c_A(\mco,M)_{\fp_A} \cong \Ext^c_{A_{\fp_A}}(k(\fp),M_{\fp_A})
\]
where $k(\fp)$ is the residue field of $\mco$.

The following observation will be used below: Since $M$ and $N$ are free of the same rank at $\fp_A$, when $\pi$ is  surjective,  it is an isomorphism at $\fp_A$. The same conclusion holds when $\pi$ injective and $\depth_A\Coker(\pi)\ge c+1$.   In particular, $\Ext^c_A(\mco,\pi)$ is bijective when localized at $\fp_A$ and hence $\ecoh^c_A(\pi)$ is injective. 

(2) Since $\pi$ is surjective, so is $\overline{\pi}$, and hence also the induced map 
\[
\tfree{(\overline{\pi})}\colon \tfree{(M/\fp_AM)} \xra{\ \cong\ } \tfree{(N/\fp_AN)}\,.
\]
That the map is also injective can be verified by localizing at $\fp_A$, and using the fact $\pi$ is an isomorphism at $\fp_A$. Thus the natural isomorphism~\ref{eq:ecoh-MO} implies that $\ecoh^c_A(\overline{\pi})$ is bijective as claimed. Given that $\ecoh^c_A(\pi)$ is one-to-one and $\ecoh^c_A(\overline{\pi})$ is bijective, the stated formula for the lengths of congruence modules is immediate from the commutative diagram \eqref{eq:change-diagram}. Here we also use the fact that the vertical maps in the diagram are one-to-one, since $M$ and $N$ are free at $\fp_A$; see Lemma~\ref{le:defect-formula}.

(3) We already know that $\ecoh^c_A(\pi)$ is injective, so it remains to verify that it is also surjective.  Complete $\pi$ to an exact sequence
\[
0 \lra M\lra N\lra C\lra  0\,.
\]
Since $M,N$ are free at $\fp_A$ and of the same rank, $C_{\fp_A}=0$, and so the hypothesis that $\depth_AC = c+1$ and Lemma~\ref{le:mcat} yield $\grade(\fp_A,C)=c+1$. Thus $\Ext^c_A(\mco,C)=0$, by \cite[Theorem~1.2.5]{Bruns/Herzog:1998}, and  so the exact sequence above induces the exact sequence
\[
 \lra \Ext^c_A(\mco,M) \xra{\ \Ext^c_A(\mco,\pi)\ } \Ext^c_A(\mco,N) \lra \Ext^c_A(\mco,C) =0\,.
\]
Therefore the induced map on torsion-free quotients,   $\ecoh^c_A(\pi)$,  is also surjective.
\end{proof}

\section{Cohen--Macaulay modules}\label{se:CM-modules}
In this section we record results on congruence modules special to Cohen--Macaulay modules, intended for later use. There is also an alternative description of congruence modules which brings up a connection to duality. See also \cite[2.10]{Iyengar/Khare/Manning/Urban:2024}, which contains a more streamlined and transparent description of this pairing.

\subsection*{Dualizing complexes}\label{sse:dualizing}
Since each $A$ in $\acat$ is a quotient of a regular ring, namely, a power series ring over $\mco$, it has a dualizing complex, $\ddual A$, unique up to isomorphism in $\dcat A$, once it is normalized so that $\Ext^i_A(k,\ddual A)\ne 0$ precisely when $i=0$; see \cite[\href{https://stacks.math.columbia.edu/tag/0A7M}{Tag 0A7M}]{stacks-project}. Given any $A$-complex $M$ in $\dcat A$, set
\begin{equation}
\label{eq:dagger}
\ddual A(M)\colonequals \RHom_A(M,\ddual A)\,.
\end{equation}
The dualizing complex captures the depth and the dimension of any nonzero finitely generated $A$-module $M$, in that
\begin{equation}
\label{eq:depth-dim}
\begin{aligned}
\depth_AM &= \min\{i\mid \hh_i(\ddual A(M))\ne 0\} \\
\dim M &=  \max\{i\mid \hh_i(\ddual A(M))\ne 0\}\,.
\end{aligned}
\end{equation}
This follows from Grothendieck's  local duality theorem and the fact that depth and dimension are detected by the local cohomology of $M$; see~\cite[\href{https://stacks.math.columbia.edu/tag/0DWZ}{Tag 0DWZ}]{stacks-project} and also \cite[Theorem~3.5.7]{Bruns/Herzog:1998}. The assignment $M\mapsto \ddual A(M)$ induces an auto-equivalence 
\[
\ddual A(-) \colon {\dbcat A}^{\mathrm{op}} \xra{\ \equiv\ } \dbcat A\,.
\]
Hence the  biduality map $M\iso {\ddual A^2}(M)$ is an isomorphism for $M$ in $\dbcat A$.

\subsection*{Cohen--Macaulay modules}\label{sse:CM}
Fix $A$ in $\acat(c)$.  An $A$-module $M$ is  said to be \emph{Cohen--Macaulay} if it is finitely generated and satisfies $\depth_AM\ge \dim M$; equality holds when $M\ne 0$.  Thus if $M$ is a Cohen--Macaulay $A$-module $\hh_{i}(\ddual A(M))=0$ for all $i\ne \dim_AM$. When $M$ is a non-zero Cohen-Macaulay module, the $A$-module 
\begin{equation}
\label{eq:dual-cm}
M^{\vee}\colonequals \hh_{d}(\ddual A(M)) \quad\text{for $d=\dim_AM$}
\end{equation}
is also Cohen--Macaulay of dimension $d$; this is easy to verify, given $M\iso {\ddual A^2}(M)$. We speak of this as the \emph{dual} of $M$. 
Since 
\[
\ddual A(M) \simeq  M^{\vee} [d] \qquad \text{in $\dcat A$.}
\]
the assignment $M\mapsto M^{\vee}$ is an auto-equivalence on the category of Cohen--Macaulay $A$-modules. In what follows, we say a Cohen--Macaulay module $M$ is \emph{self-dual} to mean that there is an isomorphism of $A$-modules
\[
 M\cong M^{\vee}\,.
 \]

The result below concerns Cohen--Macaulay modules $M$ supported at $\fp_A$ and having maximal possible depth; namely, $\dim M = c+1 = \depth M$; see Lemma~\ref{le:mcat-old}. Let $\nu_{\mco}$ be the valuation associated to $\mco$.  The determinant of an endomorphism $\alpha\colon U\to U$ of free $\mco$-modules of finite rank is denoted $\det\alpha$.

\begin{proposition}
\label{pr:change-modules}
Let $A$ be a local ring in $\acat(c)$ and $M,N$ Cohen--Macaulay $A$-modules of dimension $c+1$, that are self-dual, and have the same rank at $\fp_A$. If $\pi\colon M\to N$ is a surjective map,  then
\[
\length_{\mco}\cmod A(M) - \length_{\mco}\cmod A(N) = \nu_{\mco}(\det\alpha) = \nu_{\mco}(\det\beta)\,,	
\]
where $\alpha \colon\ecoh^c_A(M)\to \ecoh^c_A(M)$ and $\beta \colon \ecoh^c_A(N)\to \ecoh^c_A(N)$ are the compositions
\begin{gather*}
\ecoh^c_A(M)\xra{\ \ecoh^c_A(\pi)\ } \ecoh^c_A(N) \cong \ecoh^c_A(N^{\vee}) \xra {\ \ecoh^c_A({\pi}^\vee)\ } 
	\ecoh^c_A(M^\vee)\cong \ecoh^c_A(M)\\
\ecoh^c_A(N)\cong \ecoh^c_A(N^{\vee}) \xra {\ \ecoh^c_A({\pi}^\vee)\ } 
	\ecoh^c_A(M^\vee)\cong \ecoh^c_A(M)\xra {\ \ecoh^c_A({\pi})\ }  \ecoh^c_A(N)\,.
	\end{gather*}
\end{proposition}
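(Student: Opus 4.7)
The plan is to reduce the statement to Lemma \ref{le:change-modules}(2) together with the observation that, under the self-duality hypothesis, the ``dual half'' of the composition $\alpha$ is an isomorphism of free $\mco$-modules and therefore contributes a unit to $\det\alpha$. Lemma \ref{le:change-modules}(2) applied to the surjection $\pi$ already gives
\[
\length_{\mco}\cmod A(M) - \length_{\mco}\cmod A(N) = \length_{\mco}\Coker\ecoh^c_A(\pi).
\]
Since $\ecoh^c_A(M)$ and $\ecoh^c_A(N)$ are finite free $\mco$-modules of the same rank (Lemma \ref{le:change-modules}(1), with torsion-freeness supplied by Lemma \ref{le:mcat}(4), since $\depth M = \depth N = c+1$) and $\ecoh^c_A(\pi)$ is injective, this length equals $\nu_{\mco}(\det\ecoh^c_A(\pi))$. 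The two self-duality isomorphisms entering $\alpha$ have unit determinant, so the Proposition reduces to showing that $\ecoh^c_A(\pi^\vee)$ is itself an isomorphism.

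To establish this I would analyze $K \colonequals \Ker(\pi)$. The equal-rank hypothesis together with the surjectivity of $\pi_{\fp_A}$ forces this localization to be an isomorphism, so $K_{\fp_A} = 0$. Applying the depth lemma to the short exact sequence $0 \to K \to M \to N \to 0$ with $\depth M = \depth N = c+1$ yields $\depth_A K \geq c+1$; combined with $\dim K \leq \dim M = c+1$, this forces $K$ to be either zero or Cohen--Macaulay of dimension $c+1$. (If $K = 0$ then $\pi$ is an isomorphism and the statement is trivial, so assume $K \neq 0$.)

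With $K$, $M$, $N$ all Cohen--Macaulay of dimension $c+1$, applying the duality functor $\ddual A(-)$ to $0 \to K \to M \to N \to 0$ gives a triangle whose cohomology is concentrated in the single degree $c+1$, producing the short exact sequence
\[
0 \lra N^\vee \lra M^\vee \lra K^\vee \lra 0.
\]
Thus $\pi^\vee$ is injective with cokernel $K^\vee$, which is Cohen--Macaulay of dimension $c+1$ and hence of depth exactly $c+1$. By self-duality, $N^\vee \cong N$ and $M^\vee \cong M$ are free at $\fp_A$ of the same common rank, so Lemma \ref{le:change-modules}(3) applies verbatim to $\pi^\vee$ and yields the desired bijectivity of $\ecoh^c_A(\pi^\vee)$.

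The one conceptually delicate point is the depth-lemma argument that forces $K$ to be Cohen--Macaulay of maximal dimension; this is what ensures that applying $\ddual A(-)$ produces a clean short exact sequence rather than a longer complex, and it is also the point where the equal-rank hypothesis genuinely enters. Self-duality is used only to transport the freeness and rank conditions at $\fp_A$ from $M, N$ to $M^\vee, N^\vee$ so that Lemma \ref{le:change-modules}(3) is applicable; after that, everything is bookkeeping of determinants of compositions of maps between free $\mco$-modules.
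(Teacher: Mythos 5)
Your proposal is correct and follows essentially the same route as the paper's proof: analyze the kernel $L = \Ker(\pi)$ of the surjection, show it is Cohen--Macaulay of dimension $c+1$, dualize the short exact sequence $0\to L\to M\to N\to 0$ to get $0\to N^\vee \to M^\vee\to L^\vee \to 0$, and apply Lemma~\ref{le:change-modules}(3) to $\pi^\vee$ (the only difference is that you obtain $\depth_A L \geq c+1$ via the elementary depth-counting lemma for short exact sequences, where the paper instead reads both bounds off the dualizing-complex characterization \eqref{eq:depth-dim}; these are equivalent here).
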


\begin{proof}
Since $M$ and $N$ have depth $c+1$, they are free at $\fp_A$, by Lemma~\ref{le:mcat}. We can assume their ranks are nonzero, else  each term in the desired equality is zero, and also that $\pi$ is not an isomorphism.  Consider the exact sequence 
\[
0 \lra L\lra M\lra N\lra  0
\]
induced by $\pi$. There are (in)equalities
\[
c+1\le \depth_A L \le \dim L \le \dim M = c+1\,,
\]
where the first and last inequality on the left can be verified easily using \eqref{eq:depth-dim}. Thus $L$ too is Cohen--Macaulay of dimension $c+1$. Thus the exact sequence above induces an exact sequence
\[
0\lra N^{\vee} \xra{\ \pi^{\vee}\ } M^{\vee}  \lra L^{\vee} \lra 0\,.
\]
Since  $L^{\vee}$ is also Cohen--Macaulay of dimension $c+1$,  Lemma~\ref{le:change-modules}(3) yields that $\ecoh^c_A(\pi^{\vee})$ is bijective. Given this, the desired equalities are immediate from the definition of $\alpha$ and $\beta$, and Lemma~\ref{le:change-modules}(2).
\end{proof}

\begin{chunk}
Let $A$ be in $\acat(c)$ and $M$ a Cohen--Macaulay $A$-module of dimension $c+1$. Then $M^{\vee}$ is also Cohen--Macaulay of dimension $c+1$ and there is a pairing
\[
 M \lotimes_A M^{\vee} \simeq M\lotimes_A \ddual A(M)[-c-1] \lra \ddual A[-c-1]\,.
\]
Up to a shift, this is the adjoint of the natural biduality map 
\[
M\to \RHom_A(\RHom_A(M,\ddual A),\ddual A)\,;
\]
see, for instance,  \cite[\href{https://stacks.math.columbia.edu/tag/0A5W}{Tag 0A5W}]{stacks-project}. This yields a natural pairing 
\begin{equation}
\label{eq:pairing}
\langle -,-\rangle \colon \Ext^c_A(\mco,M)\otimes_{\mco} \Ext^c_A(\mco, M^{\vee}) \lra \ecoh^c_A(\mco) \cong \mco\,.
\end{equation}
of $\mco$-modules defined as follows: Given morphisms 
\[
\alpha\colon \mco \to  M[c]\quad\text{and}\quad  \beta\colon \mco \to M^{\vee}[c]
\]
in $\dcat A$ consider the morphism
\[
\alpha\otimes \beta \colon \mco \lotimes_A \mco \lra  M[c] \lotimes_A  M^\vee[c] \simeq  (M \lotimes_A M^\vee)[2c] \lra  \ddual A[c-1]
 \]
This induces by adjunction a morphism 
\[
\gamma\colon \mco\lra \RHom_A(\mco,\ddual A[c-1])\simeq  \mco[c]\,,
\]
where the isomorphism holds as the ring $\mco$ is Gorenstein and $\dim\mco=1$.  Thus $\gamma$ represents an element in $\Ext^c_A(\mco,\mco)$. Set
\[
\langle \alpha,\beta\rangle\colonequals \text{the class of the morphism $\gamma$ in $\ecoh^c_A(\mco)$.}
\]

For the Cohen--Macaulay modules of dimension $c+1$, the pairing above gives another interpretation of the congruence module. 

\begin{proposition}\label{pr:CM-duality}
Let $M$ be a Cohen--Macaulay $A$-module of dimension $c+1$. The congruence module $\cmod A(M)$ is the cokernel of the map
\[
\Ext^c_A(\mco,M) \lra \Hom_{\mco}(\Ext^c_A(\mco,M^\vee), \ecoh^c_A(\mco))\,.
\]
adjoint to the pairing \eqref{eq:pairing}.
\end{proposition}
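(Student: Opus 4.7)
The plan is to construct a natural isomorphism
\[
\iota \colon \ecoh^c_A(M/\fp_A M) \iso \Hom_\mco(\Ext^c_A(\mco, M^\vee), \ecoh^c_A(\mco))
\]
such that $\iota \circ \ecoh^c_A(\lambda_A(M))$ agrees with the adjoint of the pairing \eqref{eq:pairing}; comparing cokernels then yields the proposition. Since $M$ and $M^\vee$ are Cohen--Macaulay of dimension $c+1$, both have depth $c+1$, so $\Ext^c_A(\mco,M)=\ecoh^c_A(M)$ and $\Ext^c_A(\mco,M^\vee)=\ecoh^c_A(M^\vee)$ by Lemma~\ref{le:mcat}(4).

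The key input for $\iota$ is a natural identification $\ecoh^c_A(M^\vee)\cong \Hom_\mco(\tfree(M/\fp_A M),\mco)$. Biduality applied to $\ddual A(M)\simeq M^\vee[c+1]$ gives $M^\vee \simeq \RHom_A(M,\ddual A)[-c-1]$. Applying $\RHom_A(\mco,-)$, together with the base-change isomorphism $\RHom_A(\mco\lotimes_A M,\ddual A)\simeq \RHom_\mco(\mco\lotimes_A M,\ddual\mco)$ and $\ddual\mco \simeq \mco[1]$ (since $\mco$ is Gorenstein of dimension one, with the paper's normalization), yields
\[
\RHom_A(\mco,M^\vee) \simeq \RHom_\mco(\mco\lotimes_A M,\mco)[-c].
\]
The complex $\mco\lotimes_A M$ has $\hh_0 = M/\fp_A M$, and the higher Tor terms $\Tor_i^A(\mco,M)$ for $i\ge 1$ are $\mco$-torsion because $M$ is free at $\fp_A$ (Lemma~\ref{le:mcat}(3)). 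Their contributions to the hyper-Ext computation of the degree-$c$ cohomology are torsion, and so are killed after passing to the torsion-free quotient; since $\ecoh^c_A(M^\vee)$ is already torsion-free by Lemma~\ref{le:mcat}(4), the identification $\ecoh^c_A(M^\vee)\cong \Hom_\mco(M/\fp_A M,\mco) = \Hom_\mco(\tfree(M/\fp_A M),\mco)$ drops out. Combining this with \eqref{eq:ecoh-MO} and the freeness of $\ecoh^c_A(\mco)$ produces $\iota$.

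The main obstacle is the compatibility: verifying that under $\iota$, the map $\ecoh^c_A(\lambda_A(M))$ coincides with the adjoint of \eqref{eq:pairing}. Both constructions arise from applying $\RHom_A(\mco,-)$ to the evaluation morphism $M\lotimes_A M^\vee \to \ddual A[-c-1]$: the pairing $\langle-,-\rangle$ is obtained by combining this with the external product and the algebra multiplication $\mco\lotimes_A \mco\to \mco$, while the natural map $\lambda_A(M)\colon M\to M/\fp_A M = \mco\otimes_A M$ is itself an instance of the $A$-algebra structure on $\mco$. A diagram chase through the identifications---carefully tracking the shift in $\ddual\mco \simeq \mco[1]$ and the adjunction direction at each step---shows that both constructions agree, giving the required compatibility.
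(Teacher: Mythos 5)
Your proposal is essentially correct and follows the same strategy as the paper: establish a natural identification
\[
\Ext^c_A(\mco,M^{\vee}) \cong \Hom_{\mco}(\tfree{(M/\fp_AM)},\mco)
\]
by unwinding $M^\vee \simeq \ddual A(M)[-c-1]$ through adjunctions and Gorensteinness of $\mco$, then transfer the congruence module's defining map across this identification and compare cokernels; the paper's proof likewise reduces the claim to a diagram chase after this chain of isomorphisms.

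One imprecision worth flagging: you invoke the fact that the higher $\Tor_i^A(\mco,M)$ for $i\ge 1$ are $\mco$-torsion, and argue their contributions are ``killed after passing to the torsion-free quotient.'' In fact the contributions from $\Tor_i^A(\mco,M)$ with $i\ge 1$ to $\hh^0\RHom_\mco(\mco\lotimes_A M,\mco)$ vanish outright, for degree reasons: such a module sits in homological degree $i$, and applying $\RHom_\mco(-,\mco)$ (with $\operatorname{injdim}\mco = 1$) places its contributions in cohomological degrees $i$ and $i+1$, both $\ge 1$. So $\Ext^c_A(\mco,M^\vee)\cong\Hom_\mco(M/\fp_A M,\mco)$ holds on the nose, no passage to torsion-free quotients needed at this step. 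The paper's chain of adjunctions avoids the spectral sequence entirely by reorganizing so that one lands at $\Hom_{\dcat A}(M,\mco) = \Hom_A(M,\mco)$, a hom of honest modules; your route through $\RHom_\mco(\mco\lotimes_A M,\mco)$ is equivalent but puts the derived tensor in a position where you must argue it away. Also note that ``the extra terms are torsion, hence die in the torsion-free quotient'' is not by itself a valid inference: a map with torsion kernel and torsion cokernel need not induce an isomorphism on torsion-free quotients unless one also knows injectivity on the free parts. Fortunately you don't actually need that argument here.
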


\begin{proof}
One has natural isomorphisms
\begin{align*}
\Ext^c_A(\mco,M^{\vee})
	&\cong \Hom_{\dcat A}(\mco, M^\vee[c]) \\
	&\cong \Hom_{\dcat A}(\mco, \ddual A(M)[-1]) \\
	& \cong \Hom_{\dcat A}( M[1], \ddual A(\mco)) \\
	&\cong \Hom_{\dcat A}( M[1], \mco[1]) \\
	&\cong \Hom_A(M,\mco) \\
	&\cong \Hom_{\mco}(M/\fp_AM, \mco) \\
	&\cong \Hom_{\mco}(\tfree{(M/\fp_AM)},\mco)
\end{align*}
In this chain, the third isomorphism is a composition of two adjunctions. The fourth uses the fact that $\ddual A(\mco)$ is the dualizing complex of $\mco$, and hence equal to $\mco[1]$ since the ring $\mco$ is Gorenstein; see \cite[\href{https://stacks.math.columbia.edu/tag/0AX1}{Tag 0AX1}, \href{https://stacks.math.columbia.edu/tag/0DW7}{Tag 0DW7}]{stacks-project}. The rest of the isomorphisms are standard. This yields isomorphisms
\begin{align*}
\Hom_{\mco}(\Ext^c_A(\mco,M^\vee),\ecoh^c_A(\mco)) 
	&\cong \Hom_{\mco}(\Hom_{\mco}(\tfree{(M/\fp_AM)},\mco),\ecoh^c_A(\mco)) \\
	& \cong \ecoh^c_A(\mco) \otimes_\mco \tfree{(M/\fp_AM)}\,.
\end{align*}
With this on hand, and a diagram chase, one gets that the adjoint map in question is naturally isomorphic to the map
\[
\Ext^c_A(\mco,M) \lra  \ecoh^c_A(\mco) \otimes_\mco \tfree{(M/\fp_AM)}\,,
\]
induced by $M\to M/\fp_AM$. This justifies the claim; see \eqref{eq:ecoh-MO}.
\end{proof}

\end{chunk}

\begin{chunk}
\label{ch:BKM-defn}
Let $M$ be a Cohen--Macaulay $A$-module of dimension $c+1$. Suppose there is map $R\to A$ such that $R$ is Gorenstein with $\dim R=c+1$, and $M$ is finitely generated over $R$. Thus $M$ is maximal Cohen--Macaulay as an $R$-module and hence $M^\vee \cong \Hom_R(M,R)$. The pairing \eqref{eq:pairing} is induced by the natural pairing
\[
M\otimes_R \Hom_R(M,R) \lra R
\]
The paring \eqref{eq:pairing} becomes even more concrete when $c=0$, in which case one can take $R=\mco$, and then the pairing is the composite
\[
M[\fp_A] \otimes_\mco \Hom_{\mco}(M,\mco)[\fp_A] \subseteq M\otimes_{\mco} \Hom_{\mco}(M,\mco) \lra \mco\,.
\]
This observation reconciles the definition of congruence modules introduced in this work with that in \cite[\S3]{Bockle/Khare/Manning:2021b}.
\end{chunk}

The following result that expresses, under certain conditions, the invariant $\Coker(\eta_M)$ appearing in the defect formula~\ref{le:defect-formula} in terms of the Tate cohomology modules, $\tExt^{i}_A(\mco,M)$, of the pair $(\mco,M)$; see \cite[Definition~6.1.1]{Buchweitz:2021}.

\begin{proposition}
\label{pr:eta-Gorenstein}
If $A$ is Gorenstein and $M$ is maximal Cohen-Macaulay, then there is an isomorphism of $\mco$-modules
\[
\Coker(\eta_M)\cong \tExt^{c}_A(\mco,M)\,.
\]
\end{proposition}

\begin{proof}
By construction, $\eta_M$ is the composition of maps
\[
\Ext^c_A(\mco,A) \otimes_A M \lra \Ext^c_A(\mco,M) \lra \tfree{\Ext^c_A(\mco,M)}\,.
\]
Since $M$ is maximal Cohen-Macaulay the $\mco$-module $\Ext^c_A(\mco,M)$ is torsion-free, by Lemma~\ref{le:mcat}, so the map on the right is an equality. We identify $\eta_M$ with the map on the left. Moreover, as $A$ is Gorenstein one gets that
\[
\omega_A(\mco)\cong \Ext_A^c(\mco,A)[-c] \quad\text{and}\quad \Ext_A^c(\mco,A)\cong\mco\,.
\]
From the first isomorphism  we deduce that
\[
\Tor^A_{-i}(M,\omega_A(\mco)) \cong
\begin{cases}
0  & \text{for $i\le c+1$}\\
\Ext^c_A(\mco,A)\otimes_AM & \text{for $i=c$}
\end{cases}
\]
We leave it to the reader to verify that $\eta_M$ identifies with the map
\[
\hh^{c}\mathbf{N}(\mco,M)\colon \Tor^A_{-c}(M,\omega_A(\mco)) \lra  \Ext^c_A(\mco,M)\,.
\]
from \cite[Theorem~6.2.5]{Buchweitz:2021}. It remains to note that from part (3) of \emph{op.~cit.} and the computation above one gets an exact sequence
\[
\Tor^A_{-c}(M,\omega_A(\mco)) \xra{\ \eta_M\ } \Ext^c_A(\mco,M) \lra \tExt^c_A(\mco,M) \lra 0
\]
of $\mco$-modules.
\end{proof}

\section{Complete intersections}\label{se:ci}

\begin{chunk}
\label{ch:cat}
From now on we focus on  rings $A$ of the form $P/I$ where  $P\colonequals \mco\pos{t_1,\dots,t_n}$, a ring of formal power series over $\mco$, and $I\subseteq (\varpi)(\bs t)+(\bs t)^2$. Set $\lambda_P\colon P\to \mco$ to be the quotient modulo $(\bs t)$, and $\lambda_A\colon A\to \mco$ the induced map. 

Let $\bs f \colonequals f_1,\dots,f_m$ be a minimal generating set for $I$. The cotangent module $\fp_A/\fp_A^2$ depends only on $n$ and the linear part of the $f_i$, in the following sense: By our assumption on $I$,  each $f_i$ has an unique expression of the form
\begin{equation}
\label{eq:A-presentation}
f_i \colonequals \sum_{j=1}^n{u_{ij}}t_j + g_i \qquad\text{with  $u_{ij}\in (\varpi)\mco $ and $g_i\in (\bs t)^2$.}
\end{equation}
Then one has a presentation
\[
\mco^m \xra{\ (u_{ij}) \ } \mco^n \lra \fp_A/\fp_A^2\lra 0\,.
\]
The following observations will be useful in the sequel.

\begin{lemma}
\label{le:con-rank2}
Fix $A$ as in \ref{ch:cat}. Then $A$ is in $\acat(c)$ if and only if $\rank (u_{ij})= n-c$. When this holds $\dim A\ge c+1$ and  $\height I\le n-c$; the latter inequality is an equality if and only if $\dim A=c+1$. 
\end{lemma}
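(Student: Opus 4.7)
The proof is essentially a direct calculation combining the explicit presentation of the cotangent module displayed immediately before the lemma with standard dimension theory for quotients of regular local rings.

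My starting point is the presentation
\[
\mco^m \xra{\ (u_{ij})\ } \mco^n \lra \fp_A/\fp_A^2 \lra 0\,.
\]
Tensoring with the fraction field $E = k(\fp)$ of $\mco$ (an exact operation) and taking $E$-dimensions gives
\[
\rank_\mco(\fp_A/\fp_A^2) = n - \rank(u_{ij})\,.
\]
This is the key identity that drives the whole proof.

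To prove the biconditional, I combine this with the characterization of $\acat(c)$ recorded in \ref{pg:acat}: membership in $\acat(c)$ is equivalent to $\rank_\mco(\fp_A/\fp_A^2) = c$ (by Lemma~\ref{le:con-rank}, which identifies this rank with $\edim A_{\fp_A}$ and characterizes regularity at $\fp_A$ via the equality $\height \fp_A = \rank_\mco(\fp_A/\fp_A^2)$). Substituting the rank formula above gives $A \in \acat(c)$ iff $\rank(u_{ij}) = n - c$.

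Now assume $A \in \acat(c)$. The inequality $\dim A \geq c+1$ is the final assertion of Lemma~\ref{le:mcat-old}. For the remaining claims, I would use that $P = \mco\pos{t_1,\ldots,t_n}$ is a regular local ring of dimension $n+1$, hence Cohen--Macaulay and catenary, so that the standard dimension formula gives
\[
\dim A = \dim(P/I) = \dim P - \height I = n+1 - \height I\,.
\]
Combined with $\dim A \geq c+1$, this immediately yields $\height I \leq n - c$, with equality if and only if $\dim A = c+1$.

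The only step that is not purely bookkeeping is the identity $\dim(P/I) = \dim P - \height I$; I expect this to be the main---though still minor---obstacle. It follows from the relation $\height \fq + \dim(P/\fq) = \dim P$, which holds for every prime $\fq$ in a Cohen--Macaulay local ring $P$, applied to a minimal prime $\fq$ of $I$ realizing both $\height \fq = \height I$ and $\dim(P/\fq) = \dim(P/I)$.
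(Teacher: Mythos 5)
Your proof is correct and follows essentially the same route as the paper's: the first part is reduced to Lemma~\ref{le:con-rank} via the presentation of $\fp_A/\fp_A^2$ given in \ref{ch:cat}, the lower bound $\dim A\ge c+1$ comes from general dimension theory (as already noted in \ref{ch:congruence-module}), and the inequality $\height I\le n-c$ follows from the identity $\height I + \dim(P/I) = \dim P$ valid in the Cohen--Macaulay local ring $P$, which is precisely the cited \cite[Corollary~2.1.4]{Bruns/Herzog:1998}. You merely spell out in slightly more detail (via tensoring with the fraction field, and via the reduction of the ideal identity to the prime case) what the paper calls immediate.
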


\begin{proof}
The first part of the statement is immediate from Lemma~\ref{le:con-rank}, and the lower bound on $\dim A$ has been commented on in \ref{ch:congruence-module}.  As the ring $P$ is regular ring, \cite[Corollary~2.1.4]{Bruns/Herzog:1998} gives the first equality below:
\[
\height I = \dim P - \dim A \le  n+1 - (c+1) = n-c
\]
and the rest of the statement follows. 
\end{proof}

It also follows from the preceding discussion that when $A$ is in $\acat(c)$ one can put it in the form $P/(\bs f)$, where the relations $\bs f\colonequals f_1,\dots,f_m$ satisfy
\begin{equation}
\label{eq:nice-form}
f_i = \varpi^{d_i}t_i + g_i \qquad\text{with $g_i\in (\bs t)^2$}
\end{equation}
and $d_1\le\cdots\le d_{m}$, where $d_i<\infty$ for $i\le n-c$ and $d_i=\infty$ for $i>n-c$, and then $\varpi^{d_i}=0$. Observe that the $d_i$ are independent of the choice of the presentation for $A$, since one gets an isomorphism 
\[
\frac{\fp_A}{\fp_A^2} \cong \mco^{c} \oplus \frac{\mco}{\varpi^{d_1}\mco} \oplus \cdots \oplus \frac{\mco}{\varpi^{d_{n-c}}\mco}
\]
of $\mco$-modules.  In particular the $d_i$ are Fitting invariants of $\mco$-module $\fp_A/\fp_A^2$ Thus
\begin{equation}
\label{eq:con}
\con A \cong \frac{\mco}{\varpi^{d_1}\mco} \oplus \cdots \oplus \frac{\mco}{\varpi^{d_{n-c}}\mco}
\end{equation}
A standard prime-avoidance argument allows us to go further and obtain the result below; see \cite[Appendix, Proposition 2]{Wiles:1995} for the case $c=0$, and also \cite[Theorem~5.26]{Darmon/Diamond/Taylor:1997}. The hypothesis on $\dim A$ cannot be omitted in the last part of the statement for the complete intersection $C$ satisfies $\dim C=c+1$.

\begin{theorem}
\label{th:ci-approximation}
Fix $A$ in $\acat(c)$. In the notation above, let  $h\colonequals \height I$. There exists a minimal generating set $\bs f \colonequals f_1,\dots,f_m$ of the ideal $I$ where the $f_i$ have the form in \eqref{eq:nice-form} and the sequence $f_1,\dots,f_h$ is regular. In particular, when $\dim A=c+1$, the complete intersection $C\colonequals \mco\pos{\bs t}/(f_1,\dots,f_h)$  is in $\acat(c)$ and the surjective map $C\to A$  induces isomorphisms 
\[
\fp_C/\fp_C^2\xra{\ \cong\ } \fp_A/\fp_A^2\quad\text{and}\quad \con C\xra{\ \cong\ } \con A\,.
\]
\end{theorem}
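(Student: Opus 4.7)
The existence of a minimal generating set $\bs f = f_1,\dots,f_m$ of $I$ in the form \eqref{eq:nice-form} has already been established in the discussion preceding the statement, via Smith normal form applied to the matrix $(u_{ij})$ of linear parts. The substantive new content is the regular-sequence condition on $f_1,\dots,f_h$, which I plan to enforce by an inductive prime-avoidance argument. By Lemma \ref{le:con-rank2} one has $h\le n-c$, so there is room for $h$ linear-type generators among the $n-c$ available slots.

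The induction hypothesis at step $i<h$ is that linear-type elements $f_1,\dots,f_i$ have been selected as part of a minimal generating set of $I$ in the form \eqref{eq:nice-form} and form a regular sequence in $P$. Since $P$ is Cohen--Macaulay, $\height{(f_1,\dots,f_i)} = i < h = \height{I}$, so only finitely many minimal primes $\fq_1,\dots,\fq_s$ of $(f_1,\dots,f_i)$ fail to contain $I$. For each such $\fq_j$ at least one linear-type element of $I$ lies outside $\fq_j$: if every linear-type generator were in $\fq_j$, then, since $\fq_j \not\supseteq I$, some non-linear-type generator $f_\ell \in (\bs t)^2$ would lie outside $\fq_j$, whence $f_1+f_\ell$ is a linear-type element of $I$ outside $\fq_j$. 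Standard prime avoidance, applied also to the submodule $\fm_P I + (f_1,\dots,f_i)$ to guarantee that the chosen element is part of a minimal generating set, then produces a linear-type element $x \in I$ avoiding all the $\fq_j$. A change of variables in $t_{i+1},\dots,t_n$ (fixing $t_1,\dots,t_i$) puts $x$ in the form $\varpi^{d_{i+1}}t_{i+1}+g_{i+1}$, and a Smith-normal-form reduction of the linear parts of the remaining generators modulo those of $f_1,\dots,f_{i+1}$ restores the form \eqref{eq:nice-form} for the enlarged system.

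The main obstacle I anticipate is the bookkeeping needed to show that the linear-algebraic operations used to restore the form \eqref{eq:nice-form} after adjoining $f_{i+1}$ act only on the complementary variables $t_{i+1},\dots,t_n$ and on generators beyond the first $i+1$, so that both the elements $f_1,\dots,f_i$ and the regular-sequence property are preserved. This hinges on the fact that once the linear parts of $f_1,\dots,f_{i+1}$ have been diagonalised along $t_1,\dots,t_{i+1}$, further Smith-normal-form reductions can be confined to the complementary coordinates.

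For the ``in particular'' part, assume $\dim A = c+1$, so Lemma \ref{le:con-rank2} gives $h=n-c$. Then $C \colonequals P/(f_1,\dots,f_h)$ is a complete intersection of dimension $(n+1)-h=c+1$. After localising at $\fp_P$ each $\varpi^{d_i}$ becomes a unit, so $f_i$ is an associate of $t_i + \varpi^{-d_i}g_i$; hence $f_1,\dots,f_h$ extend to a regular system of parameters of $P_{\fp_P}$, and $C_{\fp_C}$ is regular of dimension $n-h=c$, showing $C \in \acat(c)$. Because the remaining generators $f_{h+1},\dots,f_m$ all lie in $(\bs t)^2=\fp_P^2$, one has
\[
\fp_C/\fp_C^2 \,=\, \fp_P/(\fp_P^2+(f_1,\dots,f_h)) \,=\, \fp_P/(\fp_P^2+I) \,=\, \fp_A/\fp_A^2,
\]
and taking torsion submodules yields $\con C \cong \con A$.
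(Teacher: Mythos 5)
Your high-level plan (induction plus prime avoidance) agrees with the paper's, but the mechanism you use differs in a way that leaves a genuine gap. The paper does not pick a fresh linear-type element $x\in I$ and then re-normalise; it perturbs $f'_{j+1}$ \emph{in place}, replacing it by $f'_{j+1}+f''$ with $f''\in (f'_{j+1},\dots,f'_h)^2\subseteq I^2$. Because $I\subseteq(\bs t)$ one has $I^2\subseteq (\bs t)^2$, so the linear part of $f'_{j+1}+f''$ is still $\varpi^{d_{j+1}}t_{j+1}$ and the shape \eqref{eq:nice-form} is preserved with no change of variables whatsoever; and because $I^2\subseteq\fm_P I$, the new sequence $f_1,\dots,f_j,f'_{j+1}+f'',f'_{j+2},\dots,f'_m$ is automatically still a minimal generating set. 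This one choice is exactly what makes the bookkeeping you flag as your ``main obstacle'' disappear.

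Your version has a concrete gap at the normalisation step. You require only that $x\in I$ be linear-type, avoid the associated primes $\fq_j$ of $P/(f_1,\dots,f_i)$, and lie outside $\fm_P I+(f_1,\dots,f_i)$, and you then assert that a change of variables in $t_{i+1},\dots,t_n$ brings $x$ to the form $\varpi^{d_{i+1}}t_{i+1}+g_{i+1}$. That assertion fails: a coordinate change fixing $t_1,\dots,t_i$ cannot cancel nonzero $t_1,\dots,t_i$-components of the linear part of $x$. One can instead subtract $\mco$-multiples of $f_1,\dots,f_i$ (legitimate, since for any $x\in I$ the $t_j$-coefficient of its linear part is divisible by $\varpi^{d_j}$), but then the resulting element may fail to be linear-type at all: nothing in your avoidance conditions rules out, say, $x=f_1+f_{n-c+1}$, which is linear-type, lies outside $\fm_P I+(f_1,\dots,f_i)$, and may avoid every $\fq_j$, yet reduces to $f_{n-c+1}\in(\bs t)^2$. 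To fix this you would have to avoid a \emph{third} non-prime ideal, namely $\{x\in I : L(x)\in \mco t_1\oplus\cdots\oplus\mco t_i\}$ (one checks this is a proper ideal since $i<n-c$), and then still carry out the Smith-normal-form reduction of the remaining rows that you only sketch. The paper's ``perturb inside $I^2$'' device avoids the entire issue, which is why it, rather than the strategy you propose, is the key to the proof. Your argument for the ``in particular'' part is correct and essentially matches the paper's (which leaves those verifications to the reader).
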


\begin{proof}
 We start by choosing a minimal generating set $f_1',\dots,f_m'$ for the ideal $I$ where the $f'_i$ are of the form in \eqref{eq:nice-form}. Since the ring $\mco\pos{\bs t}$ is regular, and hence Cohen--Macaulay, the height of $I$ equals its grade, that is to say, the length of any maximal $\mco\pos{\bs t}$-regular sequence contained in $I$; see \cite[Corollary~2.1.4]{Bruns/Herzog:1998}. We claim that there exists a regular sequence $f_1,\dots,f_h$  such that
\[
f_i\equiv f'_i \mod I^2\,.
\] 
It is clear that setting $f_i=f'_i$ for $i=h+1,\dots,m$ gives the desired sequence.  We construct $f_1,\dots,f_h$ by an induction argument. 

To start with, set $f_1=f'_1$. Suppose that for some $1\le j<h$ elements $f_1,\dots,f_j$ have been found, with the desired properties. 
We can suppose $f'_{j+1}$ is zero-divisor in $\overline{A}\colonequals \mco\pos{\bs t}/(f_1,\dots,f_j)$; else we can take $f_{j+1}= f'_{j+1}$.  The ideal $I \overline{A}$ contains an element that is not a zerodivisor, because the grade of $I$ is $h$, and $f_1,\dots,f_j$ is a regular sequence in $I$ of length $j<h$. Since the ideal generated by $(f'_{j+1})+ (f'_{j+1},\dots,f'_h)^2$ in $\overline{A}$ agrees with $I \overline{A}$ up to radical, a prime avoidance argument yields an element $f''$ in $(f'_{j+1},\dots,f'_h)^2$ such that $f'_{j+1}+f''$ is a not a zerodivisor in $\overline{A}$; see \cite[Theorem~124]{Kaplansky:1974}, or \cite[Lemma~1.2.2]{Bruns/Herzog:1998}. Setting $f_{j+1}\colonequals f'_{j+1}+f''$ completes the induction step.

Suppose $\dim A=c+1$, so that $\height I = n-c$, by Lemma~\ref{le:con-rank2}. It is clear by the choice of the sequence $\bs f$  that the ring $C$ is a complete intersection, of dimension $c+1$, that it is  in $\acat(c)$, and that the induced map on cotangent modules has the stated properties.
\end{proof}

\begin{example}
Consider ring $A\colonequals \mco\pos{s,t}/I$ where $I=(\varpi s, \varpi^2s +st)$. This ring is in $\acat(1)$ and of dimension two. We have at least two choices for a complete intersection in $\acat(1)$ mapping onto $A$:  $\mco\pos{s,t}/(\varpi s)$ or $\mco\pos{s,t}/(\varpi^2s+st)$. The former is the one we want, so choosing any minimal generator for $I$ would not do.

In the same vein, consider $A\colonequals \mco\pos{s,t}/I$ where $I=(\varpi s, \varpi t, st)$. This ring is in $\acat(0)$ and of dimension one. The ring $\mco\pos{s,t}/(\varpi s, \varpi t)$ is in $\acat(0)$, maps onto $A$, and has the same cotangent space, but it is not a complete intersection. What we want is the ideal $(\varpi s, \varpi t + st)$, which is a complete intersection. 
\end{example}
\end{chunk}

\begin{chunk}
Next we record a criteria for detecting isomorphisms in $\acat$ in terms of cotangent modules. To that end, consider a surjective map $\vf\colon A\to B$  in $\acat(c)$. Since $\vf$ is surjective, so is the vertical map in the middle in the commutative diagram:
\begin{equation}
\label{eq:con-diagram}
\begin{tikzcd}
0 \arrow{r} & \con A \arrow[twoheadrightarrow]{d}[swap]{\con{\vf}} \arrow{r} & \fp_A/\fp_A^2 \arrow[twoheadrightarrow]{d} \arrow{r} 
	& \tfree {(\fp_A/\fp_A^2)} \arrow{r}\arrow{d}{\cong} & 0 \\
0 \arrow{r} & \con B \arrow{r} & \fp_B/\fp_B^2 \arrow{r} 
	& \tfree {(\fp_B/\fp_B^2)} \arrow{r} & 0	
\end{tikzcd}
\end{equation}
Thus the vertical map on the right is also surjective; the source and target of this map are free $\mco$-modules of the same rank, since $A$ and $B$ are in $\acat(c)$, so we deduce that the map is an isomorphism. The Snake Lemma then implies $\con{\vf}$ is surjective. 

Here is a criterion for detecting isomorphisms in $\acat$; it generalizes \cite[Theorem~5.21]{Darmon/Diamond/Taylor:1997} that covers the case $c=0$.  The argument below is essentially the one in \emph{op.\ cit.}, but couched in terms of $\aqh iBA{\mco}$,  the $i$th Andr\'e--Quillen homology module of the map $A\to B$.

\begin{lemma}
\label{le:ci-con}
Let $\vf\colon A\to B$ be a surjective a map in $\acat(c)$ with $B$ a complete intersection. If $\length_{\mco}\con A =\length_{\mco}\con B$, then $\vf$ is an isomorphism, and hence $A$ is also a complete intersection.
\end{lemma}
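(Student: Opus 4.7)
The plan is to combine the hypothesis on lengths with the Jacobi--Zariski exact sequence in André--Quillen homology for the composite $A \xrightarrow{\vf} B \xrightarrow{\lambda_B} \mco$, in order to prove that $J \colonequals \Ker(\vf)$ vanishes.

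First I would upgrade the hypothesis on lengths to a statement about the full cotangent module. From diagram \eqref{eq:con-diagram}, the induced map $\fp_A/\fp_A^2 \to \fp_B/\fp_B^2$ is surjective, the map $\tfree{(\fp_A/\fp_A^2)} \to \tfree{(\fp_B/\fp_B^2)}$ is an isomorphism (both are free of rank $c$), and the map $\con\vf\colon \con A \twoheadrightarrow \con B$ is surjective. The hypothesis $\length_\mco \con A = \length_\mco \con B$ forces $\con\vf$ to be an isomorphism, and the Snake Lemma then yields that $\fp_A/\fp_A^2 \to \fp_B/\fp_B^2$ is an isomorphism.

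Next I would invoke the transitivity (Jacobi--Zariski) exact sequence associated to $A \to B \to \mco$: it gives
\[
\aqh{2}{\mco}{B}{\mco} \lra \aqh{1}{B}{A}{\mco} \lra \aqh{1}{\mco}{A}{\mco} \lra \aqh{1}{\mco}{B}{\mco} \lra 0\,,
\]
where $\aqh{1}{\mco}{A}{\mco} = \fp_A/\fp_A^2$, $\aqh{1}{\mco}{B}{\mco} = \fp_B/\fp_B^2$, and $\aqh{1}{B}{A}{\mco} = J/\fp_A J$. The key input is the vanishing $\aqh{2}{\mco}{B}{\mco} = 0$, which I would derive from $B$ being complete intersection in $\acat(c)$: writing $B = \mco\pos{\bs t}/(\bs f)$ with $\bs f$ of length $n-c$ and regular (Theorem \ref{th:ci-approximation}), the transitivity sequence for $\mco\pos{\bs t} \to B \to \mco$ together with $\aqh{n}{B}{\mco\pos{\bs t}}{\mco} = 0$ for $n \neq 1$ (since $\bs f$ is regular) and $\aqh{n}{\mco}{\mco\pos{\bs t}}{\mco} = 0$ for $n \neq 1$ (since $\mco\pos{\bs t}$ is regular) reduces the vanishing to checking that the Jacobian map $(\bs f)/(\bs f)^2 \otimes_B \mco \to (\bs t)/(\bs t)^2 \otimes \mco$ is injective, which follows from the normal form \eqref{eq:nice-form} of the $f_i$ since $\bs f$ has exactly $n-c$ elements.

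With $\aqh{2}{\mco}{B}{\mco} = 0$ in hand, the Jacobi--Zariski sequence gives an injection $J/\fp_A J \hookrightarrow \fp_A/\fp_A^2$ whose image is the kernel of the isomorphism $\fp_A/\fp_A^2 \iso \fp_B/\fp_B^2$ established in the first step. Hence $J/\fp_A J = 0$, and since $J \subseteq \fp_A$ is contained in the maximal ideal of the local ring $A$, Nakayama's lemma forces $J = 0$. Thus $\vf$ is an isomorphism, and $A$ inherits the complete intersection property from $B$. The main obstacle is establishing the vanishing of $\aqh{2}{\mco}{B}{\mco}$ cleanly; the rest is formal manipulation with the transitivity sequence and Nakayama.
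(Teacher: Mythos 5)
Your proposal is correct and follows essentially the same path as the paper: upgrade the length equality to an isomorphism $\fp_A/\fp_A^2 \cong \fp_B/\fp_B^2$ via \eqref{eq:con-diagram}, feed this into the Jacobi--Zariski sequence for $A\to B\to\mco$, use $\aqh 2{\mco}B{\mco}=0$ to get $J/\fp_A J \hookrightarrow \fp_A/\fp_A^2$, and conclude by Nakayama. The one spot where you diverge is the sub-argument for $\aqh 2{\mco}B{\mco}=0$: you compute the conormal map $I/I^2\otimes_B\mco\to\fp_P/\fp_P^2$ explicitly in the normal form \eqref{eq:nice-form} and read off injectivity, while the paper argues more abstractly that this module is $\mco$-torsion (since $B_{\fp_B}$ is regular) yet embeds into the free module $\aqh 1BP{\mco}$ — both are valid, with yours being more computational and the paper's more robust in that it does not need the explicit shape of the relations.
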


\begin{proof}
The crucial point is that $\aqh 2{\mco}B{\mco}=0$,  since $B$ is a complete intersection in $\acat(c)$. To see this, consider a presentation $P\to B$, as in \ref{ch:cat}. Since the augmentation $P\to \mco$ is generated by a regular sequence $\aqh i{\mco}P{\mco}=0$ for all $i\ne 1$, thus the Jacobi-Zariski sequence associated to $P\to B\to \mco$ yields an injection:
\[
0\lra \aqh 2{\mco}B{\mco} \lra \aqh 1BP{\mco} \,.
\]
Since $B_{\fp_B}$ is regular, the $\mco$-module $\aqh 2{\mco}B{\mco}$ is torsion, as can be seen by localizing at $\fp_B$. On the other hand, the $\mco$-module $\aqh 1BP{\mco}$ is free, as the map $P\to B$ is complete intersection. Thus $\aqh 2{\mco}B{\mco}=0$.

Consider the commutative diagram \eqref{eq:con-diagram}. The hypothesis is that the source and target of $\con{\vf}$ have the same length, hence it is a bijection. Then the diagram implies that  the map $\fp_A/\fp_A^2\to \fp_B/\fp_B^2$ is also a bijection. 

Set $J\colonequals \Ker(\vf)$. The Jacobi-Zariski sequence associated to $A\to B\to \mco$ reads
\[
0\lra (J/J^2)\otimes_B \mco \lra \fp_A/\fp_A^2 \lra \fp_B/\fp_B^2 \lra 0\,.
\]
The map on the left is one-to-one because  $\aqh 2{\mco}B{\mco}=0$. The map between the cotangent modules is bijective, so we conclude that $(J/J^2)\otimes_B \mco =0$. Hence  $J=0$, by Nakayama's Lemma. 
\end{proof}

\end{chunk}

\section{Tate constructions}\label{se:Tate}

Let $A$ be in $\acat(c)$, with presentation as in \ref{ch:cat}. When convenient we can assume that the relations $\bs f$ defining have $A$ the form given in Theorem~\ref{th:ci-approximation}. In this section we completely describe the structure of  the torsion-free quotient, $\ecoh^*_A(\mco)$, of the $\mco$-algebra $\Ext_A^*(\mco,\mco)$, which is the target of the map that computes the congruence module of $A$. The key tool used in the proofs are constructions of free resolutions introduced by Tate~\cite{Tate:1957}, and developed further in, for instance, Avramov's book~\cite{Avramov:1998}. We take the latter as our basic reference.

\begin{chunk}
\label{ch:acyclic-closure}
Let $A\la X\ra$ be the exterior algebra over $A$ on indeterminates $X\colonequals \{x_1,\dots,x_n\}$, with $|x_i|=1$ for each $i$.  We make this into a DG (=differential graded) algebra with differential determined by $d(x_i)=t_i$ for $1\le i\le n$. Thus $A\la X\ra$ is the Koszul complex on elements $\bs t$. 

Since the $g_i$, from \ref{eq:A-presentation}, are in $(\bs t)^2$ one can write $g_i = \sum_{j} g'_{ij}t_j$ with $g'_{ij}$ in $(\bs t)$. 
Evidently, the elements 
\[
z_i \colonequals  \sum_{j=1}^n u_{ij}x_i + \sum_{j=1}^n g'_{ij}x_j \qquad\text{for $1\le i\le m$,}
\]
are cycles in $A\la X\ra$ of degree one. Since $\bs f$ is a minimal generating set for the ideal $I$,  the homology classes $z_1,\dots,z_m$  are a minimal generating set for the $\mco$-module $\HH 1{A\la X\ra}$. Following Tate~\cite{Tate:1957}, we adjoin divided powers variables $Y\colonequals \{y_1,\dots,y_m\}$, in degree two, to kill these homology classes: 
\[
A\la X,Y\ra \colonequals A\la X,Y \mid d(y_i)=  z_i \text{ for $1\le i\le m$}\ra\,.
\]
This is sometimes called the \emph{Tate construction} of $\mco$ over $A$;  up to an isomorphism of DG algebras with divided powers, it is independent of the choice of a minimal generating set for the ideal $\fp_A$, and the $\mco$-module $\HH 1{A\la X\ra}$.  The surjection $\lambda_A\colon A\to \mco$ extends to a map  $A\la X,Y\ra\to \mco$ of DG algebras over $A$, where $X,Y$ map to $0$, for degree reasons.  By construction $\HH 1{A\la X,Y\ra}=0$. Unless the local ring $A$ is a complete intersection, $A\la X,Y\ra$ will have homology in higher degrees; see Proposition~\ref{pr:ci-tate}. Adjoining further exterior variables and divided power variables one can extend $A\la X,Y\ra$ to an acyclic closure 
\[
\ve\colon A\la U\ra\xra{\ \simeq\ }\mco\,;
\]
see \cite[Theorem~1]{Tate:1957} and \cite[Construction 6.3.1]{Avramov:1998}. By construction $U=\{U_i\}_{i\geqslant 1}$ is a graded set of indeterminates, with $U_1 = X$ and $U_2=Y$, and $A\la U\ra$ is the free $A$-algebra with divided powers, on the set $U$. 

The differential on $A\la U \ra$ satisfies
\begin{equation}
\label{eq:d-minimal}
d(A\la X\ra) \subseteq (\bs t)A\la X\ra \quad\text{and}\quad d(A\la U\ra) \subseteq (\varpi,\bs t)A\la U\ra
\end{equation}
The first inclusion holds by construction. The second one, which says that the $A\la U\ra$ is a minimal free resolution of $\mco$ over $A$, holds because the map $A\to \mco$ is large, for it has an algebra section, namely the structure map $\mco\to A$; for instance, see \cite[Corollary~2.7]{Avramov/Iyengar:2000}. The latter observation does not play a role in the sequel. 

Since the acyclic closure is a free resolution of $\mco$ over $A$, one has
\[
\Ext^*_A(\mco,\mco) = \hh^*(\End_A(A\la U\ra))\,,
\]
where $\End_A(A\la U\ra) \colonequals \Hom_A(A\la U\ra, A\la U\ra)$. The complex $\End_A(A\la U\ra)$ is a DG $A$-algebra, with composition product, and  induces the product on cohomology. This endows $\Ext^*_A(\mco,\mco)$ with a multiplication, making it into a (typically non-commutative) $\mco$-algebra. In the sequel we view $\Ext^*_A(\mco,\mco)$ as an $\mco$-algebra with this product; it coincides with the Yoneda product, up to a sign. 

The quasi-isomorphism $A\la U\ra\xra{\sim} \mco$ induces a quasi-isomorphism
\[
\Hom_A(A\la U\ra, A\la U\ra) \xra{\ \simeq\ } \Hom_A(A\la U\ra, \mco)
\]
so one can also use the complex on the right to compute $\Ext_A^*(\mco,\mco)$ as an $\mco$-module. More generally, for any $A$-module, or complex $M$, one has
\[
\Ext_A^*(\mco, M)\cong \hh^*(\Hom_A(A\la U\ra, M))\,.
\]
This identification is functorial in $M$.
\end{chunk}

\begin{chunk}
\label{ch:theta-construction}
So far we have not used the fact that $A$ is in $\acat(c)$, but now we do, to construct an element in $\Ext^c_A(\mco,\mco)$ that maps to a generator for $\ecoh_A^c(\mco)$. The discussion below is vacuous when $c=0$ so we assume $c\ge 1$. 

We can assume the relations $\bs f$ defining $A$ have the form in \eqref{eq:nice-form}. Thus the residue class of the $t_{n-c+1},\cdots,t_n$ are a basis for  the free $\mco$-module $\tfree{(\fp_A/\fp_A^2)}$. Applying \cite[Proposition~1.4]{Iyengar:2001b} yields $A$-linear maps 
\[
\theta_i\colon A\la U\ra\to A\la U\ra\,, \quad\text{for $n-c+1\le i\le n$,}
\]
with the following properties. 
\begin{enumerate}[\quad\rm(a)]
\item
$\theta_i$ is a $\varGamma$-derivation and for each $1\le j\le n$ one has
\[
\theta_i(x_j)=
\begin{cases}
1 & j=i \\
0 &\text{else}
\end{cases}
\]
where the $x_j$ are as in \ref{ch:acyclic-closure}.
\item
$d \theta_i + \theta_i d =0$, where $d$ is the differential on $A\la U\ra$.
\end{enumerate}
See  \cite[1.1]{Iyengar:2001b} for the notion of a $\varGamma$-derivation; the convention in \emph{op.~cit.} is that such derivations commute with the differential, up to the usual sign. Condition (b) above states that each $\theta_i$ represents a cycle, of (upper) degree $1$, in the complex $\End_A(A\la U\ra)$ and hence a class in $\Ext^1_A(\mco,\mco)$. Moreover the composition
\begin{equation}
\label{eq:thetaA}
\theta_A \colonequals \ve\circ \theta_n\circ\cdots \circ\theta_{n-c+1}\colon A\la U\ra \lra \mco 
\end{equation}
is a chain-map so represents a class in $\Ext^c_A(\mco,\mco)$. Evidently 
\[
\theta_A(x_{n-c+1}\cdots x_n)=1
\]
This computation is interesting because of the result below. Recall that by construction~\ref{ch:acyclic-closure} the Koszul complex, $A\la X\ra$, is a DG subalgebra of $A\la U\ra$, so any map from $A\la U\ra$ can be restricted to $A\la X\ra$. 

\begin{lemma}
\label{le:theta-class}
Let $\theta \colon A\la U\ra \to \mco$ be any $A$-linear chain-map,  of upper degree $c$, with $\theta(A\la X\ra)=\mco $. The class $[\theta]$ in $\Ext^c_A(\mco,\mco)$ generates the free $\mco$-module $\ecoh^c_A(\mco)$. This conclusion applies, in particular, to the class  $[\theta_A]$ from \eqref{eq:thetaA}.
\end{lemma}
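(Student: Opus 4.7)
My plan is to identify $\ecoh^c_A(\mco)$ with a specific $\mco$-submodule of the fraction field $E$ of $\mco$, and then exploit the cocycle condition on $\theta$ together with the hypothesis $\theta(A\la X\ra)=\mco$ to pin down the image of $[\theta]$ as a unit.

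First, I will describe $\Ext^c_{A_{\fp_A}}(E,E)$ explicitly. Using the presentation \eqref{eq:nice-form}, the images of $t_{n-c+1},\ldots,t_n$ form a regular system of parameters in the regular local ring $A_{\fp_A}$ of dimension $c$, since $t_1,\ldots,t_{n-c}$ become redundant once $\varpi$ is inverted. Consequently the Koszul sub-DG-algebra $\mathcal{K}\colonequals A_{\fp_A}\la x_{n-c+1},\ldots,x_n\ra$ of $A_{\fp_A}\la U\ra$ is the minimal free resolution of $E$ over $A_{\fp_A}$; as both $\mathcal{K}$ and $A_{\fp_A}\la U\ra$ resolve $E$ and the inclusion is the identity on $\hh_0$, it is a quasi-isomorphism. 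Applying $\Hom_{A_{\fp_A}}(-,E)$ and observing that the induced differential on $\Hom(\mathcal{K},E)$ is zero (because $d$ lands in the maximal ideal, which kills $E$), I obtain an isomorphism $\Ext^c_{A_{\fp_A}}(E,E)\cong E$ sending a cocycle $\phi$ to $\phi(x_{n-c+1}\cdots x_n)$. The localization map $\Ext^c_A(\mco,\mco)\to\Ext^c_A(\mco,\mco)_{\fp_A}=\Ext^c_{A_{\fp_A}}(E,E)$ has kernel exactly the $\mco$-torsion, so it factors through an injection $\ecoh^c_A(\mco)\hookrightarrow E$; since every $A$-linear cocycle takes values in $\mco$, the image actually lies in $\mco\subset E$.

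Next, I will show that the cocycle condition on $\theta$ forces $\theta(x_J)=0$ whenever $|J|=c$ and $J\neq\{n-c+1,\ldots,n\}$. Fix $k\in\{1,\ldots,m\}$ and $J'\subseteq\{1,\ldots,n\}$ of size $c-1$, and consider the degree-$(c+1)$ element $y_k x_{J'}$. The derivation rule gives $d(y_k x_{J'})=z_k x_{J'}+y_k d(x_{J'})$; the second summand lies in $(\bs t)A\la U\ra$, which $\theta$ annihilates since $\bs t\subseteq\ker\lambda_A$. Hence the cocycle relation $\theta(d(y_k x_{J'}))=0$ reduces to $\theta(z_k x_{J'})=0$. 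Using the explicit form \eqref{eq:nice-form}, $z_k=\varpi^{d_k}x_k+\sum_j g'_{kj}x_j$ with $g'_{kj}\in(\bs t)$, so $\theta(z_k x_{J'})=\varpi^{d_k}\theta(x_k x_{J'})$; since $\mco$ is torsion-free this forces $\theta(x_k x_{J'})=0$ whenever $d_k<\infty$, i.e.\ whenever $k\le n-c$. Letting $J'$ vary, $\theta(x_J)=0$ for every $J$ of size $c$ meeting $\{1,\ldots,n-c\}$, so indeed only $J=\{n-c+1,\ldots,n\}$ survives.

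Finally, combining the two steps, the hypothesis $\theta(A\la X\ra)=\mco$ says the $\mco$-ideal $\sum_{|J|=c}\mco\cdot\theta(x_J)$ equals $\mco$, and by the second step this ideal reduces to $\mco\cdot\theta(x_{n-c+1}\cdots x_n)$, forcing $\theta(x_{n-c+1}\cdots x_n)$ to be a unit in $\mco$. Under the injection $\ecoh^c_A(\mco)\hookrightarrow\mco\subset E$ of the first paragraph, $[\theta]$ is carried to this unit, which simultaneously forces $\ecoh^c_A(\mco)=\mco$ and identifies $[\theta]$ as a generator. The particular case of $\theta_A$ then follows because properties (a) of the $\theta_i$ yield $\theta_A(x_{n-c+1}\cdots x_n)=\pm 1$ by an immediate recursive computation. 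The only nontrivial step is the cocycle calculation of the second paragraph, which is what converts the abstract hypothesis $\theta(A\la X\ra)=\mco$ into a concrete unit-valued statement at the single monomial $x_{n-c+1}\cdots x_n$.
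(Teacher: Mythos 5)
Your proof is correct but takes a genuinely different route from the paper's. The paper argues by contradiction entirely at the level of $\ecoh^c_A(\mco)$: if $[\theta]$ failed to generate, then $[\theta]=\varpi[\alpha]$ in $\ecoh^c_A(\mco)$, so $\varpi^s(\theta-\varpi\alpha)=\beta d$ for some homotopy $\beta$ and some $s\geq 1$; evaluating at an $a\in A\la X\ra$ with $\theta(a)=1$ and using $d(a)\in(\bs t)A\la X\ra$ kills the $\beta d$ term and yields the contradiction $\varpi^s=\varpi^{s+1}\alpha(a)$. That argument never touches the Tate cycles $z_k$ or the detailed shape of the presentation beyond the minimality of the Koszul subcomplex. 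Your approach is more computational: you realize $\ecoh^c_A(\mco)$ concretely as a submodule of $\mco$ via evaluation at the top Koszul monomial $x_{n-c+1}\cdots x_n$ (using the localized quasi-isomorphism $\mathcal K\hookrightarrow A_{\fp_A}\la U\ra$), and then you exploit the cocycle identity $\theta\circ d=0$ applied to $y_kx_{J'}$ together with the explicit form of the cycles $z_k=\varpi^{d_k}x_k+\sum_j g'_{kj}x_j$ from \eqref{eq:nice-form} to force $\theta(x_J)=0$ for all other monomials $x_J$ of degree $c$; the hypothesis $\theta(A\la X\ra)=\mco$ then collapses to $\theta(x_{n-c+1}\cdots x_n)\in\mco^\times$, which simultaneously identifies $\ecoh^c_A(\mco)=\mco$ and pins down $[\theta]$ as a generator. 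The trade-off is that your argument is tied to the normalized presentation \eqref{eq:nice-form} and explicitly uses the cocycle condition on $\theta$, but in exchange it produces an explicit description of the evaluation isomorphism and of which Koszul monomials carry the class, which is additional structural information the paper's shorter contradiction argument does not expose.
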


\begin{proof}
Suppose to the contrary that $\theta$ does not generate the free $\mco$-module  $\ecoh^c_A(\mco)$. Since the latter has rank one, there must exist an $A$-linear chain-map $\alpha\colon A\la U\ra\to \mco$ of upper degree $c$ such that the class of $\theta - \varpi \alpha$ is zero in $\ecoh^c_A(\mco)$; equivalently that, for some integer $s\ge 1$, the class of $\varpi^s(\theta - \varpi\alpha)$ is zero in $\Ext^c_A(\mco,\mco)$. Thus, with $d$ denoting the differential on $A\la U\ra$, there exists a $A$-linear homotopy $\beta\colon A\la U\ra \to \mco$, of upper degree $c-1$, such that
\[
\varpi^s(\theta - \varpi\alpha) = \beta d\,.
\]
By the hypothesis, there exists an element $a$ in $A\la X\ra$ such that $\theta(a)=1$. Then from the equality above we get
\[
\varpi^s = \varpi^s\theta(a) =  \varpi^{s+1}\alpha(a) + \beta d(a) = \varpi^{s+1}\alpha(a)
\]
where the last equality holds as $d(a)$ is in $(\bs t)A\la U\ra$, hence $\beta d(a)=0$; see \eqref{eq:d-minimal}. This is a contradiction.
\end{proof}

The preceding result can be upgraded to a complete description of the torsion-free quotient of $\Ext^*_A(\mco,\mco)$. This is discussed below.

\begin{chunk}
\label{ch:algebra-structures}
Consider $\ecoh_A^*(M)\colonequals \{\ecoh_A^i(M)\}_{i\geqslant 0}$ for any $A$-complex $M$. The graded $\mco$-algebra structure on $\Ext^*_A(\mco,\mco)$ descends to $\ecoh_A^*(\mco)$. Moreover, $\ecoh^*_A(M)$ is a graded module over $\ecoh^*_A(\mco)$ for any $A$-complex $M$.

By construction, $\ecoh^*_A(\mco)$ is a subring of $\Ext^*_R(k(\fp),k(\fp))$ where $R\colonequals A_{\fp_A}$ and $k(\fp)$ is its residue field. Since $R$ is regular, $\Ext^*_R(k(\fp),k(\fp))$ is an exterior algebra on $\Ext^1_R(k(\fp),k(\fp))$, and hence strictly graded-commutative; see \cite[pp.~110]{Gulliksen/Levin:1969}. Thus $\ecoh^*_A(\mco)$ is also graded-commutative and  so there is a natural map
\begin{equation}
\label{eq:xi-definition}
\xi_A\colon \bigwedge_{\mco} \ecoh^1_A(\mco) \lra \ecoh^*_A(\mco)
\end{equation}
of graded $\mco$-algebras, with $\ecoh^1_A(\mco)$ in degree one. It is easy to check that this map is bijective when $c=0$, for then $\ecoh^*_A(\mco)\cong \mco$ and $\bigwedge_{\mco} \ecoh^1_A(\mco)\cong \mco$, since $\ecoh^1_A(\mco)=0$. The result below  generalizes this to arbitrary $c$.

\begin{theorem}
\label{th:xi-algebra}
For any $A$ in $\acat(c)$  the following statements hold.
\begin{enumerate}[\quad\rm(1)]
\item
There is an isomorphism of $\mco$-modules
\[
 \mco^{c} \cong \Hom_{\mco}(\fp_A/\fp_A^2,\mco)\cong \Ext^1_A(\mco,\mco)\,;
\]
in particular,  the map $\Ext^1_A(\mco,\mco)\to \ecoh^1_A(\mco)$ is bijective.
\item
The map $\xi_A$ from \eqref{eq:xi-definition} is bijective.
\end{enumerate}
\end{theorem}

\begin{proof}
Since the $\mco$-module $\fp_A/\fp_A^2$ has rank $c$, its $\mco$-dual is a free module of rank $c$; this explains the first isomorphism in (1). As to the other one,  consider the exact sequence of $A$-modules
\[
0\lra \fp_A\lra A \lra \mco \lra 0
\]
Since $\Ext^1_A(A,-)=0$, applying $\Hom_A(-,\mco)$ yields an exact sequence of $\mco$-modules
\[
0\lra \Hom_{A}(\mco,\mco)\xra{\ \cong\ } \Hom_A(A,\mco) \lra \Hom_A(\fp_A,\mco)\lra \Ext^1_A(\mco,\mco)\lra 0\,.
\]
It remains to note that adjunction yields an isomorphism
\[
\Hom_A(\fp_A,\mco) \cong \Hom_{\mco}(\mco \otimes_A\fp_A,\mco)\cong \Hom_{\mco}(\fp_A/\fp_A^2,\mco)\,.
\]
This justifies the isomorphisms in (1).

(2) Set $\Lambda\colonequals  \wedge_{\mco} \ecoh^1_A(\mco)$; thus $\Lambda^1= \Ext^1_A(\mco,\mco)$, by (1).   In the notation of \ref{ch:theta-construction}, the class of the maps 
\[
\theta_i\colon A\la U\ra \lra A\la U\ra \quad\text{for $n-c+1\le i\le n$}
\]
form a basis for the $\mco$-module $\Lambda^1$. These generate the $\mco$-algebra $\Lambda$ so Lemma~\ref{le:theta-class} is equivalent to: $\xi_A(\Lambda^{c}) = \ecoh^c_A(\mco)$. This is the key to the bijectivity of $\xi_A$. 

Indeed, the map $\xi_A$ is an isomorphism when localized $\fp_A$. Thus $\xi_A$ is one-to-one, for its source is torsion-free, and its cokernel is $\mco$-torsion. If $\xi_A$ is not surjective, then for some integer $i$  there exists an element $\alpha$ in $\Lambda^i\setminus \varpi \Lambda^i$ such that $\xi_A(\alpha)$ is in $\varpi \ecoh^i_A(\mco)$. Since $\Lambda$ is an exterior algebra on $\Lambda^1$, a free $\mco$-module of rank $c$, one has $\Lambda^{r-i}\alpha = \Lambda^{c}$ so that
\[
\xi_A(\Lambda^{c}) =  \xi_A(\Lambda^{c-i}\alpha) \subseteq \xi_A(\Lambda^{c-i}) \xi_A(\alpha) \subseteq \ecoh_A^{c-i}(\mco) \varpi \ecoh_A^{i}(\mco) \subseteq \varpi \ecoh_A^{c}(\mco)\,.
\]
This is a contradiction. Therefore $\xi_A$ is bijective, as claimed.
\end{proof}
\end{chunk}

It is clear from the construction that the map $\xi_A$ is functorial in $A$. Namely, given a map $\vf\colon A\to B$ in $\acat$, the natural map of $\mco$-algebra $\Ext^*_B(\mco,\mco)\to \Ext^*_A(\mco,\mco)$ induces the commutative diagram
\begin{equation}
\label{eq:xi-naturality}
\begin{tikzcd}
	& \bigwedge_{\mco} \ecoh^1_B(\mco) \arrow{d}[swap]{\cong} \arrow{rr}{\wedge_{\mco} \ecoh^1_{\vf}(\mco)}
			 && \bigwedge_{\mco} \ecoh^1_A(\mco)  \arrow{d}{\cong}   \\
	&  \ecoh^*_B(\mco) \arrow{rr}[swap]{\ecoh^*_{\vf}(\mco)}  
			 && \ecoh^*_A(\mco)
\end{tikzcd}
\end{equation}
where the isomorphisms are by Theorem~\ref{th:xi-algebra}.

\begin{proposition}
\label{pr:xi-iso}
Let $\vf\colon A\to B$ be a surjective map in $\acat(c)$. If $\vf$ is an isomorphism at $\fp_A$, then induced map 
\[
\ecoh^*_{\vf}(\mco)\colon \ecoh^*_B(\mco)\to \ecoh^*_A(\mco)
\]
of $\mco$-algebras is an isomorphism. In particular, $\ecoh^c_{\vf}(\mco)$ is bijective.
\end{proposition}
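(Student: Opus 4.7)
The plan is to leverage the structural description of $\ecoh^*_A(\mco)$ in Proposition~\ref{pr:xi-algebra} to reduce the claim that $\ecoh^*_\vf(\mco)$ is an isomorphism in all degrees to the single assertion that it is an isomorphism in degree one; then I will deduce this from the behavior of $\vf$ on cotangent modules.

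First, I would invoke the naturality square \eqref{eq:xi-naturality}. Both vertical maps are isomorphisms by Proposition~\ref{pr:xi-algebra}(2), so $\ecoh^*_\vf(\mco)$ is identified with the map $\wedge_{\mco} \ecoh^1_\vf(\mco)$ between exterior $\mco$-algebras. Hence the proposition reduces to showing that $\ecoh^1_\vf(\mco)\colon \ecoh^1_B(\mco)\to \ecoh^1_A(\mco)$ is bijective.

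Next, I would exploit Proposition~\ref{pr:xi-algebra}(1). The isomorphism $\ecoh^1_A(\mco)\cong \Hom_{\mco}(\fp_A/\fp_A^2,\mco)$ there is constructed naturally from the short exact sequence $0\to \fp_A\to A\to \mco\to 0$ by applying $\Hom_A(-,\mco)$ and $\Hom$--$\otimes$ adjunction, so it is functorial with respect to maps in $\acat$. Consequently, under the identifications coming from Proposition~\ref{pr:xi-algebra}(1) for $A$ and for $B$, the map $\ecoh^1_\vf(\mco)$ coincides with the $\mco$-dual of the natural map $\fp_A/\fp_A^2\to \fp_B/\fp_B^2$ induced by $\vf$. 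Since $\Hom_\mco(-,\mco)$ annihilates $\mco$-torsion, this is in turn the $\mco$-dual of the induced map on torsion-free quotients $\tfree{(\fp_A/\fp_A^2)}\to \tfree{(\fp_B/\fp_B^2)}$, so it is enough to prove the latter is an isomorphism.

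Finally, I would read off this last fact directly from the commutative diagram \eqref{eq:con-diagram}: the middle vertical map is surjective (because $\vf$ is surjective), hence by the Snake Lemma so is the map on torsion-free quotients on the right. Since $A$ and $B$ both lie in $\acat(c)$, the $\mco$-modules $\tfree{(\fp_A/\fp_A^2)}$ and $\tfree{(\fp_B/\fp_B^2)}$ are free of the same rank $c$ by Lemma~\ref{le:con-rank}, and a surjection of finitely generated free modules of the same rank over the noetherian local ring $\mco$ is an isomorphism. This gives $\ecoh^1_\vf(\mco)$, and hence $\ecoh^*_\vf(\mco)$, bijective. There is no real obstacle in the argument; the bulk of the work has already been done in Proposition~\ref{pr:xi-algebra}, and the hypothesis that $\vf$ is an isomorphism at $\fp_A$ serves chiefly to keep both rings in $\acat(c)$ for the same $c$, which is what makes the rank-counting step at the end succeed.
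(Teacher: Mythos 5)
Your proof is correct and follows the same skeleton as the paper's: reduce to degree one via the naturality square \eqref{eq:xi-naturality}, then translate $\ecoh^1_{\vf}(\mco)$ into the $\mco$-dual of the map on cotangent modules via Proposition~\ref{pr:xi-algebra}(1). The only variation is in the last step. The paper uses the hypothesis that $\vf$ is an isomorphism at $\fp_A$ to deduce that $I/\fp_A I$ (where $I=\Ker\vf$) is $\mco$-torsion, so that applying $\Hom_\mco(-,\mco)$ to the conormal exact sequence gives an isomorphism; you instead pass to torsion-free quotients, invoke surjectivity of the cotangent map from diagram \eqref{eq:con-diagram}, and finish by rank-counting. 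Both arguments are sound and of essentially the same difficulty.

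One remark on your closing sentence: the claim that the ``isomorphism at $\fp_A$'' hypothesis ``serves chiefly to keep both rings in $\acat(c)$'' is misplaced, since membership of $A$ and $B$ in $\acat(c)$ is already a stated hypothesis. What your argument actually exposes---worth making explicit---is that the ``isomorphism at $\fp_A$'' hypothesis is \emph{automatic} once $\vf$ is surjective and $A,B\in\acat(c)$: the induced map $A_{\fp_A}\to B_{\fp_B}$ is a surjection between regular local rings of the same dimension $c$, hence an isomorphism (a regular local ring is a domain, so the kernel, being a height-zero prime, must vanish). The paper itself uses this fact implicitly in the proof of Theorem~\ref{th:invariance-of-domain}. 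Your proof, by not invoking the hypothesis at all, makes this redundancy visible.
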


\begin{proof}
 Set $I\colonequals \Ker(\vf)$. Since $\vf$ is an isomorphism at $\fp_A$, one has $I_{\fp_A}=0$.  The diagram $A\to B\to\mco$ induces an exact sequence
\[
I/\fp_A I\lra \fp_A/\fp_A^2 \lra \fp_B/\fp_B^2 \lra 0
\]
of $\mco$-modules. Since $I_{\fp_A}=0$ the $\mco$-module $I/\fp_A I$ is torsion, so the exact sequence above induces an isomorphism
\[
\Hom_{\mco}( \fp_B/\fp_B^2,\mco)\xra{\ \cong\ } \Hom_{\mco}( \fp_A/\fp_A^2,\mco)\,.
\]
Thus the map $\ecoh^1_{\vf}(\mco)$ is bijective; see Theorem~\ref{th:xi-algebra}(1). It remains to recall the commutative diagram~\eqref{eq:xi-naturality}.
\end{proof}

\end{chunk}

\subsection*{Complete intersections}\label{sse:CI}
The remainder of the discussion in this section is on complete intersection rings, leading to the statement that the Wiles defect of such rings is zero; see Theorem~\ref{th:wiles}. The result below, which is already contained in the literature, leads us to it. 

\begin{proposition}
\label{pr:ci-tate}
Let $A$ be in $\acat$ and $A\la X,Y\ra$ the Tate construction from \ref{ch:acyclic-closure}. The following conditions are equivalent:
\begin{enumerate}[\quad\rm(a)]
\item
The local ring $A$ is complete intersection;
\item
The map $A\la X,Y\ra \to \mco$ is a quasi-isomorphism;
\item
The $\mco$-module $\HH 1{A\la X\ra}$ is free and the natural map is bijective:
\[
\bigwedge_{\mco}(\HH 1{A\la X\ra})\xra{\ \cong\ } \HH *{A\la X\ra}
\]
\end{enumerate}
\end{proposition}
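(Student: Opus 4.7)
The plan is to establish the cycle (a) $\Rightarrow$ (c) $\Rightarrow$ (b) $\Rightarrow$ (a).

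For (a) $\Rightarrow$ (c), I would use the complete intersection hypothesis to present $A=P/(\bs f)$ with $\bs f=f_1,\dots,f_h$ a regular sequence in $P$, so that $K(\bs f;P)$ is a free $P$-resolution of $A$. Both DG algebras $K(\bs t;A)=A\la X\ra$ and $K(\bs f;\mco)$ compute $\Tor^P_*(\mco,A)$, and they are linked by natural quasi-isomorphisms of DG algebras through the double resolution $K(\bs t;P)\otimes_P K(\bs f;P)$; hence their homology algebras are isomorphic as graded $\mco$-algebras. Since each $f_i$ lies in $(\bs t)$, the differential on $K(\bs f;\mco)$ vanishes, giving $\HH *{A\la X\ra}\cong \bigwedge_{\mco}(\mco^h)$ with generators in degree $1$, which is exactly (c).

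For (c) $\Rightarrow$ (b), I plan to filter $A\la X,Y\ra$ by total $Y$-weight. Since $|y_i|=2$ the filtration is bounded in each homological degree, so the associated spectral sequence converges. The weight-preserving part of the differential is the $A\la X\ra$-differential, while $d(y_i)=z_i$ lowers the weight by one, so $E^1\cong \HH *{A\la X\ra}\otimes_A \Gamma_A(Y)$ with $d_1(y_i)=[z_i]$. Under (c) this $E^1$ page becomes $\bigwedge_{\mco}(Z)\otimes_{\mco}\Gamma_{\mco}(Y)$ with $d_1(y_i)=z_i$, the classical Koszul-dual resolution of $\mco$; a direct computation on the basis $\{y^{(a)},y^{(a)}z\}$ in the single-variable case, combined with K\"unneth, gives acyclicity in positive degrees. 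The spectral sequence thus collapses to $\mco$ in degree $0$ and $A\la X,Y\ra\simeq \mco$.

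For (b) $\Rightarrow$ (a), I would appeal directly to Tate's characterization: an acyclic $A\la X,Y\ra$ is itself an acyclic closure of $\mco$ over $A$, so no divided-power variables in degrees $\ge 3$ are needed, equivalently the deviations $\eps_i(A)$ vanish for $i\ge 3$, which by \cite[Theorem~6]{Tate:1957} (see also \cite[Theorem~7.3.3]{Avramov:1998}) forces $A$ to be complete intersection. The main obstacle I expect is the spectral sequence step in (c) $\Rightarrow$ (b): arranging the filtration carefully against the divided-power structure, verifying that $d_1$ takes the claimed form, and identifying the $E^1$ page as the standard dual Koszul complex so that its acyclicity becomes a classical fact. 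The remaining implications reduce to a direct resolution calculation and to an appeal to Tate's theorem.
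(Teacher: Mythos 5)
Your route (a) $\Rightarrow$ (c) $\Rightarrow$ (b) $\Rightarrow$ (a) via direct arguments is genuinely different from the paper, which disposes of all three implications by citation: (a) $\Rightarrow$ (b) is Tate's Theorem~4, (b) $\Leftrightarrow$ (c) is Blanco--Majadas--Rodicio, and (c) $\Leftrightarrow$ (a) is \cite[Prop.~7.7]{Avramov/Henriques/Sega:2013}. Your (a) $\Rightarrow$ (c) via the double Koszul complex $K(\bs t;P)\otimes_P K(\bs f;P)$ is a sound and more self-contained alternative, and your $Y$-weight spectral sequence for (c) $\Rightarrow$ (b) is essentially an in-house proof of the relevant direction of Blanco--Majadas--Rodicio; the details (divided-power conventions, identification of $d_1$) need care but the strategy is right.

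The genuine gap is in (b) $\Rightarrow$ (a). You write that acyclicity of $A\la X,Y\ra$ makes it the acyclic closure of $\mco$ over $A$ with no variables in degree $\ge 3$, ``equivalently the deviations $\eps_i(A)$ vanish for $i\ge 3$,'' and then cite Tate/Avramov. But Tate's Theorem~6 and \cite[Theorem~7.3.3]{Avramov:1998} characterize complete intersections through the acyclic closure of the \emph{residue field} $k$ over $A$, i.e.\ through the deviations $\eps_i(A)=\eps_i(A\to k)$. What your argument actually yields is vanishing of the deviations of the \emph{augmentation} $\lambda_A\colon A\to\mco$ in degrees $\ge 3$ --- this is precisely the ``quasi-complete intersection'' condition of \cite{Avramov/Henriques/Sega:2013}, which is strictly weaker than $A$ being a complete intersection in general. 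Conflating the two invariants is where the step fails. The paper closes exactly this gap by invoking \cite[Prop.~7.7]{Avramov/Henriques/Sega:2013}: a quasi-CI surjection onto a complete intersection target (here $\mco$, being regular) forces the source to be a complete intersection. Alternatively you can close it directly: adjoin one more exterior variable $x_0$ with $d(x_0)=\varpi$; then $A\la X,Y,x_0\ra\simeq A\la X,Y\ra\otimes_A K(\varpi;A)$ is a free resolution of $k$ over $A$, and since the coefficients of $d(x_0)$, the $t_i$, and the $z_i$ all lie in $\fm_A$, the resolution is minimal, hence \emph{is} the acyclic closure of $k$ over $A$. Because it has variables only in degrees $1$ and $2$, Tate/Avramov then applies to the correct invariant and gives that $A$ is a complete intersection. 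Without one of these two repairs, (b) $\Rightarrow$ (a) is not proved.
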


\begin{proof}
Let $P\colonequals \mco\pos{t_1,\dots,t_n}$ and $A\colonequals P/(\bs f)$, as in \ref{ch:cat}. The kernel of the surjection $\lambda_P\colon P\to \mco$ is generated by the regular sequence $\bs t\colonequals t_1,\dots,t_m$.

(a)$\Rightarrow$(b): Since $A$ is complete intersection,  $\bs f$ is a regular sequence, contained in the ideal $(\bs t)$. Thus \cite[Theorem~4]{Tate:1957} applies, and yields the stated result.

(b)$\Leftrightarrow$(c): This is a special case of \cite[Theorem~1]{Blanco/Majadas/Rodicio:1998}; see also \cite[Theorem 2.3]{Iyengar:2001b}.

(c)$\Leftrightarrow$(a): Condition  (c) says precisely that the map $\lambda_A\colon A\to\mco$ is a quasi-complete intersection, in the sense of \cite[1.1]{Avramov/Henriques/Sega:2013}. Since $\mco$ is regular, hence a complete intersection, the desired equivalence is contained in \cite[Proposition~7.7]{Avramov/Henriques/Sega:2013}.
\end{proof}

\begin{chunk}
\label{ch:ci-case}
Let $A$ in $\acat(c)$ be a complete intersection. We can assume $A\colonequals P/(\bs f)$, where $\bs f \colonequals f_1,\dots,f_m$ is a regular sequence, and of the form given in \eqref{eq:nice-form}; see Theorem~\ref{th:ci-approximation}. By Proposition~\ref{pr:ci-tate}, the Tate construction $A\la X,Y\ra$ is the acyclic closure of  $\mco$ over $A$. Thus there is an isomorphism
\[
\Ext^*_A(\mco,\mco) \cong \hh^*(\Hom_A(A\la X,Y\ra,\mco))\,.
\]
Writing $\mco\la X,Y\ra$ for $\mco\otimes_A A\la X,Y\ra$, standard adjunction yields an isomorphism
\[
\Hom_A(A\la X,Y\ra,\mco)\cong \Hom_{\mco}(\mco\la X,Y\ra,\mco)
\]
Given that the $f_i$ are as in \eqref{eq:nice-form}, the differential $d$ on $\mco\la X,Y\ra$ satisfies $d(X)=0$ and $d(y_i) = \varpi^{d_i}x_i$ for $1\le i\le m=n-c$. Keeping in mind that the dual of a free algebra with divided powers is a symmetric algebra, one gets an isomorphism of complexes of $\mco$-modules
\[
\Hom_{\mco}(\mco\la X,Y\ra,\mco) \cong \mco[\eta_1,\dots,\eta_n,  \chi_1\,\dots,\chi_m\mid  d(\eta_i) = \varpi^{d_i}\chi_i\,,  d(\chi_j)=0]
\]
where each $ \eta_i$ is exterior variable of (upper) degree one, and  each $\chi_j$ is a polynomial variable of degree two. This yields an isomorphism of $\mco$-modules
\[
 \Ext^*_A(\mco,\mco) \cong \frac{\mco[\eta_{m+1},\dots,\eta_n, \chi_1\,\dots,\chi_m]}{(\varpi^{d_1}\chi_1,\dots,\varpi^{d_m}\chi_m )}
\]
Recall that $d_i=\infty$ for $c+1\le i\le n$.  Thus one gets an isomorphism of $\mco$-modules
\[
\Ext^c_A(\mco,\mco)\cong \mco \eta_{m+1}\cdots\eta_{n} \oplus (\text{torsion})
\]
where the torsion part depends on $c$ and $m$. Thus $\ecoh^c_A(\mco)$ is generated by the residue class of $\eta_{m+1}\cdots\eta_{n}$; confer the construction of the class $\theta$ in \ref{ch:theta-construction} and Lemma~\ref{le:theta-class}.

Since $A$ and $\mco$ are both complete intersections, the map $\lambda_A\colon A\to \mco$ is a quasi-complete intersection map, in the sense of \cite{Avramov/Henriques/Sega:2013}. Applying Theorem~2.5(4) from \emph{op.~cit.} yields
\[
\Ext^i_A(\mco,A) =
\begin{cases}
\mco & \text{when $i=c$} \\
0    & \text{else}.
\end{cases}
\]
In fact, one can follow the proof of \cite[Theorem~2.5(4)]{Avramov/Henriques/Sega:2013} and  deduce that
\[
\cmod A \cong \mco/(\varpi^{d_{1}+\cdots +d_m})
\]
Given this isomorphism and the description of $\con A$ in \eqref{eq:con}, it follows that when $A$ in $\acat$ is a complete intersection ring,  $\delta_A(A)=0$.  We omit details, for later on we present another proof that goes by a reduction to the case $c=0$; see Theorem~\ref{th:wiles}, which also contains a converse. 
\end{chunk}

\section{Invariance of domain}
\label{se:invariance-of-domain}
In this section we establish an ``invariance of domains" property for congruence modules; see Theorem~\ref{th:invariance-of-domain}.
The statement takes some preparation.

\begin{chunk}
Let $\vf\colon A\to B$ be a surjective map in $\acat(c)$ and $M$ a finitely generated $B$-module. For each integer $i$,   the natural transformation $\Ext^i_B(\mco, -)\to \Ext^i_A(\mco,-)$ induces the map:
\[
\ecoh^i_{\vf}(M) \colon \ecoh^i_B(M) \lra \ecoh^i_A(M)\,.
\]
Noting that $\fp_AM=\fp_BM$, the naturality of the transformation $\ecoh^i_{\vf}(-)$ gives a commutative diagram of $\mco$-modules
\begin{equation}
\label{eq:invariance-diagram}
\begin{tikzcd}[column sep=huge]
\ecoh^i_B(M) \arrow{d} \arrow{r}{\ecoh^i_{\vf}(M)} 
	& \ecoh^{i}_A(M) \arrow{d} \\
\ecoh^i_B(M/\fp_B M)  \arrow{r}{\ecoh^i_{\vf}(M/\fp_AM)} 
	& \ecoh^{i}_A(M/\fp_AM) 
\end{tikzcd}
\end{equation}
For $i=c$, the diagram above induces a map on the cokernels of the vertical maps:
\begin{equation}
\label{eq:invariance}
\cmod{\vf}(M)\colon \cmod B(M)\lra \cmod A(M)\,.
\end{equation}
\end{chunk}

This  result below may be viewed as a statement about invariance of domains for congruence modules. The case $c=0$ is covered by \cite[Lemma~3.4]{Brochard/Iyengar/Khare:2021b}. 

\begin{theorem}
\label{th:invariance-of-domain}
Let $\vf\colon A\to B$ be a surjective map of local rings in $\acat(c)$. For any finitely generated $B$-module $M$ with $\depth_BM\ge c$, the  map of $\mco$-modules in \eqref{eq:invariance} is bijective, and  hence 
\[
\length_{\mco}\cmod A(M) = \length_{\mco}\cmod B(M) \,.
\]
Thus $\delta_A(M)\ge \delta_B(M)$ with equality if and only if $\length_{\mco}\con A =\length_{\mco}\con B$.
\end{theorem}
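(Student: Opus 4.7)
My plan is to reduce the main comparison to the bijectivity of $\ecoh^c_\vf(M)$; the identity of congruence modules will then follow by a diagram chase, and the statement on Wiles defects from the surjection $\con A\twoheadrightarrow\con B$ already established in~\eqref{eq:con-diagram}. First, observe that $\vf_{\fp_A}\colon A_{\fp_A}\to B_{\fp_B}$ is an isomorphism, being a surjection of regular local rings of the same Krull dimension $c$; Proposition~\ref{pr:xi-iso} then yields that $\ecoh^c_\vf(\mco)$ is bijective. Since $M$ is a $B$-module we have $\fp_AM=\fp_BM$, and the identification~\eqref{eq:ecoh-MO} shows that the bottom arrow in~\eqref{eq:invariance-diagram} at $i=c$ is $\ecoh^c_\vf(\mco)\otimes_\mco\mathrm{id}_{\tfree{(M/\fp_AM)}}$, hence an isomorphism.

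The heart of the proof is the analogous statement for the top arrow: $\ecoh^c_\vf(M)\colon\ecoh^c_B(M)\to\ecoh^c_A(M)$ is bijective. The natural tool is the Grothendieck spectral sequence attached to $\RHom_A(\mco,M)\simeq\RHom_B(\mco,\RHom_A(B,M))$,
\[
E_2^{p,q}=\Ext^p_B(\mco,\Ext^q_A(B,M))\;\Longrightarrow\;\Ext^{p+q}_A(\mco,M),
\]
whose edge map in total degree $c$ is the natural comparison $\Ext^c_B(\mco,M)\to\Ext^c_A(\mco,M)$. For $q\geq 1$ the isomorphism $\vf_{\fp_A}$ forces $\Ext^q_A(B,M)_{\fp_A}\cong\Ext^q_{A_{\fp_A}}(A_{\fp_A},M_{\fp_A})=0$, so every $E_2^{p,q}$ with $q\geq 1$ is $\mco$-torsion, and hence so is every subquotient of the filtration on $\Ext^c_A(\mco,M)$ away from $E_\infty^{c,0}$. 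On passing to torsion-free quotients the edge map becomes an injection with $\mco$-torsion cokernel. The depth hypothesis $\depth_BM\geq c$ is brought in to promote this to a bijection; a convenient route is induction on the minimal number of generators of $\Ker\vf$, which reduces to the principal case. There a direct long exact sequence attached to $0\to\Ker\vf\to\fp_A\to\fp_B\to 0$ identifies the cokernel of the comparison with a subquotient of $\Hom_B(\Ker\vf/(\Ker\vf)^2,M)$, which is $\mco$-torsion (its localization at $\fp_B$ vanishes) and hence disappears on passing to $\tfree$.

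Granting the bijectivity of $\ecoh^c_\vf(M)$, both horizontal arrows in~\eqref{eq:invariance-diagram} at $i=c$ are isomorphisms, so the induced map $\cmod\vf(M)$ on the cokernels of the columns is an isomorphism, giving $\length_\mco\cmod B(M)=\length_\mco\cmod A(M)$. Substituting into the definition~\eqref{eq:defect} yields
\[
\delta_B(M)-\delta_A(M)=\mu\bigl(\length_\mco\con B-\length_\mco\con A\bigr),\qquad\mu\colonequals\rank_{A_{\fp_A}}M_{\fp_A},
\]
and the surjection $\con A\twoheadrightarrow\con B$ produces the stated inequality, with equality precisely when $\length_\mco\con A=\length_\mco\con B$.

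The main obstacle throughout is the bijectivity of $\ecoh^c_\vf(M)$: because $\depth_BM$ is only required to be $\geq c$ rather than $\geq c+1$, $M$ need not be free at $\fp_B$, and $\Ext^c_A(\mco,M)$ can a priori carry torsion arising from nontrivial extensions by the torsion pieces $E_\infty^{p,c-p}$ with $p<c$. Controlling these extensions, so that their torsion does not enlarge the torsion-free quotient beyond $\tfree\Ext^c_B(\mco,M)$, is precisely where the depth hypothesis (or the inductive reduction to the principal kernel case) is used in an essential rather than merely formal manner.
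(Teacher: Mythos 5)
Your reduction to the bijectivity of $\ecoh^c_\vf(M)$, and your handling of the bottom arrow of~\eqref{eq:invariance-diagram} via $\ecoh^c_\vf(\mco)$, Proposition~\ref{pr:xi-iso} and the identification~\eqref{eq:ecoh-MO}, match the paper. The gap is in the top arrow, and you have located the difficulty correctly but not resolved it. The composition spectral sequence $E_2^{p,q}=\Ext^p_B(\mco,\Ext^q_A(B,M))\Rightarrow\Ext^{p+q}_A(\mco,M)$, with $E_2^{p,q}$ $\mco$-torsion for $q\ge1$, shows that the edge map $\Ext^c_B(\mco,M)\to\Ext^c_A(\mco,M)$ has torsion kernel (giving injectivity of $\ecoh^c_\vf(M)$, which as in the paper needs no depth hypothesis) and torsion cokernel. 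But a torsion cokernel does \emph{not} imply that the induced map on torsion-free quotients is surjective: multiplication by $\varpi$ on $\mco$ already has torsion cokernel. The implicit bridge ``torsion cokernel $\Rightarrow$ bijection on $\tfree$'' would be valid only if $\Ext^c_A(\mco,M)$ were known to be torsion-free, and that is exactly what the hypothesis $\depth_BM\ge c$ does \emph{not} give: Lemma~\ref{le:mcat}(4) needs $\depth\ge c+1$ for this, and with only $\depth_AM=\depth_BM\ge c$ one can have $\Ext^c_A(k,M)\ne 0$ and hence $\varpi$-torsion in $\Ext^c_A(\mco,M)$. Your proposed fix — induction on generators of $\Ker\vf$, then relating the cokernel in the principal case to $\Hom_B(\Ker\vf/(\Ker\vf)^2,M)$ — terminates in the same conclusion (torsion cokernel) and therefore does not close the gap.

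What is actually needed is honest surjectivity of $\Ext^c_B(\mco,M)\to\Ext^c_A(\mco,M)$, and the paper establishes precisely that by a different route. Using $\RHom_A(\mco,M)\simeq\RHom_B(B\lotimes_A\mco,M)$ and the exact triangle $J\to B\lotimes_A\mco\to\mco\to J[1]$ in $\dcat B$, one reduces to showing $\Ext^c_B(J,M)=0$. One checks $\HH iJ=0$ for $i\le 0$ and every $\HH iJ$ is $\fm_B$-power torsion; the hypothesis $\depth_BM\ge c$ then gives $\Ext^p_B(W,M)=0$ for $p\le c-1$ and any $\fm_B$-power torsion $W$, and the hyperext spectral sequence over the Postnikov filtration of $J$ contributes to $\Ext^c_B(J,M)$ only terms $\Ext^p_B(\HH qJ,M)$ with $q\ge 1$, $p=c-q\le c-1$, all of which vanish. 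This is where the depth hypothesis does real work. Your spectral sequence argument stops short of surjectivity, so you should replace the ``torsion cokernel plus reduction to the principal case'' step with an argument that produces genuine surjectivity, e.g.\ the one sketched above.
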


\begin{proof}
It suffices to verify that \eqref{eq:invariance} is bijective; the claim about defects is a simple consequence. The map $\vf$ induces the surjective map $\vf_{\fp_A} \colon A_{\fp_A}\to B_{\fp_B}$ of local rings.  Its source and target are regular local rings of dimension $c$ so it is an isomorphism. This remark will be used in the argument below.

Consider the commutative diagram \eqref{eq:invariance-diagram}. We prove that the horizontal maps are isomorphisms for $i=c$; this implies the desired result. 

First we verify that the induced map  $\ecoh^c_{\vf}(M)\colon \ecoh^c_B(M)\to \ecoh^c_A(M)$ is bijective. Indeed since $\vf_{\fp_A}$ is an isomorphism, it is easy to see by localizing at $\fp_A$ that the kernel of the map 
\[
\Ext^c_B(\mco, M)\to \Ext^c_A(\mco, M)
\]
is $\mco$-torsion, and hence the map $\ecoh^c_{\vf}(M)$ is one-to-one. This requires no hypotheses on $M$.  The crucial point is that since $\depth_BM\ge c$ the map above is surjective, and hence so is $\ecoh^c_{\vf}(M)$. The argument goes as follows: Since $A$-acts on $M$ through $B$, there is an isomorphism of complexes 
\[
\RHom_A(\mco, M)\cong \RHom_B(B\lotimes_A \mco, M)\,,
\]
and the map $\RHom_B(\mco, M)\to \RHom_A(\mco,M)$ is induced by the natural morphism $ B\lotimes_A\mco\to \mco$ in $\dcat B$. Consider the exact triangle in $\dcat B$ that it generates:
\[
 J\lra B\lotimes_A\mco \lra \mco \lra J[1]
\]
For the desired statement it suffices that $\Ext^c_B(J,M)=0$. Evidently the map $\HH i{B\lotimes_A\mco}\to \HH i{\mco}$ is surjective for $i\ge 1$ and bijective for $i\le 0$; moreover, it is bijective for all $i$ once we localize at $\fp_B$. It follows that $\HH iJ$ is $0$ for $i\le 0$ and $\mco$-torsion for all $i$; equivalently, $\fm_B$-power torsion, where $\fm_B$ denotes the maximal ideal of $B$, for the $B$ action on $\HH iJ$ factors through $\lambda_B$. Since $\depth_BM \ge c$, any $\fm_B$-power torsion $B$-module $W$ satisfies
\[
\Ext_B^i(W,M)=0 \quad\text{for $i\le c-1$.}
\]
Given that $\HH iJ=0$ for $i\le 0$ and $\fm_B$-power torsion for all $i$, it follows, for example by using a standard spectral sequence argument, that
\[
\Ext_B^i(J,M)=0 \quad\text{for $i\le c$.}
\]
This is as desired.

Next we verify that the following map  is bijective:
\[
\ecoh^c_B(M/\fp_BM)\to \ecoh^c_A(M/\fp_AM)\,.
\]
 This part of the proof also does not require that $\depth_BM\ge c$. Given the  isomorphism \eqref{eq:ecoh-MO} it is enough to check that  $\ecoh^c_B(\mco)\to \ecoh^c_A(\mco)$ is bijective. This is contained in Proposition~\ref{pr:xi-iso}.
\end{proof}

The following application of Theorem~\ref{th:invariance-of-domain} will be used often in the sequel.

\begin{lemma}
\label{le:faithful}
Let $A$ be in $\acat(c)$ and $M$ a finitely generated $A$-module supported at $\fp_A$ and satisfying $\grade(\fp_A,M)\ge c$. Let $A'$ be the image of the natural map $A\to \End_A(M)$. 
The following statements hold.
\begin{enumerate}[\quad\rm(1)]
\item
$A'$ is in $\acat(c)$ and the surjective map $A\to A'$ is an isomorphism at $\fp_A$;
\item
$\mathrm{ass}_A A'\subseteq \mathrm{ass}_{A}M$;
\item
If $f\in A$ is not a zero-divisor on $M$, it is also not a zero-divisor on $A'$.
\item
There are equalities
\[
\length_{\mco}\cmod {A}(M) =\length_{\mco} \cmod {A'}(M)\quad\text{and}\quad 
\length_\mco \con {A}\ge \length_{\mco} \con {A'}\,.
\]
Hence  $\delta_{A}(M)\ge \delta_{A'}(M)$, with equality if and only if $\con{A}\cong\con{A'}$.
\end{enumerate}
\end{lemma}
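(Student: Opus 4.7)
The plan is to identify $A'$ with the quotient $A/\ann_A(M)$ and reduce every claim to a standard property of this quotient, with (4) ultimately an application of Theorem~\ref{th:invariance-of-domain}. Since $M$ is supported at $\fp_A$, any element annihilating $M$ is a unit in $A_{\fp_A}$ only at the cost of killing $M_{\fp_A}\ne 0$, so $\ann_A(M)\subseteq \fp_A$. Therefore $\lambda_A$ factors through $A'$, giving $\lambda_{A'}\colon A'\to \mco$ with $\fp_{A'}=\fp_A/\ann_A(M)$. The hypothesis $\grade(\fp_A,M)\ge c$ combined with Lemma~\ref{le:mcat-old}(3) makes $M_{\fp_A}$ a nonzero free $A_{\fp_A}$-module, hence faithful as an $A_{\fp_A}$-module, so the surjection $A_{\fp_A}\onto A'_{\fp_{A'}}$ has trivial kernel and is an isomorphism. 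Since $A_{\fp_A}$ is regular of dimension $c$, so is $A'_{\fp_{A'}}$, yielding (1). Note that this argument also gives $\mu\colonequals\rank_{A_{\fp_A}}M_{\fp_A}=\rank_{A'_{\fp_{A'}}}M_{\fp_{A'}}\ge 1$.

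Choose a surjection $A^n\twoheadrightarrow M$. Adjunction gives an injection $\End_A(M)\hookrightarrow\Hom_A(A^n,M)\cong M^n$ of $A$-modules, and restricting to the submodule $A'\subseteq\End_A(M)$ gives an embedding $A'\hookrightarrow M^n$. Associated primes behave well with respect to submodules and finite direct sums, so $\mathrm{ass}_A A'\subseteq \mathrm{ass}_A M^n = \mathrm{ass}_A M$, proving (2). For (3), the set of zero-divisors on $A'$ equals $\bigcup_{\fq\in\mathrm{ass}_A A'}\fq$, which by (2) is contained in $\bigcup_{\fq\in\mathrm{ass}_A M}\fq$, so an element $f$ that is a non-zero-divisor on $M$ avoids this union and is hence a non-zero-divisor on $A'$.

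For (4), observe that $\fp_A\subseteq \fm_A$ (since $A/\fp_A\cong\mco$ is local), so any $M$-regular sequence in $\fp_A$ is $M$-regular in $\fm_A$, which gives $\depth_A M\ge\grade(\fp_A,M)\ge c$. Because $M$ is an $A'$-module and the maximal ideals correspond under $A\onto A'$, we also have $\depth_{A'}M=\depth_A M\ge c$, so Theorem~\ref{th:invariance-of-domain} applies to $A\onto A'$ in $\acat(c)$ and yields an isomorphism $\cmod{A'}(M)\iso\cmod A(M)$, in particular the length equality. For the cotangent side, set $J\colonequals\ann_A(M)=\Ker(A\to A')$; since $J_{\fp_A}=0$, the $\mco$-module $J/\fp_A J$ is torsion, and the standard exact sequence
\[
J/\fp_A J \lra \fp_A/\fp_A^2 \lra \fp_{A'}/\fp_{A'}^2 \lra 0
\]
shows the kernel of the surjection $\fp_A/\fp_A^2\twoheadrightarrow\fp_{A'}/\fp_{A'}^2$ is torsion. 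Then, as noted in \ref{ch:tors-remark}, a surjection of finitely generated $\mco$-modules with torsion kernel restricts to a surjection on torsion submodules, giving $\con A\twoheadrightarrow\con{A'}$. This yields the length inequality, and substituting into \eqref{eq:defect} gives
\[
\delta_A(M)-\delta_{A'}(M) = \mu\cdot\bigl(\length_\mco \con A - \length_\mco \con{A'}\bigr)\ge 0,
\]
with equality if and only if the surjection $\con A\twoheadrightarrow\con{A'}$ is an isomorphism, equivalently $\con A\cong\con{A'}$ as $\mco$-modules (both having finite length).

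\textbf{Main obstacle.} The only non-cosmetic step is the depth comparison: everything hinges on confirming $\depth_{A'}M\ge c$ so that Theorem~\ref{th:invariance-of-domain} applies. Once this inequality is secured from $\fp_A\subseteq\fm_A$ and the grade hypothesis, the rest of (4) is a diagram-chasing application of the defect formula together with the torsion-kernel observation for the cotangent modules.
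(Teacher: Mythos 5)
Your proof is correct and follows the same route as the paper: establish that $A \to A'$ is an isomorphism after localizing at $\fp_A$, deduce the associated-prime containment from an embedding of $A'$ into a module built out of $M$, and then reduce (4) to Theorem~\ref{th:invariance-of-domain} after checking $\depth_{A'}M \ge c$. The only small divergence is in (2): the paper cites the identity $\mathrm{ass}_A\End_A(M)=\mathrm{ass}_A M$ directly (Bruns--Herzog, Exercise 1.2.27), whereas you re-derive the needed inclusion by embedding $A' \hookrightarrow \End_A(M) \hookrightarrow M^n$ via a presentation $A^n\onto M$, which is essentially a proof of that exercise. Your step-by-step derivation of the defect inequality in (4) from the surjection $\con A\onto\con{A'}$ together with the length equality of congruence modules is also a clean way to get the sign of the inequality right without relying on the final sentence of Theorem~\ref{th:invariance-of-domain}.
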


\begin{proof}
The hypothesis on $M$ implies that $\depth_AM\ge c$ and also that $M$ is free at $\fp_A$; see Lemma~\ref{le:mcat-old}. These observations will be used below.

(1) Since $M$ is faithful as an $A'$-module and free at $\fp_A$, it follows that the map $A\to A'$ is an isomorphism at $\fp_A$. 

(2) Since $A'$ is an $A$-submodule of $\End_A(M)$, the inclusion below holds:
\[
\mathrm{ass}_A A'\subseteq \mathrm{ass}_A \End_A(M) = \mathrm{ass}_AM\,.
\]
The equality is a direct computation; solve \cite[Exercise~1.2.27]{Bruns/Herzog:1998}.

(3) The zero-divisors of a module over a noetherian ring are the union of its associated primes, so (3) is a consequence of (2).

(4) The action of $A$ on $M$ factors through $A'$ so  $\depth_{A'}M = \depth_AM\ge c$. Hence, given (1), it remains to apply Theorem~\ref{th:invariance-of-domain} to the map $A\to A'$.
\end{proof}

\section{Deformations}
\label{se:deformations}
The main result of this section, Theorem~\ref{th:reduction-delta},  is that the defect of a module does not change under deformations. Throughout  we fix the following notation.  

\begin{chunk}
\label{ch:notation}
Fix $A$  in $\acat(c)$, for some $c\ge 1$, and $M$ a finitely generated $A$-module supported at $\fp_A$, and with $\grade(\fp_A,M)=c$; this is the maximum value possible, by Lemma~\ref{le:mcat-old}. Let $f$ in $\fp_A\setminus \fp_A^{(2)}$ be such that it is not a zero-divisor on $M$. Set 
\[
B\colonequals A/fA \qquad\text{and}\qquad N \colonequals M/fM\,,
\]
where $N$ is viewed as a $B$-module. 
\end{chunk}

\begin{theorem}
\label{th:reduction-delta}
In the context of \ref{ch:notation}, the ring $B$ is in $\acat(c-1)$, and 
\[
\delta_A(M) = \delta_{B}(N)\,.
\]
Moreover $N$ is supported at $\fp_B$ and $\grade(\fp_B,N)=\grade(\fp_A,M)-1$.
\end{theorem}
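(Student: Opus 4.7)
The first part, that $B\in\acat(c-1)$, is immediate from Lemma~\ref{le:symbolic} applied to $f\in\fp_A\setminus\fp_A^{(2)}$; this also identifies $\fp_B=\fp_A/(f)$. For the ``moreover'' part: the hypothesis $\grade(\fp_A,M)=c$ forces $M_{\fp_A}$ to be free of some rank $\mu$ over $A_{\fp_A}$ by Lemma~\ref{le:mcat-old}(3), and since $f$ is a non-zero-divisor on this free module, $N_{\fp_B}\cong M_{\fp_A}/fM_{\fp_A}\cong B_{\fp_B}^\mu$ is nonzero, showing $N$ is supported at $\fp_B$ with the same generic rank $\mu$. Extending $f$ to a maximal $M$-regular sequence in $\fp_A$ of length $c$ reduces modulo $f$ to an $N$-regular sequence in $\fp_B$ of length $c-1$, maximal by Lemma~\ref{le:mcat-old}(1); hence $\grade(\fp_B,N)=c-1$.

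\textbf{Cotangent contribution.}
The substantive content is $\delta_A(M)=\delta_B(N)$, and I plan to track the cotangent and congruence changes separately. The projection $A\to B$ and the class $\bar f\in\fp_A/\fp_A^2$ induce a short exact sequence
\[
0\lra \mco\cdot\bar f\lra \fp_A/\fp_A^2\lra \fp_B/\fp_B^2\lra 0
\]
in which the first term is isomorphic to $\mco$ because $\bar f\ne 0$ in $(\fp_A/\fp_A^2)_{\fp_A}$, by virtue of $f\notin\fp_A^{(2)}$. Let $e\ge 0$ denote the $\varpi$-valuation of the image of $\bar f$ in the rank-$c$ free $\mco$-module $\tfree{(\fp_A/\fp_A^2)}$.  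A direct Smith-normal-form analysis over the DVR $\mco$ then yields
\[
\length_{\mco}\con B - \length_{\mco}\con A = e\,,
\]
so the cotangent side contributes $\mu\bigl(\length\con A-\length\con B\bigr)=-\mu e$ to the defect difference.

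\textbf{Congruence contribution via change of rings.}
For the congruence pieces I compare $\Ext^*_A(\mco,-)$ and $\Ext^*_B(\mco,-)$ through the adjunction $\RHom_A(\mco,Y)\simeq\RHom_B(\mco\lotimes_A B,Y)$ on $B$-modules $Y$. After first reducing to the case where $f$ is $A$-regular (via Lemma~\ref{le:faithful}, passing from $A$ to the image $A'$ of $A\to\End_A(M)$, which preserves $\length\cmod{A}(M)$ and makes $f$ a non-zero-divisor), the object $\mco\lotimes_A B$ is a two-stage Postnikov complex in $\dcat B$ with $\hh_0=\hh_1=\mco$, giving the long exact sequence
\[
\cdots\to \Ext^{i-2}_B(\mco,Y)\to \Ext^i_B(\mco,Y)\to \Ext^i_A(\mco,Y)\to \Ext^{i-1}_B(\mco,Y)\to \Ext^{i+1}_B(\mco,Y)\to\cdots
\]
At $i=c$, the flanking $B$-terms are $\mco$-torsion by Lemma~\ref{le:Etors} (since $B\in\acat(c-1)$), so the middle map descends to an injection $\ecoh^c_A(Y)\hookrightarrow\ecoh^{c-1}_B(Y)$ with torsion cokernel. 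For $Y=\mco$ Proposition~\ref{pr:xi-algebra} identifies both sides as the top exterior powers of the respective dual cotangent modules, and choosing coordinates so that $\bar f$ corresponds to $\varpi^e$ times a basis vector of $\tfree{(\fp_A/\fp_A^2)}$ reveals this map as multiplication by $\varpi^e$ on rank-one free $\mco$-modules, hence with cokernel of length $e$.  Setting $\tilde M\colonequals M/\fp_AM$ and using the natural isomorphism \eqref{eq:ecoh-MO}, the induced map $\ecoh^c_A(\tilde M)\to\ecoh^{c-1}_B(\tilde M)$ has cokernel of length $\mu e$.

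\textbf{Assembly and the main obstacle.}
Noting that $\tilde N\colonequals N/\fp_BN=M/\fp_AM=\tilde M$, the congruence modules $\cmod A(M)$ and $\cmod B(N)$ appear as the vertical cokernels in
\[
\begin{tikzcd}
\ecoh^c_A(M)\arrow{r}\arrow{d}& \ecoh^{c-1}_B(N)\arrow{d}\\
\ecoh^c_A(\tilde M)\arrow{r}& \ecoh^{c-1}_B(\tilde M)
\end{tikzcd}
\]
Given that the bottom horizontal map is injective with cokernel of length $\mu e$, a diagram chase contingent on \emph{surjectivity of the top horizontal map} yields $\length\cmod B(N)=\length\cmod A(M)+\mu e$; combined with the cotangent calculation this gives $\delta_A(M)=\delta_B(N)$. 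The main obstacle is precisely this surjectivity of $\ecoh^c_A(M)\to\ecoh^{c-1}_B(N)$. To establish it I would combine the change-of-rings sequence above (applied to $Y=N$) with the long exact sequence coming from the $M$-regular element $f$,
\[
0\to \Ext^i_A(\mco,M)\to \Ext^i_A(\mco,N)\to \Ext^{i+1}_A(\mco,M)\to 0,
\]
which is exact because $f$ annihilates $\Ext^*_A(\mco,-)$ via its action on $\mco$, and then track naturality of the $k$-invariant of $\mco\lotimes_A B$ to see that the potential obstructions are $\mco$-torsion and therefore vanish on passing to the torsion-free quotients that define $\ecoh$.
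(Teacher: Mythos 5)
Your overall strategy mirrors the paper's: reduce to the case where $f$ is $A$-regular via Lemma~\ref{le:faithful}, and compare the cotangent and congruence contributions to the defect through the change-of-rings triangle $\mco[1]\to B\lotimes_A\mco\to\mco$. The setup, the ``moreover'' assertions, and the cotangent calculation are fine. But two of the substantive steps have real gaps, one of which is fatal to the argument as written.

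First, the calculation that $\ecoh^c_A(\mco)\to\ecoh^{c-1}_B(\mco)$ is multiplication by $\varpi^e$ is not established merely by invoking Proposition~\ref{pr:xi-algebra} and ``choosing coordinates.'' Knowing that both sides are top exterior powers of rank-one free $\mco$-modules tells you the map is $\varpi^{e'}$ for \emph{some} $e'$, not that $e'=e$. The paper pins down $e'$ by a nontrivial piece of structure: the connecting map $\eth^*\colon\ecoh^*_A(\mco)\to\ecoh^{*-1}_B(\mco)$ is $\ecoh^*_B(\mco)$-linear, so $\eth^c$ is determined by $\eth^1$ (which \emph{is} evaluation at $\bar f$ via \eqref{eq:order}) and by the exterior-algebra module structure. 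Without this equivariance — which you do not mention — the assertion does not follow.

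Second, and more seriously, your proposed argument for surjectivity of the top horizontal map $\ecoh^c_A(M)\to\ecoh^{c-1}_B(N)$ cannot work. You argue that the obstructions are $\mco$-torsion ``and therefore vanish on passing to the torsion-free quotients that define $\ecoh$.'' But having torsion cokernel at the $\Ext$ level does \emph{not} imply surjectivity of the induced map on torsion-free quotients: the map $\mco\xrightarrow{\varpi}\mco$ has torsion cokernel $\mco/\varpi$ and the same map on $\tfree{(-)}$ is still not surjective. In fact the identical reasoning, applied to your bottom horizontal map $\ecoh^c_A(\tilde M)\to\ecoh^{c-1}_B(\tilde M)$ (which also has torsion cokernel), would ``prove'' it surjective — contradicting your own computation that its cokernel has length $\mu e$, which is nonzero whenever $e>0$. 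So the argument proves too much and is internally inconsistent. The paper avoids this trap by (i) first reducing to $M=A$ via the defect formula, Lemma~\ref{le:defect-formula}, by showing $\Coker(\eta_N)\cong\Coker(\eta_M)$, and (ii) constructing the key comparison $\ecoh^c_A(X)\cong\ecoh^{c-1}_B(B\lotimes_A X)$ as an outright isomorphism, not merely an injection with torsion cokernel, using the identification $\RHom_A(B,X)\simeq(B\lotimes_A X)[-1]$ available once $f$ is $A$-regular (so $B$ has the finite free resolution $0\to A\xrightarrow{f}A\to 0$). Your composite $\ecoh^c_A(M)\to\ecoh^c_A(N)\to\ecoh^{c-1}_B(N)$ may well coincide with this isomorphism, but that would need to be proved; your torsion argument does not do it.
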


This result often permits one to reduce questions about defects to the case when $c=0$, which has been studied earlier in the literature. Here is one application; see also Section~\ref{se:diamond-wiles}.

\begin{corollary}
\label{co:tate}
Let $A$ be a local ring in $\acat(c)$. If $M$ is a finitely generated $A$-module with $\grade(\fp_A,M)\ge c$, then $\delta_A(M)\ge 0$. 
\end{corollary}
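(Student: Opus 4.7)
The plan is to combine the deformation invariance established in Theorem~\ref{th:reduction-delta} with the defect formula of Lemma~\ref{le:defect-formula} to reduce the general claim to the classical fact that $\delta_A(A)\ge 0$ when $A$ lies in $\acat(0)$.

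First I would dispense with the case when $M$ is not supported at $\fp_A$. Then $\mu\colonequals \rank_{A_{\fp_A}}M_{\fp_A}=0$, and since $M/\fp_A M$ is a quotient of $M$ one has $(M/\fp_A M)_{\fp_A}=0$, so the finitely generated $\mco$-module $M/\fp_A M$ is torsion. The isomorphism \eqref{eq:ecoh-MO} then forces $\ecoh^c_A(M/\fp_A M)=0$, whence $\cmod A(M)=0$ and $\delta_A(M)=0$, as required.

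Next, assuming $M$ is supported at $\fp_A$, the chain of inequalities $\grade(\fp_A,M)\le\depth_{A_{\fp_A}}M_{\fp_A}\le c$ combined with the standing hypothesis forces $\grade(\fp_A,M)=c$. I would then induct on $c$. For $c\ge 1$, Lemma~\ref{le:prime-avoidance} produces an $M$-regular element $f\in\fp_A$ with $A/fA\in\acat(c-1)$; by Lemma~\ref{le:symbolic} such an $f$ automatically lies outside $\fp_A^{(2)}$. Theorem~\ref{th:reduction-delta} then applies and yields
\[
\delta_A(M)=\delta_B(N),
\]
where $B\colonequals A/fA$ and $N\colonequals M/fM$ is supported at $\fp_B$ with $\grade(\fp_B,N)=c-1$. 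Applying the inductive hypothesis to the pair $(B,N)$ closes the step.

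For the base case $c=0$, the module $M$ is automatically free at $\fp_A$ because $A_{\fp_A}$ is a field, so Lemma~\ref{le:defect-formula} gives
\[
\delta_A(M)=\mu\cdot\delta_A(A)+\length_{\mco}\Coker(\eta_M)\ge \mu\cdot\delta_A(A),
\]
reducing the problem to showing $\delta_A(A)\ge 0$. This is the classical theorem of Tate for finite flat augmented $\mco$-algebras (alluded to in the remark preceding the statement of the corollary, and proved in the form we need in \cite{Wiles:1995} and \cite{Brochard/Iyengar/Khare:2021b}), and it is the only external input to the argument. The main obstacle is therefore already packaged into Theorem~\ref{th:reduction-delta}; once it is in hand together with Tate's input, the present corollary follows by a clean induction on $c$.
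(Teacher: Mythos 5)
Your proof is correct and follows essentially the same route as the paper's: reduce to $c=0$ by dividing out $M$-regular elements via Lemma~\ref{le:prime-avoidance} and Theorem~\ref{th:reduction-delta}, then invoke the defect formula, Lemma~\ref{le:defect-formula}, together with Wiles's inequality $\length_{\mco}\con A\ge\length_{\mco}\cmod A$. The one small addition you make---the explicit case $M_{\fp_A}=0$, where $\delta_A(M)=0$ trivially---is a sensible precaution, since Theorem~\ref{th:reduction-delta} formally requires $M$ to be supported at $\fp_A$, a point the paper glosses over.
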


\begin{proof}
Let $\bs g$ be an $M$-regular sequence of length $c$ provided by Lemma~\ref{le:prime-avoidance}. Set $B\colonequals A/\bs gA$ and $N\colonequals M/\bs g M$. Then a repeated application of Theorem~\ref{th:reduction-delta} yields $\delta_A(M) = \delta_B(N)$. After replacing $A$ and $M$ by $B$ and $N$, respectively, one can assume $c=0$. Then given the defect formula  Lemma \ref{le:defect-formula}, it suffices to verify that $\delta_A(A)\ge 0$, that is to say that
\[
\length_{\mco}\con A \ge \length_{\mco}\cmod A\,.
\]
Since $c=0$, one has $\con A = \fp_A/\fp_A^2$, and so the inequality above is exactly the one proved by Wiles'; see \cite[Appendix, Proposition 1]{Wiles:1995} and \cite[\#(5.2.3)]{Darmon/Diamond/Taylor:1997}.
\end{proof}

\begin{chunk}
\label{ch:BKM2}
Theorem~\ref{th:reduction-delta} also permits one to reconcile our notion of Wiles defect with the one introduced in \cite{Bockle/Khare/Manning:2021b} when $M=A$ is a Cohen--Macaulay ring. Indeed, both invariants  remain unchanged when we go modulo a suitable regular sequence; for the former, this is by Theorem~\ref{th:reduction-delta}, and for the latter, this is essentially by definition~\cite[\S3.4]{Bockle/Khare/Manning:2021b}. So it suffices to compare them when $c=0$, in which case they are defined in the same way.
\end{chunk}

The proof of Theorem~\ref{th:reduction-delta} makes repeated appeal to Theorem~\ref{th:invariance-of-domain}, and some observations of independent interest recorded below.

\begin{chunk}
\label{ch:order}
Let $A$ be a local ring in $\acat$, and consider the natural pairing
\[
\langle -, -\rangle \colon \Hom_\mco(\fp_A/\fp_A^2,\mco)\times \tfree{(\fp_A/\fp_A^2)} \lra \mco\,.
\]
Given an element $f\in \fp_A$, let $[f]$ denote its residue class in $\tfree{(\fp_A/\fp_A^2)}$, and consider the induced map
\[
\langle - ,[f]\rangle\colon \tfree{(\fp_A/\fp_A^2)} \lra \mco\,.
\]
Let $O(f)$ denote its image; this is called the order ideal of $[f]$. It is the subset of $\mco$ consisting of elements $\alpha(f)$, as $\alpha$ ranges over $\mco$-linear maps $\fp_A/\fp_A^2\to \mco$.  Set
\[
\nu_A(f) \colonequals \length_{\mco}(\mco/O(f))\,.
\]

Let  $f$ be as in \ref{ch:notation}. The maps $A\to B\to \mco$ give rise to an exact sequence 
\[
0\lra \mco \lra \frac{\fp_A}{\fp_A^2}\lra \frac{\fp_B}{\fp_B^2} \lra 0\,,
\]
of $\mco$-modules, where the map on the left sends $1\in\mco$ to the class of $f$ in $\fp_A/\fp_A^2$; it is one-to-one because it is so when it is localized at $\fp_A$. This uses the hypothesis that $f$ is not in $\fp_A^{(2)}$. Applying $\Hom_{\mco}(-,\mco)$ gives an exact sequence
\begin{equation}
\label{eq:order}
0\lra \Hom_B(\fp_B/\fp_B^2,\mco) \lra \Hom_A(\fp_A/\fp_A^2,\mco) \xra{\ \eth^1\ } \mco
\end{equation}
where the map on the right is evaluation on the class of $f$. In particular, the image of $\eth^1$ is $O(f)$, so 
\[
\length\Coker(\eth^1) = \nu_A(f)\,.
\]
The relevance of this discussion is manifest from the result below.

\begin{lemma}
\label{le:reduction-con}
In the set-up of \ref{ch:notation}, the ring $B$ is in $\acat(c-1)$ and
\[
\length_{\mco}\con {B} -  \length_{\mco}\con A = \nu_A(f)\,.
\]
\end{lemma}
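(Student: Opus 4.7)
The first assertion, that $B\in \acat(c-1)$, follows immediately from Lemma~\ref{le:symbolic}, whose single hypothesis $f\in\fp_A\setminus\fp_A^{(2)}$ is in force. For the length identity the plan is to extend the exact sequence~\eqref{eq:order} by one further term into $\Ext^1_{\mco}(-,\mco)$. Applying $\Hom_{\mco}(-,\mco)$ to the short exact sequence
\[
0\lra \mco \lra \fp_A/\fp_A^2 \lra \fp_B/\fp_B^2 \lra 0
\]
appearing in \ref{ch:order}, and using that $\Ext^1_{\mco}(\mco,\mco)=0$ since $\mco$ is free over itself, produces an exact sequence
\[
\Hom_{\mco}(\fp_A/\fp_A^2,\mco) \xra{\ \eth^1\ } \mco \lra \Ext^1_{\mco}(\fp_B/\fp_B^2,\mco) \lra \Ext^1_{\mco}(\fp_A/\fp_A^2,\mco) \lra 0.
\]

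The main step is to identify the two $\Ext^1$ terms with $\con B$ and $\con A$. For any finitely generated $\mco$-module $U$ there is an isomorphism $\Ext^1_{\mco}(U,\mco)\cong \tors(U)$: by the structure theorem $U$ decomposes as a free part plus cyclic torsion summands, the free part contributes nothing, and for each summand $\mco/\varpi^a$ the free resolution $0\to \mco\xra{\ \varpi^a\ }\mco\to \mco/\varpi^a\to 0$ gives $\Ext^1_{\mco}(\mco/\varpi^a,\mco)\cong \mco/\varpi^a$. Applied to $U=\fp_A/\fp_A^2$ and $U=\fp_B/\fp_B^2$, this yields the required identifications.

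With these in hand the long exact sequence collapses to a short exact sequence of $\mco$-modules
\[
0\lra \mco/O(f)\lra \con{B}\lra \con{A}\lra 0,
\]
where $O(f)$ is the image of $\eth^1$, as in \ref{ch:order}. Since $f\notin \fp_A^{(2)}$, the class $[f]$ has nonzero image in $\tfree{(\fp_A/\fp_A^2)}$ (indeed, its image remains nonzero upon localizing at $\fp_A$), hence $O(f)\neq 0$ and $\mco/O(f)$ has finite length equal to $\nu_A(f)$ by definition. Reading off lengths from the displayed short exact sequence then yields the stated identity. There is no serious obstacle here; the only substantive input is the identification $\Ext^1_{\mco}(U,\mco)\cong \tors(U)$, a routine fact over a DVR.
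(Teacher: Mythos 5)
Your proof is correct and follows essentially the same route as the paper: apply $\Hom_{\mco}(-,\mco)$ to the short exact sequence $0\to\mco\to\fp_A/\fp_A^2\to\fp_B/\fp_B^2\to 0$, continue \eqref{eq:order} one step into $\Ext^1$, and read off lengths. The one small point worth flagging is that the isomorphism $\Ext^1_{\mco}(U,\mco)\cong\tors(U)$ over the DVR $\mco$ is not natural (canonically, $\Ext^1_{\mco}(U,\mco)\cong\Hom_{\mco}(\tors(U),E/\mco)$, a Matlis-type dual), so the displayed sequence $0\to\mco/O(f)\to\con B\to\con A\to 0$ should really be stated with the $\Ext^1$ terms, after which one only uses the length equalities $\length\Ext^1_{\mco}(\fp_A/\fp_A^2,\mco)=\length\con A$ and similarly for $B$ — exactly the phrasing the paper adopts. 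This does not affect the conclusion, since the lengths agree regardless.
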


\begin{proof}
Lemma~\ref{le:symbolic} yields that $B$ is in $\acat(c-1)$. By construction, the exact sequence~\eqref{eq:order} continues as the exact sequence
\[
 \Hom_A(\fp_A/\fp_A^2,\mco) \xra{\ \eth^1\ } \mco \lra \Ext^1_{\mco}(\fp_B/\fp_B^2,\mco)\lra \Ext^1_{\mco}(\fp_A/\fp_A^2,\mco)\lra 0\,.
\]
It is easy to check that the length of $\Ext^1_{\mco}(\fp_B/\fp_B^2,\mco)$ equals that of $\con B$, and similarly for $A$. Therefore the exact sequence above yields
\[
\length_{\mco}\con B - \length_{\mco}\con A = \length\Coker(\eth^1) = \nu_A(f)\,.
\]
This is  the desired equality.
\end{proof}

We need one more observation regarding these order ideals.

\begin{lemma}
\label{le:order-ideal}
With $A$ and $f$ as in \ref{ch:notation}, let $\pi\colon A\to A'$ be a surjective map in $\acat(c)$ that is an isomorphism at $\fp_A$.
Then $\pi(f)$ is in $\fp_{A'}\setminus \fp_{A'}^{(2)}$, and $\nu_{A'}(\pi(f))=\nu_A(f)$.
\end{lemma}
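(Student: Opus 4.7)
The plan is to derive both assertions by transporting the relevant data along the isomorphism $A_{\fp_A}\xra{\cong} A'_{\fp_{A'}}$ induced by $\pi$. This isomorphism is available because $\pi$ is surjective (so $\pi^{-1}(\fp_{A'})=\fp_A$, both being the kernels of the respective augmentations to $\mco$) and, by hypothesis, an isomorphism after localizing at $\fp_A$. In particular the maximal ideal $\fp_AA_{\fp_A}$ is carried to $\fp_{A'}A'_{\fp_{A'}}$.

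For the first claim, one has $\pi(f)\in\fp_{A'}$ since $f\in\fp_A$. The hypothesis $f\notin\fp_A^{(2)}$ says that the image of $f$ in $A_{\fp_A}$ lies outside $(\fp_AA_{\fp_A})^2$; under the isomorphism above this transports to the image of $\pi(f)$ in $A'_{\fp_{A'}}$ lying outside $(\fp_{A'}A'_{\fp_{A'}})^2$, and hence $\pi(f)\notin\fp_{A'}^{(2)}$.

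For the equality $\nu_{A'}(\pi(f))=\nu_A(f)$, I would invoke naturality of the order ideals under $\pi$. The map $\pi$ induces a surjection $\fp_A/\fp_A^2\twoheadrightarrow\fp_{A'}/\fp_{A'}^2$ carrying $[f]$ to $[\pi(f)]$. As spelled out in the commutative diagram~\eqref{eq:con-diagram} applied to $\pi$, this restricts to a surjection $\con A\twoheadrightarrow\con{A'}$ and induces a map between the torsion-free quotients, which are both free $\mco$-modules of rank $c$; a surjection between free $\mco$-modules of the same finite rank is an isomorphism, so
\[
\tfree{(\fp_A/\fp_A^2)}\xra{\ \cong\ }\tfree{(\fp_{A'}/\fp_{A'}^2)}.
\]
Dualizing via $\Hom_\mco(-,\mco)$ yields an isomorphism which, by naturality of the evaluation pairing from \ref{ch:order}, intertwines $\langle-,[\pi(f)]\rangle$ with $\langle-,[f]\rangle$. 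Hence the images of these evaluation maps in $\mco$---namely the order ideals $O(\pi(f))$ and $O(f)$---coincide, and so do their colengths $\nu_{A'}(\pi(f))=\nu_A(f)$.

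There is no serious obstacle here; the only point requiring a moment's care is that the evaluation maps $\Hom_\mco(\fp_A/\fp_A^2,\mco)\to\mco$ factor through the torsion-free quotient (since $\mco$ is torsion-free), so that the isomorphism on $\tfree{(-)}$ suffices to force equality of the two order ideals in $\mco$.
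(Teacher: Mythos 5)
Your proof is correct and follows essentially the same route as the paper's: the first assertion is obtained by transporting $f\notin\fp_A^2 A_{\fp_A}$ across the localization isomorphism (the paper states this as $\fp_A^{(2)}=\pi^{-1}(\fp_{A'}^{(2)})$), and the second from the isomorphism $\tfree{(\fp_A/\fp_A^2)}\cong\tfree{(\fp_{A'}/\fp_{A'}^2)}$ supplied by the diagram~\eqref{eq:con-diagram}. You simply spell out the dualization and naturality steps that the paper summarizes as ``immediate from the definition of $\nu(-)$.''
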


\begin{proof}
Since $\pi$ is surjective and an isomorphism at $\fp_A$, one has
\[
\fp_A^{(2)}=\pi^{-1}(\fp_{A'}^{(2)}) \quad\text{and}\quad \fp_A^2A_{\fp_A}\cong \fp_{A'}^2A'_{\fp_A'}\,.
\]
Thus $f$ not in $\fp_A^{(2)}$ implies that $\pi(f)$ is not in $\fp_{A'}^{(2)}$. The equality $\nu_{A'}(\pi(f))=\nu_A(f)$ is immediate from the definition of $\nu(-)$, given that $\pi$ induces an isomorphism
\[
\tfree{(\fp_A/\fp_A^2)}\cong \tfree{(\fp_{A'}/\fp_{A'}^2)}\,.
\]
See \eqref{eq:con-diagram}.
\end{proof}

\end{chunk}

Now we present a proof of Theorem~\ref{th:reduction-delta}. See \cite[Theorem~2.28]{Iyengar/Khare/Manning/Urban:2024} for a different perspective. 

\begin{proof}[Proof of Theorem~\ref{th:reduction-delta}]
The hypothesis is that $A$ is in $\acat(c)$, the $A$-module $M$ has depth at least $c$,  and $f\in \fp_A\setminus \fp_A^{(2)}$ is not a zero-divisor on $M$. We have to verify 
\[
\delta_A(M) = \delta_B(N) \quad\text{for $B\colonequals A/fA$ and $N\colonequals M/fM$.}
\]
One has $\depth_A(\fp_A,M)=c$ and $f\in \fp_A$ is not a zero-divisor on $M$. Thus $\depth_B(\fp_B,N)=c-1$. Since $f$ is in $\fp_A$, the surjection $M\to N$ induces an isomorphism $M/\fp_AM \cong N/\fp_B N$, so there is an equality
\begin{equation}
\label{eq:reduction-rank}
\rank_{A_{\fp_A}} (M_{\fp_A}) = \rank_{B_{\fp_B}} (N_{\fp_B})\,.
\end{equation}

We reduce to the case where $f$ is not a zero-divisor also on $A$, as follows:  Let $A'$ be the image of $A$ in $\End_A(M)$ and set $B'\colonequals A'/fA'$. Then $A'$ is in $\acat(c)$ and the map $A\to A'$ is an isomorphism at $\fp_A$, by Lemma~\ref{le:faithful}. Thus
image of $f$ in $A'$ is not contained in $\fp_{A'}^{(2)}$ and  $\nu_{A'}(f)=\nu_{A}(f)$, by Lemma~\ref{le:order-ideal}.
Applying  Lemma~\ref{le:reduction-con} to $A$ and $A'$ we deduce that $B$ and $B'$ are in $\acat{(c-1)}$, and
\[
\length_{\mco}\con B - \length_{\mco} \con A = \length_{\mco} \con {B'} - \length_{\mco} \con {A'}\,.
\]
Moreover $\grade(\fp_{A'},M)=\grade(\fp_A,M) \ge c$, since the $A$ action on $M$ factors through $A'$, and similarly $\grade(\fp_{B'},N)\ge c-1$, so Theorem~\ref{th:invariance-of-domain} yields equalities
\begin{align*}
&\length_{\mco}\cmod{A'}(M) = \length_{\mco} \cmod{A}(M) \\
&\length_{\mco} \cmod{B'}(N) = \length_{\mco} \cmod{B}(N)\,.
\end{align*}
The displayed equalities above yield
\[
\delta_A(M) - \delta_{B}(N) = \delta_{A'}(M) - \delta_{B'}(N)
\]
It thus suffices to verify the desired result for the $A'$-module $M$, so replacing $A$ and $A'$, and so $B$ by $B'$, one can assume $f$ is  not a zero-divisor also on $A$, as claimed; see Lemma~\ref{le:faithful}.

The next step in the proof is a reduction to the case $M=A$.  
The assertion below concerns the map $\eta_M$ from \eqref{eq:ecoh-cd}.

\begin{claim}
There is an isomorphism $\Coker(\eta_N)\cong \Coker(\eta_M)$ of $\mco$-modules.
\end{claim}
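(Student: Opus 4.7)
The plan is to identify $\eta_M$ with $\eta_N$ via a degree shift coming from the change-of-rings equivalence across $A\twoheadrightarrow B$. After the reduction already carried out in this proof, $f$ is a nonzerodivisor on both $A$ and $M$. Thus $[A\xrightarrow{f} A]$ is a finite free $A$-resolution of $B$, and since $f\in\fp_A$ the quotient $\mco$ is naturally a $B$-module. Derived adjunction then gives, for any $A$-complex $X$,
\[
\RHom_A(\mco,X)\simeq \RHom_B(\mco,\RHom_A(B,X))\,,
\]
and the Koszul resolution computes $\RHom_A(B,X)\simeq (X/fX)[-1]$ whenever $f$ is a nonzerodivisor on $X$. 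Applied to $X=A$ and $X=M$ this produces natural quasi-isomorphisms
\[
\RHom_A(\mco, A)\simeq \RHom_B(\mco, B)[-1]\,,\qquad \RHom_A(\mco, M)\simeq \RHom_B(\mco, N)[-1]\,,
\]
and taking $H^c$ followed by torsion-free quotients yields natural $\mco$-isomorphisms $\ecoh^c_A(A)\cong \ecoh^{c-1}_B(B)$ and $\ecoh^c_A(M)\cong \ecoh^{c-1}_B(N)$.

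Next I would show that these isomorphisms carry the defining morphism of $\eta_M$ to that of $\eta_N$. Since $f$ is $M$-regular one has $B\lotimes_A M\simeq N$ in $\dcat B$, and combining with the above yields a chain of natural quasi-isomorphisms
\[
\RHom_A(\mco,A)\lotimes_A M \;\simeq\; \RHom_B(\mco,B)[-1]\lotimes_A M \;\simeq\; \bigl(\RHom_B(\mco,B)\lotimes_B N\bigr)[-1]\,;
\]
moreover, the evaluation/K\"unneth map on the $A$-side corresponds under this chain to the $B$-side evaluation shifted by $[-1]$. Passing to $H^c$ on the $A$-side (equivalently $H^{c-1}$ on the $B$-side), factoring through $\otimes_{\mco}(M/\fp_AM)=\otimes_{\mco}(N/\fp_BN)$---an equality that holds because $f\in\fp_A$ gives $fM\subseteq\fp_AM$, whence $M/\fp_AM\cong (M/fM)/(\fp_AM/fM)=N/\fp_BN$---and taking torsion-free quotients, the diagram \eqref{eq:ecoh-cd} for $(A,M)$ at codimension $c$ is identified with the analogous diagram for $(B,N)$ at codimension $c-1$. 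In particular the vertical map $\eta_M$ on the $A$-side is identified with $\eta_N$ on the $B$-side, so $\Coker(\eta_N)\cong \Coker(\eta_M)$.

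The main obstacle is the compatibility claim in the second paragraph: one must carefully verify that the natural K\"unneth map $\RHom_A(\mco,A)\lotimes_A M\to\RHom_A(\mco,M)$ is carried, under the change-of-rings equivalence, to the $B$-level K\"unneth map $\RHom_B(\mco,B)\lotimes_B N\to\RHom_B(\mco,N)$, shifted by $[-1]$. This is a naturality statement in the derived category that in practice is checked by choosing compatible resolutions. Once this compatibility of natural transformations is in hand, the rest---taking cohomology, factoring through the $\mco$-module quotient, and passing to torsion-free quotients---is purely functorial, and the isomorphism $\Coker(\eta_N)\cong\Coker(\eta_M)$ follows.
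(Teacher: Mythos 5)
Your proposal is correct and takes essentially the same approach as the paper: both use the Koszul resolution $[A\xrightarrow{f}A]$ of $B$ and the adjunction $\RHom_A(\mco,X)\simeq\RHom_B(\mco,\RHom_A(B,X))$ to obtain natural shift isomorphisms $\ecoh^i_A(X)\cong\ecoh^{i-1}_B(B\lotimes_A X)$, and then apply them to $X=A$ and $X=M$ with $B\lotimes_AM\simeq N$. The paper keeps the target as $(B\lotimes_A X)[-1]$ so that the whole construction is visibly natural in $X$ (no regularity hypothesis on $X$ needed), which is exactly what gives the commutativity with the K\"unneth map that you flag as the remaining obstacle; phrasing it as $(X/fX)[-1]$ as you do is fine for the two objects you need but makes the naturality slightly less transparent.
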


Indeed, since $f$ is not a zero-divisor on $A$, the free resolution of the $A$-module $B$  is $0\to A\xra{f}A\to 0$. In particular, it is finite free, so for any $A$-complex $X$ there is a natural isomorphism
\[
\RHom_A(B,X) \cong \RHom_A(B,A)\lotimes_A X \cong (B\lotimes_A X)[-1]
\]
in $\dcat B$.  This gives rise to isomorphisms
\[
\RHom_A(\mco, X) \cong \RHom_B(\mco, \RHom_A(B,X)) \cong \RHom_B(\mco, (B\lotimes_A X)[-1])\,.
\]
Thus, passing to cohomology and applying $\tfree{(-)}$ yields an isomorphism 
\[
\ecoh^i_A(X)  \cong \ecoh_B^{i-1}(B\lotimes_AX)
\]
for each integer $i$, and natural in $X$. Moreover, since $f$ is not a zero-divisor on $M$, the natural map 
\[
B\lotimes_AM \to \HH 0{B\lotimes_AM} \cong M/fM = N
\]
is a quasi-isomorphism. Summing up, one gets a commutative diagram 
\[
\begin{tikzcd}[column sep=huge]
\ecoh^c_A(A)\otimes_{\mco}\tfree{(M/\fp_AM)} \arrow{d}[swap]{\eta_M} \arrow{r}{\cong}
	& \ecoh^{c-1}_B(B)\otimes_{\mco}\tfree {(N/\fp_BN)}   \arrow{d}{\eta_N}\\
\ecoh^{c}_A(M)   \arrow{r}{\cong}
	& \ecoh^{c-1}_B(N) 
\end{tikzcd}
\]
of $\mco$-modules. This justifies the claim.

Given the preceding claim, \eqref{eq:reduction-rank}, and the exact sequence Lemma~\ref{le:defect-formula}, it suffices to establish the desired equality for $M=A$, namely that $\delta_A(A)=\delta_B(B)$. In view of Lemma~\ref{le:reduction-con}, this is equivalent to
\begin{equation}
\label{eq:cmod-equality}
\length_{\mco}\cmod B - \length_{\mco}\cmod A = \nu_A(f)\,.
\end{equation}
To that end, consider the commutative square  
\begin{equation}
\label{eq:reduction}
\begin{tikzcd}
\ecoh^c_A(A) \arrow{d}[swap]{\cong}  \arrow{r} 
	& \ecoh^c_A(\mco) \arrow{d}{\cong} \\
\ecoh^{c-1}_B(B) \arrow{r} 
	& \ecoh^{c-1}_B(B\lotimes_A \mco) \arrow{r} & \ecoh^{c-1}_B(\mco)
\end{tikzcd}
\end{equation}
The maps in the lower row are induced by canonical maps $B \to B\lotimes_A \mco \to \mco$ whose composition is $\lambda_B$. The key point is this:

\begin{claim}
The map $\ecoh^{c-1}_B(B\lotimes_A\mco) \to  \ecoh^{c-1}_B(\mco)$ is one-to-one and its cokernel has length equal to $\nu_A(f)$. 
\end{claim}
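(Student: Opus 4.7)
\emph{Plan.} I apply $\RHom_B(\mco,-)$ to the Postnikov exact triangle $\mco[1]\to B\lotimes_A\mco\to\mco$ in $\dcat B$ (whose homology is $\mco$ in degrees $0$ and $1$, as seen from the $A$-free resolution $0\to A\xra{f}A\to B\to 0$) to obtain the long exact sequence
\[
\Ext_B^c(\mco,\mco)\xra{\ \alpha\ }\Ext_B^{c-1}(\mco,B\lotimes_A\mco)\xra{\ \beta\ }\Ext_B^{c-1}(\mco,\mco)\xra{\ \gamma\ }\Ext_B^{c+1}(\mco,\mco),
\]
with $\gamma$ equal to Yoneda multiplication by the $k$-invariant $\kappa\in\Ext_B^2(\mco,\mco)$ of the triangle. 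Since $B\in\acat(c-1)$, Lemma~\ref{le:Etors} forces both $\Ext_B^c(\mco,\mco)$ and $\Ext_B^{c+1}(\mco,\mco)$ to be $\mco$-torsion. Hence $\Ker(\beta)=\mathrm{im}(\alpha)$ is torsion, which proves the injectivity of the induced map $\bar\beta$ on torsion-free quotients.

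For the cokernel, I transport $\bar\beta$ along the shift isomorphism $\ecoh_A^c(\mco)\xra{\ \cong\ }\ecoh_B^{c-1}(B\lotimes_A\mco)$ established earlier in the proof, to obtain a map between rank-one free $\mco$-modules $\ecoh_A^c(\mco)\to\ecoh_B^{c-1}(\mco)$. Using Proposition~\ref{pr:xi-algebra}, these identify with the top exterior powers $\wedge^c V_A$ and $\wedge^{c-1}V_B$ for $V_A:=\Hom_\mco(\fp_A/\fp_A^2,\mco)$ and $V_B:=\Hom_\mco(\fp_B/\fp_B^2,\mco)$; moreover, the cotangent sequence $0\to\mco\to\fp_A/\fp_A^2\to\fp_B/\fp_B^2\to 0$ of \ref{ch:order} dualizes to $0\to V_B\to V_A\xra{\mathrm{ev}_f}\mco$ with image $O(f)$. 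The goal is to identify the transported $\bar\beta$ with the contraction-by-$\mathrm{ev}_f$ map
\[
\wedge^c V_A\lra\wedge^{c-1}V_B,\qquad \xi_1\wedge\cdots\wedge\xi_c\longmapsto\sum_{i=1}^c(-1)^{i-1}\xi_i(f)\,\xi_1\wedge\cdots\widehat{\xi_i}\cdots\wedge\xi_c,
\]
which takes values in $\wedge^{c-1}V_B$ since $\xi_i\in V_B$ forces $\xi_i(f)=0$. Granting this, upon choosing any $v\in V_A$ that lifts a generator of $V_A/V_B\cong O(f)$, contraction sends a basis element $e_1\wedge\cdots\wedge e_{c-1}\wedge v$ of $\wedge^c V_A$ to $\pm v(f)\,e_1\wedge\cdots\wedge e_{c-1}$, so the cokernel is $\mco/O(f)$, of length $\nu_A(f)$.

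The main obstacle is the above identification. I plan to verify it by a chain-level computation in the Tate acyclic closures $A\la U\ra\xra{\ \simeq\ }\mco$ and $B\la V\ra\xra{\ \simeq\ }\mco$ of Section~\ref{se:Tate}. After adjusting the presentation of $A$ via Theorem~\ref{th:ci-approximation} and a $\GL_c(\mco)$-change of the last $c$ variables so that $f\equiv\varpi^{\nu_A(f)}t_n\pmod{(\bs t)^2}$, one can build $B\la V\ra$ from $B\otimes_A A\la U\ra$ by adjoining a single divided-power variable $y$ of degree $2$ with $d(y)=\varpi^{\nu_A(f)}x_n$, which kills the generator $\varpi^{\nu_A(f)}x_n$ of $\hh_1(B\lotimes_A\mco)$. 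The $\varGamma$-derivations of \ref{ch:theta-construction} can then be chosen compatibly on both constructions, and a direct calculation shows that the augmentation $\theta_A=\ve_A\circ\theta_{n-c+1}\circ\cdots\circ\theta_n$ is carried under the structural comparison map to $\varpi^{\nu_A(f)}\theta_B$ up to an $\mco^\times$-unit, the factor $\varpi^{\nu_A(f)}$ entering precisely through $d(y)=\varpi^{\nu_A(f)}x_n$ when evaluating $\theta_n$. This exhibits the transported $\bar\beta$ as multiplication by $\varpi^{\nu_A(f)}$ between the rank-one modules and yields the claimed cokernel length.
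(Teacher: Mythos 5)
Your first step, deducing injectivity on torsion-free quotients from the exact triangle $\mco[1]\to B\lotimes_A\mco\to\mco$ together with Lemma~\ref{le:Etors}, is exactly the paper's argument. The interesting divergence is in the cokernel computation. You propose to identify the transported map $\ecoh^c_A(\mco)\to\ecoh^{c-1}_B(\mco)$ with contraction by $\mathrm{ev}_f$ under the exterior-algebra identifications of Proposition~\ref{pr:xi-algebra}, and to certify this by an explicit chain-level calculation in Tate acyclic closures. The paper instead observes that the connecting map
\[
\eth^*\colon \Ext^*_A(\mco,\mco)\lra\Ext^{*-1}_B(\mco,\mco)
\]
is a map of graded $\Ext^*_B(\mco,\mco)$-modules (a general property of the adjunction isomorphism and of $\Ext^{*-1}_B(\mco,-)$ applied to the morphism $B\lotimes_A\mco\to\mco$), passes to torsion-free quotients, identifies $\eth^1$ with the evaluation map of \eqref{eq:order} having image $O(f)$, and then uses $\ecoh^{c-1}_B(\mco)\cdot\ecoh^1_A(\mco)=\ecoh^c_A(\mco)$ from Proposition~\ref{pr:xi-algebra} to conclude $\eth^c(\ecoh^c_A(\mco))=\varpi^{\nu_A(f)}\ecoh^{c-1}_B(\mco)$. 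This is the contraction formula in disguise (up to sign, contraction by $\mathrm{ev}_f$ is $\wedge^*V_B$-linear precisely because $\mathrm{ev}_f$ kills $V_B$), so the two approaches are morally the same; the module-theoretic route trades your chain computation for a degree-one computation plus formal multiplicativity.

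Where your plan is at genuine risk is the Tate-construction step. The assertion that the acyclic closure $B\la V\ra$ is obtained from $B\otimes_A A\la U\ra$ by adjoining a single degree-$2$ divided-power variable is not a free lunch. Once $\nu_A(f)>0$ one has $f\in\fm_A^2$, the complex $B\otimes_A A\la U\ra$ is no longer minimal, and killing the single class in $\hh_1$ produces a free resolution of $\mco$ over $B$ (Tate/Shamash for a hypersurface section) but not immediately the acyclic closure on which the $\varGamma$-derivations of \ref{ch:theta-construction} are constructed; reconciling $\theta_B$ built from $B\la V\ra$ with the structural comparison map from $B\otimes_A A\la U\ra$ then needs an explicit quasi-isomorphism plus a homotopy argument, and keeping track of minimality and the higher divided powers of $y$ is exactly where errors creep in. If you want a chain-level proof you should either prove that $B\otimes_A A\la U\ra\la y\ra$ really is the acyclic closure (via a deviations/uniqueness argument), or abandon minimality and compute with an arbitrary $\varGamma$-resolution, for which you would need a version of Lemma~\ref{le:theta-class} that does not reference the minimal Koszul subalgebra. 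The paper's use of the $\ecoh^*_B(\mco)$-module structure of $\eth^*$ is designed precisely to avoid having to say anything about the acyclic closure of $\mco$ over $B$.
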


Given this claim and the diagram \eqref{eq:reduction} one gets inclusions
\[
\ecoh^{c-1}_B(B) \hookrightarrow \ecoh^{c-1}_B(B\lotimes_A\mco) \hookrightarrow \ecoh^{c-1}_B(\mco)
\]
These and the isomorphisms in \eqref{eq:reduction} yield \eqref{eq:cmod-equality}, as desired.

To verify the claim we consider the canonical exact triangle
\[
 \mco[1] \lra B\lotimes_A\mco \lra \mco\lra \mco[2]
\]
in $\dcat B$. Applying $\Hom_{\dcat B}(\mco, (-)[{c-1}])$ yields an exact sequence of $\mco$-modules
\[
\Ext^{c}_B(\mco,\mco) \lra \Ext^{c-1}_B(\mco, B\lotimes_A\mco) \lra \Ext^{c-1}_B(\mco,\mco) \xrightarrow{\ \chi\ } \Ext^{c+1}_B(\mco,\mco) \lra
\]
The first part of the claim follows because the $\mco$-module $\Ext^i_B(\mco,\mco)$ is torsion for all $i\ge c$; see Lemma~\ref{le:Etors} and ~\ref{ch:tors-remark}. As an aside we note that $\chi$ is the cohomology operator of the map $A\to B$,  discovered by Eisenbud and Gulliksen; see~\cite[\S9.1]{Avramov:1998}.

It remains to verify the second part of the claim. In view of the commutative diagram~\ref{eq:reduction}, this is tantamount to the statement that the cokernel of the map $\ecoh^c_A(\mco) \to \ecoh^{c-1}_B(\mco)$ has length $\nu_A(f)$. This follows from a certain equivariance property of the map. Namely, the connecting map 
\[
 \Ext^*_A(\mco,\mco) \lra \Ext^{*-1}_B(\mco,\mco)
\]
is $\Ext^*_B(\mco,\mco)$-linear, where the source is viewed as an $\Ext^*_B(\mco,\mco)$-module via the natural map $\Ext^*_B(\mco,\mco)\to \Ext^*_A(\mco,\mco)$.  Thus the induced map
\[
\eth^*\colon \ecoh^*_A(\mco) \lra \ecoh^{*-1}_B(\mco)
\]
is $\ecoh^*_B(\mco)$-linear. The map $\eth^1$ is precisely the one in \eqref{eq:order}.  In view of Theorem~\ref{th:xi-algebra}, the exact sequence~\eqref{eq:order} implies that the image of $\ecoh^1_B(\mco)$ in $\ecoh^1_A(\mco)$ is a nonzero direct summand, and hence also that $\ecoh^{c-1}_B(\mco)\cdot \ecoh^1_A(\mco)=\ecoh^c_A(\mco)$.  Since the image of $\eth^1$ is the order-ideal of $f$, the equivariance property of $\eth^*$ implies
\[
\eth^{c}(\ecoh^c_A(\mco)) = \eth^{c}(\ecoh^{c-1}_B(\mco)\ecoh^1_A(\mco)) = \ecoh^{c-1}_B(\mco)\eth^{1}(\ecoh^1_A(\mco)) = \varpi^{\nu_A(f)} \ecoh^{c-1}_B(\mco)
\]
Thus the cokernel of $\eth^{c}$ has length $\nu_A(f)$, as claimed. 

This completes the proof of the claim and hence of the theorem.
\end{proof}

\section{Criteria for freeness}
\label{se:diamond-wiles}
In this section we establish numerical criteria, in terms of congruence modules and cotangent modules, for detecting when a module has a free summand, and also for detecting the complete intersection local rings in $\acat$.

\subsection*{Gorenstein rings}
A noetherian local ring $R$ is \emph{Gorenstein} if $R$, viewed as a module over itself, has finite injective dimension. 
Over such a ring $R$, a finitely generated $R$-module $M$ is maximal Cohen--Macaulay if and only if
\begin{equation}
\label{eq:gor-mcm}
\Ext^i_R(M,R)=0\quad \text{for all $i\ge 1$.}
\end{equation}
This follows from \eqref{eq:depth-dim}, keeping in mind that $\ddual R\cong R[\dim R]$ in $\dcat R$; see also \cite[Theorem~3.3.10]{Bruns/Herzog:1998}. The result below is Theorem~\ref{th:intro-1} from the Introduction. In the statement, $e_A(M)$ denotes the multiplicity of an $A$-module $M$; see \cite[\S4.6]{Bruns/Herzog:1998}.

\begin{theorem}
\label{th:gorenstein}
Let $A$ be a Gorenstein local ring in $\acat$ and $M$ a maximal Cohen--Macaulay $A$-module with $\mu\colonequals \rank_{\fp_A}(M_{\fp_A})\ne 0$. Then $\delta_A(M)=  \mu \cdot \delta_A(A)$ holds if and only if
\[
M\cong A^{\mu}\oplus W \qquad\text{where $W_{\fp_A}=0$.}
\]
When this holds and $e_A(M)\le \mu  \cdot e(A)$, then $M$ is free.
\end{theorem}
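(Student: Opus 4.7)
The plan is to dispose of the ``if'' direction by direct computation, and to handle the ``only if'' direction by reducing to codimension zero via Theorem~\ref{th:reduction-delta} and lifting the resulting splitting with the aid of the Gorenstein hypothesis.

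For the ``if'' direction, suppose $M\cong A^\mu\oplus W$ with $W_{\fp_A}=0$. Then $(W/\fp_AW)_{\fp_A}=0$ and $\Ext^c_A(\mco,W)_{\fp_A}=\Ext^c_{A_{\fp_A}}(k(\fp_A),0)=0$, so $\tfree{(W/\fp_AW)}=0$ and $\ecoh^c_A(W)=0$; hence $\cmod A(W)=0$. Additivity of the congruence module construction on direct sums gives $\length_{\mco}\cmod A(M)=\mu\length_{\mco}\cmod A(A)$, and combined with $\rank_{A_{\fp_A}}M_{\fp_A}=\mu$ this yields $\delta_A(M)=\mu\,\delta_A(A)$.

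For the converse, observe first that $M$ maximal Cohen--Macaulay together with $M_{\fp_A}\ne 0$ forces $\dim A=c+1$ by Lemma~\ref{le:mcat-old}. Since $A$ is Gorenstein (hence Cohen--Macaulay), Lemma~\ref{le:prime-avoidance} produces a sequence $\bs g=g_1,\dots,g_c\subset\fp_A$ that is regular on both $A$ and $M$ with $A_0\colonequals A/\bs gA\in\acat(0)$. Setting $M_0\colonequals M/\bs gM$, iterated application of Theorem~\ref{th:reduction-delta} gives $\delta_A(A)=\delta_{A_0}(A_0)$ and $\delta_A(M)=\delta_{A_0}(M_0)$, while $A_0$ remains Gorenstein and $M_0$ remains MCM over $A_0$ of generic rank $\mu$ at $\fp_{A_0}$. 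The codimension-zero case of the theorem, contained in \cite{Brochard/Iyengar/Khare:2021b}, then yields a decomposition $M_0\cong A_0^\mu\oplus W_0$ with $(W_0)_{\fp_{A_0}}=0$.

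To lift this decomposition, let $\psi\colon A^\mu\to M$ send the standard basis to lifts of an $A_0$-basis of the free summand of $M_0$, so that $\psi$ reduces modulo $\bs g$ to the split injection $\bar\psi\colon A_0^\mu\hookrightarrow M_0$. Because $A$ is Gorenstein and $M$ is MCM, $\Ext^i_A(M,A)=0$ for all $i\ge 1$; inducting on the length of $\bs g$ via the long exact Ext sequences arising from $0\to A\xrightarrow{g_j}A\to A/g_jA\to 0$ shows that the reduction map $\Hom_A(M,A^\mu)\to\Hom_{A_0}(M_0,A_0^\mu)$ is surjective, so any retraction $\bar\phi$ of $\bar\psi$ lifts to some $\phi\colon M\to A^\mu$. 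The composite $\phi\psi\in\End_A(A^\mu)$ reduces to $\mathrm{id}$ modulo $\bs g$, so $\mathrm{id}-\phi\psi$ takes values in $\bs gA^\mu\subseteq\fm_AA^\mu$; Nakayama's lemma makes $\phi\psi$ surjective, and being a surjective endomorphism of the finitely generated $A$-module $A^\mu$ it is an isomorphism. Therefore $(\phi\psi)^{-1}\phi$ retracts $\psi$ and $M\cong A^\mu\oplus W$, with $W_{\fp_A}=0$ by a rank comparison at $\fp_A$. For the final freeness assertion, $W$ inherits the MCM property as a direct summand of $M$, and additivity of the Hilbert--Samuel multiplicity on MCM direct sums gives $e_A(M)=\mu\cdot e(A)+e_A(W)$; the hypothesis $e_A(M)\le\mu\cdot e(A)$ then forces $e_A(W)=0$, and since a nonzero MCM module has strictly positive multiplicity, $W=0$ and $M\cong A^\mu$. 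The main obstacle is this lifting step, where the Gorenstein hypothesis (through the vanishing $\Ext^i_A(M,A)=0$) and the Nakayama argument combine to promote the codimension-zero splitting into one over $A$, and so endow the formal reduction via Theorem~\ref{th:reduction-delta} with its structural content.
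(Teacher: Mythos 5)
Your proof is correct, and it realises exactly the alternative route the paper itself flags just before its proof of Theorem~\ref{th:gorenstein}: reduce to $c=0$ via Theorem~\ref{th:reduction-delta} and then promote the splitting back to $A$. The paper's own argument instead stays at codimension $c$ and feeds the surjectivity of $\Ext^c_A(\mco,A)\otimes_A M\to\Ext^c_A(\mco,M)$ (extracted from Lemma~\ref{le:defect-formula}) into Lemma~\ref{le:summands-gor}, which reduces the Krull dimension to zero and finishes with a socle argument. The two routes are close in spirit but differ in which dimension gets collapsed: you collapse $\height\fp_A$ first and then Krull dimension, while the paper collapses Krull dimension directly within arbitrary $c$. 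Your lifting step --- surjectivity of $\Hom_A(M,A^\mu)\to\Hom_{A_0}(M_0,A_0^\mu)$ from $\Ext^1_A(M,A)=0$, followed by Nakayama and the fact that a surjective endomorphism of $A^\mu$ is bijective --- is a direct reproof of the content of the paper's Lemma~\ref{le:summands}, which uses trace ideals instead. One small imprecision: you attribute the codimension-zero case to \cite{Brochard/Iyengar/Khare:2021b}, but what that paper states explicitly is the defect formula (its Theorem~1.2); the Gorenstein free-summand conclusion at $c=0$ is rather implicit in the proof of \cite[Theorem~2.4]{Diamond:1997}, as the present paper notes, or can be read off from Lemma~\ref{le:summands-gor} specialised to $c=0$. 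That caveat affects only the bibliographic framing, not the validity of your argument: everything else, including the use of the Cohen--Macaulay hypothesis on $A$ to choose $\bs g$ regular on both $A$ and $M$ so that both $\delta_A(A)$ and $\delta_A(M)$ descend, the rank comparison giving $W_{\fp_A}=0$, and the multiplicity count forcing $W=0$ under $e_A(M)\le\mu\cdot e(A)$, is sound.
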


A key step in the proof of Theorem~\ref{th:gorenstein} is a criterion for detecting free summands in modules over general Gorenstein local rings, and not special to the category $\acat$. This is explained below. 

\begin{lemma}
\label{le:summands}
Let $R$ be a Gorenstein local ring, $M\in\rmod R$ a maximal Cohen--Macaulay module, and $x\in R$ not a zero-divisor on $R$ and on $M$. If the $R/xR$-module $M/xM$ has a free summand of rank $\mu $, then so does the $R$-module $M$.
\end{lemma}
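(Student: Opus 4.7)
The plan is to lift a splitting of $M/xM$ to a splitting of $M$, using the Gorenstein-plus-MCM hypothesis to kill the obstruction.

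Write $M/xM = \bar F \oplus \bar N$ with $\bar F \cong (R/xR)^\mu$, and let $\bar\iota\colon (R/xR)^\mu \hookrightarrow M/xM$ and $\bar\pi\colon M/xM \twoheadrightarrow (R/xR)^\mu$ denote the associated inclusion and projection, so that $\bar\pi\bar\iota = \mathrm{id}$. I would first lift $\bar\iota$: since $R^\mu$ is free, simply pick preimages in $M$ of the $\mu$ images of the standard basis under $\bar\iota$ to define $\iota\colon R^\mu \to M$ with $\iota \otimes_R R/xR = \bar\iota$.

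The serious step is lifting $\bar\pi$. View it as an $R$-linear map $M \to (R/xR)^\mu$ (composing with $M \twoheadrightarrow M/xM$). Applying $\Hom_R(M,-)$ to the exact sequence
\[
0 \lra R^\mu \xra{\ x\ } R^\mu \lra (R/xR)^\mu \lra 0
\]
gives the obstruction sequence
\[
\Hom_R(M, R^\mu) \lra \Hom_R(M, (R/xR)^\mu) \lra \Ext^1_R(M, R)^\mu\,.
\]
Here is where the hypotheses enter: since $R$ is Gorenstein and $M$ is maximal Cohen--Macaulay, one has $\Ext^1_R(M,R)=0$ by~\eqref{eq:gor-mcm}. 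Hence $\bar\pi$ lifts to some $\pi\colon M \to R^\mu$ with $\pi \otimes_R R/xR = \bar\pi$. This use of the Gorenstein hypothesis is the crux.

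Finally, I would check that $\pi\iota\colon R^\mu \to R^\mu$ is an automorphism. By construction its reduction modulo $x$ equals $\bar\pi\bar\iota = \mathrm{id}$, so the matrix of $\pi\iota$ has the form $I + xA$. Since $x$ is a non-zero-divisor on $M \ne 0$, it cannot be a unit in $R$, hence lies in the maximal ideal of the local ring $R$; consequently $\det(I+xA) \equiv 1$ modulo the maximal ideal, and so is a unit. Replacing $\pi$ by $(\pi\iota)^{-1}\pi$, we obtain a retraction of $\iota$, whence
\[
M \cong R^\mu \oplus \Ker(\pi)\,,
\]
giving the desired free summand of rank $\mu$. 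The main obstacle is really only the vanishing of $\Ext^1_R(M,R)$; once that is in place the lifting and splitting are formal.
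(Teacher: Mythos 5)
Your proof is correct, and it takes a genuinely different route from the paper's. The paper reduces to $\mu = 1$ and works with the trace ideal: a finitely generated $R$-module $M$ has $R$ as a direct summand iff the trace map $\tau_M(R)\colon\Hom_R(M,R)\otimes_RM\to R$ is surjective, and surjectivity can be checked modulo $x$ by Nakayama; the Gorenstein--MCM vanishing $\Ext^1_R(M,R)=0$ is then used to show that $\Hom_R(M,R)\to\Hom_{R/xR}(M/xM,R/xR)$ is surjective, so that $\tau_M(R)\bmod x$ is identified with $\tau_{M/xM}(R/xR)$, which is onto by hypothesis. You instead lift the inclusion and projection $\bar\iota,\bar\pi$ directly, using the same vanishing $\Ext^1_R(M,R)=0$ to lift $\bar\pi$, and then observe that $\pi\iota\equiv\mathrm{id}\pmod{x}$ is automatically invertible over the local ring $R$. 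Both approaches pivot on exactly the same $\Ext^1$ vanishing; yours is more hands-on and keeps $\mu$ general throughout, avoiding the citation to trace ideal theory, while the paper's is slightly more conceptual and uses Nakayama at the level of the trace map rather than through a determinant computation.

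One small slip: you write ``since $x$ is a non-zero-divisor on $M\ne0$, it cannot be a unit.'' That implication is false --- a unit is a non-zero-divisor on every module. What you actually need is that $x\in\fm_R$, and the correct justification is that $M/xM$ has a nonzero free summand (assuming $\mu\ge1$; the case $\mu=0$ is vacuous), hence $M/xM\ne0$, hence $x$ is not a unit. With that fix the determinant argument goes through and the proof is complete.
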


\begin{proof}
It suffices to verify that if $R'\colonequals R/xR$ is a direct summand of $M'\colonequals M/xM$, then $R$ is a direct summand of $M$. The latter condition is equivalent to the condition that the trace ideal of $M$ is $R$; that is to say, the natural map is surjective:
\[
\tau_M(R) \colon \Hom_R(M,R)\otimes_R M\lra R\,, \quad\text{given by $f\otimes m\to f(m)$.}
\]
See, for instance, \cite[Proposition 2.8(iii)]{Lindo:2017}. By Nakayama's Lemma, it suffices to prove that $\tau_M(R)$ is surjective after applying $-\otimes_R R'$; equivalently, the horizontal map in the  diagram below is surjective:
\[
\begin{tikzcd}
(\Hom_R(M,R)\otimes_R R') \otimes_{R'} M' \arrow{d}  \arrow{r} & R' \\
\Hom_{R'}(M',R') \otimes_{R'} M'\arrow[bend right=20]{ur}[swap]{\tau_{M'}(R')}
\end{tikzcd}
\] 
The vertical map is induced by the map
\[
\Hom_R(M,R) \lra \Hom_R(M,R') \cong \Hom_{R'}(M',R')\,,
\]
where the first map is induced by $R\to R'$, and the isomorphism is standard adjunction.  It is a simple exercise to check that the diagram above is commutative. Moreover,  $\tau_{M'}(R')$ is surjective because $R'$ is a direct summand of $M'$. Thus, it suffices to verify that the vertical map is surjective. Since $M$ is maximal Cohen--Macaulay, $\Ext^1_R(M,R)=0$ by \eqref{eq:gor-mcm}, so applying $\Hom_R(M,-)$ to the exact sequence
\[
0\lra R\xra{\ x\ } R \lra R' \lra 0
\]
yields the desired result.
\end{proof}

Here is a general result on detecting free summands. The hypothesis on the Ext modules is equivalent to the vanishing of the Tate cohomology module of the pair $(\mco,M)$, in degree $c$; see Proposition~\ref{pr:eta-Gorenstein} and also~\cite[Corollary~6.3.4]{Buchweitz:2021}. 

\begin{lemma}
\label{le:summands-gor}
Let $R$ be a Gorenstein local ring and $M$ a maximal Cohen--Macaulay $R$-module. Let $\lambda\colon R\to\mco$ be surjective map, set $\fp\colonequals \Ker(\lambda)$ and $c\colonequals \height\fp$. If the induced map $\widetilde{\eta}_M\colon \Ext^c_R(\mco,R)\otimes_RM\to \Ext^c_R(\mco,M)$ is surjective and $M_\fp$ is nonzero, then $M$ has a free $R$-summand.
\end{lemma}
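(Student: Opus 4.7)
The plan is to reduce to the case $c=0$ via a Koszul-style deformation argument and then to treat the base case using Gorenstein duality together with self-injectivity of Artinian Gorenstein rings.

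For the reduction, prime avoidance produces a sequence $\bs x=x_1,\dots,x_c\in\fp$ that is regular on both $R$ and $M$. Indeed, $R$ is Cohen--Macaulay (being Gorenstein) of dimension $c+1$ by the formula $\dim R=\height\fp+\dim R/\fp$ for catenary rings, and $M$ is maximal Cohen--Macaulay, so both have depth at least $c+1$, while their associated primes all lie in $\mathrm{Min}(R)$, none of which contains $\fp$ when $c\ge 1$. Iterating Lemma~\ref{le:summands} then reduces the claim to showing that $M':=M/\bs xM$ has a free summand over $R':=R/\bs xR$. To transport the hypothesis, use that the Koszul complex $K(\bs x;R)$ is an $R$-free resolution of $R'$; since $\bs x$ is both $R$- and $M$-regular, one has $\Ext^q_R(R',N)=0$ for $q\ne c$ and $\Ext^c_R(R',N)\cong N/\bs xN$ for $N=R,M$, and the change-of-rings spectral sequence $\Ext^p_{R'}(\mco,\Ext^q_R(R',N))\Rightarrow \Ext^{p+q}_R(\mco,N)$ collapses to give natural identifications $\Ext^c_R(\mco,R)\cong\Hom_{R'}(\mco,R')$ and $\Ext^c_R(\mco,M)\cong\Hom_{R'}(\mco,M')$ that convert $\widetilde\eta_M$ into its $c=0$ analog for $(R',M')$. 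Surjectivity transfers, and $M'_{\fp'}\ne 0$ by Nakayama since $\bs x\subseteq\fp R_\fp$.

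Now $R'$ is a one-dimensional Gorenstein local ring with $R'/\fp'=\mco$, and Gorenstein duality identifies $R'[\fp']=\Hom_{R'}(\mco,R')\cong\mco$ as $R'$-modules. Fixing a generator $r$ of $R'[\fp']$, the hypothesis reads $rM'=M'[\fp']$. Localizing at $\fp'$, the ring $R'_{\fp'}$ is Artinian Gorenstein, hence self-injective, with socle $(\alpha)$ for $\alpha$ the image of $r$, and the hypothesis becomes $\alpha M'_{\fp'}=\operatorname{soc}(M'_{\fp'})$. Since $M'_{\fp'}\ne 0$, one picks $m\in M'_{\fp'}$ with $\alpha m\ne 0$; the standard fact that every nonzero ideal of an Artinian Gorenstein local ring contains the socle then forces $\ann_{R'_{\fp'}}(m)=0$, and self-injectivity of $R'_{\fp'}$ splits the cyclic submodule $R'_{\fp'}\cdot m\cong R'_{\fp'}$ off $M'_{\fp'}$ as a direct summand.

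The main obstacle is globalizing this $\fp'$-local splitting to an honest free $R'$-summand of $M'$. Clearing denominators in the local splitting produces only an $R'$-linear map $M'\to R'$ whose image escapes $\fp'$ but may still lie in the maximal ideal. The plan is to exploit the \emph{global} surjectivity of $\widetilde\eta_{M'}$---which the paper observes is equivalent to the vanishing of the Tate cohomology $\widehat{\Ext}^c_{R'}(\mco,M')$---by passing to the completion of $R'$, where Krull--Schmidt permits writing $M'=(R')^{\mu}\oplus M'_0$ with $M'_0$ having no free summand. The hardest step is to rule out $\mu=0$: the vanishing of $\widehat{\Ext}^c_{R'}(\mco,M'_0)$ manifests as an orthogonality between $\mco$ and $M'_0$ in the stable category of MCM modules over $R'$, and this orthogonality combined with $(M'_0)_{\fp'}\ne 0$ must be shown to force $M'_0$ to be stably trivial (hence free), contradicting the assumption and yielding $\mu\ge 1$.
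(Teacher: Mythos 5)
Your reduction to $c=0$ via a regular sequence $\bs x\subseteq\fp$ is correct as far as it goes, and the analysis of $\widetilde\eta_{M'}$ localized at $\fp'$ via Artinian Gorenstein duality is sound. But you yourself flag the real problem --- globalizing the $\fp'$-local splitting to an $R'$-free summand of $M'$ --- and then you only gesture at a remedy (completion, Krull--Schmidt, stable-category orthogonality) without executing it. That last step is precisely where the content of the lemma lies, and as written it is a genuine gap, not a routine verification: the vanishing of Tate cohomology $\widehat{\Ext}^c_{R'}(\mco,M'_0)$ is \emph{not} a standard orthogonality condition in the singularity category (over a non-isolated, or even isolated, singularity it does not by itself force $M'_0$ to be stably trivial), so your sketch cannot be completed as stated.

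The paper avoids this obstacle entirely by a different choice of first reduction: instead of cutting by an element \emph{in} $\fp$, it cuts by a single $R$- and $M$-regular element $x$ with $\lambda(x)=\varpi$. Such an $x$ is \emph{not} in $\fp$, and passing to $S:=R/xR$ kills $\varpi$ --- the augmentation $\lambda$ now factors through $S\to k$, whose kernel is the \emph{maximal} ideal $\fm_S$, with $\height\fm_S=\dim S=c$. The remaining hypothesis says that $\widetilde\eta_N$ (for $N:=M/xM$) is nonzero, and this is now a statement about cohomology supported at the maximal ideal, not at a non-maximal prime. From there one cuts by a maximal $S$- and $N$-regular sequence to reach Krull dimension zero; the surjectivity hypothesis descends to the statement $\mathrm{Soc}(S')\cdot N'\neq 0$, and a two-line argument using that $\mathrm{Soc}(S')$ is the unique minimal ideal of the self-injective zero-dimensional Gorenstein ring $S'$ produces a free $S'$-summand of $N'$. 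Since all the reductions are by regular elements, Lemma~\ref{le:summands} lifts the free summand back up to $M$. No localization at $\fp$ ever occurs, so the globalization problem you hit simply does not arise. Your choice of $\bs x\subseteq\fp$ keeps $\mco$ (rather than $k$) in the picture and so leaves $\fp$ as a non-maximal prime, which is what creates the local-to-global difficulty.
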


\begin{proof}
The argument is by reduction to Krull dimension zero. Choose an element $x$ in $R$ such that $\lambda(x)=\varpi$, the uniformizer for $\mco$, and is not zero-divisor on $R$ or on $M$. Set $S\colonequals R/xR$ and $N\colonequals M/xM$. For any $R$-module, say $U$, on which $x$ is not a zero-divisor, the surjection $U\to U/xU$ induces, for each integer $n$, a map
\[
\Ext^n_R(\mco, U) \lra \Ext^n_R(\mco,U/xU)\cong \Ext^n_S(k,U/xU)
\]
natural in $U$. This isomorphism is by standard adjunction. Specializing $U$ to $R$ and to $M$ gives a commutative diagram:
\[
\begin{tikzcd}
\Ext^c_R(\mco,R)\otimes_R M \arrow{d} \arrow[twoheadrightarrow]{r}{\widetilde{\eta}_M} & \Ext^c_R(\mco,M) \arrow{d}  \\
\Ext^c_S(k,S)\otimes_S N \arrow{r}{\widetilde{\eta}_N} & \Ext^c_S(k,N)
\end{tikzcd}
\]
The vertical map on the right is nonzero, as can be seen by considering the long exact sequence  induced by $\Hom_R(\mco, -)$ to the exact sequence 
\[
0\lra M \xra{\ x\ } M \lra N\lra 0
\]
and Nakayama's Lemma. This is where we need the hypothesis that $M_\fp\ne 0$, for it implies $\Ext^c_R(\mco,M)\ne 0$; localize at $\fp$. Thus we conclude that the map $\widetilde{\eta}_N$ in the diagram above is nonzero. It thus suffices to prove following claim, for then $M$ has a free $R$-summand, by Lemma~\ref{le:summands}.

\begin{claim}
When $\widetilde{\eta}_N$ is nonzero, the $S$-module $N$ has a free summand.
\end{claim}

To verify the claim, choose a sequence $\bs x\colonequals x_1,\dots,x_r$ in $\fm_S$ that is regular on $S$ and on $N$; this is possible as both have maximal depth. Set $S'\colonequals S/{\bs x}S$ and $N'\colonequals N/{\bs x}N$.  By \cite[Lemma~3.1.16]{Bruns/Herzog:1998} one gets the isomorphisms in the diagram:
\[
\begin{tikzcd}
&\Ext^c_S(k,S)\otimes_S N \arrow{d}[swap]{\cong} \arrow{r}{\widetilde{\eta}_N} & \Ext^c_S(k,N) \arrow{d}{\cong} \\
\mathrm{Soc}(S')\otimes_{S'} N' \ar{r}{\cong} &\Ext^0_{S'}(k,S')\otimes_{S'}N' \arrow{r}{\widetilde{\eta}_{N'}} 
	& \Ext^0_{S'}(k,N')\ar{r}{\cong} &\mathrm{Soc}(N')
\end{tikzcd}
\]
The diagram is commutative because of the naturality of the maps.  Since $\widetilde{\eta}_N$ is nonzero so is $\widetilde{\eta}_{N'}$, that is to say, $\mathrm{Soc}(S')\cdot N' \ne 0$. We claim that this property implies  $N'$ has a free $S'$-summand, so then $N$ has a free $S$-summand, by Lemma~\ref{le:summands}, which is as desired. Here is one way to verify the claim. 

Choose an element $a \in N'$ such that $\mathrm{Soc}(S')a \ne 0$ and consider the $S'$-linear map $\alpha\colon S'\to N'$ defined by $1\mapsto a$. Since $S'$ is a Gorenstein ring of Krull dimension zero, $\mathrm{Soc}(S')$ is the smallest nonzero ideal in $S'$. Thus the choice of $a$ implies that $\alpha$ is injective. It remains to observe that $S'$ being a Gorenstein ring of Krull dimension zero, it is injective as a module over itself, so $\alpha$ is split-injective.
\end{proof}

We can now present a proof of Theorem~\ref{th:gorenstein}. Instead of the argument given below, one could also use Theorem~\ref{th:reduction-delta} to reduce to the case $c=0$, as in the proof of Corollary~\ref{co:tate}; then the result is implicit in the proof of \cite[Theorem~2.4]{Diamond:1997}. The latter statement is contained in Theorem~\ref{th:diamond} further below.

\begin{proof}[Proof of Theorem~\ref{th:gorenstein}]
Suppose $A$ is in $\acat(c)$.  Since $M$ is maximal Cohen--Macaulay, $\depth_AM\ge c+1$, so it is free at $\fp_A$ and the $\mco$-module $\Ext^c_A(\mco,M)$ is torsion-free, by Lemma~\ref{le:mcat}. Thus if $\delta_A(M)=\mu \cdot \delta_A(A)$, then Lemma~\ref{le:defect-formula} yields that the map
\[
\Ext^c_A(\mco,A)\otimes_A M\lra \Ext^c_A(\mco, M)
\]
is surjective. Then a repeated application of Lemma~\ref{le:summands-gor} yields that $M$ has a free summand of rank $\mu $. Thus $M\cong A^{\mu}\oplus W$ for some $A$-module $W$. It remains to observe that since $M$ is free of rank $\mu$ at $\fp_A$, one must have $W_{\fp_A}=0$.

Conversely, if $M\cong A^{\mu} \oplus W$ with $W$ not supported at $\fp_A$, then $\delta_A(W)=0$, by Lemma~\ref{le:defect-formula}, so
\[
\delta_A(M) = \mu  \cdot \delta_A(A) + \delta_A(W)=  \mu  \cdot \delta_A(A)\,.
\]

As to the last part of the statement, write $M\cong A^{\mu } \oplus W$ for some $A$-module $W$. When the stated bound on the multiplicity of $M$ holds, one gets
\[
\mu \cdot e(A) + e_A(W) = e_A(A^{\mu}\oplus W)= e_A(M) \le \mu  \cdot e(A)
\]
Thus $e_A(W)=0$. Since $M$ is maximal Cohen--Macaulay, either $W$ is $0$ or it is also maximal Cohen--Macaulay. The latter case cannot hold, 
the multiplicity of a nonzero maximal Cohen--Macaulay module is positive; see \cite[\S4.6]{Bruns/Herzog:1998}. 
\end{proof}

\subsection*{Complete intersection rings}
Next we turn to complete intersections. Any local ring $A$ in $\acat$ is of the form $P/I$, where $P\colonequals\mco\pos{\bs t}$, a ring of formal power series, and in particular, a regular local ring. Such an $A$ is complete intersection if and only if the ideal $I$ can be generated by a regular sequence; see \cite[\S2.3]{Bruns/Herzog:1998}. 

The result below generalizes a criterion for complete intersection due to Wiles~\cite[Proposition, Appendix]{Wiles:1995} and Lenstra~\cite[Theorem in introduction]{Lenstra}, as extended in \cite[Proposition~A.1]{FKR}, both of which deal with the case $c=0$. Compare this result with \cite[Theorem~2.6]{Iyengar/Khare/Manning/Urban:2024} that states that $A$ in $\acat$ is regular if and only if $\con A=0$, if and only if $\cmod A(A)=0$.

\begin{theorem}
\label{th:wiles}
Let $A$ be a local ring in $\acat(c)$ with $\depth A\ge c+1$. The ring $A$ is complete intersection if, and only if, $\delta_A(A)=0$.
\end{theorem}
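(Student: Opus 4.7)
The plan is to reduce to the case $c=0$, where the statement is the classical criterion of Wiles and Lenstra, extended to positive depth in \cite[Proposition~A.1]{FKR}. The bridge is the deformation invariance of the Wiles defect provided by Theorem~\ref{th:reduction-delta}, combined with the fact that being a complete intersection is invariant under quotients by $A$-regular sequences.

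First, I would verify that the hypotheses of Lemma~\ref{le:prime-avoidance} apply to $M=A$. Since $A_{\fp_A}\ne 0$, Lemma~\ref{le:mcat-old}(1) gives $\grade(\fp_A,A)\le c$; combined with $\depth A\ge c+1$, part (2) of that lemma forces $\grade(\fp_A,A)=c$ and $\depth A=c+1$. Hence Lemma~\ref{le:prime-avoidance} supplies a sequence $\bs g=g_1,\dots,g_c$ in $\fp_A$ which is $A$-regular and such that, setting $A_i\colonequals A/(g_1,\dots,g_i)A$, one has $A_i\in\acat(c-i)$ and $\depth A_i=(c-i)+1$. In particular, by Lemma~\ref{le:symbolic}, each $g_i$ lies in $\fp_{A_{i-1}}\setminus\fp_{A_{i-1}}^{(2)}$ and is a non-zero-divisor on $A_{i-1}$, so Theorem~\ref{th:reduction-delta} (applied with $M=A_{i-1}$, $f=g_i$) gives
\[
\delta_{A_{i-1}}(A_{i-1})=\delta_{A_i}(A_{i-1}/g_iA_{i-1})=\delta_{A_i}(A_i).
\]
Iterating, $\delta_A(A)=\delta_{A_c}(A_c)$ with $A_c\in\acat(0)$ and $\depth A_c=1$.

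Next, since $\bs g$ is an $A$-regular sequence in the maximal ideal, $A$ is complete intersection if and only if $A_c=A/\bs gA$ is complete intersection; this is the standard invariance of the complete intersection property under quotients by regular sequences (cf.\ \cite[\S2.3]{Bruns/Herzog:1998}). So the theorem for $A$ is equivalent to the theorem for $A_c$. For the latter, we are exactly in the situation of the Wiles--Lenstra numerical criterion as extended in \cite[Proposition~A.1]{FKR}: an $\mco$-algebra $A_c$ in $\acat(0)$ with $\depth A_c\ge 1$ is a complete intersection if and only if its Wiles defect vanishes. Combining these three steps yields the theorem. The only delicate point is bookkeeping the hypotheses of Theorem~\ref{th:reduction-delta} along the chain $A=A_0\twoheadrightarrow A_1\twoheadrightarrow\cdots\twoheadrightarrow A_c$, which is settled by the depth and grade computations above.
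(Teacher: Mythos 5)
Your argument is correct and follows the same route as the paper. The paper's proof of Theorem~\ref{th:wiles} simply says to reduce to $c=0$ "as in the proof of Corollary~\ref{co:tate}" and then invoke~\cite[Proposition~A.1]{FKR}; you have unpacked exactly that reduction — Lemma~\ref{le:prime-avoidance} to produce the regular sequence, iterated use of Theorem~\ref{th:reduction-delta} to preserve the defect, and the stability of the complete intersection property under quotients by $A$-regular sequences (which the paper leaves implicit but is indeed needed). Your hypothesis bookkeeping at each stage of the chain $A=A_0\twoheadrightarrow\cdots\twoheadrightarrow A_c$ is also accurate.
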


\begin{proof}
One can reduce to the case $c=0$ as in the proof of Corollary~\ref{co:tate}.  At that point, one can invoke~\cite[Proposition~A.1]{FKR}.
\end{proof}

Here is an extension of a result due to Diamond~\cite[Theorem~2.4]{Diamond:1997}. 

\begin{theorem}
\label{th:diamond}
Let $A$ be a local ring in $\acat(c)$ and $M$ a finitely generated $A$-module with $\depth_AM\ge c+1$ and $\mu\colonequals \rank_{\fp_A}(M_{\fp_A})\ne 0$. Then $\delta_A(M)=0$ if and only if $A$ is complete intersection and
\[
M\cong A^{\mu}\oplus W \qquad\text{where $W_{\fp_A}=0$.}
\]
When this holds and $e_A(M)\le \mu\cdot e(A)$, then $M$ is free.
\end{theorem}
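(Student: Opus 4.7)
The plan is to combine the defect formula (Lemma~\ref{le:defect-formula}) with the complete-intersection criterion (Theorem~\ref{th:wiles}) and the Gorenstein decomposition (Theorem~\ref{th:gorenstein}), after first arranging that $M$ is faithful over $A$ so that $A$ inherits a depth bound from $M$.

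For the easy direction, suppose $A$ is a complete intersection and $M\cong A^\mu\oplus W$ with $W_{\fp_A}=0$. Then $A$ is Cohen--Macaulay and equidimensional with $\dim A=\height\fp_A+\dim(A/\fp_A)=c+1$, so $\depth A=c+1$ and Theorem~\ref{th:wiles} yields $\delta_A(A)=0$. Additivity of $\cmod A(-)$ in its argument gives $\delta_A(M)=\mu\cdot\delta_A(A)+\delta_A(W)$. The summand $W$ is MCM as a direct summand of the MCM module $M$, so Corollary~\ref{co:tate} forces $\delta_A(W)\ge 0$; on the other hand $W_{\fp_A}=0$ makes $\delta_A(W)=-\length_\mco\cmod A(W)\le 0$ directly from the definition. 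Hence $\delta_A(W)=0$ and $\delta_A(M)=0$.

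For the forward direction, I would first reduce to the case that $M$ is faithful over $A$ by replacing $A$ with $A'\colonequals\mathrm{image}(A\to\End_A(M))$. Lemma~\ref{le:faithful} places $A'$ in $\acat(c)$, makes $A\to A'$ an isomorphism at $\fp_A$, and yields $\delta_A(M)\ge\delta_{A'}(M)\ge 0$ together with $\length_\mco\con A=\length_\mco\con{A'}$. Since any $M$-regular element is $A'$-regular by Lemma~\ref{le:faithful}(3), one has $\depth A'\ge\depth_{A'}M=\depth_AM\ge c+1$. If the conclusion is established for $(A',M)$, namely $A'$ is CI and $M\cong(A')^\mu\oplus W$ with $W_{\fp_{A'}}=0$, then Lemma~\ref{le:ci-con} applied to $A\to A'$ (using the equality of cotangent lengths) forces $A\cong A'$, and the decomposition descends. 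So I may and will assume from here on that $M$ is faithful over $A$ and $\depth A\ge c+1$.

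Under these assumptions $M$ is free at $\fp_A$ by Lemma~\ref{le:mcat}, and the defect formula reads
\[
0=\delta_A(M)=\mu\cdot\delta_A(A)+\length_\mco\Coker(\eta_M).
\]
Corollary~\ref{co:tate} gives $\delta_A(A)\ge 0$, so both summands on the right vanish; Theorem~\ref{th:wiles} then forces $A$ to be a complete intersection. Consequently $A$ is Gorenstein and equidimensional of dimension $c+1$, and faithfulness of $M$ gives $\dim M=\dim A=c+1=\depth_AM$, so $M$ is maximal Cohen--Macaulay. Because $\delta_A(M)=0=\mu\cdot\delta_A(A)$, Theorem~\ref{th:gorenstein} now delivers $M\cong A^\mu\oplus W$ with $W_{\fp_A}=0$, and the final assertion of that theorem handles the freeness conclusion under $e_A(M)\le\mu\cdot e(A)$. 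The main obstacle is precisely the step that bridges hypotheses on $M$ to a depth bound on $A$ needed for Theorem~\ref{th:wiles}; the faithful reduction accomplishes this, resting on the invariance-of-domain property (Theorem~\ref{th:invariance-of-domain}, packaged in Lemma~\ref{le:faithful}) together with Lemma~\ref{le:ci-con} for transporting the complete-intersection property back to $A$.
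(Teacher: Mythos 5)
Your reduction to the faithful case (replacing $A$ by $A'\colonequals\mathrm{image}(A\to\End_A(M))$) is fine, and your treatment of the ``if'' direction is correct, if a little more elaborate than it needs to be. The gap is in the forward direction, at the sentence ``Since any $M$-regular element is $A'$-regular by Lemma~\ref{le:faithful}(3), one has $\depth A'\ge\depth_{A'}M$.'' Lemma~\ref{le:faithful}(3) only says that a single $M$-regular element is $A'$-regular; it does not let you iterate, because after passing to $A'/fA'$ and $M/fM$ the hypotheses of that lemma are no longer in force ($A'/fA'$ need not be the image of $A$ in $\End_A(M/fM)$, and the inclusion $A'\hookrightarrow M^n$ does not descend modulo $f$). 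The claim is in fact false in general: a faithful finitely generated module can have strictly larger depth than the ring. For instance take $\mco$ a complete DVR, $A=\mco\pos{a,b,c,d}/(ac,ad,bc,bd)$ with $\lambda\colon a,b,c\mapsto 0$, $d\mapsto 1$; then $A\in\acat(2)$ (so $c=2$), $M\colonequals A/(a,b)\oplus A/(c,d)$ is faithful with $M_{\fp_A}\ne 0$ and $\depth_AM=3=c+1$, but $\depth A=2<c+1$. Thus you cannot verify the hypothesis of Theorem~\ref{th:wiles}, and the argument cannot proceed as written.

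The paper avoids this: the faithful reduction is only used to secure $\depth A'\ge 1$ (which does follow, because $\mathrm{ass}(A')\subseteq\mathrm{ass}(M)$ rules out $\fm_{A'}$ from the associated primes of $A'$ when $\depth_{A'}M\ge 1$), and then the theorem is proved by induction on $c$ using Theorem~\ref{th:reduction-delta}. One picks $f\in\fp_A\setminus\fp_A^{(2)}$ regular on $A$ and $M$, passes to $B=A/fA$ and $N=M/fM$ in $\acat(c-1)$, applies the inductive hypothesis, and then lifts the complete intersection property and the free summand back via \cite[Theorem~2.3.4]{Bruns/Herzog:1998} and Lemma~\ref{le:summands}. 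The base case $c=0$ is where Theorem~\ref{th:wiles} enters, and there $\depth A'\ge 1$ is exactly what is needed. If you want to salvage your write-up you should replace the direct appeal to Theorem~\ref{th:wiles} with this deformation-and-induction step.
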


\begin{proof}
The ``if" direction of the statement is clear, given Theorem~\ref{th:wiles}. The proof of ``only if" direction uses the observation below. 

\begin{claim}
The desired statement holds for all $A$ in $\acat(c)$ if it holds for all such $A$ with $\depth A\ge 1$.
\end{claim}

Indeed, let $A'$ be the image of $A$ in $\End_A(M)$. Since the $A$ action of $M$ factors through the surjection $A\to A'$, the depth of $M$ as an $A'$-module also equals $c+1$. Moreover, Lemma~\ref{le:faithful} yields the first inequality below:
\[
0= \delta_A(M) \ge \delta_{A'}(M) \ge 0\,.
\]
The equality is by hypothesis and the second inequality is from Corollary~\ref{co:tate}.  Thus one gets the first equality below:
\[
\delta_{A'}(M)=0 \quad\text{and}\quad \length_{\mco}\con A=\length_{\mco}\con {A'}\,.
\]
The second one is again by Lemma~\ref{le:faithful}.  It thus suffices to prove the desired result for $A'$, for when $A'$ is complete intersection Lemma~\ref{le:ci-con} implies that $A\cong A'$. 

This justifies the claim.

We verify the first part of the statement by induction on $c$; the second part then follows as in the proof of Theorem~\ref{th:gorenstein}.

The base case is $c=0$. By the preceding claim, we can assume $A$  has positive depth.   Since $c=0$, one has $\delta_A(A)\ge 0$; see Corollary~\ref{co:tate}. Thus the hypothesis that $\delta_A(M)=0$ and Lemma~\ref{le:defect-formula} implies $\delta_A(A)=0$. Thus $A$ is complete intersection, by Theorem~\ref{th:wiles}. At this point, we can invoke Theorem~\ref{th:gorenstein} to conclude that $M$ has a free summand; this is where the hypothesis that the $M_{\fp_A}$ is nonzero is required, so that the rank of $M$ at $\fp_A$ is nonzero; see Lemma~\ref{le:mcat}.

This completes the proof when $c=0$.

Suppose that $c\ge 1$.  One again, thanks to the claim above, we can suppose $\depth A\ge 1$.  Pick an element $f$ in $\fp_A\setminus \fp_A^{(2)}$ that is not a zero-divisor on $M$ and on $A$. Set $B\colonequals A/fA$ and $N\colonequals M/fM$. The ring $B$ is in $\acat{(c-1)}$ and $\delta_{B}(N)=0$, by Theorem~\ref{th:reduction-delta}. Moreover the rank of $N$ at $\fp_B$ equals $\mu$, as can be easily verified. Thus, by induction, $B$ is a complete intersection and the $B$-module $N$ has a free summand of rank $\mu$. Thus $A$ is complete intersection,  by \cite[Theorem~2.3.4]{Bruns/Herzog:1998}, and that the $A$-module $M$ has a free summand of rank $\mu$, by Lemma~\ref{le:summands}.

This completes the proof.
\end{proof}

Here is one corollary, which may be seen as a counterpart to Lemma~\ref{le:ci-con}. When $c=0$  the result below is \cite{Lenstra}, and \cite[Theorem~5.8]{Darmon/Diamond/Taylor:1997}, both of which were inspired by Wiles' work in \cite[Appendix]{Wiles:1995}.

\begin{corollary}
\label{co:ci-delta}
Let $\vf\colon A\to B$ be a surjective a map in $\acat(c)$ with $B$ a complete intersection. If $\delta_A(B)=0$, then $\vf$ is bijective and $A$ is a complete intersection.
\end{corollary}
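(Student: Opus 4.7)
The plan is to deduce both conclusions simultaneously by applying Theorem~\ref{th:diamond} to the $A$-module $M \colonequals B$, where $B$ is given its $A$-module structure through $\vf$, with rank parameter $\mu = 1$.

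First I would verify the two hypotheses of that theorem. Since $B$ lies in $\acat(c)$ and is a complete intersection, the last assertion of Lemma~\ref{le:mcat-old} gives $\dim B \ge c+1$, and the complete intersection property yields $\depth_B B = \dim B \ge c+1$; passing to $A$ via the surjection $\vf$ gives $\depth_A B \ge c+1$. For the freeness of $B$ at $\fp_A$, the surjection $\vf$ induces a surjection of local rings $A_{\fp_A} \twoheadrightarrow B_{\fp_B}$. Both are regular of Krull dimension $c$, since $A$ and $B$ both lie in $\acat(c)$, so this surjection is in fact an isomorphism; in particular $B_{\fp_A}$ is free of rank one over $A_{\fp_A}$, so $\mu = 1$ and $M_{\fp_A} \ne 0$.

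Now Theorem~\ref{th:diamond}, combined with the hypothesis $\delta_A(B) = 0$, gives both that $A$ is a complete intersection and that there is an $A$-module decomposition
\[
B \;\cong\; A \oplus W \qquad \text{with } W_{\fp_A} = 0.
\]
It remains to show $W = 0$, which is what gives injectivity of $\vf$. The key observation is that $B$ is cyclic as an $A$-module, generated by $1_B = \vf(1_A)$. If $\psi\colon B \isomto A \oplus W$ is an $A$-linear isomorphism and $\psi(1_B) = (u, w_0)$, then surjectivity of $\psi$ forces $(1,0)$ to lie in $A\cdot(u, w_0)$; hence there is $a \in A$ with $au = 1$ and $a w_0 = 0$, so $u$ is a unit and $w_0 = 0$. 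But then the image of $\psi$ is contained in $A \oplus \{0\}$, forcing $W = 0$.

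I do not anticipate any serious obstacle: the substantive content is delivered entirely by Theorem~\ref{th:diamond}, and the passage from the module-theoretic splitting $B \cong A \oplus W$ to $\vf$ being bijective is a formality based on the cyclicity of $B$ over $A$. The only point requiring a moment of care is the verification that $\vf$ is an isomorphism at $\fp_A$, but this follows at once from the fact that any surjection between regular local rings of equal dimension is bijective.
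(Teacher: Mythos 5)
Your proof is correct and takes a genuinely different route from the paper. The paper's argument is shorter: since $B$ is a complete intersection with $\depth B \ge c+1$, Theorem~\ref{th:wiles} gives $\delta_B(B)=0$; combining this with the hypothesis $\delta_A(B)=0$ and the invariance-of-domain Theorem~\ref{th:invariance-of-domain} (applied to $M=B$) forces $\length_{\mco}\con A = \length_{\mco}\con B$, and Lemma~\ref{le:ci-con}---a short Jacobi--Zariski argument exploiting the vanishing of the second Andr\'e--Quillen homology of $B\to\mco$ when $B$ is a complete intersection---then yields directly that $\vf$ is an isomorphism. You instead invoke the full strength of Theorem~\ref{th:diamond}, producing a decomposition $B \cong A \oplus W$ with $W_{\fp_A}=0$, and then recover the ring-level conclusion $\Ker(\vf)=0$ from the module isomorphism via the cyclicity of $B$ over $A$. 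Both are sound; the paper's version is more economical and closer in spirit to Lenstra's original criterion, while yours makes the freeness-criterion interpretation of the corollary explicit. One expository remark: your sentence ``$W=0$ is what gives injectivity of $\vf$'' is a little loose in isolation, since $W=0$ by itself only gives $B\cong A$ as $A$-modules; what actually forces injectivity is the accompanying fact that $\psi(1_B)$ is a unit, so that $\vf(a)=0$ implies $a\,\psi(1_B)=0$ and hence $a=0$ (equivalently, compare annihilators: $\ann_A B = \Ker\vf$ while $\ann_A(A\oplus W)=0$). Your computation in fact establishes both facts together, so the argument is complete.
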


\begin{proof}
Since $B$ is a complete intersection, $\delta_B(B)=0$, by Theorem~\ref{th:wiles}. Thus the hypothesis and Theorem~\ref{th:invariance-of-domain}, applied with $M=B$, imply $\length_{\mco}\con A = \length_{\mco}\con B$. The desired result follows from Lemma~\ref{le:ci-con}.
\end{proof}

\part{Patching and duality}\label{part:patching and duality}
In this part we summarize the commutative algebra needed for the patching construction in the derived setting. Our main purpose is to show that this construction preserves duality; see Theorem \ref{th:duality}.

\section{Abstract patching}
\label{se:patching}
This section is mostly a recollection of the patching construction,  following the approach of  Calegari and Geraghty~\cite{Calegari/Geraghty:2018}, Hansen~\cite{Hansen:2012},  \cite{KT} and Allen et.\ al.~\cite{ACC+:2018}.  In particular, the ultrapatching method is due to Scholze~\cite{Scholze}.

\begin{chunk}
\label{ch:oy}
Going forth we assume the residue field $k\colonequals \mco/\varpi\mco$ of $\mco$ is finite, and of positive characteristic $\ell$. We fix a power series ring
\[
\mco\pos{\bs y}\quad\text{where $\bs y\colonequals  y_1,\ldots,y_r$.}
\]
For each integer $n\ge 1$, fix integers $e(1,n),\ldots,e(r,n)\ge n$ and quotient rings
\[
\Lambda_n \colonequals \mco\pos{\bs y}/I_n, \quad\text{where $I_n \colonequals \left((1+y_i)^{\ell^{e(i,n)}}-1\mid 1\le i\le r\right)$}.
\]
These are flat $\mco$-algebras, augmented to $\Lambda_0=\mco$, and complete intersections. For each $n$ pick a $\Lambda_n$-algebra $R_n$ that is finite as a $\Lambda_n$-module, and such that  there is an isomorphism $R_n/(\bs y)R_n = \mco \otimes_{\Lambda_n}R_n \cong R_0$; we  suppress this isomorphism in our notation and write an equality instead.   We assume no other relations between the $R_n$.  Let $\varphi_n\colon \Lambda_n\to R_n$ denote the structure maps. 
\end{chunk}

In what follows we consider derived actions of rings on complexes. We introduce some language and notation to facilitate the discussion.

\begin{chunk}
\label{ch:derived-action}
Let $\Lambda$ be a ring, $M$ a $\Lambda$-complex and $R$ a $\Lambda$-algebra.  We say that $M$ is a \emph{derived $R$-complex}, or that $R$ has a \emph{derived action} on $M$, to mean that there is a map of $\Lambda$-algebras $R\to \End_{\dcat \Lambda}(M)$. When $M$ is a module this means exactly that there is an $R$-module structure on $M$ extending its $\Lambda$-module structure. For a general $M$ one only gets an $R$-module structure on the homology modules $\hh_i(M)$ extending their $\Lambda$-structure.

Let $\rdcat {\Lambda}R$ denote the category of pairs $(M,\iota)$  consisting of an $M$ in $\dcat{\Lambda}$ and a map $\iota\colon R\to \End_{\dcat \Lambda}(M)$ of $\Lambda$-algebras. We usually write $M$, instead of the pair, for an object on $\rdcat{\Lambda}R$, but one has to keep in mind that an $M$ in $\dcat{\Lambda}$ may host different derived $R$-actions. The morphisms in $\rdcat{\Lambda}R$ are the morphisms of $\Lambda$-complexes compatible with the derived $R$-action, that is to say, a morphism $f\colon M\to N$ in $\dcat \Lambda$ where  for each $r\in R$ the diagram 
\[
\begin{tikzcd}
M \arrow{d}[swap]{r} \arrow{r}{f} & N \arrow{d}{r} \\
M \arrow{r}[swap]{f} & N
\end{tikzcd}
\]
in $\dcat \Lambda$ is commutative.   We record some stability properties of derived $R$-actions.

Let $(M,\iota)$ be a derived $R$-complex. For any $\Lambda$-complex, $C$, the $\RHom_{\Lambda}(M,C)$ has  a derived $R$-action with structure map the composition
\[
R\xra{\ \iota\ } \End_{\dcat{\Lambda}}(M)\lra \End_{\dcat{\Lambda}}(\RHom_{\Lambda}(M,C))
\]
where the one on the right is induced by $\RHom_{\Lambda}(-,C)$. An analogous statement holds for $\RHom_{\Lambda}(C,M)$.

A map $\Lambda\to \Lambda'$ of rings   induces a derived $R'\colonequals (R\otimes_\Lambda \Lambda')$-action on $\Lambda'\lotimes_\Lambda M$ in $\dcat{\Lambda}$ via the natural map
\[
\rho\colon \Lambda'\otimes_\Lambda \End_{\dcat{\Lambda}}(M)\lra \End_{\dcat{\Lambda'}}(\Lambda'\lotimes_{\Lambda}M)\,.
\]
Thus one gets a functor $\rdcat{\Lambda}R\to \rdcat{\Lambda'}{R'}$ that sends $(M,\iota)$ to $(\Lambda'\lotimes_{\Lambda}M,\rho(\Lambda'\otimes\iota))$. 
\end{chunk}

\begin{chunk}
\label{ch:pasy}
Fix integers $d$ and $\ell_0$, and consider a category $\PatchSys$ of \emph{patching systems} whose objects are collections $\perfs C\colonequals \left(C_n,\alpha_n\right)_{n\geqslant 0}$ where 
\begin{enumerate}[\quad\rm(1)]
	\item $C_n$ is a perfect $\Lambda_n$-complex;
	\item  $\hh_i(k\lotimes_{\Lambda_n}C_n) = 0$ for $i\not\in [d,d+\ell_0]$;
	\item $C_n$ is a derived $R_n$-complex;
	\item $\alpha_n\colon  \mco \lotimes_{\Lambda_n}C_n \iso C_0$ is an isomorphism  in $\rdcat{\mco}{R_n}$, where the $R_n$-action on $C_0$ is via the surjection $R_n\to  R_0$.
\end{enumerate}
Given  (2), condition (1) is equivalent to requiring that $C_n$ is in $\dbcat{\Lambda_n}$.

A morphism  $f\colon \perfs C \to \perfs D$ in $\PatchSys$ is a family $f_n\colon C_n\to D_n$  of morphisms in $\rdcat{\Lambda_n}{R_n}$ commuting with the morphisms in the systems defining $\perfs C$ and $\perfs D$. The category $\PatchSys$ is defined over the system of rings $(\Lambda_n)$; this will be emphasized only if needed.
\end{chunk}

\begin{chunk}
\label{ch:square}
Consider the power series ring
\begin{align*}
S_\infty  &\colonequals   \mco\pos{\bs y,\bs w}  \quad\text{where $\bs w\colonequals  w_1,\ldots,w_j$, and set}\\
\fn &\colonequals (\bs y,\bs w)\,.
\end{align*}
The ring $\mco\pos{\bs y}$ is a subring of $S_\infty$. In what follows we write $\base{(-)}$ for the extension of scalars functor (also known as \emph{framing}) from the category of finitely generated $\mco\pos{\bs y}$-modules to finitely generated $S_\infty$-modules; thus
\[
\base M \colonequals S_{\infty}\otimes_{\mco\pos{\bs y}}M\,.
\]
When $\Lambda$ is a module-finite $\mco\pos{\bs y}$-algebra and $M$ a finitely generated $\Lambda$-module, $\base{\Lambda}$ is a module-finite $S_\infty$-algebra and $\base{M}$ a finitely generated $\base{\Lambda}$-module. Moreover, since the extension $\mco\pos{\bs y}\to S_\infty$ is flat, $\base{(-)}$ extends to a functor on appropriate derived categories, derived complexes, and even to the category of patching systems. In particular, one gets a functor from patching systems over $(\Lambda_n)$ to those defined over $(\base{\Lambda}_n)$. These remarks will be used without further comment.
\end{chunk}

\begin{chunk}
\label{ch:Rinfinity}
Let $r,\ell_0$ and $j$ be the integers from \ref{ch:oy}, \ref{ch:pasy}, and \ref{ch:square}, respectively. Fix a complete local, flat, noetherian $\mco$-algebra $R_\infty$ of dimension  $r+j-\ell_0+1$, equipped with surjective $\mco$-algebra morphism $\pi_n\colon R_\infty\twoheadrightarrow \base R_n$ for $n\ge 1$. We do not assume  the  $\pi_n$ are compatible.  The ultrapatching construction of \cite[Section 6.4]{ACC+:2018} and the work of \cite{Hansen:2012} give the following

\begin{theorem}
\label{th:ultrapatching}
There exists a homomorphism $\varphi_\infty\colon S_\infty\to R_\infty$ making $R_\infty$ into a finite $S_\infty$-algebra and a functor $\patch\colon \PatchSys\to \mathrm{mod}\, R_\infty$ with the following properties: 
\begin{enumerate}[\quad\rm(1)]
\item 
$\patch(\perfs C)$ is a maximal Cohen--Macaulay $R_\infty$-module, nonzero when $\perfs C\ne 0$.
\item 
There is a surjection $R_\infty/\fn R_\infty \twoheadrightarrow  R_0$ and an isomorphism of $R_0$-modules
\[
\patch(\perfs C)/\fn\patch(\perfs C) \cong \hh_{d}(C_0)\,.
\]
\item 
Treating $\patch(\perfs C)$ as an $S_\infty$-module via $\varphi_\infty$, for any open ideal $\fa \subseteq S_\infty$ and for infinitely many $n$, there are isomorphisms
\[
 (S_\infty/\fa) \lotimes_{S_\infty} \patch(\perfs C)[d] \longiso (S_\infty/\fa) \lotimes_{\base{\Lambda}_n}\base{C}_n,
\]
of derived $R_\infty$-complexes, where $R_\infty$ acts on $\base{C}_n$ via $\base{\iota_n}\pi_n$.
\item 
There is an isomorphism of $R_\infty$-algebras:
\[
\End_{\dcat{S_\infty}}(\patch(\perfs C)) 
\cong \lim_\fa\End_{\dcat{S_\infty/\fa}}((S_\infty/\fa) \lotimes_{S_\infty} \patch(\perfs C))
\]
\item 
Fix a map $f\colon \perfs C \to \perfs C$ in $\PatchSys$. If there exists an element $\tau\in R_\infty$ such that
\[
\pi_n(\tau) = \base{f}_n  \quad \text{ as elements in } \End_{\dcat{\base{\Lambda}_n}}(\base{C}_n)
\]
for all but finitely many $n$, then $\patch(f)=\tau$ in $\End_{R_\infty}(\patch(\perfs C))$.
\end{enumerate}
\end{theorem}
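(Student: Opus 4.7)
The plan is to follow the ultrapatching method of Scholze as formalized in \cite[\S6.4]{ACC+:2018}, with careful attention to the fact that the surjections $\pi_n\colon R_\infty\twoheadrightarrow\base R_n$ are \emph{not} assumed compatible as $n$ varies, and that the objects being patched are derived complexes rather than modules. Fix once and for all a non-principal ultrafilter $\mcu$ on $\N$. For each open ideal $\fa\subseteq S_\infty$, cofinal in the $\fn$-adic topology, consider the finite-level truncations $\base C_n(\fa)\colonequals (S_\infty/\fa)\lotimes_{\base\Lambda_n}\base C_n$, viewed as objects of $\rdcat{S_\infty/\fa}{R_\infty}$ via $\pi_n$ (for $n$ large enough that $\fa$ contains the annihilator in $S_\infty$ of the image of $\pi_n$). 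The crucial finiteness observation is that, since each $\base C_n$ is perfect of bounded Tor-amplitude and its cohomology is controlled by condition~(2) of \ref{ch:pasy}, there are only finitely many isomorphism classes of such $\base C_n(\fa)$ together with their $R_\infty$-action data, as $n$ varies.

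With this finiteness in hand, the ultrafilter $\mcu$ selects, for each $\fa$, the unique isomorphism class $C(\fa)$ of $\base C_n(\fa)$ that occurs for $\mcu$-many $n$; as $\fa$ shrinks, these $C(\fa)$ assemble canonically into an inverse system in $\dcat{S_\infty}$ (with coherent derived $R_\infty$-action) because the uniqueness comes from the ultrafilter. Because each $\base C_n(\fa)$ is concentrated in cohomological degrees $[d,d+\ell_0]$ and patching pushes the depth of the cohomology to its maximum, one shows (as in \cite[Prop.~6.4.9]{ACC+:2018}) that $C(\fa)$ is quasi-isomorphic to a single module placed in degree $d$, once $\fa$ is small enough. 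Define
\[
\patch(\perfs C)\colonequals \varprojlim_{\fa}\, \hh_d(C(\fa))\,,
\]
which is naturally an $R_\infty$-module via the compatible derived $R_\infty$-actions on the $C(\fa)$. Functoriality in $\perfs C$ is automatic because $\mcu$-selection commutes with morphisms up to the same ultrafilter-coherence.

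For property (1), observe that each $\base C_n$ is perfect over the Cohen--Macaulay ring $\base\Lambda_n$ of dimension $r+j+1$ and concentrated in an interval of length $\ell_0+1$; a standard Auslander--Buchsbaum argument shows $\base{C}_n$ has projective amplitude at most $\ell_0$, so its top cohomology module has depth $\geq r+j+1-\ell_0$ over $\base\Lambda_n$. Passing to the ultrafilter limit preserves this depth bound (this is where the flatness hypothesis on $R_\infty$ and the dimension $r+j-\ell_0+1=\dim R_\infty$ are used), making $\patch(\perfs C)$ a maximal Cohen--Macaulay $R_\infty$-module, nonzero whenever $\perfs C\ne 0$ by the explicit description in (2). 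For (2), the base change $\patch(\perfs C)/\fn\patch(\perfs C)$ reduces along the ultrafilter to $\mco\lotimes_{\Lambda_n}C_n$ in degree $d$, which is canonically $\hh_d(C_0)$ via $\alpha_n$. Property~(3) is the defining property of the ultrafilter construction, and (4) follows because a finitely generated $S_\infty$-module equals the limit of its truncations modulo $\fa$ and taking $\End$ commutes with such limits.

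The main obstacle, which drives the whole construction, is property~(5) combined with the non-compatibility of the $\pi_n$: we need to show that elements of the abstract ring $R_\infty$ act on $\patch(\perfs C)$ in a way compatible with the individual $\base R_n$-actions on each $\base C_n$, even though there is no global compatibility between the $\pi_n$. The resolution is that, at each finite level $\fa$, the derived action of $R_\infty$ on $C(\fa)$ is forced to agree with the image of the $\mcu$-chosen $R_\infty\to \base R_n$ composed with $\iota_n$, and any $\tau\in R_\infty$ realizing $\base f_n$ for $\mcu$-many $n$ must therefore act as $\patch(f)$ in the limit; this is exactly (5). Once this is set up, the remaining verifications are bookkeeping.
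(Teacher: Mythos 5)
Your proposal captures the broad strategy---minimal free representatives, ultrafilter selection at each finite level $\fa$, an inverse limit over $\fa$, and an acyclicity argument to obtain a maximal Cohen--Macaulay module---but it contains a genuine error in the central step, together with several smaller inaccuracies that stem from it.

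The key wrong claim is that ``$C(\fa)$ is quasi-isomorphic to a single module placed in degree $d$, once $\fa$ is small enough.'' This is false. For any open ideal $\fa$, the complex $C(\fa,\infty) = (S_\infty/\fa)\lotimes_{S_\infty}C_\infty$ is a perfect complex over the \emph{Artinian} ring $S_\infty/\fa$, and it typically has nonzero homology in every degree in $[d,d+\ell_0]$; indeed this is exactly the content of part~(3) of the theorem, which exhibits $(S_\infty/\fa)\lotimes_{S_\infty}\patch(\perfs C)$ as isomorphic to $(S_\infty/\fa)\lotimes_{\base\Lambda_n}\base C_n$, a complex whose homology is spread over the full range. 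Collapsing to a single degree happens only after forming the inverse limit $C_\infty = \lim_s C(\fm_{S_\infty}^s,\infty)$: it is the complex $C_\infty$ over the Cohen--Macaulay ring $S_\infty$ that is concentrated in degrees $[0,\ell_0]$ after shift, and then Lemma~\ref{le:acyclicity}, which critically uses the derived $R_\infty$-action together with the equality $\dim S_\infty - \dim R_\infty = \ell_0$, forces $\hh_i(C_\infty)=0$ for $i\ne 0$. That mechanism has no analogue over $S_\infty/\fa$, which is zero-dimensional. You have inverted the order of operations: acyclicity must be established for the limit, not at finite levels.

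This error propagates into your argument for~(1). You assert that the lowest-degree cohomology of $\base C_n$ has depth $\geq r+j+1-\ell_0$ over $\base\Lambda_n$; apart from the dimension being wrong ($\base\Lambda_n \cong \mco\pos{\bs y,\bs w}/I_n$ has dimension $1+j$, not $r+j+1$), the claimed depth bound for the top cohomology of a perfect complex does not follow from Auslander--Buchsbaum alone---it requires a priori vanishing of lower cohomology, which is exactly what is not yet known at the finite level $n$. The correct route is to transfer the problem to $C_\infty$ over $S_\infty$ and apply the acyclicity criterion there, as described above. Your proposal also leaves unaddressed the nontrivial point that $\varphi_\infty\colon S_\infty\to R_\infty$ must be chosen to make $R_\infty$ a \emph{finite} $S_\infty$-algebra; the paper handles this by first identifying the image $R'$ of $R_\infty$ in $\End_{S_\infty}(\patch(\perfs C))$, observing $\dim R' = \dim R_\infty$ via the MCM property just established, and then invoking a lifting argument \emph{after} (1) is proved. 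Finally, the ``finitely many isomorphism classes together with their $R_\infty$-action data'' claim needs to be made precise by factoring the $R_\infty$-action through the finite rings $R(s,\fa,\infty)$, since $R_\infty$ itself is not finite; the paper's construction of those auxiliary rings is what licenses the ultrafilter argument.
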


\begin{proof}
As noted in \ref{ch:square}, each patching system $\perfs C$ over $(\Lambda_n)$ gives rise to the framed patching system $\base{\perfs C}$ over $(\base{\Lambda}_n)$. Parts (3)--(5) of the statement only involve ${\base{\perfs C}}$, so does the construction of the functor $\patch$. Thus replacing $(\Lambda_n), (R_n)$ and  $\perfs C$ by $(\base{\Lambda}_n), (\base{R}_n)$, and $(\base{\perfs C})$, we assume we are in the framed context. 

We use the ultrapatching method introduced in \cite[\S 9]{Scholze}. Fix a non-principal ultrafilter $\uf$ on $\mathbb N$. Take any finite local ring $A$ (that is to say, a ring with finite cardinality). Consider the ring $\prod_{n=1}^\infty A$. As $A$ is finite and $\uf$ is an ultrafilter, for any element $(a_1,a_2,\ldots)$ in $\prod_{n=1}^\infty A$ there is a unique element $a \in A$ for which $\{n\in \mathbb N|a_n = a\}\in \uf$. Denote this element $f(a_1,a_2,\ldots)$. It is easy to verify that the function $f\colon \prod_{n=1}^\infty A\to A$ is an $A$-algebra homomorphism, and that for any $I\in\uf$, $f$ factors through the natural quotient map $\prod_{n=1}^\infty A\onto \prod_{n\in I} A$.

Let $\fm_A$ be the maximal ideal of $A$ and let $\fp = f^{-1}(\fm_A)$; this is a maximal ideal of $\prod_{n=1}^\infty A$. Then $f$ induces an isomorphism $\left(\prod_{n=1}^\infty A\right)_{\fp}\isomto A$ and so in particular for any $I\in \uf$, the map $\left(\prod_{n=1}^\infty A\right)_{\fp}\to \left(\prod_{n\in I} A\right)_{\fp}$ is an isomorphism.

For any sequence $M_1,M_2,\ldots$ of $A$-modules (respectively $A$-algebras) the kernel of the map $\prod_{n=1}^\infty M_n\onto \left(\prod_{n=1}^\infty M_n\right)_{\fp}$ is the set of sequences $(m_1,m_2,\ldots)\in \prod_{n=1}^\infty M_n$ for which $\{n\in\mathbb{N}|m_n=0\}$ is in $\uf$. By definition this implies that $\left(\prod_{n=1}^\infty M_n\right)_{\fp}$ is the ultraproduct of the $M_n$'s.

If the $M_n$'s are all finite with uniformly bounded cardinality, standard properties of ultraproducts imply that the set of $i\in \mathbb N$ for which there is an isomorphism
\[
\left(\prod_{n=1}^\infty M_n\right)_{\fp} \cong M_i
\]
as $A$-modules (respectively, as $A$-algebras) is an element of $\uf$. In particular, as $\uf$ is a \emph{nonprincipal} ultrafilter, $\left(\prod_{n=1}^\infty M_n\right)_{\fp}$ is isomorphic to $M_i$ for infinitely many $i$.

For any open ideal $\fa\subseteq S_\infty$ and integer $s\ge 0$ set
\[
R(s,\fa,\infty) \colonequals  
\left(\prod_{n=1}^\infty S_\infty/\fa \otimes_{\Lambda_n}  R_n/\fm_{R_n}^s
\right)_{\fp}
\]
where when $ I_n \not\subseteq \fa$ we interpret 
\[
S_\infty/\fa \otimes_{\Lambda_n} R_n/\fm_{R_n}^s = 0 \,;
\]
this only happens for finitely many $n$. As in \cite[Remark 6.4.14]{ACC+:2018}, we get surjections $R_\infty\twoheadrightarrow  R(s,\fa,\infty)$ for all $s,\fa$,  inducing a surjection $R_\infty\twoheadrightarrow  \lim_{s,\fa}R(s,\fa,\infty)$. Since each $R(s,\fa,\infty)$ is an $S_\infty$-algebra,  the structure map $S_\infty\to \lim_{s,\fa}R(s,\fa,\infty)$ lifts to a homomorphism 
\[
\varphi_\infty\colon S_\infty\to R_\infty
\]
as desired. For the moment, any lift will do, but later on we prove that there exists a lift that ensures that $R_\infty$ is finite as an $S_\infty$-module. In any case, \cite[Lemma 6.4.15]{ACC+:2018} gives the surjection 
\[
R_\infty/\fn R_\infty = \mco\otimes_{S_\infty} R_\infty \twoheadrightarrow  R_0\,.
\]

Let $\perfs C\colonequals \left(C_n,\alpha_n\right)_{n\geqslant 0}$ be a patching system. Since each $C_n$ is perfect,  we can replacing it by its minimal free resolution over $\Lambda_n$, and assume  $C_n$ is a bounded complex of free modules; see~\cite[Chapter 2, Theorem~2.4]{Roberts:1980}. Then $\mco\lotimes_{\Lambda_n}C_n$ is represented by the complex $\mco\otimes_{\Lambda_n}C_n$, which is also free and minimal as an $\mco$-complex. Thus we can assume  $\alpha$ is a morphism of $\mco$-complexes:
\[
\alpha_n \colon \mco\otimes_{\Lambda_n}C_n \longiso C_0\,.
\]
Since this is a quasi-isomorphism between minimal free complexes, it is an isomorphism; see~\cite[Chapter 2, Theorem~2.4]{Roberts:1980}. For any open ideal $\fa\subseteq S_\infty$ set
\[
C(\fa,\infty) \colonequals \left(\prod_{n=1}^\infty S_\infty/\fa \otimes_{\Lambda_n} C_n \right)_{\fp}
\]
where as before when $I_n \not\subseteq \fa$ the tensor product in question is assumed to be $0$.

Since $C_n$ is a minimal free complex over $\Lambda_n$, for each integer $i$ one has
\begin{align*}
\rank_{\Lambda_n}(C_n)_i 
	&= \rank_k \hh_i(k\otimes_{\Lambda_n} C_n) \\
	&= \rank_k \hh_i(k \otimes_{\mco} (\mco\otimes_{\Lambda_n} C_n)) \\
	&=\rank_k \hh_i(k\otimes_{\mco}C_0)
\end{align*}
The last equality holds because $\mco \otimes_{\Lambda_n}C_n \cong C_0$, via $\alpha_n$.  Thus the integers
\[
c_i\colonequals \rank_{\Lambda_n}(C_n)_i
\]
are independent of $n$. It follows that the complex $S_\infty/\fa \otimes_{\Lambda_n} C_n$ over $S_\infty/\fa$ is minimal, free and  for each $i\in \mathbb Z$ and all $n$, one has
\[
\rank_{S_\infty/\fa}(S_\infty/\fa  \otimes_{\Lambda_n} C_n)_i = c_i\,.
\]
In particular, the modules $(S_\infty/\fa \otimes_{\Lambda_n}C_n)_i$ are all finite of bounded cardinality. It follows that $C(\fa,\infty)$ is a minimal complex of finite free modules over $S_\infty/\fa$  and that for infinitely many $n$ there are isomorphisms
\[
C(\fa,\infty)\cong S_\infty/\fa \otimes_{\Lambda_n}C_n\,.
\]
Thus $\rank_{S_\infty/\fa}C(\fa,\infty)_i = c_i$ for each $i\in\mathbb Z$.  As in \cite[Proposition 6.4.10]{ACC+:2018} one gets 
\[
S_\infty/\fa  \otimes_{S_\infty/\fb} C(\fb,\infty) \cong C(\fa,\infty)
\]
for any $\fb\subseteq \fa$. Set
\[
C_\infty\colonequals \lim_s C \left(\fm_{S_\infty}^s,\infty\right).
\]
By \cite[Lemma 2.13]{KT} this is a minimal complex of $S_\infty$-modules with 
\[
S_\infty/\fa \otimes_{S_\infty} C_\infty \cong C(\fa,\infty) \quad\text{for all $\fa$,}
\]
and for each $i\in\mathbb Z$ we have 
\[
 \hh_i(C_\infty) \cong \lim_s \hh_i\left(C\left(\fm_{S_\infty}^s,\infty\right)\right)\,.
 \]
 Also (the proof of) \cite[Lemma 2.13(3)]{KT} implies that the natural map 
 \[
 \End_{\dcat{S_\infty}}(C_\infty)\lra\lim_\fa\End_{\dcat{S_\infty/\fa}}(C(\fa,\infty))
 \]
 is an isomorphism. 

The structure maps $R_\infty\xrightarrow{\pi_n}R_n \to \End_{\dcat{\Lambda_n}}(C_n)$ induce maps 
\[
R_\infty \lra R(s,\fa,\infty)\lra \End_{\dcat{S_\infty/\fa}}(C(\fa,\infty))
\]
for all $s\gg 0$, compatible with the maps 
\[
\End_{\dcat{S_\infty/\fb}}(C(\fb,\infty))\lra \End_{\dcat{S_\infty/\fa}}(C(\fa,\infty))
\]
for $\fb\subseteq \fa$. Thus these maps induce a map 
\[
\iota_\infty\colon R_\infty\lra \End_{\dcat{S_\infty}}(C_\infty)\,.
\]
As the maps 
\[
S_\infty\lra R_n \lra \End_{\dcat{\Lambda_n}}(C_n)
\]
induce the natural action of $S_\infty$ on $C_n$ it follows that the map 
\[
S_\infty\xrightarrow{\varphi_\infty}R_\infty\lra \End_{\dcat{S_\infty}}(C_\infty)
\]
induces the natural action of $S_\infty$ on $C_\infty$. Thus the $S_\infty$ action on $C_\infty$ extends to a derived $R_\infty$-action.
Set
\[
\patch(\perfs C) \colonequals \hh_{d}(C_\infty) \quad \text{in $\operatorname{Mod}{R_\infty}$.}
\]
This is finitely generated as an $S_\infty$-module, for $C_\infty$ is a finite free complex over $S_\infty$, and hence also as an $R_\infty$-module. Thus $\patch(\perfs C)$ is in $\rmod{R_\infty}$. 

We  check that this assignment is functorial on $\PatchSys$. Without loss of generality, in the remainder of the proof we assume $d=0$, to ease up notation. Let $f\colon \perfs C\to\perfs D$ be a morphism in $\PatchSys$. As before we assume each $D_n$ is a finite free $\Lambda_n$-complex, and that $\mco\otimes_{\Lambda_n}D_n\iso D_0$ is an isomorphism of $\mco$-complexes. Moreover, we assume that the maps $f_n\colon C_n\to D_n$ are morphisms of complexes; these are uniquely defined up to homotopy, and we leave it to the reader to check that the patching process preserves these homotopies.

For each open ideal $\fa\subseteq S_\infty$ the $f_n$ induce maps
\[
f_n \colon C_n \lra D_n\quad\text{and} \quad 
f_{\fa,n}\colon S_\infty/\fa \otimes_{\Lambda_n} C_n \lra S_\infty/\fa \otimes_{\Lambda_n} D_n\,,
\]
and hence a map $f_{\fa,\infty} \colon C(\fa,\infty)\to D(\fa,\infty)$. By construction
\[
S_\infty/\fa \otimes_{S_\infty/\fb} f_{\fb,n} = f_{\fa,n}\quad\text{for all $\fb\subseteq \fa$ and $n$},
\]
so  $S_\infty/\fa \otimes_{S_\infty/\fb} f_{\fb,\infty}  = f_{\fa,\infty}$. By \cite[Lemma 2.13(3)]{KT}, for each $s$ the map $f_{\fm_{S_\infty}^s,\infty}$ determine a unique, up to homotopy, map $f_\infty\colon C_\infty\to D_\infty$ satisfying 
\[
 S_\infty/\fm_{S_\infty}^s \otimes_{S_\infty} f_\infty  = f_{\fm_{S_\infty}^s,\infty}
\]
and hence $S_\infty/\fa \otimes_{S_\infty} f_\infty = f_{\fa,\infty}$. We can now define the $S_\infty$-module morphism 
\[
\patch(f) \colonequals \hh_{0}(f_\infty)\colon \hh_{0}(C_\infty)\lra \hh_{0}(D_\infty),.
\]
This map depends only on the homotopy class of $f_\infty$,  so does not depend on the choices of $f_n$. It follows that $f\mapsto \patch(f)$ is functorial.

As the maps $f_n$ commute with the structure maps
\[
 R_n\to \End_{\dcat{\Lambda_n}}(C_n)\quad\text{and}\quad  R_n\to \End_{\dcat{\Lambda_n}}(D_n)
\]
the map $f_\infty\colon C_\infty\to D_\infty$ commutes with the maps 
\[
R_\infty\lra \End_{\dcat{S_\infty}}(C_\infty)\quad\text{and}\quad R_\infty\lra \End_{\dcat{S_\infty}}(D_\infty)\,,
	\]
and so $\patch(f)$ is in fact an $R_\infty$-module homomorphism. It follows that $\patch$  defines a functor $\PatchSys\to \mathrm{mod}\, {R_\infty}$.

Now we verify claims (1)--(5). Recall that we have assumed $d=0$.

(1) Each  $C_\infty$ is a finite free $S_\infty$-complex with a derived $R_\infty$-action. For each $n$ one has $(C_n)_i = 0$ for $i\not\in [0,\ell_0]$, thus $(C_\infty)_i = 0$ for the same range of values of $i$. Since $\ell_0=\dim S_\infty - \dim R_\infty$, we can apply  Lemma~\ref{le:acyclicity} below to deduce that the $R_\infty$-module $\patch(\perfs C)$ is maximal Cohen--Macaulay, as claimed.

Let $R'$ denote the image of $R_\infty$ in $\End_{S_\infty}(\patch(\perfs C))$. Then $\dim R'=\dim R_\infty$, as the $R_\infty$-module $\patch(\perfs C)$ is maximal Cohen--Macaulay. Moreover, $R'$ is a finite $S_\infty$-algebra. At this point, one can readily adapt the proof of \cite[Lemmas~3.6 and 3.7]{Bockle/Khare/Manning:2021a} to deduce that the lift $S_\infty \to R_\infty$ of the map $S_\infty\to R'$ can be chosen to ensure that $R_\infty$ is finite as an $S_\infty$-module.

(2) Take any integer $s\ge 0$ and consider the open ideal $\fa_s \colonequals \fn+\varpi^s S_\infty$. Then
\begin{align*}
\patch(\perfs C)/\fn\patch(\perfs C)\otimes_{\mco}\mco/(\varpi^s) 
&= 
\patch(\perfs C)/{\fa_s}\patch(\perfs C) \\
& = \hh_{0}(S_\infty/{\fa_s} \lotimes_{S_\infty}  \patch(\perfs C) )\\
&\cong \hh_{0}(S_\infty/{\fa_s} \otimes_{S_\infty} C_\infty) \\
& \cong \hh_{0}(C(\fa_s,\infty)).
\end{align*}
But now as $\mco\otimes_{\Lambda_n} C_n \cong C_0$, in $\dcat{\mco}$ one gets
\begin{align*}
C(\fa_s,\infty) 
&= \left(\prod_{n=1}^\infty  S_\infty/{\fa_s} \otimes_{\Lambda_n} C_n \right)_{\fp} \\
& = \left(\prod_{n=1}^\infty
	 S_\infty/{\fa_s} \otimes_{\mco}(\mco \otimes_{\Lambda_n} C_n) \right)_{\fp} \\
&= \left(\prod_{n=1}^\infty
S_\infty/{\fa_s} \otimes_{\mco} C_0 \right)_{\fp} \\
& = \left(\prod_{n=1}^\infty
\mco/(\varpi^s)\otimes_{\mco}  C_0 \right)_{\fp} \\
& \cong \mco/(\varpi^s) \otimes_{\mco} C_0\,.
\end{align*}
Combining the computations above, one gets isomorphisms
\[
\patch(\perfs C)/\fn\patch(\perfs C)\otimes_{\mco}\mco/(\varpi^s) 
\cong \hh_{0}(\mco/(\varpi^s) \otimes_{\mco} C_0 )
 \cong \mco/(\varpi^s) \otimes_{\mco} \hh_{0}(C_0) \,.
\]
Taking inverse limits gives the desired isomorphism $\patch(\perfs C)/\fn\patch(\perfs C)\cong \hh_{0}(C_0)$.

(3) The fact that $\hh_i(C_\infty) = 0$ for $i\ne 0$ also implies that 
\[
C_\infty\simeq  \hh_{0}(C_\infty) =  \patch(\perfs C) \quad \text{in $\rdcat{S_\infty}{R_\infty}$.}
\]
The discussion above thus yields
\[
(S_\infty/\fa)  \lotimes_{S_\infty} \patch(\perfs C) \simeq 
(S_\infty/\fa) \otimes_{S_\infty} C_\infty \cong C(\fa,\infty)\cong  (S_\infty/\fa) \otimes_{\Lambda_n} C_n
\]
for infinitely many $n$, proving (2).

(4) This now follows from the above observation that the natural map 
\[
\End_{\dcat{S_\infty}}(C_\infty)\lra \lim_\fa\End_{\dcat{S_\infty/\fa}}(C(\fa,\infty))
\]
is an isomorphism.

(5) If $f_n =\pi_n(\tau)$ in $\End_{\dcat{\Lambda_n}}(C_n)$ for all but finitely many $n$, then for any open ideal $\fa$ the map $f_{\fa,\infty}$ is the image of $\tau$ under the map 
\[
R_\infty\lra R(s,\fa,\infty)\lra \End_{\dcat{S_\infty/\fa}}(C(\fa,\infty)) = \End_{{\cat K}(S_\infty/\fa)}(C(\fa,\infty))\,,
\]
and so by the construction of $\patch(f)$ we get $f_\infty = \iota_\infty(\tau)$, which gives $\patch(f) = \tau$ by the definition of the $R_\infty$-module structure on $\patch(\perfs C)$.
\end{proof}
\end{chunk}

The result below was used in proof of Theorem~\ref{th:ultrapatching}(1); see  \cite[Lemma 6.2]{Calegari/Geraghty:2018}. 
The basic argument goes to the acyclicity criteria of Buchsbaum and Eisenbud~\cite[Proposition~1.4.12]{Bruns/Herzog:1998}, and Peskine and Szpiro~\cite[1.4.24]{Bruns/Herzog:1998}. 

\begin{lemma}
\label{le:acyclicity}
Let $S$ be a Cohen--Macaulay local ring with residue field $k$. If $C$ is an $S$-complex in $\dbcat S$ admitting a derived action of a noetherian local $S$-algebra $R$ with $\dim R\le \dim S$ and 
\[
\hh_i(k\lotimes_SC)=0\quad \text{for $i\notin [0,\dim S - \dim R]$}\,,
\]
then $C\iso \hh_0(C)$ and the $R$-module $\hh_0(C)$ is maximal Cohen--Macaulay.
\end{lemma}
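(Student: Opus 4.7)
My strategy is to replace $C$ by a minimal free resolution $F$ over $S$ and combine Iversen's formula with the Auslander--Buchsbaum equality, using the $R$-module constraints on the homology to force higher homology modules to vanish. First, I would take a minimal free resolution $F = P_\bullet \simeq C$ by finitely generated $S$-modules. By minimality, $k \otimes_S F$ has zero differential, so $\hh_i(k \lotimes_S C) \cong k \otimes_S F_i$, and the hypothesis forces $F_i = 0$ for $i \notin [0, \ell_0]$, where $\ell_0 = \dim S - \dim R$. Thus $F$ is a bounded complex of finitely generated free $S$-modules concentrated in homological degrees $[0, \ell_0]$, and in particular $\mathrm{pd}_S F \leq \ell_0$.

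The key step combines Iversen's formula $\mathrm{pd}_S F = \sup_q\{\mathrm{pd}_S \hh_q(F) + q : \hh_q(F) \neq 0\}$ --- which follows from the survival of the extremal corner of the Tor spectral sequence $E^2_{p,q} = \Tor^S_p(k, \hh_q(F)) \Rightarrow \Tor^S_{p+q}(k, F)$ --- with the Auslander--Buchsbaum equality applied to each $\hh_q(F)$. This yields $\mathrm{pd}_S \hh_q(F) \leq \ell_0 - q$, so each $\hh_q(F)$ has finite projective dimension over $S$, and
\[
\depth_S \hh_q(F) = \depth S - \mathrm{pd}_S \hh_q(F) \geq \dim R + q,
\]
using $\depth S = \dim S$ from the Cohen--Macaulay hypothesis. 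On the other hand, each $\hh_q(F) \cong \hh_q(C)$ is finitely generated as an $R$-module via the derived action, so its support in $\mathrm{Spec}\, S$ lies in the image of $\mathrm{Spec}\, R$, giving $\dim_S \hh_q(F) \leq \dim R$ and hence $\depth_S \hh_q(F) \leq \dim R$. Combining, $\dim R + q \leq \dim R$ forces $q \leq 0$, so $\hh_q(F) = 0$ for all $q > 0$, and $C \simeq F \simeq \hh_0(C) =: M$ in $\dcat S$.

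Applying the depth bound at $q = 0$: $\depth_S M \geq \dim R \geq \dim_S M$, so $M$ is maximal Cohen--Macaulay over $S$. Since the local homomorphism $S \to R$ sends $M$-regular sequences in $\mathfrak{m}_S$ to $M$-regular sequences in $\mathfrak{m}_R$ (they act identically on $M$), one has $\depth_R M \geq \depth_S M = \dim R \geq \dim_R M$, so $M$ is MCM over $R$. The main subtlety is justifying that $\mathrm{pd}_S \hh_q(F)$ is finite and bounded by $\ell_0 - q$: when $S$ is regular (as in the intended application with $S = S_\infty$ a power series ring), this is automatic since every finitely generated module has finite projective dimension, and the argument above is clean; for general Cohen--Macaulay $S$ the Iversen-type inequality $\mathrm{pd}_S \hh_q(F) + q \leq \mathrm{pd}_S F$ can fail in the absence of the dimension constraint on homology, and one would instead argue directly via the local cohomology spectral sequence $E_2^{p,q} = H^p_{\mathfrak{m}_S}(\hh^q F) \Rightarrow H^{p+q}_{\mathfrak{m}_S}(F)$, exploiting the vanishing $E_2^{p,q} = 0$ for $p > \dim R$ together with the concentration of $R\Gamma_{\mathfrak{m}_S}(F) \simeq H^{\dim S}_{\mathfrak{m}_S}(S) \otimes_S F[-\dim S]$ in cohomological degrees $[\dim R, \dim S]$ to inductively force the vanishing of higher homology.
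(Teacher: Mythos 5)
Your overall strategy — pass to a minimal free complex, constrain the dimension of the homology modules via the derived $R$-action, and play that against a depth/projective-dimension bound coming from the Auslander–Buchsbaum equality — is close in spirit to the paper's, but the key step rests on a false formula. The identity you call Iversen's formula, $\mathrm{pd}_S F = \sup_q\{\mathrm{pd}_S \hh_q(F) + q\}$, does not hold. For a concrete counterexample, take $S$ regular local of dimension $2$ with regular system of parameters $x,y$, and let $F$ be the two-term minimal free complex $S^2 \xrightarrow{d} S^2$ in degrees $1,0$ with $d = \left(\begin{smallmatrix} x & y \\ 0 & 0 \end{smallmatrix}\right)$. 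Then $\mathrm{pd}_S F = 1$, yet $\hh_0(F) \cong S/(x,y) \oplus S$ has projective dimension $2$, so $\mathrm{pd}_S \hh_0(F) + 0 = 2 > \mathrm{pd}_S F$. The extremal corner of the Tor spectral sequence does not survive here because the outgoing differential $d^2\colon E^2_{2,0} \to E^2_{0,1}$ is an isomorphism — the corner-survival argument you sketch requires control on differentials \emph{out} of the corner, which is not available. (Note: this $F$ does not satisfy the lemma's dimension hypotheses, so it is not a counterexample to the lemma — only to your formula.) Your backup route via the local cohomology spectral sequence has the same structural problem: the extremal term $E_2^{p,q_0}$ with $q_0 = \min$ still admits nonzero incoming differentials from rows with larger $q$, so ``inductively forcing vanishing'' is not justified as written.

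What you actually need is only a bound at the \emph{top} nonvanishing homological degree $s = \sup\hh(F)$, namely $\depth_S \hh_s(F) \geq \depth S - \mathrm{pd}_S F + s = \dim R + s$; combined with $\depth_S \hh_s(F) \leq \dim_S \hh_s(F) \leq \dim R$, this forces $s \leq 0$, and then the $q=0$ case of the same bound gives maximal Cohen–Macaulayness. But that single inequality is itself a nontrivial depth-sensitivity statement for complexes and would need a citation or proof; it is not a formal consequence of the Tor spectral sequence corner. The paper sidesteps this entirely by dualizing: it passes to $\Hom_S(C,S)$, observes that the derived action of the image $R'$ of $R$ (a \emph{finite} $S$-algebra) bounds $\dim_S \hh_i(\Hom_S(C,S)) \leq \dim R$ for all $i$, and then invokes the formula $\sup\{i : \hh_i(C)\ne 0\} = \dim_S \Hom_S(C,S) - \dim S$ from Christensen--Iyengar. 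That route trades the delicate corner analysis for a clean dimension bound on the dual complex, which is where the Cohen–Macaulay hypothesis and $R'$-finiteness do their work. If you want to salvage your approach, you should cite the correct depth inequality for the top homology of a perfect complex (and only apply it there), rather than the incorrect global projective-dimension formula.
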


\begin{proof}
Given that $C$ is in $\dbcat S$ and the hypothesis on $\hh_i(k\lotimes_SC)$ replacing $C$ by its minimal free resolution, we can assume $C$  is a finite free complex with $C_i=0$ for $i\notin [0,\dim S - \dim R]$. Let $R'$ denote the image of $R$ in $\End_{\dcat S}(C)$. Then $R'$ is a finite $S$-algebra and $C$ has a derived $R'$-action. Thus the $S$-complex $\Hom_S(C,S)$ inherits a derived $R'$-action so each $\hh_i(\Hom_S(C,S))$ is an $R'$-module. Since $R'$ is a finite $S$-algebra,  for each $i$ we get
\begin{align*}
\dim_S \hh_i(\Hom_S(C,S))  \le \dim_S R' =  \dim R' \le \dim R\,.
\end{align*}
For the next step, it is convenient to use the notion of dimension of the complex $\Hom_S(C,S)$, defined as follows:
\[
\dim_S \Hom_S(C,S) 
\colonequals \max\{\dim_S \hh_i(\Hom_S(C,S))-i\mid  i\in [\dim R - \dim S, 0] \}\,.
\]
See \cite[(2)]{Christensen/Iyengar:2021} and references therein.  Since the complex $\Hom_S(C,S)$ is zero outside the range $[\dim R - \dim S,0]$, one immediately gets the inequality below:
\[
\dim_S \Hom_S(C,S) \le  \dim S\,.
\]
This gives the inequality on the right in:
\[
0\le \max\{i\mid \hh_i(C)\ne 0\} = \dim_S\Hom_S(C,S) - \dim S\le 0\,.
\]
The equality is from \cite[Proposition~6]{Christensen/Iyengar:2021}. It follows that  $\hh_i(C)=0$ for $i>0$, that is to say, $C$ is a free resolution of  $\hh_0(C)$ over $S$. Then the Auslander--Buchsbaum formula \cite[Theorem~1.3.3]{Bruns/Herzog:1998} implies the inequality below:
\begin{align*}
\depth_R \hh_0(C) 
	&= \depth_{R'}\hh_0(C) \\
	& = \depth_S \hh_0(C) \\
	& \ge \dim S - (\dim S - \dim R) = \dim R\,.
\end{align*}
The equalities hold because the maps $R\to R'$ and $S\to R'$ are finite. Thus the $R$-module $\hh_0(C)$ is  maximal Cohen--Macaulay.
\end{proof}

The result below on derived action is used in the proof of Proposition \ref{prop:H0 wt 1}. 

\begin{lemma}
\label{lem:faithful-H0}
Let $\Lambda$ be a noetherian local ring with residue field $k$  and fix $C$ in  $\dbcat{\Lambda}$ such that $\hh_i(k\lotimes_\Lambda C) = 0$ for $i\ne 0,1$ and $\mathrm{projdim}_\Lambda\hh_0(C)\le 1$. Set $C^\dagger\colonequals \RHom_{\Lambda}(C,\Lambda)[1]$. 

If $\varphi\in \End_{\dcat{\Lambda}}(C)$ is such that $\hh_0(\varphi)= 0= \hh_0(\varphi^\dagger)$, then $\varphi = 0$ in $\dbcat{\Lambda}$.
\end{lemma}

\begin{proof}
Given the hypotheses on $C$, replacing it by its minimal free resolution, we can assume the $\Lambda$-module $C_i$ is finite free for each $i$, and equal to $0$ for $i\ne 0,1$. Thus $\varphi$ can be represented as a pair $(\varphi_1,\varphi_0)$ where $\varphi_i\colon C_i\to C_i$ are $\Lambda$-linear maps satisfying $\varphi_0 d = d\varphi_1$, where $d\colon C_1\to C_0$ is the differential on $C$.  Moreover $\varphi^\dagger$ can be realized as the morphism $\Hom_{\Lambda}(\varphi,\Lambda)[1]$, and hence is represented by the pair $(\varphi_0^*,\varphi_1^*)$. Moreover $\varphi \cong \varphi^{\dagger\dagger}$ so $\varphi$ is zero in $\dcat{\Lambda}$ if and only if so is $\varphi^\dagger$.

The hypothesis $\hh_0(\varphi)=0$ implies that there is an map $h\colon C_0\to C_1$ of $\Lambda$-modules with $\varphi_0= dh$.  Replacing $\vf$ by the homotopy-equivalent morphism $\vf + [h,d]$  one can assume $\varphi_0=0$. Thus the morphism $\vf$ factors as
\[
\begin{tikzcd}
0 \arrow[r] &C_1 \arrow[d, "j'"] \arrow[r,"d"] & C_0 \arrow[d]\arrow[r] & 0 \\
0 \arrow[r] &\hh_1(C) \arrow[d, hookrightarrow, "\iota"] \arrow[r] & 0 \arrow[d] \arrow[r] & 0 \\
0 \arrow[r] &C_1 \arrow[r, "d"] & C_0 \arrow[r] & 0 
\end{tikzcd}
\]
with $\iota j=\varphi_1$. Since  $\mathrm{projdim}_\Lambda\hh_0(C)\le 1$ the $\Lambda$-module $\Coker(\iota) \cong \mathrm{Image}(d)$ is free, so $\iota$ is split-injective, and in particular $\hh_1(C)$ is free as well. It follows that $\iota^*$ is surjective. Applying $(-)^*$ to the diagram above yields the diagram of complexes of $\Lambda$-modules
\[
\begin{tikzcd}
0 \arrow[r] &C_0^* \arrow[d] \arrow[r,"d^*"] & C_1^* \arrow[d, twoheadrightarrow, "\iota^*" ]\arrow[r] & 0 \\
0 \arrow[r] & 0 \arrow[d] \arrow[r] & \hh_1(C)^* \arrow[d, "j^*" ] \arrow[d] \arrow[r] & 0 \\
0 \arrow[r] &C_0^* \arrow[r, "d^*"] & C_1^*\arrow[r] & 0 
\end{tikzcd}
\]
Since $j^*i^*=\vf_1^*$ and $\iota^*$ is surjective, the hypothesis that $\hh_{0}(\varphi^\dagger)=\hh_{-1}(\varphi^*)=0$ implies that $\hh_{-1}(j^*)=0$, and since $\hh_1(C)^*$ is free, $j^*$ lifts through $d^*$, that is to say, $j^*$ is homotopic to 0. Thus $\vf^\dagger$ is homotopic to zero, as claimed. 
\end{proof}

\section{Duality}
\label{se:duality}
In this section we prove that patching commutes with duality, in a suitable sense. The setup and notation is as in Section~\ref{se:patching}. In particular, we consider local rings
\[
\mco\pos{\bs y}\,, \quad S \colonequals \mco\pos{\bs y,\bs w}\,, \quad \text{and $R_\infty$.}
\]
All rings considered are quotients of these local rings, and hence possess dualizing complexes. We normalize them as in Section~\ref{se:CM-modules}, making them unique up to isomorphism in the appropriate derived category. Recall that $\ddual A$ denotes the dualizing complex of a local ring $A$ and $\ddual A(-)$ the duality functor; see \eqref{eq:dagger}.

\begin{chunk}
\label{ch:ddual-properties}
The following observations about dualizing complexes are used implicitly.
\begin{enumerate}[\quad\rm(1)]
\item
If $A$ is Gorenstein, for $M\in\dbcat A$ and free resolution $F\iso M$, one has
\[
\ddual A(M) \cong \Hom_A(F,A)[\dim A] \qquad\text{in $\dcat A$.}
\]
In particular, when $M$ is perfect, so is $\ddual A(M)$.
\item 
If $A\to B$ is a finite map, then  $\ddual A(M) \cong  \ddual B(M)$ for $M\in\dbcat B$.
\item
If $A\to B$ is a flat map with $A,B$ Gorenstein, and $M$ is in $\dbcat A$, then
\[
\ddual A(M)\otimes_AB \cong \ddual B(B\otimes_AM)[\dim A-\dim B] \quad\text{in $\dbcat B$.}
\]
\end{enumerate}
Indeed, $A$ Gorenstein means $\ddual A\cong A[\dim A]$---see \cite[\href{https://stacks.math.columbia.edu/tag/0DW7}{Tag 0DW7}]{stacks-project}---so (1) follows from isomorphisms
\[
\RHom_A(M,\ddual A) \cong \Hom_A(F,\ddual A) \cong \Hom_A(F,A)[\dim A]\,.
\]
As to (2), since $\RHom_A(B,\ddual A)\cong\ddual B$ in $\dcat B$, by 
\cite[\href{https://stacks.math.columbia.edu/tag/0AX1}{Tag 0AX1}]{stacks-project},  one has
\[
\RHom_A(M,\ddual A) \cong \RHom_B(M \RHom_A(B,\ddual A))\cong \RHom_B(M, \ddual B)\,.
\]
Part (3) is a direct consequence of the isomorphism:
\[
\RHom_A(M,A) \otimes_A B \cong \RHom_B(B\otimes_AM, B) \,,
\]
which is valid because $M$ is in $\dbcat A$ and $B$ is flat as an $A$-module.
\end{chunk}

\begin{chunk}
\label{ch:patch-duality}
Let  $\perfs C \colonequals (C_n, \alpha_n)_{n\geqslant 0}$ be a patching system over $(\Lambda_n)$; see \ref{ch:pasy}.  The \emph{dual} of $\perfs C$ is the patching system ${\perfs C}^{\dagger} \colonequals (C^{\dagger}_n,\alpha^{\dagger}_n)_{n\geqslant 0}$ where 
\[
C^{\dagger}_n \colonequals \RHom_{\mco}(C_n, \mco) [2d+\ell_0] = \ddual{\mco}(C_n)[2d+\ell_0-1]
\]
Since  $\Lambda_n$ is Gorenstein and $\dim\Lambda_n=1$, one gets
\[
C^{\dagger}_n = \ddual{\Lambda_n}(C_n)[2d+\ell_0-1] \cong  \RHom_{\Lambda_n}(C_n, \Lambda_n) [2d+\ell_0]\,.
\]
In particular,  $C^{\dagger}_n$ is a perfect complex over $\Lambda_n$, and hence it satisfies condition (1) in \ref{ch:pasy}. The suspension by $2d+\ell_0$ ensures that condition (2)  is also satisfied. Since duals and suspension are functors on the category of derived $R_n$-complexes, $C^{\dagger}_n$ satisfies (3) as well. Finally, a standard computation yields that applying $\ddual {\mco}(-)$ to $\alpha_n$ yields an isomorphism
\[
\ddual{\mco}(\alpha_n)\colon \ddual {\mco}(C_0) \longiso \mco\lotimes_{\Lambda_n} \ddual{\Lambda_n}(C_n)
\]
compatible with derived $R_n$-actions. Setting $\alpha^{\dagger}_n$ to be the $(2d+\ell_0)$ suspension of the inverse of this morphism completes the definition of the ${\perfs C}^{\dagger}$.

It is clear that $(-)^\dagger$ defines a contravariant functor on $\PatchSys$, and that the equivalence $(-) \iso \ddual {\Lambda_n}^2(-)$ induce a natural isomorphism in $\PatchSys$:
 \[
 \perfs C \longiso ({\perfs C}^{\dagger})^{\dagger}
 \]
\end{chunk}

The statement below is the desired result on duality. The argument is an extension of that for \cite[Proposition 4.10]{Manning}. Here $(-)^{\vee}$ denotes the duality functor on Cohen--Macaulay modules defined in \eqref{eq:dual-cm}. Applying it is justified for $\patch(-)$ takes values in maximal Cohen--Macaulay $R_\infty$-modules by the Theorem~\ref{th:ultrapatching}.

\begin{theorem}
\label{th:duality}
For $\perfs C$ in $\PatchSys$, there is an natural isomorphism of $R_\infty$-modules 
\[
\patch({\perfs C}^{\dagger})\cong \patch(\perfs C)^{\vee}\,.
\]
This is functorial in $\perfs C$, in that, if $f\colon \perfs C\to \perfs D$ is a morphism in $\PatchSys$ then 
\[
\patch(f^{\dagger})= \patch(f)^{\vee}\,.
\]
\end{theorem}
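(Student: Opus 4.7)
The plan is to prove the result by running the dualizing operation through the Cohen--Macaulay complex $C_\infty \in \dbcat{S_\infty}$ built in the proof of Theorem~\ref{th:ultrapatching}. By Lemma~\ref{le:acyclicity} and that proof, $C_\infty$ is a bounded finite free $S_\infty$-complex concentrated in homological degree $d$, so $C_\infty \simeq \patch(\perfs C)[d]$ in $\rdcat{S_\infty}{R_\infty}$. The idea is to identify the patched complex associated with ${\perfs C}^{\dagger}$ with the $S_\infty$-dual of $C_\infty$, and to identify the Cohen--Macaulay dual $\patch(\perfs C)^{\vee}$ with the same $S_\infty$-dual (at the appropriate degree); both will turn out to be $\Ext^{\ell_0}_{S_\infty}(\patch(\perfs C), S_\infty)$.

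First I would establish, at each finite stage, the chain of isomorphisms
\[
S_\infty/\fa \otimes_{\Lambda_n} C_n^{\dagger}
\cong \RHom_{S_\infty/\fa}\bigl(S_\infty/\fa \otimes_{\Lambda_n} C_n,\, S_\infty/\fa\bigr)[2d+\ell_0]\,,
\]
for any open ideal $\fa \subseteq S_\infty$ with $I_n \subseteq \fa$. This uses three facts: (a) $\Lambda_n$ is Gorenstein of dimension one and finite flat over $\mco$, so \ref{ch:ddual-properties}(2) gives $\RHom_{\mco}(C_n,\mco) \simeq \RHom_{\Lambda_n}(C_n,\Lambda_n)$; (b) $C_n$ is perfect over $\Lambda_n$, so $\RHom$ commutes with base change; and (c) $S_\infty/\fa \otimes_{\Lambda_n} \Lambda_n = S_\infty/\fa$ when $I_n \subseteq \fa$. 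Passing to the ultrafilter product and using the description $C(\fa,\infty) \cong S_\infty/\fa \otimes_{\Lambda_n} C_n$ for infinitely many $n$ from the proof of Theorem~\ref{th:ultrapatching} yields
\[
(C^{\dagger})(\fa,\infty) \cong \RHom_{S_\infty/\fa}\bigl(C(\fa,\infty), S_\infty/\fa\bigr)[2d+\ell_0]\,.
\]
Taking the inverse limit over $\fa = \fm_{S_\infty}^s$ and exploiting the fact that $\RHom_{S_\infty}(C_\infty, S_\infty)$ is a finite free $S_\infty$-complex (hence $\fm_{S_\infty}$-adically complete) gives
\[
(C^{\dagger})_{\infty} \cong \RHom_{S_\infty}(C_\infty, S_\infty)[2d+\ell_0]
\]
in $\rdcat{S_\infty}{R_\infty}$, where the derived $R_\infty$-action on the right is induced from that on $C_\infty$ by precomposition via contravariant functoriality of $\RHom$.

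Substituting $C_\infty \simeq \patch(\perfs C)[d]$ converts the right-hand side into $\RHom_{S_\infty}(\patch(\perfs C), S_\infty)[d+\ell_0]$, whence
\[
\patch({\perfs C}^{\dagger}) \,=\, \hh_{d}((C^{\dagger})_{\infty}) \,\cong\, \Ext^{\ell_0}_{S_\infty}(\patch(\perfs C), S_\infty)
\]
as $R_\infty$-modules. For the other side, the finite map $S_\infty\to R_\infty$ gives $\ddual{R_\infty}(\patch(\perfs C)) \cong \ddual{S_\infty}(\patch(\perfs C))$ by \ref{ch:ddual-properties}(2); since $S_\infty$ is regular of dimension $r+j+1$, this equals $\RHom_{S_\infty}(\patch(\perfs C), S_\infty)[r+j+1]$. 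Taking homology in degree $\dim R_\infty = r+j-\ell_0+1$ produces exactly the same $\Ext^{\ell_0}_{S_\infty}(\patch(\perfs C), S_\infty)$, matching the module structures. Functoriality then follows by naturality: a morphism $f\colon \perfs C \to \perfs D$ yields compatible morphisms of the $C_\infty$'s and, by contravariance, of the ${(C^{\dagger})}_{\infty}$'s; the identifications above fit into a commutative square, giving $\patch(f^{\dagger}) = \patch(f)^{\vee}$. The main technical obstacle is tracking the $R_\infty$-equivariance throughout: one must verify that the ultrafilter product, the $\RHom$-identifications, and the inverse limit are all compatible with the derived $R_n$-actions, so that the resulting isomorphism of $S_\infty$-modules is in fact an isomorphism of $R_\infty$-modules. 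This is a matter of unwinding the naturality of each step, but it is the point at which the argument demands the most care.
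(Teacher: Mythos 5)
Your proposal is correct and follows essentially the same route as the paper's proof: reduce to the framed setting, identify $(C^{\dagger})_\infty$ with the $S_\infty$-dual of $C_\infty$ at each finite stage via the Gorenstein base-change isomorphisms of \ref{ch:ddual-properties}, pass through the ultrafilter product and the inverse limit over open ideals, and then match degrees against $\patch(\perfs C)^{\vee}$ using $\ddual{R_\infty}(\patch(\perfs C)) \cong \ddual{S_\infty}(\patch(\perfs C))$. The only cosmetic difference is that you route the comparison through the explicit module $\Ext^{\ell_0}_{S_\infty}(\patch(\perfs C), S_\infty)$ rather than phrasing it as a single isomorphism $\patcht(\perfs C^{\dagger}) \cong \ddual{S_\infty}(\patcht(\perfs C))[-\dim R_\infty]$ of complexes, and your acknowledged concern about tracking $R_\infty$-equivariance through the ultraproduct and limit is exactly the point the paper handles by insisting on naturality and $R_\infty$-compatibility at every step.
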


\begin{proof}
To begin with, given the discussion in \ref{ch:patch-duality}, we can assume we are already in the framed situation, $j=0$, as in the proof of Theorem~\ref{th:ultrapatching}. Moreover, we can assume the $\Lambda_n$-complex $C_n$ is finite free and concentrated in degrees $[0,\ell_0]$ for each $n$. The functor $(-)^\dagger$ preserves these properties. Let 
\[
\patcht \colon \PatchSys\lra \rdcat{S_\infty}{R_\infty}
\]
be the functor $\perfs C\mapsto C_\infty$ and  $f\mapsto f_\infty$ constructed in the proof of Theorem \ref{th:ultrapatching}, so that $\patch(\perfs C) = \hh_{0}(\patcht(\perfs C))$. It suffices to verify that there is an isomorphism
\[
\patcht ({\perfs C}^{\dagger})
	 \longiso  \ddual{S_\infty}(\patcht(\perfs C))[-\dim R_\infty] 
\]
in $\dcat{S_\infty}$, functorial in $\perfs C$, and compatible with $R_\infty$-actions. Indeed, given this isomorphism one gets
\begin{align*}
\patch ({\perfs C}^{\dagger})
	&=\hh_0(\patcht ({\perfs C}^{\dagger})) \\
	&\cong \hh_{\dim R_\infty}(\ddual{S_\infty}(\patcht(\perfs C))) \\
	&\cong \hh_{\dim R_\infty}(\ddual{S_\infty}(\patch(\perfs C)))\\
	&=\patch(\perfs C)^\vee
\end{align*}
keeping in mind that $\patch(\perfs C)$ is Cohen--Macaulay of dimension $\dim R_\infty$.

As to the isomorphism above, fix $\perfs C = (C_n,\alpha_n)_{n\geqslant 0}$ in $\PatchSys$. For any open ideal $\fa\subseteq S_\infty$ containing the ideal $\Ker(S_\infty\twoheadrightarrow  \Lambda_n)$ the complex $S_\infty/\fa \lotimes_{\Lambda_n}C_n $ of $S_\infty/\fa$-modules is finite free,  so using the properties of duality listed in \ref{ch:ddual-properties}, one gets a natural isomorphism
\[
S_\infty/\fa \lotimes_{\Lambda_n} \ddual{\Lambda_n}(C_n) \longiso 
\RHom_{S_\infty/\fa}(S_\infty/\fa \lotimes_{\Lambda_n}C_n, S_\infty/\fa)[1]
\]
respecting the action of $R_\infty$ on both sides.  Here we are using the fact that $\Lambda_n$ is Gorenstein and  $C_n$ is a perfect $\Lambda_n$-complex. Similarly there is an isomorphism
\[
S_\infty/\fa \lotimes_{S_\infty} \ddual{S_\infty}(\patcht(\perfs C))\longiso
\RHom_{S_\infty/\fa}(S_\infty/\fa \lotimes_{S_\infty}\patcht(\perfs C), S_\infty/\fa)[\dim S_\infty]
	\]	
again compatible with the action of $R_\infty$ and functorial in $\perfs C$. This uses the fact that $S_\infty$ is Gorenstein and that the $S_\infty$-complex $\patcht(\perfs C)$ is perfect. Then, for any open ideal $\fa\subseteq S_\infty$, Theorem \ref{th:ultrapatching}(3) gives an isomorphism
\begin{align*}
S_\infty/\fa \lotimes_{S_\infty} \patcht(\perfs C^\dagger)
		&\cong S_\infty/\fa  \lotimes_{\Lambda_n} C_n^{\dagger} \\
		&=  S_\infty/\fa  \lotimes_{\Lambda_n} \ddual{\Lambda_n}(C_n)[\ell_0-1]\\
		&\cong \RHom_{S_\infty/\fa}(S_\infty/\fa \lotimes_{\Lambda_n}C_n, S_\infty/\fa)[\ell_0]\\
	    &\cong \RHom_{S_\infty/\fa}(S_\infty/\fa \lotimes_{S_\infty}\patcht(\perfs C), S_\infty/\fa)[\ell_0]\\
		&\cong S_\infty/\fa \lotimes_{S_\infty}\ddual{S_\infty}(\patcht(\perfs C))[-\dim R_\infty]
	\end{align*}
for infinitely many $n$, again, compatible with the action of $R_\infty$ and functorial in $\perfs C$. As $\patcht(\perfs C)$ is a bounded complex of finite free $S_\infty$ modules, it follows from \cite[\href{https://stacks.math.columbia.edu/tag/06XY}{Section 06XY}]{stacks-project} that we have natural isomorphisms
	\begin{align*}
		\patcht(\perfs C^\dagger) 
		&\cong \lim_\fa\left(S_\infty/\fa \lotimes_{S_\infty} \patcht(\perfs C^\dagger) \right)\\
		&\cong \lim_\fa\left(S_\infty/\fa \lotimes_{S_\infty}\ddual{S_\infty}(\patcht(\perfs C))[-\dim R_\infty]\right)\\
		&\cong \ddual{S_\infty}(\patcht(\perfs C))[-\dim R_\infty] 
	\end{align*}
respecting the action of $R_\infty$ and functorial in $\perfs C$. This is as desired.
\end{proof}

\begin{remark}
As an immediate application of Theorem \ref{th:duality}, one can generalize the main result of \cite{Manning}.

Specifically let $F$ be a CM field in which $\ell$ does not ramify, let $D$ be a quaternion algebra over $F$, and let $Y_K$ be the manifold associated to a compact open $K\subseteq D^\times(\mbb{A}_F^\infty)$ (analogous to the manifolds constructed in Section \ref{sec:Hecke}). Assume appropriate analogues of Conjectures \ref{rhobar conj}, \ref{R->T conj} and \ref{vanishing conj}. Let $\fm$ be a non-Eisenstein ideal of the Hecke algebra of $Y_K$, corresponding to a Galois representation $\rhobar_\fm\colon G_F\to \GL_2(k)$, satisfying certain ``Taylor--Wiles conditions.''

Similarly to the construction given in Sections \ref{sec:Hecke} and \ref{sec:main}, one can use the chain complexes for the $Y_K$'s (localized at $\fm$) to produce a patching system ${\perfs C}$ and then apply the functor $\patch$ to produce a maximal Cohen--Macaulay module $M_\infty$ over a ring $R_\infty$. By Theorem \ref{th:ultrapatching}(2), one gets 
\[
M_\infty/\fm_{R_\infty}M_\infty\cong \hh_d(Y_K,\mco)_\fm/\fm_{R_0} \hh_d(Y_K,\mco)_\fm\,,
\]
with $d$ the lowest degree for which $\hh_*(Y_K,\mco)_\fm\ne 0$. Thus determining $M_\infty$ allows us to determine the \emph{multiplicity} of $\rhobar_\fm$ in $\hh_d(Y_K,\mco)$, just as in \cite[Section 4]{Manning}.

Crucially $R_\infty$ is the same ring as the one  in \cite{Manning}, since both rings are determined by local information at finite places, which is not affected by the transition from totally real fields to CM fields (or general number fields). Moreover, by an analogous construction to the one used in Proposition \ref{prop: twisted duality}, one can show that ${\perfs C}^\dagger\cong {\perfs C}$, and so Theorem \ref{th:duality} gives $M_\infty^\vee \cong M_\infty$, just as in \cite[Proposition 4.10]{Manning}.

If $\hh_1(Y_K,E)_{\fm} \ne 0$, then standard multiplicity one theorems for automorphic forms imply that $M_\infty$ has generic rank $1$, and so \cite[Theorem 3.1]{Manning} implies that $\dim M_\infty/\fm_{R_\infty}M_\infty = 2^k$, giving $\dim \hh_1(Y_K,\mco)_{\fm}/\fm_{R_0} = 2^k$, where $k$ is an integer (defined in \cite[Theorem 1.1]{Manning}) depending only the local properties of $\rhobar_\fm$ at the places where the quaternion algebra $D$ ramifies. The method of \cite{Manning} crucially relies on the assumption that $M_\infty$ has generic rank $1$.

This observation is not original to this paper. Frank Calegari has remarked to one of us (JM) that he discovered this result (and Theorem \ref{th:duality}) before we did.

In addition to the multiplicity statement, \cite[Theorem 3.1]{Manning} also implies (again under the assumption that $\hh_1(Y_K,E)_{\fm} \ne 0$) that the natural map 
\[
\tau_{M_\infty}\colon M_\infty \otimes_{R_\infty} M_\infty\lra \omega_{R_\infty}
\]
induced by the self-duality of $M_\infty$ is surjective. In the setting of \cite{Manning}, that is to say, the ``$\ell_0=0$'' case, this implies an important statement about the associated congruence module (see \cite[Theorem 1.2]{Manning} and \cite[Theorem 3.2]{Bockle/Khare/Manning:2021b}). This argument does not generalize to the ``$\ell_0>0$'' case considered here, but it seems plausible that the surjectivity of $\tau_{M_\infty}$  still implies some statement about the homology group $\hh_*(Y_K,\mco)_\fm$.
\end{remark}

\part{Deformation rings and Hecke algebras}
\label{part:deformation rings}
 We now  use our work in Parts \ref{part:ca} and \ref{part:patching and duality}  to prove integral, non-minimal modularity lifting theorems in defect $\ell_0>0$.  We  prove two versions of our main `$R=\TT$' theorem, one in the the case of $\PGL_{2}$ over an arbitrary number field and one in the case of weight one modular forms on a Shimura curve over $\QQ$. Our main argument (given in Section \ref{sec:main}) will be essentially identical in the two cases, so the only difference in the two cases will be in the setup. When necessary, we  refer to these two cases as cases \ref{case:PGL2} and \ref{case:wt1}:

\setcasetag{(PGL2)}
\begin{case}\label{case:PGL2}
This deals with Hecke eigenclasses arising from the cohomology of symmetric manifolds associated to $\PGL_2$ over an arbitrary number field $F$. Our main result in this case is Theorem \ref{th:intro-5}. Most of our work in this case is contingent on conjectures \ref{rhobar conj}, \ref{R->T conj}, \ref{vanishing conj} and \ref{Ihara conj}.
\end{case}
 
\setcasetag{(Wt1)}
\begin{case}\label{case:wt1}
This deals with weight one modular forms on a  Shimura curve over $\mathbb Q$. Our main result in this case is Theorem \ref{th:intro-5b}. As a corollary of this we also obtain an integral Jacquet--Langlands statement, Theorem \ref{JLcorr}, in certain situations. In this case, our results  hold unconditionally. 
\end{case}

 In Section \ref{se:Galois}, we introduce the necessary Galois deformation theory required for our results. We work in a generality which covers both cases for most of this section, and only specialize when necessary. In Section \ref{sec:Hecke} we define the Hecke algebras and complexes involved in case \ref{case:PGL2}, and introduce the conjectures \ref{rhobar conj}, \ref{R->T conj}, \ref{vanishing conj} and \ref{Ihara conj}. In Section \ref{sec:Hecke wt1} we define the Hecke algebras and complexes needed in case \ref{case:wt1}, and prove the analogues of conjectures \ref{rhobar conj}, \ref{R->T conj}, \ref{vanishing conj} and \ref{Ihara conj}. We deduce Theorem \ref{th:intro-5c}  from Theorem \ref{th:intro-5b}. Our main result, Theorem~\ref{R=T theorem}, is proved in Section \ref{sec:main}. It implies Theorem \ref{th:intro-5} in Case \ref{case:PGL2} and Theorem \ref{th:intro-5b} in Case \ref{case:wt1}. 
 
 The work in this part is   illustrative of  the applicability of the commutative algebra developed earlier to number theory. As such, we have not worked in the fullest generality possible. We have indicated  a few other plausible number theoretic applications in the introduction.

\section{Galois deformation theory}
\label{se:Galois}
For the remainder of this paper we fix a prime $\ell>2$ and a finite extension $E/{\mbb Q}_\ell$ with ring of integers $\mco$, uniformizer $\varpi$ and residue field $k\colonequals \mco/\varpi$. Let $F$ denote an arbitrary number field and assume that $\ell$ does not ramify in $F$. Let 
\[
\rhobar\colon G_F\to \GL_2(k)
\]
be a continuous Galois representation with $\rhobar|_{G_{F(\zeta_\ell)}}$ is absolutely irreducible.  For convenience we assume that for each $\sigma\in G_F$ the eigenvalues of $\rhobar(\sigma)\in\GL_2(k)$ lie in $k$ (which can be done by replacing $k$ by its unique quadratic extension if necessary). Let $\psi\colon G_F\to \mco^\times$ be a character lifting $\det\rhobar:G_F\to k^\times$.

While we work in the generality stated above for most of this section, in the remainder of the paper we apply these results only in two special cases, corresponding to the cases \ref{case:PGL2} and \ref{case:wt1} mentioned above.

\begin{itemize}
	\item Case \ref{case:PGL2}:  Here $F$ is an arbitrary number field, $\psi$ is the cyclotomic character $\varepsilon_{\ell}$ and $\rhobar|_{G_{F_v}}$ is flat at all places $v|\ell$.
	\item  Case \ref{case:wt1}:  Here $F\colonequals \QQ$, the character $\psi$  is the Teichmuller lift of $\det\rhobar$ (and hence has finite image), and $\rhobar$ is unramified at $\ell$ and odd.
\end{itemize}

\subsection{Local deformation rings}
\label{ssec:local def}
We introduce the various local (framed) deformation rings needed for the rest of the paper.

Fix a prime $v$ of $F$, and consider the local Galois representation 
\[
\rhobar_v\colonequals \rhobar|_{G_{F_v}}\colon G_{F_v}\to \GL_2(k)
\]
Write $q_v = \Nm(v) = \#(\mco_F/v)$ for the norm of $v$. Let $I_{F_v}\unlhd G_{F_v}$ be the inertia subgroup and $P_{F_v}\unlhd I_{F_v}$ be the wild inertia subgroup.

Let $\varphi_v\in G_{F_v}$ be a lift of  (arithmetic) Frobenius  $\Frob_v$ and  $\sigma_v\in G_{F_v}$ a lift of a topological generator of $I_{F_v}/P_{F_v}$, so that $\varphi_v\sigma_v\varphi_v^{-1} = \sigma_v^{q_v}$ in $G_{F_v}/P_{F_v}$. 

Let $R_v^\square$ be the unrestricted framed deformation ring parameterizing lifts of $\rhobar_v$ with determinant $\psi|_{G_{F_v}}$. Let $\rho_v^\square\colon G_{F_v}\to \GL_2(R_v^\square)$ be the universal lift of $\rhobar_v$.

Suppose first that $v|\ell$. If $\rhobar_v$ is flat, as is $\psi|_{G_{F_v}}$  (which will be the case if $\psi=\varepsilon_{\ell}$), then let $R^{\fl}_v$ be the quotient of $R_v^{\square}$ parameterizing flat deformations of $\rhobar_v$, with determinant $\varepsilon_{\ell}$.  As $\ell$ does not ramify in $F$,  from \cite[Section 2.4.1]{CHT} we get
\[
R_v^{\fl}\cong \mco\pos{x_1,x_2,\ldots,x_{3+[F_v:{\mbb Q}_\ell]}}\,.
\]
If $\rhobar_v$ and $\psi|_{G_{F_v}}$ are unramified at $v$, let $R_v^{\ur}$ be the quotient of $R_v^\square$ parameterizing unramified lifts of $\rhobar_v$ with determinant $\psi|_{G_{F_v}}$. Then we clearly have
\[
R_v^{\ur}\cong \mco\pos{x_1,x_2,x_{3}}.
\]
From now on assume that $v\nmid \ell$. It follows (see for example \cite[Theorem 2.5]{Shotton2}) that $R_v^\square$ is a reduced complete intersection,  flat over $\mco$ of relative dimension $3$.

Also, as $\Ker(\GL_2(R_v^\square)\to \GL_2(k))$ is pro-$\ell$, the natural map $\rho_v^\square(P_{F_v})\to \rhobar_v^\square(P_{F_v})$ is an isomorphism, and so if $\rhobar_v$ is tamely ramified, $\rho^\square_v$ is as well.

Let $R^{\min}_v$ be the quotient of $R_v^\square$ parameterizing ``minimally ramified'' lifts of $\rhobar_v$ (see \cite[Definition 2.4.14]{CHT}). Then by \cite[Lemma 2.4.19]{CHT} we get that 
\[
R_v^{\min}\cong \mco\pos{x_1,x_2,x_3}\,.
\]

From now on we consider the case where $v$ is unramified in $\rhobar$, and $q_v\not \equiv 1\pmod \ell$ and $\rhobar_v(\Frob_v)$ has eigenvalues with ratio $q_v$. Fix a choice $\sqrt{q_v\psi(\varphi_v)^{-1}}\in \mco^\times$ of square root of $q_v\psi(\varphi_v)^{-1}$ in $\mco^\times$, and also use $\sqrt{q_v\psi(\varphi_v)^{-1}}$ to denote the image in $k^\times$. In Case \ref{case:PGL2} we have $\psi = \varepsilon_{\ell}$, so that $q_v\psi(\varphi_v)^{-1} = 1$, and so we take $\sqrt{q_v\psi(\varphi_v)^{-1}}=1$ for convenience.

As $q_v\not\equiv 1 \pmod{\ell}$ the eigenvalues of $\rhobar(\varphi_v)$ are distinct and so up to conjugation, $\rhobar(\varphi_v)$ has the form
\[
\rhobar(\varphi_v)=\left( \begin{array}{cc} \psi(\varphi_v) \epsilon_v \sqrt{q_v\psi(\varphi_v)^{-1}}& 0 \\ 0 & \epsilon_v\sqrt{q_v\psi(\varphi_v)^{-1}}^{-1}\end{array} \right)
\]
for some $\epsilon_v\in \{1,-1\}$. If $q_v\not\equiv \pm 1\pmod{\ell}$, then the choice of $\epsilon_v$ is uniquely determined by $\rhobar(\varphi_v)$. In the case when $q_v\equiv \pm -1\pmod{\ell}$, either $\epsilon_v=1$ or $\epsilon_v=-1$ is possible.

For $\epsilon_v\in\{1,-1\}$ the Steinberg quotient $R_q^{\St(\epsilon_v)}$ is the unique torsion free reduced quotient of $R_v^\square$ characterized by the fact that the $L$-valued points of its generic fiber, for any finite extension $L/E$, correspond to representations conjugate to
\[ \left( \begin{array}{cc} \psi  \chi_v& \ast \\ 0 & \chi_v^{-1}\end{array} \right)\]
where $\chi_v$ is an unramified character with $\chi_v(\varphi_v)=\epsilon_v\sqrt{q_v\psi(\varphi_v)^{-1}}$. If $q_v\not\equiv\pm1\pmod{\ell}$, then there is a unique $\epsilon_v$ for which $R_q^{\St(\epsilon_v)}\ne 0$, while if $q_v\equiv-1\pmod{\ell}$ then $R_q^{\St(1)}$ and $R_q^{\St(-1)}$ are both nonzero.

We define the unipotent quotient $R_v^{{\rm uni}(\epsilon_v)}$ of $R_v^\square$ to be
the unique reduced quotient of $R_v^\square$ such that 
\[
\operatorname{Spec} R_v^{{\rm uni}(\epsilon_v)}=\operatorname{Spec} R_v^{{\rm St}(\epsilon_v)}\cup\operatorname{Spec} R_v^{\rm min}
\]
inside $\operatorname{Spec} R_v^\square$.

The computations of \cite[Section 5]{Shotton} give the following result.

\begin{proposition}
\label{prop:unipotent-deformations}
Assume that $\rhobar_v$ is unramified and $q_v\not\equiv 1\pmod{\ell}$.

If the ratio of the eigenvalues of $\rhobar_v(\varphi_v)$ is not $q_v^{\pm 1}$, then 
\[
R_v^\square = R_v^{\min} \cong \mco\pos{x_1,x_2,x_3}\,.
\]
Now assume that the eigenvalues of  $\rhobar_v(\varphi_v)$ have ratio $q_v$. If $q_v\not\equiv \pm1\pmod{\ell}$, let $\epsilon_v$ be the unique element of $\{1,-1\}$ for which $R_q^{\St(\epsilon_v)}\ne 0$. If $q_v\equiv -1\pmod{\ell}$, let $\epsilon_v$ be either $1$ or $-1$.

We have an isomorphism $R_v^{{\rm uni}(\epsilon_v)}\cong \mco\pos{A_v,B_v,X_v,Y_v}/(A_vB_v)$ and the corresponding universal representation sends:
\begin{align*}
\sigma_v &\mapsto 
U^{-1}
\begin{pmatrix}
	1&A_v\\
	0&1
\end{pmatrix}
U \qquad \text{where $U\colonequals \begin{pmatrix}
1&X_v\\
Y_v&1
\end{pmatrix}$, and } \\
		\varphi_v &\mapsto 
U^{-1}\begin{pmatrix}
	\psi_v(\varphi_v)\epsilon_v\sqrt{q_v\psi(\varphi_v)^{-1}}(1+B_v)^{-1}&0\\
	0&\epsilon_v\sqrt{q_v\psi(\varphi_v)^{-1}}^{-1}(1+B_v)
\end{pmatrix}
U
\end{align*}
We have
and $R^{\min}_v = R_v^{{\rm uni}(\epsilon_v)}/(A_v)=\mco\pos{B_v,X_v,Y_v}$ and $R_v^{\St(\epsilon_v)} = R_v^{{\rm uni}(\epsilon_v)}/(B_v) = \mco\pos{A_v,X_v,Y_v}$. If $q_v\not\equiv\pm 1\pmod{\ell}$, then $ R_v^{{\rm uni}(\epsilon_v)} = R_v^\square$.\qed
\end{proposition}

\begin{remark}
In our main results (e.g., Theorem \ref{R=T theorem}) we restrict ourselves to  allowing (non-minimal)  ramification of lifts of $\rhobar$  only at the places  in Proposition \ref{prop:unipotent-deformations} above (these are our ``level raising’’ primes); this is done   mainly for simplicity. We should be able to treat unrestricted ramification at places  $v$ such that 
 $q_v\equiv 1\pmod{\ell}$  and $\rhobar$ is unramified at $v$ with  $\rhobar(\Frob_{v})$  having  distinct eigenvalues, or unipotent ramification at places  $v$ such that 
 $q_v\equiv 1\pmod{\ell}$  and $\rhobar$ is unramified at $v$ with  $\rhobar(\Frob_{v})$  of order divisible by $\ell$,    without  too much trouble. (It would be more work to allow unipotent ramification at  places $v$ that are trivial  for $\rhobar$.)
 
 We note that if $\rhobar$ is odd at a real place of $F$, with corresponding (conjugacy class of)  complex conjugation $c \in G_F$, then any $v$ such that $\rhobar(\Frob_v)$ is conjugate to  $\rhobar(c)$ is a level raising prime (with $q_v\equiv-1\pmod\ell$) in our sense. 
 If the projective image of $\rhobar$ is ${\rm PSL_2}(k)$ (for $|k| \geq 5$)  and $[F(\zeta_\ell):F]>2$ then there are places $v$ such that $\rhobar(\Frob_v)$ has eigenvalues with ratio $q_v$   and $q_v \not\equiv \pm 1\pmod\ell$; these are again level raising primes in our sense.
 \end{remark}

\subsection{Global deformation rings}\label{ssec:global def}

Let $\rhobar\colon G_F\to \GL_2(k)$ be as before. Let $S$ be the set of all finite places $v$ of $F$, such that either $\rhobar$ or $\psi$ is ramified at $v$, or $v|\ell$.  At places $v \in S$ and $ v \nmid\ell$, we either assume that $q_v \not\equiv -1 \pmod {\ell}$, or that $\rhobar|_{I_v}$ is irreducible, or that $\rhobar|_{D_v}$ is reducible. (We allude to this condition as {\it non-vexing} following the use of this terminology in \cite{Calegari}; it avoids considerations of types on the automorphic side to match conditions of minimal ramification on the Galois side.)  Also fix three disjoint finite collections $T$, $\disc$ 
and $Q$ of finite places of $F$ such that for each $v\in T\cup \disc$:
\begin{itemize}
	\item $\rhobar_v$ is unramified.
	\item $v\nmid \ell$ and $q_v \not\equiv 1\pmod{\ell}$.
	\item The ratio of the eigenvalues of $\rhobar_v(\Frob_v)$ is $q_v$.
\end{itemize}
and for each $v\in Q$:
\begin{itemize}
	\item $\rhobar_v$ is unramified.
	\item $q_v \equiv 1\pmod{\ell}$.
	\item $\rhobar_v(\Frob_v)$ has has distinct eigenvalues in $k$.
\end{itemize}
For each $v\in T\cup\disc$ we  fix a $\epsilon_v\in\{1,-1\}$ for which $R_v^{\St(\epsilon_v)}\ne 0$ (and recall that for $q_v\not\equiv \pm 1\pmod{\ell}$, there is a unique such choice). Let $\Sigma$ be any subset of $T$.

In our applications, $\Sigma$ represents a collection of `level raising primes' for $\rhobar$ and $Q$ represents a collection of `Taylor--Wiles primes', which are used in our patching construction. In the case \ref{case:wt1}, $\disc$ is the discriminant of a Shimura curve over $\QQ$, while $\disc=\es$  in the case \ref{case:PGL2}.

The goal of this section is to construct global Galois deformation rings $R_{\Sigma}^{\disc}$ and $R_{\Sigma,Q}^{\disc}$, and their framed variants $R_\Sigma^{\disc,\square}$ and $R_{\Sigma,Q}^{\disc,\square}$.

Let $\Art$ (respectively, $\CNLO$) denote the category of Artinian (respectively, of complete local noetherian) $\mco$-algebras with residue field $k$.

Define a functor $\mcd\colon \Art\to \Sets$ which sends a ring $A\in\Art$ to the set $\mcd(A)$ of equivalence classes of continuous representations $\rho\colon G_F\to \GL_2(A)$ for which $\det\rho = \psi$ and the composition 
\[
G_F\xrightarrow{\rho}\GL_2(A)\onto \GL_2(k)
\]
is equal to $\rhobar\colon G_F\to \GL_2(k)$, where two such representations $\rho,\rho\colon G_F\to \GL_2(A)$ are considered equivalent if $\rho' = \gamma\rho\gamma^{-1}$ for some $\gamma\in 1+M_2(\fm_A)$. 

Define subfunctors $\mcd_\Sigma^{\disc}\subseteq \mcd_{\Sigma,Q}^{\disc}\subseteq \mcd\colon \Art\to \Sets$ such that for any $\rho\in \mcd(A)$, we have $\rho\in \mcd^\disc_\Sigma(A)$ if
\begin{itemize}
\item In Case \ref{case:PGL2}, $\rho$ is flat at $v$ for all places $v|\ell$;
\item In Case \ref{case:wt1}, $\rho$ is unramified at $\ell$;
\item If $v$ is any place of $F$ for which $v\nmid \ell$ and $\rhobar$ is ramified at $v$, then $\rho$ is minimally ramified at $v$;
\item If $v$ is any place of $F$ with $v\not\in S\cup\Sigma\cup \disc$ (including the case $v\in T\sm\Sigma$) then $\rho$ is unramified at $v$.
\item If $v\in \disc$ then $\rho|_{G_{F_v}}$ arises from $R_v^{\St(\epsilon_v)}$. 
\item If $v \in \Sigma$ and $q_v\equiv- 1\pmod{\ell}$ then $\rho|_{G_{F_v}}$ arises from $R_v^{{\rm uni}(\epsilon_v)}$.
\end{itemize}
and $\rho\in \mcd^\disc_{\Sigma,Q}(A)$ if
\begin{itemize}
	\item In Case \ref{case:PGL2}, $\rho$ is flat at $v$ for all places $v|\ell$;
	\item In Case \ref{case:wt1}, $\rho$ is unramified at $\ell$;
	\item If $v$ is any place of $F$ for which $v\nmid \ell$ and $\rhobar$ is ramified at $v$, then $\rho$ is minimally ramified at $v$;
	\item If $v$ is any place of $F$ with $v\not\in S\cup\Sigma\cup\disc\cup Q$ (including the case $v\in T\sm\Sigma$) then $\rho$ is unramified at $v$.
	\item If $v\in \disc$ then $\rho|_{G_{F_v}}$ arises from $R_v^{\St(\epsilon_v)}$. 
	\item If $v \in \Sigma$ and $q_v\equiv- 1\pmod{\ell}$ then $\rho|_{G_{F_v}}$ arises from $R_v^{{\rm uni}(\epsilon_v)}$.
\end{itemize}
Note that for $\rho\in \mcd^\disc_\Sigma(A)$ (respectively, $\rho\in \mcd^\disc_{\Sigma,Q}(A)$) there is no restriction on the ramification of $\rho$ at primes in $\Sigma$ (respectively, at primes in $\Sigma\cup Q$) with $q_v\not\equiv-1\pmod{\ell}$.

Similarly define framed versions of these functors 
\[
\mcd^{\disc,\square}_\Sigma,\mcd^{\disc,\square}_{\Sigma,Q}\colon \Art\lra \Sets
\]
as follows: For $A\in \Art$, let $\mcd^{\disc,\square}_\Sigma(A)$ (respectively, $\mcd^{\disc,\square}_{\Sigma,Q}(A)$) denote the set of tuples $(\rho,(\beta_v)_{v\in S\cup T\cup\disc})$ where $\rho\in \mcd_\Sigma(A)$ (respectively, $\rho\in \mcd_{\Sigma,Q}(A)$) and each $\beta_v$ is an element of $1+M_2(\fm_A)$, where two tuples $(\rho,(\beta_v)_{v\in S\cup T\cup\disc})$ and $(\rho',(\beta'_v)_{v\in S\cup T\cup\disc})$ are equivalent if there is some $\gamma\in 1+M_2(\fm_A)$ satisfying $\rho' = \gamma\rho\gamma^{-1}$ and $\beta_v' = \gamma\beta_v$ for all $v\in S\cup T\cup\disc$.

One may  view  $\beta_v$ as a choice of basis for $A^2$ lifting the standard basis for $k^2$.

It is well known that the functors $\mcd^\disc_\Sigma$, $\mcd^{\disc,\square}_\Sigma$, $\mcd^\disc_{\Sigma,Q}$ and $\mcd_{\Sigma,Q}^{\disc,\square}$ are all pro-representable by rings $R^\disc_\Sigma, R_\Sigma^{\disc,\square}, R^\disc_{\Sigma,Q}$ and $R_{\Sigma,Q}^{\disc,\square}$, respectively, in $\CNLO$. When $\disc$ is either clear from context or empty (as it is in Case \ref{case:PGL2}) we sometimes omit it from our notation, and write these rings as $R_\Sigma, R_\Sigma^{\square}, R_{\Sigma,Q}$ and $R_{\Sigma,Q}^{\square}$ 

By definition we have $\mcd_\Sigma^\disc\subseteq \mcd_{\Sigma,Q}^\disc$ and for any $\Sigma_1\subseteq \Sigma_2\subseteq T$ we have $\mcd_{\Sigma_1}^\disc\subseteq \mcd_{\Sigma_2}^\disc$ and $\mcd_{\Sigma_1,Q}^\disc\subseteq \mcd_{\Sigma_2,Q}^\disc$. These maps on functors induce a commutative diagram
\[
\begin{tikzcd}
	R_{\Sigma_2,Q}^\disc \arrow[twoheadrightarrow]{d} \arrow[twoheadrightarrow]{r} & R_{\Sigma_2}^\disc \arrow[twoheadrightarrow]{d}  \\
	R_{\Sigma_1,Q}^\disc \arrow[twoheadrightarrow]{r} & R_{\Sigma_1}^\disc
\end{tikzcd}
\]
Similarly we have $\mcd_\Sigma^{\disc,\square}\subseteq \mcd_{\Sigma,Q}^{\disc,\square}$ (for any $\Sigma$), $\mcd_{\Sigma_1}^{\disc,\square}\subseteq \mcd_{\Sigma_2}^{\disc,\square}$ and $\mcd_{\Sigma_1,Q}^\square\subseteq \mcd_{\Sigma_2,Q}^{\disc,\square}$ giving rise to a commutative diagram
\[
\begin{tikzcd}
	R_{\Sigma_2,Q}^{\disc,\square} \arrow[twoheadrightarrow]{d} \arrow[twoheadrightarrow]{r} & R_{\Sigma_2}^{\disc,\square} \arrow[twoheadrightarrow]{d}  \\
	R_{\Sigma_1,Q}^{\disc,\square} \arrow[twoheadrightarrow]{r} & R_{\Sigma_1}^{\disc,\square}
\end{tikzcd}
\]
It is important here that the definitions of $\mcd^{\disc,\square}_\Sigma,\mcd^{\disc,\square}_{\Sigma,Q}$ included $\beta_v$'s for all $v\in S\cup T\cup\disc$, including $v\in T\sm \Sigma$ (but not for $v\in Q$). Otherwise we would not be able to compare the functors $\mcd_{\Sigma_1}^{\disc,\square}$, $\mcd_{\Sigma_2}^{\disc,\square}$, $\mcd_{\Sigma_1,Q}^{\disc,\square}$ and $\mcd_{\Sigma_2,Q}^{\disc,\square}$ in this way.

The map $(\rho,(\beta_v)_{v\in S\cup T\cup\disc})\mapsto \rho$ clearly induces maps $R_\Sigma^{\disc}\to R_\Sigma^{\disc,\square}$ and $R_{\Sigma,Q}^{\disc}\to R_{\Sigma,Q}^{\disc,\square}$. It is well-known and easy to see that these maps are formally smooth, and thus induce (non-canonical) isomorphisms 
\[
R_\Sigma^{\disc,\square}\cong R_\Sigma^{\disc}\pos{w_1,\ldots,w_{4|S\cup T|-1}}\quad\text{and}\quad 
R_{\Sigma,Q}^{\disc,\square}\cong R_{\Sigma,Q}^{\disc}\pos{w_1,\ldots,w_{4|S\cup T|-1}}\,,
\]
respecting the commutative diagrams above.

Also for any $v\in S\cup T\cup\disc$, the map $(\rho,(\beta_v)_{v\in S\cup T\cup\disc})\mapsto \beta_v^{-1}\rho|_{G_{F_v}}\beta_v$ induces a map $R_v^{\disc,\square}\to R_{\Sigma,Q}^{\disc,\square}$ and (and hence a map $R_v^{\disc,\square}\to R_{\Sigma,Q}^{\disc,\square}\to R_{\Sigma}^{\disc,\square}$). By the definition of $R_{\Sigma,Q}^{\disc,\square}$, it is easy to see that this induces a map
\begin{equation*}\label{R_loc}
R_{\loc,\Sigma} \colonequals 
\widehat{\bigotimes_{v|\ell}}R^{\fl}_v\widehat{\otimes}
\widehat{\bigotimes_{\substack{v\in S,\\v\nmid\ell}}}R^{\min}_v\widehat{\otimes}
\widehat{\bigotimes_{v\in\Sigma}}R_v^{{\rm uni}(\epsilon_v)}
\widehat{\otimes}
\widehat{\bigotimes_{v\in T\sm\Sigma}} R^{\min}_v
\to R_{\Sigma,Q}^{\square}.
\end{equation*}
in Case \ref{case:PGL2}, and in Case \ref{case:wt1} a map
\begin{equation*}
	R_{\loc,\Sigma}^{\disc} \colonequals 
	\widehat{\bigotimes_{v|\ell}}R^{\ur}_v\widehat{\otimes}
	\widehat{\bigotimes_{\substack{v\in S,\\v\nmid\ell}}}R^{\min}_v\widehat{\otimes}
	\widehat{\bigotimes_{v\in\Sigma}}R_v^{{\rm uni}(\epsilon_v)}
	\widehat{\otimes}
	\widehat{\bigotimes_{v\in T\sm\Sigma}} R^{\min}_v
	\widehat{\otimes}
	\widehat{\bigotimes_{v\in \disc}} R^{\St(\epsilon_v)}_v
	\to R_{\Sigma,Q}^{\disc,\square}.
\end{equation*}
Moreover for $\Sigma_1\subseteq \Sigma_2\subseteq T$ it is easy to see that the natural map $R_{\loc,\Sigma_2}^{\disc}\onto R_{\loc,\Sigma_1}^{\disc}$, given by the quotient map 
\[
R_v^{{\rm uni}(\epsilon_v)}\onto  R_v^{{\rm uni}(\epsilon_v)} /(A_v) =R_v^{\min}
\]
for all $v\in\Sigma_2\sm\Sigma_1, q_v \not\equiv -1 \pmod{\ell}$ gives the commutative diagram
\[
\begin{tikzcd}
	R_{\loc,\Sigma_2}^{\disc} \arrow[twoheadrightarrow]{d} \arrow[twoheadrightarrow]{r} & R_{\Sigma_2,Q}^{\disc,\square} \arrow[twoheadrightarrow]{d} \arrow[twoheadrightarrow]{r} & R_{\Sigma_2}^{\disc,\square} \arrow[twoheadrightarrow]{d}  \\
	R_{\loc,\Sigma_1}^{\disc} \arrow[twoheadrightarrow]{r} &R_{\Sigma_1,Q}^{\disc,\square} \arrow[twoheadrightarrow]{r} & R_{\Sigma_1}^{\disc,\square}
\end{tikzcd}
\]

\subsection{Taylor--Wiles deformations.}
\label{ssec:taylor_wiles_deformations} 
	Suppose $v$ is a prime of $F$ that is unramified in $\rhobar$ and  that $q_v \equiv 1\pmod{\ell}$, and $\overline{\rho}|_{G_{F_v}}$ is unramified. Let $\Delta_v = k(v)^\times(\ell)$ and $\Lambda_v = \mco[\Delta_v]$, and suppose that $\overline{\rho}(\Frob_v)$ has  distinct eigenvalues $\gamma_{v, 1}, \gamma_{v, 2} \in k$.
	Then the universal representation $\rho^{\rm univ}_{\Sigma,Q}:G_F \to \GL_2(R_{\Sigma,Q}^{\disc})$ when restricted to $G_{F_v}$  is of the form
	\[ 
	\chi_{1,v}  \oplus \chi_{2,v}, 
	\]
	for continuous characters $\chi_{1,v}, \chi_{2,v} \colon G_{F_v} \to (R_{\Sigma,Q}^{\disc})^\times$ satisfying the following conditions: we have $(\chi_i)(\Frob_v) \equiv \gamma_{v, i}\pmod{{\fm}_{R_{\Sigma,Q}}}$ and $\chi_i|_{I_{F_v}}$ induces via the Artin map a homomorphism $\Delta_v \to (R_{\Sigma,Q}^{\disc})^\times$ that we denote by the same symbol $\chi_i$.  
	
	Let $\Delta_Q = \prod_{v\in Q}\Delta_v$, and consider the group algebra $\mco[\Delta_Q]$. 
	Then $R_{\Sigma,Q}^{\disc}$ is naturally a $\mco[\Delta_Q]$-algebra via $\prod_v \chi_{1,v}\colon \mco[\Delta_Q] \to R_{\Sigma, Q}^{\disc}$. If $\fa_Q \subset \mco[\Delta_Q]$ is the augmentation ideal, then there is a canonical isomorphism $R_{\Sigma,Q}^{\disc}/(\fa_Q) \cong R_\Sigma^{\disc}$.  We record this as the following proposition.
	
	\begin{proposition}\label{prop: O[Delta]->R}
		For any $\Sigma$, there is a map $\mco[\Delta_Q]\to R_{\Sigma,Q}^{\disc}$ such that 
		\[
		R_{\Sigma,Q}^{\disc}\otimes_{\mco[\Delta_Q]}\mco = R_\Sigma^{\disc}
		\]
		via the natural map $R_{\Sigma,Q}^{\disc}\onto R_\Sigma^{\disc}$, and for any $\Sigma_1\subseteq \Sigma_2$, the map $R_{\Sigma_2,Q}^{\disc}\onto R_{\Sigma_1,Q}^{\disc}$ is an $\mco[\Delta_Q]$-algebra homomorphism. \qed
	\end{proposition}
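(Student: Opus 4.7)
The map $\mco[\Delta_Q]\to R_{\Sigma,Q}$ has in essence already been constructed in the discussion preceding the proposition, so the plan is mainly to verify the two displayed assertions. First I would carefully assemble the construction: for each $v\in Q$, Hensel's lemma applied to the characteristic polynomial of $\rho^{\rm univ}_{\Sigma,Q}(\Frob_v)$ yields a decomposition $\rho^{\rm univ}_{\Sigma,Q}|_{G_{F_v}}\cong \chi_{1,v}\oplus \chi_{2,v}$, where this is possible because $\rhobar(\Frob_v)$ has distinct eigenvalues $\gamma_{v,1}\ne \gamma_{v,2}$ in $k$ and lifts to a pair of distinct eigenvalues in $R_{\Sigma,Q}$. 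Since $1+\fm_{R_{\Sigma,Q}}$ is pro-$\ell$, the restriction $\chi_{i,v}|_{I_{F_v}}$ factors through the maximal pro-$\ell$ quotient of $I_{F_v}^{\rm ab}$, which local class field theory identifies with $\Delta_v = k(v)^\times(\ell)$. Thus $\chi_{1,v}|_{I_{F_v}}\colon \Delta_v\to R_{\Sigma,Q}^\times$ is a well-defined character, and taking the product over $v\in Q$ produces the desired $\mco$-algebra map $\mco[\Delta_Q]\to R_{\Sigma,Q}$.

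For the equality $R_{\Sigma,Q}\otimes_{\mco[\Delta_Q]}\mco = R_\Sigma$, my plan is to compare universal properties. The quotient $R_{\Sigma,Q}/\fa_Q R_{\Sigma,Q}$ represents the subfunctor of $\mcd_{\Sigma,Q}$ consisting of those lifts $\rho$ for which the induced characters $\chi_{1,v}|_{I_{F_v}}$ are trivial for all $v\in Q$. Because $v\nmid \ell$ (as $q_v\equiv 1\pmod{\ell}$ forces $v$ to be prime to $\ell$), the cyclotomic character $\varepsilon_\ell$ is unramified at $v$, and since $\chi_{1,v}\chi_{2,v}=\det\rho = \varepsilon_\ell$, the triviality of $\chi_{1,v}|_{I_{F_v}}$ forces $\chi_{2,v}|_{I_{F_v}}$ to be trivial as well. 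Hence $\rho|_{G_{F_v}}$ is unramified for each $v\in Q$, and the subfunctor coincides on the nose with $\mcd_\Sigma$. By Yoneda this yields the desired identification, and the identification is realized by the natural surjection $R_{\Sigma,Q}\onto R_\Sigma$ since both maps send the universal object to the same place.

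For the compatibility with $\Sigma_1\subseteq \Sigma_2$, the surjection $R_{\Sigma_2,Q}\onto R_{\Sigma_1,Q}$ is induced by the inclusion of functors $\mcd_{\Sigma_1,Q}\hookrightarrow \mcd_{\Sigma_2,Q}$; equivalently, the pushforward of the universal deformation $\rho^{\rm univ}_{\Sigma_2,Q}$ along this map \emph{is} $\rho^{\rm univ}_{\Sigma_1,Q}$. Because the characters $\chi_{1,v}$ for $v\in Q$ are extracted functorially from the universal representation (Hensel lifting of eigenvalues commutes with ring homomorphisms), the $\chi_{1,v}$ for $R_{\Sigma_1,Q}$ is the composition of the $\chi_{1,v}$ for $R_{\Sigma_2,Q}$ with $R_{\Sigma_2,Q}\onto R_{\Sigma_1,Q}$. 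This is precisely the assertion that the surjection is a homomorphism of $\mco[\Delta_Q]$-algebras.

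I do not anticipate a serious technical obstacle, as the content is a formal consequence of the universal properties once the Hensel-lifting/character-decomposition step is set up; the only point requiring care is to be explicit that the choice of eigenvalue $\gamma_{v,1}$ (versus $\gamma_{v,2}$) is made uniformly across all the rings $R_{\Sigma,Q}$ so that the $\mco[\Delta_Q]$-action is canonical and compatible with the transition maps.
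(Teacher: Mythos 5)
Your proposal is correct and follows the same route the paper implicitly takes: the paper provides no separate proof for this proposition (the \qed attached to the statement indicates that it is recorded as a direct consequence of the preceding paragraph, where the decomposition $\rho^{\rm univ}_{\Sigma,Q}|_{G_{F_v}}\cong\chi_{1,v}\oplus\chi_{2,v}$ via Hensel lifting, the $\mco[\Delta_Q]$-algebra structure through $\prod_v\chi_{1,v}$, and the identification $R_{\Sigma,Q}/(\fa_Q)\cong R_\Sigma$ are all set up). Your elaboration of why the augmentation-ideal quotient represents $\mcd_\Sigma$ (the determinant condition forces $\chi_{2,v}|_{I_{F_v}}$ to be trivial once $\chi_{1,v}|_{I_{F_v}}$ is, so the lift is unramified at $v\in Q$) and of the functoriality of the character extraction across the surjections $R_{\Sigma_2,Q}\onto R_{\Sigma_1,Q}$ simply fills in details the paper leaves to the reader; the closing remark about fixing the eigenvalue labeling $\gamma_{v,1}$ versus $\gamma_{v,2}$ uniformly is a reasonable point of care, and is handled in the paper by fixing this choice once and for all at the residual level.
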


\section{Hecke algebras for $\PGL_{2}/F$}\label{sec:Hecke}

In this section we work exclusively in the case \ref{case:PGL2}. As before, $F$ is an arbitrary number field such that the prime $\ell$ does not ramify in $F$. Let $r_1$ and $r_2$ respectively denote the number of real and complex places of $F$. For the remainder of this section we use the notation and results of Sections \ref{se:patching} and \ref{se:duality} with $d=r_1+r_2$ and $\ell_0 = r_2$. In particular, for any $C\in \dcat{\mco}$, we  set
\[
C^{\dagger} \colonequals \RHom_\mco(C,\mco)[2d+\ell_0] = \RHom_\mco(C,\mco)[2r_1+3r_2].
\]
In the case when $C$ is a derived $A$-complex for an $\mco$-algebra $A$, this induces a derived $A$-complex structure on $C^{\dagger}$, and so we may view $(-)^{\dagger}$ as a contravariant functor $\rdcat{\mco}{A}\to \rdcat{\mco}{A}$.

\subsection{Manifolds and complexes}\label{sse:manifolds}

Consider the algebraic group $\PGL_{2}$ over $F$. Let $K_\infty\subseteq \PGL_{2}(F\otimes_{\mbb Q}{\mbb R})$ be a maximal compact subgroup. For any compact open subgroup  $K = \prod_vK_v \subseteq \PGL_2(\mbb{A}_{F}^\infty)$  consider the topological space:
\[
Y_K = \PGL_2(F)\backslash \PGL_2(\mbb{A}_F)/KK_\infty
\]
which is an orbifold of dimension $2r_1+3r_2$. We say that $K$ is \emph{sufficiently small} if $gKg^{-1} \cap \PGL_2(F)$ is  torsion-free for all $g \in \PGL_2(\mbb{A}_F^\infty)$.

We note the following lemma (\cite[Lemma 6.1]{KT}).
\begin{lemma}\label{lem_good_subgroups_act_without_fixed_points}
	Let $K \subset \PGL_2(\mbb{A}_{F}^\infty)$ be a sufficiently small subgroup.
	\begin{enumerate}[\quad\rm(1)]
		\item  Then for all $g \in \PGL_2(\mbb{A}_{F}^\infty)$, the group $\overline{\Gamma}_{K, g}\colonequals gKg^{-1} \cap \PGL_2(F)$ acts freely on $\PGL_{2}(F\otimes_{\mbb Q}{\mbb R})/K_\infty$, and $Y_K$ is endowed with the structure of smooth manifold of dimension $2r_1+3r_2$.
		
		\item Let $V = \prod_v V_v$ be a normal open compact subgroup of $K$. Then the map $Y_V \to Y_K$ is a Galois covering space, with Galois group $K/V$. \qed
	\end{enumerate}
\end{lemma}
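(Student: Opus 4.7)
The plan is to establish the lemma by reducing to the classical fact that a torsion-free discrete group acting on a homogeneous space by a compact isotropy subgroup acts freely, and then to globalize via the double-coset decomposition of $Y_K$.

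For part (1), I would first fix $g\in\PGL_2(\mbb{A}_F^\infty)$ and observe that $\overline{\Gamma}_{K,g}=gKg^{-1}\cap\PGL_2(F)$, viewed inside $\PGL_2(F\otimes_\Q\R)$ via the archimedean embedding, is discrete: the diagonal image of $\PGL_2(F)$ in $\PGL_2(\mbb{A}_F)$ is discrete, and intersecting with the open subset $\PGL_2(F\otimes_\Q\R)\times gKg^{-1}$ gives a discrete subset whose projection to the archimedean factor is injective, hence discrete. Next, the stabilizer in $\PGL_2(F\otimes_\Q\R)$ of a point $h K_\infty\in\PGL_2(F\otimes_\Q\R)/K_\infty$ is the compact subgroup $hK_\infty h^{-1}$, so its intersection with the discrete group $\overline{\Gamma}_{K,g}$ is finite. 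By the sufficiently-small hypothesis, $\overline{\Gamma}_{K,g}$ is torsion-free, so that finite intersection is trivial; the action is therefore free. Writing $X_\infty\colonequals \PGL_2(F\otimes_\Q\R)/K_\infty$, standard facts about properly discontinuous free actions of discrete groups on the connected components of the symmetric space produce a smooth manifold structure on $\overline{\Gamma}_{K,g}\backslash X_\infty$ of dimension $\dim X_\infty=2r_1+3r_2$. Finally, the finiteness of $\PGL_2(F)\backslash\PGL_2(\mbb{A}_F^\infty)/K$ (strong approximation together with finiteness of class number) gives a finite set of representatives $\{g_i\}$ and an identification
\[
Y_K\;\cong\;\bigsqcup_{i}\overline{\Gamma}_{K,g_i}\backslash X_\infty\,,
\]
which is therefore a smooth manifold of the claimed dimension.

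For part (2), I would first note that any open subgroup $V\subseteq K$ is itself sufficiently small (since $gVg^{-1}\cap\PGL_2(F)\subseteq gKg^{-1}\cap\PGL_2(F)$), so part (1) applies to $V$. Right multiplication by $K$ on $\PGL_2(\mbb{A}_F)/VK_\infty$ descends to a right action of the finite group $K/V$ on $Y_V$, and since $V$ is normal in $K$ the orbit space of this action is exactly $Y_K$, realizing the natural surjection $Y_V\onto Y_K$ as the quotient map for $K/V$. It remains to check that the $K/V$-action on $Y_V$ is free; working component-by-component in the decomposition for $Y_V$ as in part (1), a fixed point would produce an element $k\in K\setminus V$ together with $\gamma\in\overline{\Gamma}_{V,g}$ and $x\in X_\infty$ satisfying $x\cdot(g^{-1}\gamma g)=x\cdot k$, hence $g^{-1}\gamma g\cdot k^{-1}$ fixes $x$. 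As in part (1), this forces $g^{-1}\gamma g\cdot k^{-1}\in K_\infty\cap g^{-1}\PGL_2(F)g\cdot K$; unwinding, this element lies in $gKg^{-1}\cap\PGL_2(F)=\overline{\Gamma}_{K,g}$, and the torsion-freeness forces $\gamma k^{-1}\in V$, i.e.\ the class of $k$ in $K/V$ is trivial. Hence $Y_V\to Y_K$ is a covering map, and being a quotient by the free action of the finite group $K/V$, it is a Galois cover with deck group $K/V$.

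The main technical point is the bookkeeping in part (2): making sure that the globally-defined right action of $K/V$ on $Y_V$ permutes the components of $Y_V$ in a way compatible with the component structure of $Y_K$, and that freeness genuinely holds at the global adelic level rather than only inside each fixed component $\overline{\Gamma}_{V,g}\backslash X_\infty$. Once this is checked carefully, the covering-space conclusion is formal from the standard principle that a finite group acting freely on a manifold yields a Galois covering onto the quotient.
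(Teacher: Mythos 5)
The paper does not give a proof of this lemma: it is quoted verbatim from the reference \cite[Lemma~6.1]{KT} (note the \verb|\qed| at the end of the statement itself), so there is no ``paper proof'' against which to compare. That said, your blind reconstruction is the standard argument and is essentially correct. Part~(1) is clean: discreteness of $\overline{\Gamma}_{K,g}$ in the archimedean factor, compactness of point stabilizers in $\PGL_2(F\otimes_\Q\R)/K_\infty$, and the ``sufficiently small'' hypothesis (which by definition says exactly that $gKg^{-1}\cap\PGL_2(F)$ is torsion-free) together force the stabilizers to be trivial; the finite double-coset decomposition and the proper discontinuity of a discrete-group action on the symmetric space then give the manifold structure.

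Your sketch of part~(2) has the right ideas but is notationally muddled in one place. The element $\gamma$ produced by a putative fixed point arises from the finite-adelic equation $gk = \gamma_f g v$ and therefore lies a priori in $\overline{\Gamma}_{K,g}=gKg^{-1}\cap\PGL_2(F)$, not in $\overline{\Gamma}_{V,g}$ as you first write (you correct this a sentence later, but the intermediate expression ``$g^{-1}\gamma g\cdot k^{-1}\in K_\infty\cap g^{-1}\PGL_2(F)g\cdot K$'' does not parse, since $g$ is a finite-adelic element and cannot conjugate into the archimedean factor where $K_\infty$ lives). The clean way to say it is: split the fixed-point equation into its archimedean and finite parts; the finite part identifies $\gamma$ as an element of $\overline{\Gamma}_{K,g}$, the archimedean part says $\gamma$ stabilizes a point of $X_\infty$ and so is torsion, and then part~(1) forces $\gamma=1$, which feeds back into the finite part to give $k\in V$. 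With that repair the argument is complete and matches the intended one.
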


For any $K$, let $C_{K}$ be the complex of singular chains on $Y_K$ with coefficients in $\mco$, so that $C_{K}$ computes of the homology $\hh_*(Y_K,\mco)$. Then $C_{K}$ is quasi-isomorphic to a bounded complex of free $\mco$-modules, as so may be viewed as a perfect complex in $\dcat{\mco}$.

\subsection{Double coset operators}\label{sse:double coset}

Let $G = \PGL_2(\mbb{A}_{F}^\infty)$ 
Let $C_c(G)$ be the abelian group of compactly supported continuous functions $f\colon G\to \mbb Z$. For any compact open subgroups $K_1,K_2\subseteq G$, let 
\[
\hecke{G}{K_2}{K_1} = \{f\in C_c(G)|f(k_2xk_1) = f(x) \text{ for all }x\in G,k_1\in K_1, k_2\in K_2\},
\]
which is an additive subgroup of $C_c(G)$. For any $\alpha\in G$ and any compact opens $K_1,K_2\subseteq G$, let $[K_2\alpha K_1]\in \hecke{G}{K_2}{K_1}$ denote the indicator function of the double coset $K_2\alpha K_1 = \{k_2\alpha k_1|k_1\in K_1,k_2\in K_2\}\subseteq G$. Then $\hecke{G}{K_2}{K_1}$ is clearly a free abelian group generated by the distinct $[K_2\alpha K_1]$'s.

Moreover we may define a convolution operation 
\[*:\hecke{G}{K_3}{K_2}\times \hecke{G}{K_2}{K_1} \to \hecke{G}{K_3}{K_1}\]
by
\[
(g*f)(x) = \sum_{K_2y\in K_2\backslash G} g(xy^{-1})f(y).
\]
This operation is clearly bilinear and associative, and satisfies 
\[
[K_2]* f = f = f*[K_1]
\]
for any $f\in \hecke{G}{K_2}{K_1}$. In particular, this makes $\hecke{G}{K}{K}$ into a ring.

Moreover it is well known that the convolution operation is given by
\[
[K_3\beta K_2]*[K_2\alpha K_1] = \sum_{K_1\gamma K_3}c_{\gamma}[K_3\gamma K_1]
\]
where the sum runs over all double cosets $K_3 \gamma K_1\subseteq G$ and
\[
c_\gamma = \#\{(i,j)|\beta_j\alpha_i K_1 = \gamma K_1\}
\]
where $\{\alpha_i\}$ and $\{\beta_j\}$ are (finite) sets of coset representatives, $K_2\alpha K_1 = \sqcup_i \alpha_i K_1$ and $K_3\beta K_2 = \sqcup_j \beta_jK_2$.

Let $K_1,K_2\subseteq \PGL_2(\mbb{A}_F^{\infty})$ be any compact open subgroups and $\alpha\in \PGL_2(\mbb{A}_F^{\infty})$ any element. we  define a map $[K_2\alpha K_1]: C_{K_1}\to C_{K_2}$ in $\dcat{\mco}$ as follows.

Let $L = \alpha K_1\alpha^{-1}\cap K_2$, which is also a compact open subgroup of $\PGL_2(\mbb{A}_F^{\infty})$. Define  maps $p_1\colon Y_L\to Y_{K_1}$ and $p_2\colon Y_L\to Y_{K_2}$ by $p_1([x]_L) = [x\alpha]_{K_1}$ and $p_2([x]_L) = [x]_{K_2}$ for $x\in \PGL_2(\mbb{A}_F^{\infty})$ (where for any $U$ and any $y\in \PGL_2(\mbb{A}_F^{\infty})$, $[y]_U$ denotes the equivalence class of $y$ in $Y_U = \PGL_2(F)\backslash \PGL_2(\mbb{A}_F)/UK_\infty$). Note that the map $p_1$ is well-defined as $\alpha^{-1} L\alpha\subseteq K_1$.

The maps $p_i\colon Y_L\to Y_{K_i}$ for $i=1,2$ are both finite-to-one maps of topological spaces (and are covering maps in the case where the $K_i$'s are sufficiently small, Lemma \ref{lem_good_subgroups_act_without_fixed_points}). Thus they induce maps $p_{i,*}:C_{L}\to C_{K_i}$ and $p_i^*\colon C_{K_i}\to C_{L}$. 

Define the \emph{double coset operator} $[K_2\alpha K_1]\in \Hom_{\dcat{\mco}}(C_{K_1},C_{K_2})$ as the composition $C_{K_1}\xrightarrow{p_1^*} C_{L}\xrightarrow{p_{2,*}} C_{K_2}$. As shown in \cite[Lemma 2.19]{NewtonThorne}\footnote{Technically \cite{NewtonThorne} only proves this in the case when $K_1 = K_2$. However their methods give the result stated here without any significant modifications.} we have

\begin{lemma}\label{lem:double coset}
The assignment $[K_2\alpha K_1]\mapsto[K_2\alpha K_1]$ defines an additive group homomorphism $\hecke{G}{K_2}{K_1}\to \Hom_{\dcat{\mco}}(C_{K_1},C_{K_2})$ such that the map
\[
\Hom_{\dcat{\mco}}(C_{K_2},C_{K_3})\times \Hom_{\dcat{\mco}}(C_{K_1},C_{K_2})\to \Hom_{\dcat{\mco}}(C_{K_1},C_{K_3})
\]
induced by convolution $*\colon \hecke{G}{K_3}{K_2}\times \hecke{G}{K_2}{K_1}\to \hecke{G}{K_3}{K_1}$ is just function composition.

In particular, $\hecke{G}{K}{K}\to \End_{\dcat{\mco}}(C_{K})$ is a ring homomorphism. \qed
\end{lemma}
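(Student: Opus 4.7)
The plan is to proceed in three steps. First, I would verify that the operator $[K_2\alpha K_1]$ depends only on the double coset, not on the representative $\alpha$. If $\alpha'=k_2\alpha k_1$ with $k_i\in K_i$, then $L'=\alpha' K_1\alpha'^{-1}\cap K_2=k_2Lk_2^{-1}$, and right-translation by $k_2^{-1}$ gives a homeomorphism $Y_L\iso Y_{L'}$ making both of the defining diagrams (for $\alpha$ and for $\alpha'$) commute on the nose. This identifies the two operators. Since $\hecke{G}{K_2}{K_1}$ is free abelian on the $[K_2\alpha K_1]$, extending by linearity gives the additive map, and additivity is automatic.

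Second, the heart of the matter is verifying that convolution corresponds to composition on basis elements. Fix $[K_3\beta K_2]$ and $[K_2\alpha K_1]$, and set $L_\alpha=\alpha K_1\alpha^{-1}\cap K_2$, $L_\beta=\beta K_2\beta^{-1}\cap K_3$. The composition in $\dcat{\mco}$ is
\[
C_{K_1}\xrightarrow{\,p_1^{\alpha,*}\,}C_{L_\alpha}\xrightarrow{\,p_{2,*}^\alpha\,}C_{K_2}\xrightarrow{\,p_1^{\beta,*}\,}C_{L_\beta}\xrightarrow{\,p_{2,*}^\beta\,}C_{K_3}.
\]
The key input is a base-change identity for the middle composition $p_1^{\beta,*}\circ p_{2,*}^\alpha$. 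Choosing double-coset representatives $K_2=\bigsqcup_j L_\beta\delta_j L_\alpha$, the fibered product
\[
Y_{L_\beta}\times_{Y_{K_2}}Y_{L_\alpha}\ \cong\ \bigsqcup_j Y_{M_j},\qquad M_j\colonequals \delta_j L_\alpha\delta_j^{-1}\cap L_\beta,
\]
so that proper base change for the transfer maps attached to the finite covers $p_2^\alpha$ and $p_1^\beta$ of (orbifold) manifolds yields
\[
p_1^{\beta,*}\circ p_{2,*}^\alpha\ =\ \sum_j q_{j,2,*}\circ q_{j,1}^{\,*},
\]
where $q_{j,1}\colon Y_{M_j}\to Y_{L_\alpha}$ and $q_{j,2}\colon Y_{M_j}\to Y_{L_\beta}$ are the evident finite projections. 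Splicing this into the composition above, each summand is recognised as a double-coset operator: tracing through definitions,
\[
p_{2,*}^\beta\circ q_{j,2,*}\circ q_{j,1}^{\,*}\circ p_1^{\alpha,*}\ =\ [K_3(\beta\delta_j\alpha)K_1].
\]
Regrouping summands by the double coset $K_3\gamma K_1$ to which $\beta\delta_j\alpha$ belongs, the resulting multiplicity is exactly the integer $c_\gamma=\#\{(i,j)\mid \beta_j\alpha_i K_1=\gamma K_1\}$ appearing in the convolution formula (the correspondence between double-coset reps $\delta_j\in L_\beta\backslash K_2/L_\alpha$ and pairs $(\alpha_i,\beta_j)$ is standard Hecke-algebra bookkeeping, exchanging double cosets of the auxiliary group $L_\alpha$ for single cosets $\alpha_iK_1$ in $K_2\alpha K_1/K_1$).

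Third, the ring-homomorphism statement for $\hecke{G}{K}{K}\to \End_{\dcat{\mco}}(C_K)$ is an immediate special case, once one checks that $[K\cdot 1\cdot K]=[K]$ acts as the identity (the corresponding $L=K$, both $p_1,p_2$ being the identity). The main obstacle is the base-change identity: topologically it is the standard proper base-change/transfer statement for Cartesian squares of finite covers, but one must carry it out in the derived category of $\mco$-modules and handle the orbifold case where the $K_i$ may fail to be sufficiently small; alternatively, as indicated in the footnote, one can follow the argument in \cite{NewtonThorne}*{Lemma~2.19} essentially verbatim, the only novelty being that one must permit the three compact opens $K_1,K_2,K_3$ to differ, which requires no substantive change to the argument there.
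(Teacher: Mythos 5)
Your sketch is correct and is the same proper-base-change argument that the paper outsources to \cite[Lemma~2.19]{NewtonThorne} (the paper gives no inline proof, only that citation together with the footnote that the generalization from $K_1=K_2$ to distinct $K_1,K_2,K_3$ is routine). You correctly isolate the two points that need care — decomposing the fibered product $Y_{L_\beta}\times_{Y_{K_2}}Y_{L_\alpha}$ according to the double cosets $L_\beta\backslash K_2/L_\alpha$ and matching the resulting multiplicities with the convolution coefficients $c_\gamma$, and handling the orbifold case since at this point in the paper the $K_i$ are not yet assumed sufficiently small — and these are exactly what the cited reference supplies.
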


\subsection{Hecke algebras}\label{sse:Hecke}

From now on, take a compact open subgroup $K = \prod_v K_v\subseteq \PGL_2(\mbb{A}_{F}^\infty)$, and  let $S$ be a finite set of places such that for each $v\not\in S$, $K_v$ is  a maximal compact subgroup of $\PGL_2(F_v)$ ($S$ need not be the minimal set with these properties). For each $v\not\in S$, let $T_v\in\End_{\dcat{\mco}}(C_{K})$ be the double coset operator
\[
T_v = \left[K\begin{pmatrix}\varpi_v&0\\0&1\end{pmatrix}K\right]
\]
 if $v\not\in S$  and $K_1(v)\subseteq K_v\subseteq K_0(v)$ and $d\in K_0(v)/K_v\into (\mco_F/v)^\times$ let $U_v,\langle d\rangle\in \End_{\dcat{\mco}}(C_{K})$ denote the double coset operators
\begin{align*}
U_v &= \left[K\begin{pmatrix}\varpi_v&0\\0&1\end{pmatrix}K\right]&
&\text{and}&
\langle d\rangle_v &= \left[K\begin{pmatrix}\widetilde{d}&0\\0&1\end{pmatrix}K\right]
\end{align*}
for any lift $\widetilde{d}\in \mco_{F,v}^\times$ of $d$.

Define $\TT^S(K) = \mco[T_v|v\not\in S]\subseteq \End_{\dcat{\mco}}(C_{K})$, which is well known to be a finite, commutative $\mco$-algebra. We  state a  conjecture  about existence of Galois representations attached to Hecke eigenclasses (which might be torsion) arising from the cohomology of $C_{K}$. In the case when $F$ is a CM field, part (i) is known by the work of Scholze and part (ii) is known for sufficiently large $\ell$;  see \cite[Theorem 1.1, Theorem 1.3] {NewtonThorne}. 
This is similar to Conjecture A of \cite{Calegari/Geraghty:2018}.

\begin{conj}\label{rhobar conj}
Let $K = \prod_v K_v\subseteq \PGL_2(\mbb{A}_F^{\infty})$ be a compact open subgroup, and let $S$ be any finite set of places such that for each $v\not\in S$, $K_v$ is  a maximal compact subgroup of $\PGL_2(F_v)$. Then:
\begin{enumerate}[\quad\rm(i)]
\item
For every maximal ideal $\fm$ of $\TT^S(K)$, there is a  semisimple  Galois representation $\rhobar_{\fm}\colon G_F\to \GL_2(\TT^S(K)/\fm)$ such that for all $v\not\in S$, $v \nmid \ell$, $\rhobar_{\fm}|_{G_{F_v}}$ is unramified and $\rhobar_{\fm}(\Frob_v)$ has characteristic polynomial $x^2-T_vx+\Nm(v)$. 
\item
Furthermore there is a lift  of $\rhobar_{\fm}$ to a representation
\[
\rho_K\colon G_F\to \GL_2(\TT^S(K)_\fm)
\]
such that  for all $v\not\in S$,  $v \nmid \ell$, $\rho_{K}|_{G_{F_v}}$ is unramified and $\rho_{K}(\Frob_v)$ has characteristic polynomial $x^2-T_vx+\Nm(v)$.
\end{enumerate}
\end{conj}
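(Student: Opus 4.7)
The plan is to follow the architecture of the Scholze/Newton--Thorne construction for imaginary quadratic fields and aim to generalize it. The first step is to reduce the problem to producing, for each $n$, a continuous pseudo-representation (or genuine representation) $\rho_n\colon G_F \to \GL_2(\TT^S(K)_\fm/\varpi^n)$ with the correct characteristic polynomial of Frobenius at all $v \notin S$, $v \nmid \ell$, and then to take an inverse limit; by Chebotarev density the pseudo-representation is uniquely determined by this Frobenius data, and the irreducibility of $\rhobar_\fm$ (which one gets in part (i) first, via the classical case of $\GL_2$ combined with Brauer--Nesbitt after it is constructed) then upgrades the compatible system of pseudo-representations to an honest representation valued in $\GL_2(\TT^S(K)_\fm)$.

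The construction of $\rho_n$ itself would proceed by ``promoting'' the module $H_*(Y_K, \mco/\varpi^n)_\fm$ into the cohomology of a Shimura variety whose cohomology is already known to carry Galois representations. Concretely, for $F$ a CM field one embeds the locally symmetric space $Y_K$ as a boundary stratum of a Borel--Serre compactification of a Shimura variety for a suitable unitary group $U(2,2)$ (Harris--Lan--Taylor--Thorne), and applies Scholze's perfectoid methods on the infinite level Shimura variety to transfer Galois representations from characteristic zero Hecke eigenclasses to torsion ones via completed cohomology and a congruence argument. This produces representations modulo an auxiliary nilpotent ideal $I$, and then one uses Caraiani--Scholze type vanishing theorems for the non-cuspidal cohomology or the more refined local-global compatibility of Newton--Thorne to reduce (or kill) this nilpotent ideal, which yields parts (i) and (ii) simultaneously after passing to the limit.

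For $F$ that is neither totally real nor CM, no Shimura variety has $Y_K$ as a natural boundary stratum, so the perfectoid construction has to be replaced by something new. The most promising route is to work directly with the completed cohomology $\widetilde H^*(K^{(\ell)}, \mco)$ of the tower of arithmetic manifolds at level $\ell$, establish an appropriate control theorem identifying torsion in $H^*(Y_K, \mco/\varpi^n)_\fm$ with slices of completed cohomology, and then attempt to attach Galois representations to pieces of completed cohomology via a $p$-adic Langlands/local-global compatibility input (as in the Calegari--Emerton program). Alternatively, one may take a congruence approach: produce enough characteristic zero lifts of Hecke eigensystems mod $\fm$ using an automorphy lifting input in some auxiliary setting, and interpolate.

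The main obstacle, and the reason the conjecture is still open in general, is precisely this last step: outside the CM case there is no Shimura variety target to which one can bootstrap, and the construction of Galois representations attached to torsion classes in the cohomology of non-Hermitian arithmetic locally symmetric spaces requires genuinely new ideas going beyond the geometric approach of Scholze. Even within the CM case, refining Scholze's construction to eliminate the nilpotent ideal in the map $R \to \TT$ (i.e., establishing sharp local-global compatibility at $v \mid \ell$ and at ramified $v \nmid \ell$) is extremely delicate and, in the formulation needed for part (ii), not yet fully established.
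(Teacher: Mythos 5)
This statement is labeled a \emph{conjecture} in the paper (Conjecture~\ref{rhobar conj}), and the paper does not prove it: it is assumed as a hypothesis for the modularity lifting results in Part~\ref{part:deformation rings}, with the authors citing Scholze and Newton--Thorne for partial results in the CM case. Your ``proposal'' is therefore not being compared against a proof in the paper, because no proof exists there.

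That said, your account of the state of the art is essentially accurate and matches what the paper itself says in the surrounding discussion. For $F$ CM, part (i) is due to Scholze (boundary of a $U(2,2)$ Shimura variety, perfectoid methods, completed cohomology), and part (ii) is known up to a nilpotent ideal by Newton--Thorne and the $10$-author paper \cite{ACC+:2018}; eliminating the nilpotent ideal is the outstanding issue. For $F$ that is neither totally real nor CM, no Shimura variety has $Y_K$ in its boundary and the construction is genuinely open, as you say. Your Chebotarev/pseudo-representation reduction of part (ii) to (i) plus a compatible family of mod-$\varpi^n$ representations is also the standard reduction; the paper records a version of this observation (that the representation lands in $\GL_2(\TT^S(K)_\fm)$ provided $\rhobar_\fm$ is irreducible) in the remark immediately after the conjecture. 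So your discussion is a correct description of why the conjecture is believed and where the difficulties lie, and you correctly conclude that it is not a proof. The one thing to be explicit about is that you should have flagged up front that this is a conjecture the paper assumes rather than establishes; a ``blind proof attempt'' here can at best be a survey of evidence and obstacles, which is what you produced.
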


\begin{remark}
Note that if we assumed in (ii) above the weaker statement that, for a $\TT \in \CNL_{\mco}$  with an inclusion   $\TT^S(K)_\fm \hookrightarrow \TT$,  there is a lift  of $\rhobar_{\fm}$ to a representation $\rho_K\colon G_F\to \GL_2(\TT)$ such that  for all $v\not\in S$,  $v$ not above $\ell$, $\rho_{K}|_{G_{F_v}}$ is unramified and $\rho_{K}(\Frob_v)$ has characteristic polynomial $x^2-T_vx+\Nm(v)$, then the stronger hypothesis in (ii) that the representation can be chosen to take values in $\GL_2(\TT^S(K)_\fm )$ follows provided the residual representation $\rhobar_{\fm}$ is irreducible. This follows upon  using the Chebotarev density theorem as traces of $\Frob_v$ for $v \not \in S$, $v \nmid \ell$,   with $S$ a finite set of places, are in $\TT^S(K)_\fm $.
\end{remark}

We  say that a maximal ideal $\fm\subseteq \TT^S(K)$ is \emph{non-Eisenstein} if the representation $\rhobar_{\fm}$ is absolutely irreducible. 
In what follows we often use the lemma below, which is a consequence of the Chebotarev density theorem (see  \cite[Lemma 6.20]{KT}).

\begin{lemma}
Assume Conjecture \ref{rhobar conj}.  Then for any finite set of places $S'$ containing $S$, the natural map  $\TT^{S’}(K)_\fm \to  \TT^S(K)_\fm $  is an isomorphism when $\fm$ is non-Eisenstein.
\end{lemma}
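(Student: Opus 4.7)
The plan is to combine Conjecture~\ref{rhobar conj} with the Chebotarev density theorem. I would start by setting $\fm'\colonequals \fm\cap \TT^{S'}(K)$ and noting that the inclusion $\TT^{S'}(K)\hookrightarrow \TT^S(K)$ is a finite extension of finite $\mco$-algebras, so that $\TT^S(K)\otimes_{\TT^{S'}(K)}\TT^{S'}(K)_{\fm'}$ decomposes as a finite product of the local rings $\TT^S(K)_{\widetilde\fm}$, indexed by the maximal ideals $\widetilde\fm$ of $\TT^S(K)$ lying above $\fm'$. The goal then reduces to proving (a) $\fm$ is the unique such $\widetilde\fm$ (so $\TT^S(K)_\fm=\TT^S(K)\otimes_{\TT^{S'}(K)}\TT^{S'}(K)_{\fm'}$, giving injectivity of $\iota$); and (b) $\iota\colon \TT^{S'}(K)_{\fm'}\to \TT^S(K)_\fm$ is surjective.

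For (a), I would observe that each $\widetilde\fm$ above $\fm'$ yields via Conjecture~\ref{rhobar conj}(i) a semisimple residual representation $\rhobar_{\widetilde\fm}\colon G_F\to \GL_2(\TT^S(K)/\widetilde\fm)$ with $\tr\rhobar_{\widetilde\fm}(\Frob_v)=T_v\bmod\widetilde\fm$ for $v\notin S$, $v\nmid\ell$. For $v\notin S'$ these traces lie in $\TT^{S'}(K)/\fm'$ and so depend only on $\fm'$, hence agree with those of $\rhobar_\fm$. By Chebotarev density together with the Brauer--Nesbitt theorem, the semisimple representations $\rhobar_{\widetilde\fm}$ and $\rhobar_\fm$ are isomorphic over the compositum of residue fields; this uses that $\rhobar_\fm$, being absolutely irreducible, is already semisimple. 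The isomorphism then forces $T_v\bmod\widetilde\fm=T_v\bmod\fm$ for \emph{every} $v\notin S$, $v\nmid\ell$; since these $T_v$ generate $\TT^S(K)$ as an $\mco$-algebra, this forces $\widetilde\fm=\fm$.

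For (b), after harmlessly enlarging $S$ to contain all places above $\ell$, I would invoke Conjecture~\ref{rhobar conj}(ii) to obtain a continuous lift $\rho_K\colon G_F\to \GL_2(\TT^S(K)_\fm)$, unramified outside $S$, with $\tr\rho_K(\Frob_v)=T_v$ for all $v\notin S$. Letting $R\subseteq \TT^S(K)_\fm$ denote the image of $\iota$, $R$ is a finitely generated $\mco$-submodule of $\TT^S(K)_\fm$, hence $\varpi$-adically closed, and it contains $T_v$ for every $v\notin S'$. By Chebotarev density, the Frobenius classes $\{\Frob_v:v\notin S'\}$ are dense in the profinite quotient of $G_F$ through which $\rho_K$ factors, so continuity of $\tr\circ\rho_K$ promotes the containment $\{T_v:v\notin S'\}\subseteq R$ to $\tr\rho_K(G_F)\subseteq R$. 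In particular $T_v=\tr\rho_K(\Frob_v)\in R$ for every $v\notin S$; since these $T_v$ generate $\TT^S(K)_\fm$ as an $\mco$-algebra, we conclude $R=\TT^S(K)_\fm$. The only real obstacle is the topological one---closedness of $R$ and continuity of $\tr\circ\rho_K$---which is routine given that $\TT^S(K)_\fm$ is a complete Noetherian local $\mco$-algebra, finite over $\mco$.
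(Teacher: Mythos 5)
Your argument is the standard Chebotarev density argument, matching the approach of \cite[Lemma 6.20]{KT} that the paper cites in place of a proof. The two-part structure is correct: uniqueness of the maximal ideal of $\TT^S(K)$ over $\fm'$, via Chebotarev together with Brauer--Nesbitt, yields injectivity; surjectivity follows by propagating $\{T_v : v\notin S'\}\subseteq R$ to $\tr\rho_K(G_F)\subseteq R$ using continuity of the trace and the $\varpi$-adic closedness of $R$.

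The one point that needs attention is the phrase ``after harmlessly enlarging $S$ to contain all places above $\ell$.'' This is not a genuine reduction: enlarging $S$ changes the ring $\TT^S(K)$ (it shrinks), and if one tries to make this precise the argument becomes circular, as it implicitly invokes the lemma for the pair $(S, S\cup\{v\mid\ell\})$. The underlying issue is that Conjecture~\ref{rhobar conj} only identifies $\tr\rho_K(\Frob_v)$ with $T_v$ for $v\nmid\ell$, so if some $v_0\mid\ell$ lies outside $S$, neither your (a) nor your (b) says anything about $T_{v_0}$: you cannot exclude two maximal ideals over $\fm'$ agreeing at all $T_v$ with $v\nmid\ell$ but differing at $T_{v_0}$, nor can you place $T_{v_0}$ in $R$. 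In the paper's applications $S$ always contains the places above $\ell$, so the issue is vacuous there; but as written your proof needs either that hypothesis made explicit, or a separate argument controlling $T_v$ for $v\mid\ell$ (which would require input beyond the stated form of the conjecture). Apart from this, the argument is correct.
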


Thus if $\fm$ is non-Eisenstein the localization $\TT^S(K)_\fm$ does not depend on $S$, and so we often use $\TT(K)_\fm$ to denote this localization, eliding $S$ in the notation. As shown in \cite{KT}, for any maximal ideal $\fm\subseteq \TT(K)$ the localization $(C_{K})_\fm$ is a direct summand of $C_{K}$ and we have $\hh_*(Y_K,\mco)_\fm= \hh_*(C_{K})_\fm = \hh_*((C_{K})_\fm)$.  Given a maximal ideal $\fm \subseteq \TT^S(K)$ and any compact open $K'\subseteq K$, we  also use $\fm\subseteq \TT^S(K')$ to denote the preimage of $\fm$ under the map $\TT^S(K')\onto \TT^S(K)$.

In what follows we  assume that $\rhobar_\fm$  is such that $\rhobar_\fm|_{F(\zeta_\ell)}$ is irreducible.  The result below is easy to prove; see  \cite[Lemma 12.3]{Jar99}  for the proof of the  first part.

\begin{lemma}\label{trivial}
Let $\rhobar:G_F \to \GL_2(k)$ be such that  $\rhobar|_{F(\zeta_\ell)}$ is irreducible. 
\begin{enumerate}[\quad\rm(i)]
\item
Then there exists a place $t \notin S$ such that  
\begin{gather*}
 \tr \rhobar(\Frob_t)/\det \rhobar(\Frob_t) \neq (1+\Nm(t))^2/\Nm(t)\\  
 \Nm(t) \not\equiv 1 \pmod{\ell}\,,
 \end{gather*}
and $\Nm(t)>4^{[F:\mbb Q]}$.
\item
For such places $t$ the (unrestricted)  deformation ring $R_t^{\square}=R_t^{\rm min}$, and  is  isomorphic to a power series ring over $\mco$ of relative dimension 3.
\item
For such places $t$, the compact open subgroup  
\begin{align*}
K_1(t^2) &= \left\{
\begin{pmatrix}
	a&b\\c&d
\end{pmatrix}\in \PGL_2(\mcohat_F)\middle|c \in t^2, ad^{-1} =1 \pmod{ t^2 }\right\}\subseteq \PGL_2(\mcohat_F)
\end{align*}
is sufficiently small. \qed
\end{enumerate}
\end{lemma}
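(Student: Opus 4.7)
My plan is to establish the three parts in order: part (i) follows Jarvis's group-theoretic argument using Chebotarev, part (ii) is a direct consequence of Proposition~\ref{prop:unipotent-deformations} together with a trace computation, and part (iii) is a classical torsion-freeness argument for congruence subgroups.

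For part (i), we apply the Chebotarev density theorem to the finite Galois extension $L = F(\zeta_\ell, \rhobar)/F$. As $\rhobar|_{F(\zeta_\ell)}$ is irreducible, a standard group-theoretic argument (carried out in \cite[Lemma~12.3]{Jar99}) shows that the set of $\sigma \in \Gal(L/F)$ with $\bar\varepsilon_\ell(\sigma) \ne 1$ (equivalently, $\sigma$ acts nontrivially on $\zeta_\ell$) and
\[
\tr\rhobar(\sigma)^2/\det \rhobar(\sigma) \ne \left(1 + \bar\varepsilon_\ell(\sigma)\right)^2/\bar\varepsilon_\ell(\sigma)
\]
has positive density; irreducibility is used precisely to guarantee that not every element of the image is contained in a Borel after twisting by the cyclotomic character. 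Chebotarev then produces infinitely many $t \notin S$ with $\Frob_t$ conjugate in $\Gal(L/F)$ to such a $\sigma$, so in particular we can select one satisfying the additional archimedean bound $\Nm(t) > 4^{[F:\Q]}$.

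For part (ii), if $\rhobar(\Frob_t)$ has eigenvalues $\alpha,\beta \in \bar k$, then
\[
\frac{\tr\rhobar(\Frob_t)^2}{\det\rhobar(\Frob_t)} = \frac{(\alpha+\beta)^2}{\alpha\beta} = \frac{\alpha}{\beta} + 2 + \frac{\beta}{\alpha},
\]
and a routine calculation shows that this quantity equals $(1+\Nm(t))^2/\Nm(t)$ if and only if $\alpha/\beta \in \{\Nm(t),\Nm(t)^{-1}\}$. The inequality in (i) therefore rules out the ratio $\Nm(t)^{\pm 1}$. Combined with $\Nm(t) \not\equiv 1 \pmod \ell$ and the fact that $\rhobar$ is unramified at $t$, the first alternative of Proposition~\ref{prop:unipotent-deformations} applies, yielding $R_t^\square = R_t^{\min} \cong \mco\pos{x_1,x_2,x_3}$.

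For part (iii), suppose $\gamma \in gK_1(t^2)g^{-1} \cap \PGL_2(F)$ is a nontrivial torsion element. Lift $\gamma$ to $\tilde\gamma \in \GL_2(\bar F)$; its projective order being finite means the eigenvalue ratio $\zeta \colonequals \alpha/\beta$ is a root of unity of some order $m \ge 1$, and the projective invariant $\zeta + \zeta^{-1} = \tr(\tilde\gamma)^2/\det(\tilde\gamma) - 2$ lies in $F$. Hence $[F(\zeta):F] \le 2$ and $\phi(m) \le 2[F:\Q]$. The assumption that $g^{-1}\gamma g$ lies in $K_1(t^2)$ means that at the prime $t$ it is represented (up to scaling) by an element of $\PGL_2(\mco_{F_t})$ whose reduction modulo $t^2$ is upper triangular unipotent, forcing
\[
\zeta + \zeta^{-1} \equiv 2 \pmod{t^2} \quad \text{in } \mco_{F(\zeta),\mathfrak{t}}
\]
for any place $\mathfrak{t}$ of $F(\zeta)$ above $t$; equivalently $(\zeta-1)^2 \equiv 0 \pmod{t^2}$, so $\zeta \equiv 1 \pmod{\mathfrak{t}}$. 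If $\zeta \ne 1$, then $\mathfrak{t}$ divides $\zeta-1$, so $\Nm(t) \le \Nm_{F(\zeta)/\Q}(\zeta-1)$; a standard estimate, using $\Phi_m(1) \in \{1,p\}$ and $\phi(m) \le 2[F:\Q]$, bounds the right-hand side by $4^{[F:\Q]}$, contradicting our choice of $t$. Hence $\zeta = 1$, so $\gamma$ is unipotent in $\PGL_2(F)$, and being torsion in characteristic zero, must be trivial.

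The main obstacle is the numerical bookkeeping in (iii), specifically verifying that the bound $\Nm(t) > 4^{[F:\Q]}$ is sharp enough to exclude every primitive $m$-th root of unity with $\phi(m) \le 2[F:\Q]$ congruent to $1$ modulo a prime of norm $\le 4^{[F:\Q]}$; parts (i) and (ii) are essentially a citation and a short computation, respectively.
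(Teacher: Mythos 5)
Your proposal is correct, and it fills in more detail than the paper itself, which proves only part (i) by citation to \cite[Lemma~12.3]{Jar99} and treats (ii) and (iii) as routine.

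On the point you flag as the main obstacle, the bookkeeping in (iii), the bound $4^{[F:\Q]}$ does suffice, and here is the check you leave implicit. Say $\zeta$ is a primitive $m$-th root of unity with $\zeta\ne1$. From $\zeta+\zeta^{-1}\in F$ one gets $[F(\zeta):F]\le 2$, hence $\phi(m)=[\Q(\zeta):\Q]$ divides $[F(\zeta):\Q]\le 2[F:\Q]$. Since $\zeta-1\in\Q(\zeta)$,
\[
\bigl|\Nm_{F(\zeta)/\Q}(\zeta-1)\bigr| = \bigl|\Nm_{\Q(\zeta)/\Q}(\zeta-1)\bigr|^{[F(\zeta):\Q(\zeta)]} = \Phi_m(1)^{[F(\zeta):\Q(\zeta)]}.
\]
If $m$ is not a prime power then $\Phi_m(1)=1$, so $\zeta-1$ is a unit and cannot be divisible by $\mathfrak t$, an immediate contradiction. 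If $m=p^k$ then $\Phi_m(1)=p$ and $[F(\zeta):\Q(\zeta)]\le 2[F:\Q]/\phi(p^k)$, giving
\[
\bigl|\Nm_{F(\zeta)/\Q}(\zeta-1)\bigr| \le p^{2[F:\Q]/\phi(p^k)} = \bigl(p^{2/\phi(p^k)}\bigr)^{[F:\Q]}.
\]
The function $p\mapsto p^{2/(p^{k-1}(p-1))}$ is maximized over all prime powers at $p=2,\;k=1$, where it equals $4$; hence $\Nm(t)\le\Nm(\mathfrak t)\le|\Nm_{F(\zeta)/\Q}(\zeta-1)|\le 4^{[F:\Q]}$, the desired contradiction. (Note that the level $t^2$, rather than merely $t$, is what gives $v_{\mathfrak t}(\zeta-1)\ge e(\mathfrak t/t)\ge 1$ rather than only $\ge e(\mathfrak t/t)/2$; with level $t$ the borderline case $p=2$, $k=1$ would saturate the inequality and the argument would require more care.) Parts (i) and (ii) are exactly as you say: (i) is Jarvis's Chebotarev argument plus picking a prime of large norm in a positive-density set, and (ii) is the observation that the trace condition in (i) is equivalent to the eigenvalue ratio of $\rhobar(\Frob_t)$ not being $\Nm(t)^{\pm1}$, which together with $\Nm(t)\not\equiv 1\pmod\ell$ and unramifiedness puts one in the first case of Proposition~\ref{prop:unipotent-deformations}.
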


In what follows we apply the lemma for $\rhobar=\rhobar_\fm$ and  always impose level structures at $t$ so that we work with open compact subgroups $K$ of $\PGL_2(\mbb{A}_{F}^\infty)$ that are sufficiently small. Because no lift  of $\rhobar_\fm$ is ramified at $t$ we do not need to explicitly allow the Galois deformations  of $\rhobar_\fm$ we consider below to ramify at $t$. (See   6.5.1,  Lemma 6.5.2, and Lemma 6.5.2 of \cite{ACC+:2018} for similar considerations.)

\subsection{Sufficiently small level subgroups}\label{sse:K_0(N)}

We fix a prime $t$ as in Lemma \ref{trivial}.  For any nonzero ideal $\mcn\subseteq \mco_F$ define the following  compact open subgroups of $\PGL_2(\mbb{A}_{F}^\infty)$:

\begin{align*}
K_0(\mcn) &= \left\{
\begin{pmatrix}
a&b\\c&d
\end{pmatrix}\in \PGL_2(\mcohat_F)\middle|c\in \mcn t^2, ad^{-1} =1 \pmod{ t^2 }\right\}\subseteq \PGL_2(\mcohat_F)\\
K_1(\mcn) &= \left\{
\begin{pmatrix}
	a&b\\c&d
\end{pmatrix}\in \PGL_2(\mcohat_F)\middle|c \in \mcn t^2, ad^{-1} =1 \pmod{ \mcn t^2 }\right\}\subseteq \PGL_2(\mcohat_F)
\end{align*}
so that $K_1(\mcn)\unlhd K_0(\mcn)$ and $K_0(\mcn)/K_1(\mcn)\cong (\mco_F/\mcn)^\times$ via the map
\[
\begin{pmatrix}
	a&b\\c&d
\end{pmatrix}\mapsto ad^{-1}\pmod{\mcn}\,.
\]
Also define $K_\Delta(\mcn)$ to be the smallest intermediate subgroup
\[
K_1(\mcn)\le K_\Delta(\mcn)\le K_0(\mcn)
\]
for which $|K_0(\mcn)/K_\Delta(\mcn)|$ is an $\ell^{th}$ power.

In the case when $\mcn = v^e$ for some prime $v$ and some $e\ge 0$, we  also sometimes use $K_0(v^e)$, $K_1(v^e)$ and $K_\Delta(v^e)$ to denote the compact open subgroups $K_0(v^e)\cap \PGL_2(\mco_{F,v})$, $K_1(v^e)\cap \PGL_2(\mco_{F,v})$ and $K_\Delta(v^e)\cap \PGL_2(\mco_{F,v})$ of $\PGL_2(F_v)$.

For any nonzero ideal $\mcn\subseteq \mco_F$ we  write $Y_0(\mcn) = Y_{K_0(\mcn)}$. Also for an ideal $Q$ of $\mco_F$ prime to $\mcn$, write $Y_{0,\Delta}(\mcn,Q) = Y_{K_0(\mcn)\cap K_\Delta(Q)}$. Write $C_0(\mcn)$ and $C_{0,\Delta}(\mcn,Q)$ for the corresponding prefect complexes in $\dcat{\mco}$.

 (Thus  we  are imposing  level structure at $t$ for the compact open subgroups $K$ of $\PGL_2(\mbb{A}_F^{\infty})$ we consider;  we suppress this from the notation to make it less clumsy and  hope this will not be misleading for the reader.)

\subsection*{Duality}
\label{sse:Duality}
From now on fix a nonzero-ideal $\mcn_\es\subseteq \mco_F$ and a non-Eisenstein maximal ideal $\fm\subseteq \TT^S(K_0(\mcn_\es))$. 
Assume that $N(\rhobar_\fm) = \mcn_\es$ (i.e. $\mcn_\es$ is the \emph{minimal level} for $\rhobar_\fm$). By enlarging $\mco$ if necessary we may assume that $\TT^S(K)/\fm\cong k$, and so $\rhobar_\fm$ is a representation $G_F\to \GL_2(k)$.

\begin{proposition}
\label{prop: Verdier}
Assume Conjecture \ref{rhobar conj} holds. Then for any compact open $K=\prod_vK_v\subseteq K_0(\mcn_\es)$, there is a natural derived $\TT(K)_\fm$-isomorphism 
\[
(C_{K})_\fm\cong \RHom_{\mco}((C_{K})_\fm,\mco)[2r_1+3r_2] = (C_K)_\fm^{\dagger}
\]
in $\rdcat{\mco}{\TT(K)_\fm}$. Moreover, for any $K_1,K_2\subseteq K_0(\mcn_\es)$ the adjoint of the double coset operator $[K_2\alpha K_1]\colon (C_{K_1})_\fm\to (C_{K_2})_\fm$ with respect to these isomorphisms is given by the double coset operator $[K_1\alpha^{-1} K_2]\colon (C_{K_2})_\fm\to (C_{K_1})_\fm$.
\end{proposition}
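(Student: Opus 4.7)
My plan is to combine Poincaré--Verdier duality on the Borel--Serre compactification of $Y_K$ with the vanishing of boundary cohomology after localization at a non-Eisenstein maximal ideal. Because $K\subseteq K_0(\mcn_\es)$ is sufficiently small (by the choice of the auxiliary prime $t$ in Lemma~\ref{trivial}), $Y_K$ is a smooth orientable manifold of dimension $2r_1+3r_2$. Let $\overline{Y}_K$ be its Borel--Serre compactification, a compact manifold with corners whose interior is $Y_K$ and such that $Y_K\hookrightarrow \overline{Y}_K$ is a homotopy equivalence; write $\partial Y_K$ for its boundary. Poincar\'e--Lefschetz duality then supplies a natural isomorphism
\[
C_{K}^{\dagger}=\RHom_{\mco}(C_{K},\mco)[2r_1+3r_2] \simeq C_{K}^{\mathrm{BM}}
\]
in $\dcat{\mco}$, where $C_{K}^{\mathrm{BM}}$ denotes the Borel--Moore chain complex of $Y_K$, equivalently the chain complex of the pair $(\overline{Y}_K,\partial Y_K)$.

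Next, I would use the canonical long exact triangle
\[
C_{\partial Y_K}\lra C_{K}\lra C_{K}^{\mathrm{BM}}\lra C_{\partial Y_K}[1]
\]
coming from the pair $(\overline{Y}_K,\partial Y_K)$; this triangle is equivariant for the natural extension of the Hecke action to $\overline{Y}_K$. The boundary $\partial Y_K$ is a union of Borel--Serre strata attached to the proper parabolic subgroups of $\PGL_2$, which are Borel subgroups. Hence every Hecke eigensystem appearing in $\hh_{*}(\partial Y_K,\mco)$ is parabolically induced from a Hecke character, and the associated residual Galois representation (applying Conjecture~\ref{rhobar conj} to the relevant congruence subgroups on the boundary) is reducible. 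Since $\fm$ is non-Eisenstein, this forces $(C_{\partial Y_K})_{\fm}=0$ in $\dcat{\mco}$, so the triangle collapses to a $\TT(K)_{\fm}$-equivariant isomorphism
\[
(C_{K})_{\fm}\iso (C_{K}^{\mathrm{BM}})_{\fm}\cong (C_{K})_{\fm}^{\dagger}.
\]

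For the statement about adjoints of double coset operators, I would unwind the recipe of Section~\ref{sse:double coset}. With $L=\alpha K_1\alpha^{-1}\cap K_2$ the double coset operator $[K_2\alpha K_1]$ is the composition $p_{2,*}\circ p_1^{*}$ for finite (and, on sufficiently small levels, unramified covering) maps $p_i\colon Y_L\to Y_{K_i}$. Verdier duality for such finite covers interchanges $p_{i}^{*}$ and $p_{i,*}$ up to the Poincar\'e shift, so under the identification above the adjoint of $p_{2,*}p_1^{*}$ becomes $p_{1,*}p_2^{*}$. The latter is precisely the double coset operator associated to the intermediate level $\alpha^{-1}L\alpha=K_1\cap\alpha^{-1}K_2\alpha$ for the element $\alpha^{-1}$, that is, $[K_1\alpha^{-1}K_2]$.

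The hard part is the non-Eisenstein vanishing $(C_{\partial Y_K})_{\fm}=0$: for a general number field this requires a careful description of the cohomology of the Borel--Serre boundary in terms of Eisenstein data and the identification of the associated Galois representations as reducible, so that they cannot match the residually irreducible $\rhobar_{\fm}$. This is by now standard for $\PGL_2$ over arbitrary number fields, and can be carried out along the lines of \cite{NewtonThorne} and \cite{KT}; once it is in hand, the rest of the argument is a formal combination of Poincar\'e--Verdier duality and functoriality of the trace/pullback maps for finite coverings.
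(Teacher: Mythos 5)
Your proposal reproduces, in outline, the argument underlying the Newton--Thorne results that the paper simply cites in its proof. The paper invokes \cite[Proposition 3.7, Theorem 4.2]{NewtonThorne} for the Verdier duality isomorphism and for the fact that it sends $[K_2\alpha K_1]$ to $[K_1\alpha^{-1}K_2]$, whereas you sketch the Borel--Serre mechanism behind those references: pass to the compactification, use Poincar\'e--Lefschetz duality to identify $C_K^\dagger$ with Borel--Moore chains, observe that the long exact triangle of the pair $(\overline{Y}_K,\partial Y_K)$ collapses after localizing at a non-Eisenstein $\fm$ because boundary eigensystems are parabolically induced and hence Eisenstein, and then track the transpose of push-pull for finite covers to get the adjoint formula. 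This is the right picture, and your identification of the adjoint of $p_{2,*}p_1^{*}$ with $p_{1,*}p_2^{*}$, then with $[K_1\alpha^{-1}K_2]$ via the isomorphism $Y_L\cong Y_{\alpha^{-1}L\alpha}$, is correct. One small side point: you invoke Conjecture~\ref{rhobar conj} ``for the relevant congruence subgroups on the boundary,'' but the conjecture is about the interior Hecke algebra; the Eisenstein-ness of the boundary eigensystems is a direct structural fact about parabolic induction, and one only uses Conjecture~\ref{rhobar conj} to know that the $\rhobar_\fm$ appearing in the non-Eisenstein hypothesis exists.

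The one genuine gap is that you do not verify that the resulting isomorphism $(C_K)_\fm\cong(C_K)_\fm^\dagger$ is $\TT(K)_\fm$-equivariant, rather than merely $\mco$-linear. The Verdier duality pairing intertwines the double coset operator $[K\alpha K]$ on one side with $[K\alpha^{-1}K]$ on the other, so equivariance for $\TT^S(K)$ requires knowing that each $T_v$ is self-adjoint, i.e.\ that
\[
\left[K\begin{pmatrix}\varpi_v&0\\0&1\end{pmatrix}K\right]
=\left[K\begin{pmatrix}\varpi_v&0\\0&1\end{pmatrix}^{-1}K\right]
\]
when $K_v=\PGL_2(\mco_{F,v})$. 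This is a short computation in $\PGL_2$ (conjugate by the Weyl element in $K_v$) but it is essential, and the paper performs it explicitly. Without it, your argument only yields a Hecke-equivariant isomorphism for the ``transposed'' Hecke action on the dual, not for the natural one demanded by the statement. You should add this self-adjointness check to close the argument.
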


\begin{proof}
By \cite[Proposition 3.7]{NewtonThorne} and \cite[Theorem 4.2]{NewtonThorne} there is a Verdier duality isomorphism $(C_{K})_\fm\cong \RHom_{\mco}((C_{K})_\fm,\mco[2r_1+3r_2])$ sending each double coset operator $[K_2\alpha K_1]$ to $[K_1\alpha^{-1}K_2]$. Here we are using the fact that condition $\spadesuit$ of \emph{loc. cit.} holds in the case $m=2$ by Conjecture \ref{rhobar conj} and in the case $m=1$ by class field theory.

 In particular this is $\TT^S(K)$-equivariant as it is easy to check that 
	\begin{align*}
	K\begin{pmatrix}
		\varpi_v^{-1}&0\\0&1
	\end{pmatrix}
	K
	&= 
	K
\begin{pmatrix}
	0&1\\1&0
\end{pmatrix}
	\begin{pmatrix}
	\varpi_v^{-1}&0\\0&1
	\end{pmatrix}
\begin{pmatrix}
0&1\\1&0
\end{pmatrix}
	K \\
	&= 
	K\begin{pmatrix}
		1&0\\0&\varpi_v^{-1}
	\end{pmatrix}
	K \\
	&=
	K\begin{pmatrix}
	\varpi_v&0\\0&1
	\end{pmatrix}
	K,
	\end{align*}
whenever $K_v = \PGL_2(\mco_{F,v})$, and so each $T_v$ for $v\not\in S$ is self-adjoint.
\end{proof}

The isomorphism from Proposition \ref{prop: Verdier} is not in general equivariant for the Hecke operators $U_v$ and $\langle d\rangle_v$. However, it is easy to modify this isomorphism to make it equivariant for all Hecke operators.

From now on we  restrict our attention to compact open subgroups $K$ in the form $K = \prod_v K_v\subseteq K_0(\mcn_\es)$ where for all $v$ either $K_v = \PGL_2(\mco_{F,v})$ or $K_1(v)\le K_v \le K_0(v)$.  For each $v$, let 
\[
w_v = \left[
K\begin{pmatrix}
0&-1\\\varpi_v&0
\end{pmatrix}
K
\right]
\]
if $K_1(v)\le K_v \le K_0(v)$ and let $w_v = 1$ if $K_v = \PGL_2(\mco_{F,v})$.
Observe that $K_v\begin{pmatrix}
	0&-1\\\varpi_v&0
\end{pmatrix} = \begin{pmatrix}
0&-1\\\varpi_v&0
\end{pmatrix} K_v$ for $K_1(v)\le K_v \le K_0(v)$ and $K_v = \PGL_2(\mco_{F,v})$. It is easy to verify from this that
\[
[K_3\beta K_2]*\left[
K_2\begin{pmatrix}
	0&-1\\\varpi_v&0
\end{pmatrix}K_1
\right] = \left[K_3\beta \begin{pmatrix}
	0&-1\\\varpi_v&0
\end{pmatrix}K_1\right]
\]
and 
\[
\left[K_3\begin{pmatrix}
	0&-1\\\varpi_v&0
\end{pmatrix} K_2\right]*\left[
K_2\alpha K_1
\right] = \left[K_3 \begin{pmatrix}
	0&-1\\\varpi_v&0
\end{pmatrix}\alpha K_1\right]
\]
It follows that $w_v^2 = 1$ for all $v$ and $w_{v}w_{v'} = w_{v'}w_v$ for any $v$ and $v'$. Moreover by definition, $w_v = 1$ whenever $K_v = \PGL_2(\mco_{F,v})$ (and hence in particular, $w_v = 1$ for $v\not\in S$). Hence the operator $w_K = \prod_{v\in S}w_v\in \End_{\dcat{\mco}}((C_{K})_\fm)$ is well-defined and independent of $S$. As $w_K^2 = 1$, $w_K$ is an automorphism in $\dcat{\mco}$. So now we may define an isomorphism $\varphi_K\colon (C_{K})_\fm\isomto (C_K)_\fm^\dagger$ by
\[
\varphi_K:(C_{K})_\fm\xrightarrow{w_K}(C_{K})_\fm\xrightarrow{\sim}(C_K)_\fm^\dagger
\]
where the second map is the isomorphism from Proposition \ref{prop: Verdier}.

Now we have the following:

\begin{proposition}
\label{prop: twisted duality}
The isomorphism $\varphi_K\colon (C_{K})_\fm\cong (C_{K})_\fm^{\dagger}$ commutes with the actions of all the Hecke operators $T_v$, $U_v$ and $\langle d\rangle_v$.

Moreover the adjoint of $[K_2\alpha K_1]\colon (C_{K_1})_\fm\to (C_{K_2})_\fm$ with respect to $\varphi_{K_1}$ and $\varphi_{K_2}$ is $w_{K_1}[K_1\alpha^{-1} K_2]w_{K_2}\colon (C_{K_2})_\fm\to (C_{K_1})_\fm$.
\end{proposition}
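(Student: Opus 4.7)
The plan is to first establish the adjointness formula (the second assertion) and then deduce self-adjointness of the individual Hecke operators (the first assertion) from it.

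First I would write $\varphi_K = \psi_K \circ w_K$, where $\psi_K \colon (C_K)_\fm \to (C_K)_\fm^{\dagger}$ denotes the Verdier-duality isomorphism supplied by Proposition \ref{prop: Verdier}. Since $w_K^2 = 1$, one has $\varphi_K^{-1} = w_K \circ \psi_K^{-1}$, so the $\varphi$-adjoint of any morphism $f \colon (C_{K_1})_\fm \to (C_{K_2})_\fm$ in $\dcat{\mco}$ factors as
\[
\varphi_{K_1}^{-1} \circ f^{\dagger} \circ \varphi_{K_2} \;=\; w_{K_1} \circ \bigl(\psi_{K_1}^{-1} \circ f^{\dagger} \circ \psi_{K_2}\bigr) \circ w_{K_2}\,.
\]
The middle term is the $\psi$-adjoint of $f$, and for $f = [K_2 \alpha K_1]$ this equals $[K_1 \alpha^{-1} K_2]$ by Proposition \ref{prop: Verdier}. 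This gives the second claim.

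Next I would deduce the first claim by specializing to $K_1 = K_2 = K$. By the formula just obtained, the $\varphi_K$-adjoint of a Hecke operator $[K\alpha K]$ is $w_K [K\alpha^{-1} K] w_K$, so self-adjointness becomes the identity $[K\alpha K] = w_K [K\alpha^{-1}K] w_K$. For $T_v$ with $v \notin S$, the computation in the proof of Proposition \ref{prop: Verdier} already gives $[K\alpha K] = [K\alpha^{-1}K]$, so it suffices to check that $w_K$ commutes with $T_v$. Since $w_K = \prod_{v' \in S} w_{v'}$ and $v \notin S$, this reduces to the observation that double coset operators at distinct places commute --- the supporting elements in $G = \PGL_2(\mathbb{A}_F^\infty)$ lie in disjoint local factors, so the convolution formula from Lemma \ref{lem:double coset} is symmetric in the two factors.

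For $U_v$ and $\langle d \rangle_v$ at a place $v$ with $K_1(v) \leq K_v \leq K_0(v)$, the same commutation argument reduces the claim to verifying at the local place $v$ that
\[
w_v\, [K(1,\varpi_v)K]\, w_v \;=\; [K(\varpi_v,1)K] \qquad \text{and} \qquad w_v\, [K(1,\widetilde{d})K]\, w_v \;=\; [K(\widetilde{d},1)K]\,,
\]
with the other $w_{v'}$ commuting past the operator at $v$. The key input is that $\eta_v \colonequals \bigl(\begin{smallmatrix} 0 & -1 \\ \varpi_v & 0 \end{smallmatrix}\bigr)$ normalizes both $K_0(v)$ and $K_1(v)$ locally, so that $w_v \cdot [K\beta K] \cdot w_v = [K\,\eta_v \beta \eta_v\, K]$ by the double coset convolution formula; a direct $2{\times}2$ matrix computation in $\PGL_2$ then shows $\eta_v (1,\varpi_v)\eta_v$ and $\eta_v(1,\widetilde{d})\eta_v$ represent $(\varpi_v,1)$ and $(\widetilde{d},1)$ respectively, up to scalars that are killed in $\PGL_2$.

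The main obstacle is bookkeeping rather than substance: one must carefully justify the commutativity of double cosets supported at disjoint places (by writing the convolution as a product over places), and verify that $\eta_v$ normalizes the local subgroups so that the conjugation formula for double cosets applies. Once these two local calculations are in hand, both assertions follow formally from Proposition \ref{prop: Verdier}.
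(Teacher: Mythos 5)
Your proposal is correct and follows essentially the same route as the paper: factor $\varphi_K = \psi_K \circ w_K$, deduce the adjoint formula formally from Proposition \ref{prop: Verdier}, note that $w_K$ acts trivially on $T_v$ for $v \notin S$, and reduce the self-adjointness of $U_v$ and $\langle d\rangle_v$ to the $2{\times}2$ computation $\eta_v \left(\begin{smallmatrix}a&0\\0&d\end{smallmatrix}\right)^{-1}\eta_v = \left(\begin{smallmatrix}a&0\\0&d\end{smallmatrix}\right)$ in $\PGL_2(F_v)$. The only difference is stylistic: you spell out the bookkeeping about double cosets at distinct places commuting and the normalizing property of $\eta_v$, whereas the paper leaves these implicit (having already recorded the relevant convolution identities and $K_v\eta_v = \eta_v K_v$ just before the proposition).
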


\begin{proof}
The last claim is immediate from Proposition \ref{prop: Verdier} and the definition of $\varphi_K$.

For the first claim, we have $w_KT_vw_K = T_v$ for $v\not\in S$ by the definition of $w_K$. The fact that $U_v$ and $\langle d\rangle_v$ are self adjoint now follows from the observation that
\begin{align*}
\begin{pmatrix}
	0&-1\\\varpi_v&0
\end{pmatrix}
\begin{pmatrix}
	a&0\\0&d
\end{pmatrix}^{-1}
\begin{pmatrix}
	0&-1\\\varpi_v&0
\end{pmatrix}
&=
\begin{pmatrix}
	0&-1\\\varpi_v&0
\end{pmatrix}
\begin{pmatrix}
	a^{-1}&0\\0&d^{-1}
\end{pmatrix}
\begin{pmatrix}
	0&-1\\\varpi_v&0
\end{pmatrix}\\
&
=
\begin{pmatrix}
	-\varpi_vd^{-1}&0\\0&-\varpi_va^{-1}
\end{pmatrix} \\
&=
\begin{pmatrix}
	a&0\\0&d
\end{pmatrix}
\end{align*}
in $\PGL_2(F_v)$, for any $a,d\in F_v^\times$.
\end{proof}

\subsection{Level lowering maps}\label{ssec: level lowering}

For this subsection fix a compact open $K$  of $\PGL_2(\A)$ as before and a place $v$ for which $K_v = \PGL_2(\mco_{F,v})$. Write $L = K\cap K_0(v)$. Define two maps $\pi_1,\pi_2\colon (C_{L})_\fm \to (C_{K})_\fm$ by 
\[
\pi_1 = \left[K\begin{pmatrix}
	1&0\\0&1
\end{pmatrix}L
\right] \quad \text{and}\quad \pi_2=\left[K\begin{pmatrix}
	\varpi_v&0\\0&1
\end{pmatrix}L
\right]
\]
and write $\pi_{K,v} = \pi_1\oplus \pi_2:(C_{L})_\fm \to (C_{K})_\fm^{\oplus 2}$. Let $\pi_{K,v}^\dagger\colon (C_{K})_\fm^{\oplus 2}\to (C_{L})_\fm$ be the adjoint of $\pi_{K,v}$ with respect to $\varphi_K$ and $\varphi_L$.   It  is easy to see that 

\[
\pi_{K,v}\circ U_v = \begin{pmatrix}
T_v&-1\\\Nm(v)&0
\end{pmatrix}
\circ \pi_{K,v}
\]
as morphisms $(C_{L})_\fm \to (C_{K})_\fm^{\oplus 2}$.

\subsection{$\TT_\Sigma$ and $\TT_{\Sigma,Q}$}\label{sse:Sigma}

Pick disjoint finite sets $T$ and $Q$ of places of $F$ satisfying the conditions from Section \ref{ssec:global def} (which in particular imply that every element of $T$ and $Q$ is prime to $\mcn_\es$).

For any subset $\Sigma\subseteq T$ let $\mcn_\Sigma = \mcn_\es\prod_{v\in\Sigma}v$, where $\mcn_\es$ is as above. From now on assume that $T\cup Q\subseteq S$, and $S$ contains all primes dividing $\mcn_\es$ and the place $t$.
For any $\Sigma\subseteq T$, define $K_\Sigma = K_0(\mcn_\Sigma)$ and $K_{\Sigma,Q} = K_0(\mcn_\Sigma)\cap K_\Delta(Q)$. Note that $K_{\Sigma,\es} = K_\Sigma$.

Consider the Hecke algebras $\TT^S(K_\Sigma)$ and $\TT^S(K_{\Sigma,Q})$. Let $\fm\subseteq \TT^S(K_0(\mcn_\es))$ be a non-Eisenstein maximal ideal. Assume further that $\rhobar_{\fm}|_{G_{F(\zeta_\ell)}}$ is absolutely irreducible.

Again, for any compact open $K\subseteq K_0(\mcn_\Sigma)$, let $\fm\subseteq \TT^S(K)$ denote the preimage of $\fm$. Write $\TT_\Sigma =\TT^S(K_\Sigma)_\fm$ and $\TT_{\Sigma,Q} =\TT^S(K_{\Sigma,Q})_\fm$.

Now define the full Hecke algebras:
\begin{align*}
\fullT^S(K_\Sigma) &= \TT^S(K_\Sigma)[\{U_v\}_{v\in\Sigma}]\subseteq \End_{\dcat{\mco}}(C_{K_\Sigma})\\
\fullT^S(K_{\Sigma,Q}) &= \TT^S(K_\Sigma)[\{U_v\}_{v\in\Sigma\cup Q},\{\langle d\rangle_v\}_{v\in Q,d\in \Delta_v}]\subseteq \End_{\dcat{\mco}}(C_{K_{\Sigma,Q}}).
\end{align*}

These are  commutative $\mco$-algebras, finite over $\TT^S(K_\Sigma)$ and $\TT^S(K_{\Sigma,Q})$. Again note that $\fullT^S(K_{\Sigma,\es}) = \fullT^S(K_{\Sigma})$.
We define ideals  $\fm_{\Sigma,Q}\subseteq \fullT^S(K_{\Sigma,Q})$ lying over $\fm\subseteq \TT^S(K_{\Sigma,Q})$ by specifying  that
$U_v-\epsilon_v$ (for $v \in \Sigma$) and $U_v-\gamma_{1,v}$ (for $v \in Q$)  are in $\fm_{\Sigma,Q}$.   Then it is easy to see that these are indeed maximal ideals and that the ideal $\fm_{\Sigma,Q}\subseteq \fullT^S(K_{\Sigma,Q})$ lies over the ideal $\fm_{\Sigma,\es}\subseteq \fullT^S(K_{\Sigma})$ and for $\Sigma_1\subseteq \Sigma_2$, the ideal $\fm_{\Sigma_2,Q}\subseteq \fullT^S(K_{\Sigma_2,Q})$ lies over the ideal $\fm_{\Sigma_1,Q}\subseteq \fullT^S(K_{\Sigma_1,Q})$.

The following conjecture  asserts that the Galois representations of Conjecture \ref{rhobar conj} satisfy local-global compatibility. Many cases of this conjecture (up to the issue of going modulo a nilpotent ideal) are known from \cite[\S 3, \S 4]{ACC+:2018}).

\begin{conj}\label{R->T conj}
For any $\Sigma$ and $Q$, the representations $\rho_{\Sigma,Q}\colon G_F\to \GL_2(\TT_{\Sigma,Q})$  stemming from Conjecture \ref{rhobar conj}    arise from  the corresponding universal representation  $G_F \to \GL_2( R_{\Sigma,Q})$ via maps $R_{\Sigma,Q}\onto \TT_{\Sigma,Q}$  and satisfy the following properties:
\begin{enumerate}[\quad\rm(1)]
	\item 
	 $\tr\rho_{\Sigma,Q}(\Frob_v) = T_v$ for all $v\not\in S$.
	\item If $R_{\Sigma,Q}$ is given the $\mco[\Delta_Q]$ structure arising from Proposition \ref{prop: O[Delta]->R}, then  the composition of  maps  $R_{\Sigma,Q}\onto \TT_{\Sigma,Q} \to \fullT^S(K_{\Sigma,Q})_{\fm_{\Sigma,Q}} $ is an $\mco[\Delta_Q]$-algebra homomorphism. Furthermore $\chi_{1,v}(\varphi_v)$ maps to $U_v$ for $v \in Q$ for a suitable lift $\varphi_v$ of $\Frob_v$.
	\item For $v\in\Sigma$, writing $B_v\in \TT_\Sigma^\square$ for the image of $B_v\in R_v^{{\rm uni}(\epsilon_v)}$ under the map 
\[
	R_v^{{\rm uni}(\epsilon_v)}\into R_{\Sigma,\loc}\to R_\Sigma^\square \onto \TT_\Sigma^\square,
\]
we have $(B_v) = (\epsilon_vU_v-1)$ as ideals of $\TT_\Sigma^\square$. 	

\item The diagrams
\begin{center}
$
	\begin{tikzcd}
		R_{\Sigma,Q} \arrow[twoheadrightarrow]{d} \arrow[twoheadrightarrow]{r} & \TT_{\Sigma,Q} \arrow[twoheadrightarrow]{d}  \\
		R_{\Sigma} \arrow[twoheadrightarrow]{r} & \TT_{\Sigma}
	\end{tikzcd}
$
	and
$
	\begin{tikzcd}
		R_{\Sigma_2,Q} \arrow[twoheadrightarrow]{d} \arrow[twoheadrightarrow]{r} & \TT_{\Sigma_2,Q} \arrow[twoheadrightarrow]{d}  \\
		R_{\Sigma_1,Q} \arrow[twoheadrightarrow]{r} & \TT_{\Sigma_1,Q}
	\end{tikzcd}
$
\end{center}
	(for $\Sigma_1\subseteq \Sigma_2\subseteq T$) commute.
\end{enumerate}
\end{conj}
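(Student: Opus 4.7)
The strategy is to show that the Galois representation $\rho_{\Sigma,Q}\colon G_F\to\GL_2(\TT_{\Sigma,Q})$ supplied by Conjecture \ref{rhobar conj} is a lift of $\rhobar_\fm$ satisfying the local conditions that cut out the deformation functor $\mcd_{\Sigma,Q}$, so that universality produces the map $R_{\Sigma,Q}\to \TT_{\Sigma,Q}$. The trace condition (1) is then automatic from Conjecture \ref{rhobar conj}, and Chebotarev together with the absolute irreducibility of $\rhobar_\fm|_{G_{F(\zeta_\ell)}}$ shows the image lies in, and in fact generates, $\TT_{\Sigma,Q}$. The local conditions have to be checked prime by prime: at $v\mid\ell$ one needs flatness of $\rho_{\Sigma,Q}|_{G_{F_v}}$, which reduces to the fact that the cohomological weight-$2$ automorphic forms contributing to $C_{K_{\Sigma,Q}}$ correspond to flat local representations (in the torsion setting this is exactly the input provided for CM fields by \cite{NewtonThorne}); at $v\mid \mcn_\es$ one checks minimality using classical local-global compatibility at characteristic-zero points and then spreading the condition over $\TT_{\Sigma,Q}$; at $v\not\in S\cup\Sigma\cup Q\cup\{t\}$ the representation is manifestly unramified with the correct trace.

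The arithmetic heart of the conjecture lies at the Taylor--Wiles primes $v\in Q$ and the level-raising primes $v\in\Sigma$. At $v\in Q$ the condition $q_v\equiv 1\pmod\ell$, combined with the distinctness of the eigenvalues of $\rhobar_\fm(\Frob_v)$, allows one to split $\rho_{\Sigma,Q}|_{G_{F_v}}=\chi_{1,v}\oplus\chi_{2,v}$ by Hensel's lemma, after which one matches the Hecke action on the $K_\Delta(Q)$-isotypic summand with the character $\chi_{1,v}$ to identify $\chi_{1,v}|_{I_{F_v}}$ with the diamond operators $\langle d\rangle_v$ under the Artin map and $\chi_{1,v}(\varphi_v)$ with $U_v$; this gives (2). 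At $v\in\Sigma$ the explicit presentations from Proposition~\ref{prop:unipotent-deformations} show that in the adapted basis the universal Frobenius eigenvalue is $\epsilon_v(1+B_v)$, while an Atkin--Lehner / oldform--newform analysis of the $U_v$-operator on $C_{0,\Delta}(\mcn_\Sigma,Q)_{\fm_{\Sigma,Q}}$ identifies the Hecke eigenvalue $U_v$ with precisely this Frobenius eigenvalue, yielding the relation $B_v\mapsto \epsilon_v U_v-1$ of (3). The diagram compatibilities in (4) then follow formally: the maps on the deformation side are defined by imposing further conditions on the functors $\mcd_{\Sigma,Q}$, while on the Hecke side they are induced by the degeneracy maps between the complexes $C_{K_\Sigma}$, $C_{K_{\Sigma,Q}}$, $C_{K_{\Sigma_1,Q}}$, $C_{K_{\Sigma_2,Q}}$, and both constructions of $\rho$ are natural in these maps.

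The main obstacle, and the reason the statement is listed as a conjecture rather than a theorem, is the ``nilpotent ideal'' problem: the constructions of Galois representations available in the positive-defect torsion setting (Scholze, and \cite{ACC+:2018, NewtonThorne}) typically produce a representation with values in $\TT_{\Sigma,Q}/I$ for some nilpotent ideal $I$, rather than in $\TT_{\Sigma,Q}$ itself, and removing $I$ on the nose is not accessible by deformation-theoretic or patching methods and appears to require genuinely new input. A secondary difficulty is the verification of the local conditions at $v\mid \ell$ and at the ramified places $v\mid \mcn_\es$ for classes that are purely torsion; here one normally interpolates from classical points, which is delicate when (as in the Bianchi case) the classical locus can be small, and forces one to work up to the same sort of nilpotent ideal. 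Modulo these issues, the plan above recovers the conjecture in exactly the cases currently known, and reduces the general statement to an integral refinement of the existence of Galois representations attached to torsion Hecke eigenclasses.
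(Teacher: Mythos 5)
This statement is presented in the paper as Conjecture~\ref{R->T conj} rather than as a theorem: the paper offers no proof, and instead assumes it (together with Conjectures~\ref{rhobar conj}, \ref{vanishing conj}, \ref{Ihara conj}) as input to the modularity-lifting results in Part~\ref{part:deformation rings}. So there is no ``paper's own proof'' to compare against. Your write-up correctly recognizes this and reads not as a proof but as a well-informed account of the expected strategy and of the genuine obstructions: the reduction to checking the local deformation conditions prime by prime, the identification of the diamond/$U_v$ action with the characters at Taylor--Wiles primes, the $B_v\mapsto\epsilon_vU_v-1$ relation from the explicit presentation in Proposition~\ref{prop:unipotent-deformations}, the formal commutativity of the diagrams, and, crucially, the nilpotent-ideal problem in Scholze's and \cite{ACC+:2018}/\cite{NewtonThorne}'s constructions. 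This matches the paper's own framing (``Many cases of this conjecture (up to the issue of going modulo a nilpotent ideal) are known from \cite{ACC+:2018}''), so your proposal is consistent with the paper's treatment; it just cannot, and does not claim to, remove the conjectural status, which is the correct position here.
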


Note that the assumption that $\TT_{\Sigma,Q}$ is a quotient of $R_{\Sigma,Q}$ implies that $\rho_{\Sigma,Q}$ satisfies all of the necessary local-global compatibility assumptions (such as the fact that it is flat at all primes $v|\ell$).

\begin{lemma}\label{lem: localization}
We have that  $\fullT^S(K_{\Sigma,Q})_{\fm_{\Sigma,Q}} = \TT^S(K_{\Sigma,Q})_\fm = \TT_{\Sigma,Q}$.

\end{lemma}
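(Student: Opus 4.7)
The plan is to show that the natural inclusion $\TT^S(K_{\Sigma,Q})_\fm \hookrightarrow \fullT^S(K_{\Sigma,Q})_{\fm_{\Sigma,Q}}$ is an equality; since $\TT_{\Sigma,Q}$ is defined to be the left hand side, this gives the lemma. It suffices to verify that the extra generators of $\fullT^S(K_{\Sigma,Q})$ over $\TT^S(K_{\Sigma,Q})$, namely the operators $U_v$ for $v \in \Sigma \cup Q$ and $\langle d\rangle_v$ for $v \in Q$, already lie inside $\TT^S(K_{\Sigma,Q})_\fm$. The unifying tool is Hensel's lemma applied to the characteristic polynomial of the Galois representation $\rho_{\Sigma,Q}\colon G_F \to \GL_2(\TT_{\Sigma,Q})$ furnished by Conjectures~\ref{rhobar conj} and \ref{R->T conj}.

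First I handle $v \in Q$. By assumption $q_v \equiv 1 \pmod{\ell}$ and $\rhobar_\fm(\Frob_v)$ has distinct eigenvalues $\gamma_{v,1}, \gamma_{v,2} \in k$. By Conjecture~\ref{R->T conj}, the polynomial $X^2 - T_v X + q_v \in \TT_{\Sigma,Q}[X]$ is the characteristic polynomial of $\rho_{\Sigma,Q}(\Frob_v)$ and reduces modulo $\fm$ to $(X - \gamma_{v,1})(X - \gamma_{v,2})$ with distinct roots. Since $\TT_{\Sigma,Q}$ is a complete semilocal $\mco$-algebra and this factorization is coprime modulo the Jacobson radical, Hensel's lemma produces a unique factorization $X^2 - T_v X + q_v = (X - \alpha_v)(X - \beta_v)$ with $\alpha_v \equiv \gamma_{v,1}$ and $\beta_v \equiv \gamma_{v,2}$ modulo $\fm$, and in particular $\alpha_v, \beta_v \in \TT_{\Sigma,Q}$. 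By Conjecture~\ref{R->T conj}(2), $\chi_{1,v}(\varphi_v) = U_v$ in $\fullT^S(K_{\Sigma,Q})_{\fm_{\Sigma,Q}}$, and by the defining property of $\fm_{\Sigma,Q}$ one has $U_v \equiv \gamma_{v,1} \pmod{\fm_{\Sigma,Q}}$, so $U_v$ must be the root $\alpha_v$; thus $U_v \in \TT_{\Sigma,Q}$. The corresponding idempotent $e_v = (\alpha_v - \beta_v)^{-1}(\rho_{\Sigma,Q}|_{G_{F_v}} - \beta_v)$ splits $\rho_{\Sigma,Q}|_{G_{F_v}}$ as $\chi_{1,v} \oplus \chi_{2,v}$ with both characters taking values in $\TT_{\Sigma,Q}^\times$; then $\langle d\rangle_v = \chi_{1,v}(\mathrm{Art}(\widetilde{d}))$ also lies in $\TT_{\Sigma,Q}$.

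Next I handle $v \in \Sigma$. By hypothesis $\rhobar_\fm(\Frob_v)$ is unramified with eigenvalues $\epsilon_v$ and $q_v \epsilon_v$, and $q_v \not\equiv 1 \pmod{\ell}$, so these eigenvalues are again distinct modulo $\fm$. Exactly the same Hensel argument factors $X^2 - T_v X + q_v$ over $\TT_{\Sigma,Q}$, producing a root $\alpha_v \in \TT_{\Sigma,Q}$ with $\alpha_v \equiv \epsilon_v \pmod{\fm}$. The shape of the universal local representation in Proposition~\ref{prop:unipotent-deformations} (applied to $R_v^\square$ when $q_v \not\equiv -1 \pmod{\ell}$ and to $R_v^{\mathrm{uni}(\epsilon_v)}$ when $q_v \equiv -1 \pmod{\ell}$) shows that the image of $\epsilon_v(1 + B_v)$ in $\TT_{\Sigma,Q}^\square$ is precisely this Hensel root $\alpha_v$. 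Finally Conjecture~\ref{R->T conj}(3) (or rather its direct analogue for the pair $(\Sigma,Q)$, obtained by the same reasoning as in the statement, together with the fact that $U_v$ on the $K_{\Sigma,Q}$-cohomology is the pullback of $U_v$ from $K_\Sigma$) identifies this root with $U_v$, so $U_v \in \TT_{\Sigma,Q}$.

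The main obstacle is bookkeeping: one must match up the Hensel-lifted root of the characteristic polynomial with the specific Hecke operator appearing in $\fm_{\Sigma,Q}$, which requires invoking Conjecture~\ref{R->T conj}(2)--(3) in the correct normalization and verifying, using the distinctness of the residual eigenvalues provided by the hypotheses on $q_v$ and $\rhobar_\fm(\Frob_v)$, that Hensel's lemma actually applies in the complete local setting. Once this is done, the surjectivity of $\TT^S(K_{\Sigma,Q})_\fm \to \fullT^S(K_{\Sigma,Q})_{\fm_{\Sigma,Q}}$ is immediate, and combined with the obvious injectivity yields the stated equality.
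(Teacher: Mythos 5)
Your argument arrives at the correct conclusion, and the underlying content is the same as the paper's, but you take a more elaborate route than the paper intends, and there are a couple of points worth flagging.

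The paper's own proof is a one-line appeal to Conjecture~\ref{R->T conj}, which is formulated so that the lemma is an immediate consequence. For $v \in Q$, Conjecture~\ref{R->T conj}(2) asserts that the composition $R_{\Sigma,Q} \twoheadrightarrow \TT_{\Sigma,Q} \to \fullT^S(K_{\Sigma,Q})_{\fm_{\Sigma,Q}}$ is an $\mco[\Delta_Q]$-algebra map under which $\chi_{1,v}(\varphi_v)$ (an element of $R_{\Sigma,Q}$) lands on $U_v$. Since the inclusion $\TT_{\Sigma,Q} \hookrightarrow \fullT^S(K_{\Sigma,Q})_{\fm_{\Sigma,Q}}$ is exactly this composition restricted to the image of $R_{\Sigma,Q}$, this literally says $U_v \in \TT_{\Sigma,Q}$; the $\mco[\Delta_Q]$-equivariance likewise says each $\langle d\rangle_v$ lies in $\TT_{\Sigma,Q}$. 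For $v \in \Sigma$, Conjecture~\ref{R->T conj}(3) identifies $\epsilon_v U_v - 1$ with the image of $B_v \in R_{\Sigma,\loc}$, which again is an element in the image of the deformation ring. No Hensel's lemma or characteristic-polynomial manipulation is needed; your factorization argument is essentially re-deriving the reasoning that motivated the formulation of Conjecture~\ref{R->T conj} rather than using it.

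Two smaller issues. First, you write the characteristic polynomial as $X^2 - T_v X + q_v$ for $v \in Q$ and $v \in \Sigma$. But $T_v$ is only defined for $v \notin S$, and $\Sigma \cup Q \subseteq S$, so $T_v$ is not a Hecke operator here. What you mean is $X^2 - \tr\rho_{\Sigma,Q}(\varphi_v)X + \det\rho_{\Sigma,Q}(\varphi_v)$, after fixing a Frobenius lift $\varphi_v$ — and for $v \in Q$ the representation may ramify at $v$, so the choice of lift matters (this is exactly the ``suitable lift $\varphi_v$'' in Conjecture~\ref{R->T conj}(2)). Second, you correctly notice that Conjecture~\ref{R->T conj}(3) is stated only for $\TT_\Sigma$ rather than $\TT_{\Sigma,Q}$ and wave at ``its direct analogue.'' This is indeed a slight imprecision in the paper (its proof also quietly uses the $(\Sigma,Q)$-level statement), but since the conjecture is a hypothesis of the theorem, the intended reading is that the analogous relation holds with $Q$-primes present; your parenthetical is a reasonable acknowledgment, but not a gap you are responsible for filling.
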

\begin{proof}    This follows from the above Conjecture \ref{R->T conj} as from it we see that $U_v, v \in \Sigma \cup Q$ and $ \{\langle d\rangle_v\}_{v\in Q,d\in \Delta_v}\subseteq \End_{\dcat{\mco}}(C_{K_{\Sigma,Q}})$ belong to $\TT_{\Sigma,Q}$.
\end{proof}

We will write $\fm_\Sigma = \fm_{\Sigma,\es}$. Lemma \ref{lem: localization} implies that we may now treat $U_v$ for $v\in\Sigma\cup Q$ and $\langle d\rangle_v$ for $v\in Q$ and $d\in\Delta_v$ as elements of $\TT_{\Sigma,Q}$. In particular there is now a natural $\mco$-algebra homomorphism $\mco[\Delta_Q] = \otimes_{v\in Q}\mco[\Delta_v]\to \TT_{\Sigma,Q}$, given by $d\mapsto \langle d\rangle_v$ for $d\in\Delta_v$, and so we may view $\TT_{\Sigma,Q}$ as a $\mco[\Delta_Q]$-algebra.

Define the complexes $C_\Sigma = (C_{K_\Sigma})_{\fm_\Sigma}$ and $C_{\Sigma,Q} = (C_{K_{\Sigma,Q}})_{\fm_{\Sigma,Q}}$ in $\dcat{\mco}$. Note that these are not the same as the complexes $(C_{K_\Sigma})_{\fm}$ and $(C_{K_{\Sigma,Q}})_{\fm}$, as we are localizing with respect to the action of a larger ring.
In particular we naturally have $C_\Sigma \in \rdcat{\mco}{\TT_\Sigma}$ and $C_{\Sigma,Q}\in \rdcat{\mco}{\TT_{\Sigma,Q}}$.

\begin{proposition}\label{prop: O[Delta]->T}
For each $\Sigma\subseteq T$ and each $Q$ we have:
\begin{itemize}
	\item The action $\mco[\Delta_Q]\to \TT_{\Sigma,Q}\to \End_{\dcat{\mco}}(C_{\Sigma,Q})$ gives $C_{\Sigma,Q}$ the structure of a perfect complex of $\mco[\Delta_Q]$-modules.
	\item 
	$\TT_{\Sigma,Q}\otimes_{\mco[\Delta_Q]}\mco\cong \TT_\Sigma$
	\item $C_{\Sigma,Q}\lotimes_{\mco[\Delta_Q]}\mco\cong C_{\Sigma}$ in $\rdcat{\mco}{\TT_{\Sigma}}$.
\end{itemize}
\end{proposition}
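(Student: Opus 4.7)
The proof proceeds at the level of the full chain complexes $C_{K_{\Sigma,Q}}$ before $\fm$-localization, with the key geometric input being a Galois covering of arithmetic manifolds. The subgroup $K_{\Sigma,Q}$ is contained in $K_0(\mcn_\Sigma Q) = K_\Sigma \cap K_0(Q)$ and is normal there with quotient isomorphic to $K_0(Q)/K_\Delta(Q) = \Delta_Q$, using coprimality of $\mcn_\Sigma$ and $Q$ together with normality of $K_\Delta(Q)$ in $K_0(Q)$. Since $K_\Sigma \subseteq K_1(t^2)$ is sufficiently small by Lemma \ref{trivial}(iii), Lemma \ref{lem_good_subgroups_act_without_fixed_points}(2) implies $Y_{K_{\Sigma,Q}} \to Y_{K_0(\mcn_\Sigma Q)}$ is a Galois covering with group $\Delta_Q$ acting freely on $Y_{K_{\Sigma,Q}}$.

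Next I match the $\mco[\Delta_Q]$-action on $C_{K_{\Sigma,Q}}$ via the diamond operators $\langle d\rangle_v$ with the action by deck transformations. Because $\begin{pmatrix}\tilde d & 0\\ 0 & 1\end{pmatrix}\in K_0(v)$ normalizes $K_\Delta(v)$ (the quotient $\Delta_v$ being abelian), the double coset $K_{\Sigma,Q}\begin{pmatrix}\tilde d & 0\\ 0 & 1\end{pmatrix}K_{\Sigma,Q}$ collapses to a single left coset and the corresponding double coset operator is precisely right translation. Freeness of the $\Delta_Q$-action then ensures that $C_{K_{\Sigma,Q}}$ is quasi-isomorphic to a bounded complex of finite free $\mco[\Delta_Q]$-modules and that $C_{K_{\Sigma,Q}}\lotimes_{\mco[\Delta_Q]}\mco \simeq C_{K_0(\mcn_\Sigma Q)}$ in $\dcat{\mco}$, compatibly with all Hecke operators. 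Localizing at $\fm_{\Sigma,Q}$, which commutes with $\mco[\Delta_Q]$, then shows that $C_{\Sigma,Q}$ is perfect over $\mco[\Delta_Q]$ and that $C_{\Sigma,Q}\lotimes_{\mco[\Delta_Q]}\mco$ equals the summand of $(C_{K_0(\mcn_\Sigma Q)})_\fm$ specified by $U_v\equiv \gamma_{1,v}\pmod{\fm}$ for each $v\in Q$.

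The main obstacle is to identify this summand with $C_\Sigma$. I will handle the primes $v\in Q$ one at a time, applying the level-lowering maps $\pi_{K,v}\colon (C_{K\cap K_0(v)})_\fm \to (C_K)_\fm^{\oplus 2}$ of Section \ref{ssec: level lowering}. By Lemma \ref{lem: pi U_q}, $\pi_{K,v}$ intertwines $U_v$ on the source with the companion-matrix action of $\begin{pmatrix}T_v & -1\\ q_v & 0\end{pmatrix}$ on the target. Since the characteristic polynomial $x^2 - T_v x + q_v$ has distinct residues $\gamma_{1,v},\gamma_{2,v}$ modulo $\fm$, Hensel's lemma over the complete local ring $\TT(K)_\fm$ factors it as $(x-\tilde\gamma_{1,v})(x-\tilde\gamma_{2,v})$. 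The resulting idempotents split both $(C_{K\cap K_0(v)})_\fm$ into $U_v$-eigencomponents and the companion matrix into its eigenlines, so an appropriate $\mco$-linear combination of $\pi_1$ and $\pi_2$ carries the $U_v=\gamma_{1,v}$-summand of $(C_{K\cap K_0(v)})_\fm$ isomorphically onto $(C_K)_\fm$. Iterating this over $v\in Q$ then delivers the desired isomorphism $C_{\Sigma,Q}\lotimes_{\mco[\Delta_Q]}\mco \simeq C_\Sigma$.

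Finally, the isomorphism $\TT_{\Sigma,Q}\otimes_{\mco[\Delta_Q]}\mco \cong \TT_\Sigma$ is deduced from this derived equivalence. The natural map $\TT_{\Sigma,Q}/\fa_Q \to \TT_\Sigma$ is surjective since both rings are generated over $\mco$ by images of $T_v$ for $v$ outside a sufficiently large finite set; injectivity holds because $\TT_{\Sigma,Q}$ acts faithfully on the perfect $\mco[\Delta_Q]$-complex $C_{\Sigma,Q}$, forcing any element annihilating $C_{\Sigma,Q}\lotimes_{\mco[\Delta_Q]}\mco \simeq C_\Sigma$ to lie in $\fa_Q\TT_{\Sigma,Q}$, while $\TT_\Sigma$ in turn acts faithfully on $C_\Sigma$. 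The hardest step is the level-collapsing one: constructing the isomorphism between the $\fm_{\Sigma,Q}$-summand of $C_{K_0(\mcn_\Sigma Q)}$ and $C_\Sigma$, which is analogous in spirit to Hida-style ordinary projections but must be executed carefully in the perfect derived setting, tracking the Hensel-lifted eigenvalue operators $\tilde\gamma_{1,v}$ through the composition relations between the $\pi_i$ and $\pi_i^\dagger$.
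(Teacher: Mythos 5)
The paper does not actually prove this proposition; it invokes \cite{KT}*{\S 5} and \cite{Calegari/Geraghty:2018}*{Lemma 9.5} and stops there. Your proposal is therefore a genuine self-contained attempt rather than a reconstruction, and much of it is sound: the identification of $Y_{K_{\Sigma,Q}}\to Y_{K_0(\mcn_\Sigma Q)}$ as a Galois cover with group $\Delta_Q$ is correct (normality of $K_{\Sigma,Q}$ inside $K_0(\mcn_\Sigma Q)$ follows as you say from the abelian quotient $K_0(Q)/K_1(Q)$, and the second isomorphism theorem plus coprimality gives the quotient $\Delta_Q$); the derived descent $C_{K_{\Sigma,Q}}\lotimes_{\mco[\Delta_Q]}\mco\simeq C_{K_0(\mcn_\Sigma Q)}$ via the free action is standard; and the level-lowering step via $\pi_{K,v}$ is in outline the right idea --- your computations that $(q_v+1)^2-T_v^2$ and $\gamma_{1,v}^2-1$ are units mod $\fm$ do use precisely the hypotheses on $Q$-primes (namely $q_v\equiv 1$ and $\gamma_{1,v}\ne\gamma_{2,v}$, hence $\gamma_{1,v}\ne\pm1$). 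You correctly flag that iterating the eigenspace-projection over $v\in Q$ and checking Hecke-equivariance needs care, and I agree this is only a sketch.

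The genuine gap is in the final paragraph. The claim that \emph{injectivity} of $\TT_{\Sigma,Q}/\fa_Q\TT_{\Sigma,Q}\to\TT_\Sigma$ ``holds because $\TT_{\Sigma,Q}$ acts faithfully on the perfect $\mco[\Delta_Q]$-complex $C_{\Sigma,Q}$, forcing any element annihilating $C_{\Sigma,Q}\lotimes_{\mco[\Delta_Q]}\mco$ to lie in $\fa_Q\TT_{\Sigma,Q}$'' is a non sequitur. Faithfulness of $\TT_{\Sigma,Q}$ on $C_{\Sigma,Q}$ is a tautology (it is \emph{defined} as a subring of $\End_{\dcat{\mco}}(C_{\Sigma,Q})$) and gives no control over the kernel of the reduction map. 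The issue is that $\hh^0\circ\RHom$ and $(-)\lotimes_{\mco[\Delta_Q]}\mco$ do not commute: the base-change map $\End_{\dcat{\mco[\Delta_Q]}}(C_{\Sigma,Q})\otimes_{\mco[\Delta_Q]}\mco\to\End_{\dcat{\mco}}(C_\Sigma)$ can have kernel and cokernel controlled by Tor and Ext terms, so an endomorphism of a perfect $\mco[\Delta_Q]$-complex can reduce to a null-homotopic map without being divisible by $\fa_Q$ as a homotopy class. Establishing the kernel really is $\fa_Q\TT_{\Sigma,Q}$ requires a substantive argument --- this is exactly the content of the cited results. The argument typically uses either the minimal resolution of $C_{\Sigma,Q}$ over $\mco[\Delta_Q]$ together with a degree-by-degree Nakayama argument, or the $\mco[\Delta_Q]$-freeness of $\hh_d(C_{\Sigma,Q})$ (which in turn is often an output, not an input, of the patching argument). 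As written, your proof of the second bullet is incomplete; you should either supply that argument or, as the paper does, defer to \cite{KT} and \cite{Calegari/Geraghty:2018}.
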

\begin{proof} This is proved in  \cite[\S 5]{KT} in a more general context. It is also proved (for $\GL_2$) in \cite[Lemma 9.5]{Calegari/Geraghty:2018}.
\end{proof}

Now fix $v\in T\sm \Sigma$ and consider the map 
 \[
 \pi_{K,v} = \pi_1\oplus \pi_2\colon (C_{K_{\Sigma \cup \{v\},Q}})_{\fm_{\Sigma,Q}} \lra (C_{K_{\Sigma,Q}})_{\fm_{\Sigma,Q}}^{\oplus 2}
 \]
 arising from Section \ref{ssec: level lowering} (for $K=K_{\Sigma,Q}$, after localizing at $\fm_{\Sigma,Q}$).  We claim that this induces a map  $\pi_v\colon C_{\Sigma\cup\{v\},Q}\to C_{\Sigma,Q}$. To see this we consider the operator 
 \[ 
 \begin{pmatrix}
T_v&-1\\\Nm(v)&0
\end{pmatrix}
\]
that we call $U_v'$, acting on  $(C_{K_{\Sigma,Q}})_{\fm_{\Sigma,Q}}^{\oplus 2}$. Consider the maximal ideal $\fm'_{\Sigma,Q}$ of
\[
\TT_{\Sigma,Q}[U_v']/(U_v'^2-T_vU'_v+\Nm(v)) 
\]
acting on     $(C_{K_{\Sigma,Q}})_{\fm_{\Sigma,Q}}^{\oplus 2}$, that extends the maximal ideal $\fm_{\Sigma,Q}$ and is given by $(\fm_{\Sigma,Q},U'_v-  \epsilon_v)$. We consider the localization of  $(C_{K_{\Sigma,Q}})_{\fm_{\Sigma,Q}}^{\oplus 2}$ at $\fm'_{\Sigma,Q}$. This is isomorphic as a $\TT_{\Sigma,Q}$ module to  $C_{\Sigma,Q}$. Thus we get  a Hecke equivariant  (for the $\TT_{\Sigma \cup \{v\},Q}$-action) map  $\pi_v\colon C_{\Sigma\cup\{v\},Q}\to C_{\Sigma,Q}$. It also induces an adjoint map $\pi_v^\dagger\colon C_{\Sigma,Q} \to C_{\Sigma\cup\{v\},Q}$. (See \cite[\S 2.2]{Wiles:1995} for similar arguments.)

Note that we may treat $U_v$ as an element of $\TT_{\Sigma,Q}$ via Lemma \ref{lem: localization} and the quotient map $\TT_{\Sigma\cup\{v\},Q}\onto \TT_{\Sigma,Q}$. From  the relation  \[
\pi_{K,v}\circ U_v = \begin{pmatrix}
T_v&-1\\\Nm(v)&0
\end{pmatrix}
\circ \pi_{K,v}
\] noted in \S \ref{ssec: level lowering} 
  we get that the action of $U'_v$ on $C_{\Sigma,Q}$ agrees with the action of $U_v\in \TT_{\Sigma,Q}$ on $C_{\Sigma,Q}$.

 Proposition \ref{prop: twisted duality} and a standard  computation  gives  the following:

\begin{proposition}
\label{prop: localized duality}
There is a family of isomorphisms 
\[
\varphi_{\Sigma,Q}\colon C_{\Sigma,Q}\isomto C_{\Sigma,Q}^\dagger
\]
in $\rdcat{\mco}{\TT_{\Sigma,Q}}$ compatible with the isomorphisms $C_{\Sigma,Q}\lotimes_{\mco[\Delta_Q]}\mco\cong C_{\Sigma}$.

Moreover for each $v\in T\sm\Sigma$, if 
\[
\pi_v^\dagger\colon C_{\Sigma,Q} \cong  C_{\Sigma,Q}^\dagger \to  C_{\Sigma\cup \{v\},Q}^\dagger \cong C_{\Sigma\cup\{v\},Q}
\]
is the adjoint of $\pi_v\colon C_{\Sigma\cup\{v\},Q}\to C_{\Sigma,Q}$, upon using the isomorphisms  $\varphi_{\Sigma,Q}$ and $\varphi_{\Sigma\cup\{v\},Q}$, then 
\begin{align*}
\pi_v\circ \pi_v^\dagger & \simeq  U_v^2-1 \in \End_{\dcat{\mco}}(C_{\Sigma,Q})&
&\text{and}& 
\pi_v^\dagger\circ \pi_v & \simeq U_v^2-1 \in \End_{\dcat{\mco}}(C_{\Sigma\cup\{v\},Q}),
\end{align*} where $\simeq$  means up to units in the corresponding Hecke algebras $\TT_{\Sigma,Q}$ and $\TT_{\Sigma\cup\{v\},Q}$ respectively. 
\end{proposition}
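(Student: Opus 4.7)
The plan is to bootstrap Proposition \ref{prop: twisted duality} together with Lemmas \ref{lem: 2x2 matrix} and \ref{lem: pi U_q} by carefully tracking how duality and the degeneracy maps interact with localization at $\fm_{\Sigma,Q}$ and $\fm'_{\Sigma,Q}$.

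First I would obtain $\varphi_{\Sigma,Q}$ as follows. Proposition \ref{prop: twisted duality} applied to $K=K_{\Sigma,Q}$ provides a Hecke-equivariant isomorphism $\varphi_K\colon (C_K)_\fm\isomto (C_K)_\fm^\dagger$ that commutes with all $T_v$, $U_v$, and $\langle d\rangle_v$; as these generate $\fullT^S(K_{\Sigma,Q})$, the map $\varphi_K$ descends to the direct summand cut out by $\fm_{\Sigma,Q}$, yielding $\varphi_{\Sigma,Q}$. For the compatibility $\varphi_{\Sigma,Q}\lotimes_{\mco[\Delta_Q]}\mco\cong\varphi_\Sigma$ under Proposition \ref{prop: O[Delta]->T}, one appeals to naturality of Verdier duality with respect to the transfer map coming from $K_{\Sigma,Q}\subseteq K_\Sigma$, combined with the fact that $\mco[\Delta_Q]$ is Gorenstein (as the group algebra of a finite abelian $\ell$-group over $\mco$) and $C_{\Sigma,Q}$ is perfect over it by Proposition \ref{prop: O[Delta]->T}, so $(-)^\dagger$ commutes with $-\lotimes_{\mco[\Delta_Q]}\mco$ up to the appropriate shift.

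Next, set $K=K_{\Sigma,Q}$ and $L=K\cap K_0(v)$. The map $\pi_v\colon C_{\Sigma\cup\{v\},Q}\to C_{\Sigma,Q}$ is defined by localizing $\pi_{K,v}\colon (C_L)_\fm\to (C_K)_\fm^{\oplus 2}$ at $\fm_{\Sigma\cup\{v\},Q}$ on the source and $\fm'_{\Sigma,Q}$ on the target; the intertwining $\pi_{K,v}U_v=U_v'\pi_{K,v}$ of Lemma \ref{lem: pi U_q} identifies $U_v\equiv\epsilon_v$ on the source with $U_v'\equiv\epsilon_v$ on the target, so this localization is well defined. By functoriality, $\pi_v^\dagger$ is the corresponding localization of $\pi_{K,v}^\dagger$ (the adjoint formed with respect to $\varphi_L$ and $\varphi_K^{\oplus 2}$). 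With these in hand, Lemma \ref{lem: 2x2 matrix} gives $\pi_v^\dagger\pi_v\simeq U_v^2-1$ on $C_{\Sigma\cup\{v\},Q}$ by direct localization of $\pi_{K,v}^\dagger\pi_{K,v}=U_v^2-1$.

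For the remaining identity $\pi_v\pi_v^\dagger\simeq U_v^2-1$, I would localize the matrix formula $\pi_{K,v}\pi_{K,v}^\dagger=\bigl(\begin{smallmatrix}\Nm(v)+1&T_v\\T_v&\Nm(v)+1\end{smallmatrix}\bigr)$. The quadratic relation $(U_v')^2=T_vU_v'-\Nm(v)$ together with the congruence $U_v'\equiv\epsilon_v\bmod\fm'_{\Sigma,Q}$ forces $T_v\equiv\epsilon_v(\Nm(v)+1)\bmod\fm'_{\Sigma,Q}$; using the intertwining of Lemma \ref{lem: pi U_q} to transport $U_v$ through $\pi_v$, one then identifies the descended operator on $C_{\Sigma,Q}$ with a unit multiple of $U_v^2-1$. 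The main obstacle will be precisely this identification: the matrix above does not literally commute with $U_v'$ on $(C_K)_\fm^{\oplus 2}$, so one must argue, via a Bezout-type manipulation invoking the characteristic polynomial of $U_v'$, that after passing to the $\fm'_{\Sigma,Q}$-localization it nonetheless descends coherently to $C_{\Sigma,Q}$ and agrees there, up to units, with the expected Euler factor; the ``$\simeq$ up to units'' convention absorbs the residual scalar arising from this descent.
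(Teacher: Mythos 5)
Your steps for $\varphi_{\Sigma,Q}$ and for $\pi_v^\dagger\circ\pi_v\simeq U_v^2-1$ match the paper's (terse) "localize Proposition \ref{prop: twisted duality} and Lemma \ref{lem: 2x2 matrix}"; that part is fine, although your discussion of compatibility with $-\lotimes_{\mco[\Delta_Q]}\mco$ invokes Gorensteinness of $\mco[\Delta_Q]$ in a way the paper does not make explicit and is somewhat orthogonal to the real issue (the duality is with respect to $\mco$, not $\mco[\Delta_Q]$; compatibility comes from the functoriality of the Verdier pairing under passage between levels, which is the content already packaged in Proposition \ref{prop: twisted duality}).

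The genuine gap is in your treatment of $\pi_v\circ\pi_v^\dagger\simeq U_v^2-1$. You correctly flag the obstruction — the matrix $\begin{pmatrix}\Nm(v)+1&T_v\\T_v&\Nm(v)+1\end{pmatrix}$ does not commute with $U_v'$ on $(C_K)_\fm^{\oplus 2}$, so it is not an endomorphism of the localization $C_{\Sigma,Q}$ in any obvious way, and the mod-$\fm'_{\Sigma,Q}$ congruence $T_v\equiv\epsilon_v(\Nm(v)+1)$ is a statement about residues, not about the full operator. But you then wave at "a Bezout-type manipulation invoking the characteristic polynomial of $U_v'$" without producing it. That manipulation is precisely the content of the proof, and it does not proceed by showing that the matrix \emph{commutes} with $U_v'$ or \emph{descends} abstractly. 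Instead, following Ribet (Proposition 3.3 of \cite{RibCong}), one intertwines $U_v$ through $\pi_{K,v}$ and compares operators on the \emph{image} of $\pi_{K,v}$: one exhibits an explicit invertible $2\times 2$ matrix $u$ with
\[
\pi_{K,v}\circ(U_v^2-1)= u \circ\begin{pmatrix}\Nm(v)+1&T_v\\T_v&\Nm(v)+1\end{pmatrix}\circ\pi_{K,v},
\]
which, since $\pi_{K,v}\circ\pi^\dagger_{K,v}$ is exactly the middle matrix and $\pi_{K,v}\circ(U_v^2-1)$ is exactly $\pi_{K,v}\circ\pi^\dagger_{K,v}\circ\pi_{K,v}$, shows that on the image of $\pi_{K,v}$ the operator $\pi_{K,v}\pi^\dagger_{K,v}$ is a unit times $(U_v')^2-1$; localizing then gives the claim on $C_{\Sigma,Q}$. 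The "Bezout" idea you gesture at would never produce a descent of the matrix to the $\fm'_{\Sigma,Q}$-localization, because as you observe no such descent as an operator exists; the comparison only makes sense \emph{after precomposition with} $\pi_{K,v}$, and that restriction is the step your write-up is missing.
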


\begin{proof}  The first assertion follows from  localizing the results of Proposition \ref{prop: twisted duality}. To see the second assertion above we can use that  (see the proof of \cite[Proposition 3.3]{RibCong} for a similar computation)
 \[  
 \pi_{K,v}  \circ  (U_v^2-1)=  \pi \circ \pi^\dag  \circ
 \begin{pmatrix}
0&-1\\-1&T_v
\end{pmatrix}
\circ \pi_{K,v}\]  as maps  $(C_{K_{\Sigma \cup \{v\},Q}})_{\fm}\to (C_{K_{\Sigma ,Q}})_{\fm}^{\oplus 2}$. 
\end{proof}

\begin{conj}\label{vanishing conj}
If $K\subseteq \PGL_2(\mbb{A}_F^{\infty})$ is any compact open subgroup and $\fm$ is a non-Eisenstein maximal ideal, then $\hh_i(X_K,k)_{\fm} = 0$ for $i\not\in [r_1+r_2,r_1+2r_2]$.
\end{conj}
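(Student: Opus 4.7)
The plan is to deduce the vanishing by combining an Eisenstein-killing argument at the boundary of the Borel--Serre compactification with the Borel--Wallach classification of cohomological representations of $\PGL_2$, and then to transfer the resulting characteristic-zero statement to $k$-coefficients. First I would use Poincar\'e duality to identify $\hh_i(X_K,k)$ with $\hh^{2r_1+3r_2-i}(X_K,k)$; since $i\mapsto 2r_1+3r_2-i$ swaps the two endpoints of the interval $[r_1+r_2,r_1+2r_2]$, it will be enough to bound cohomology in degrees $j<r_1+r_2$.

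For the rational statement I would use the long exact sequence of the pair $(\overline{X}_K,\partial\overline{X}_K)$ arising from the Borel--Serre compactification. The boundary is stratified by proper parabolics of $\PGL_2/F$, all of which are conjugate to the Borel $B$, so the Hecke eigensystems appearing on the boundary cohomology come from cohomology of arithmetic subgroups of the torus $T=B/U$ twisted by characters; the Galois representations attached to these eigensystems are reducible and hence are killed by the non-Eisenstein maximal ideal $\fm$. This would give $\hh^j(X_K,E)_{\fm}\cong \hh^j_!(X_K,E)_{\fm}$, which by Borel's theorem equals cuspidal cohomology. Matsushima's formula then identifies it with a sum over cuspidal automorphic representations $\pi$ of $\PGL_2(\mbb{A}_F)$ of $\pi_f^K\otimes \hh^j(\mathfrak{g},K_\infty;\pi_\infty)$, and only cohomological $\pi$ appear. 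The Borel--Wallach classification tells us that each real place contributes a discrete series with $(\mathfrak{g},K_\infty)$-cohomology concentrated in degree $1$, and each complex place contributes a cohomological principal series with $(\mathfrak{g},K_\infty)$-cohomology in degrees $\{1,2\}$; the K\"unneth formula then forces $\hh^j(\mathfrak{g},K_\infty;\pi_\infty)=0$ unless $j\in[r_1+r_2,r_1+2r_2]$, completing the argument in characteristic zero.

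The hard part, and the reason this remains a conjecture in general, is the passage from $E$- to $k$-coefficients: there is no a priori reason that the $\ell$-torsion in $\hh_i(X_K,\mco)_{\fm}$ should vanish in the degrees where rational cohomology does, and ruling out non-Eisenstein torsion classes outside the tempered range is a well-known open problem. For $F$ a CM field one can try to attach a residual Galois representation to any torsion Hecke eigenclass via the construction of Scholze (as refined in \cite{NewtonThorne, ACC+:2018}) and then use local-global compatibility at $\ell$ together with Poincar\'e duality on the Galois side to force such classes to be Eisenstein, but even there the available results are only modulo a nilpotent ideal. For a general number field this step is genuinely open, so in the present paper the conjecture is assumed as an input, following Calegari--Geraghty \cite{Calegari/Geraghty:2018}.
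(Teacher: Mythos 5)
This statement is a \emph{Conjecture} in the paper, not a theorem; the authors do not prove it and instead take it as an input, following Calegari--Geraghty. You have correctly recognized this, and your write-up is a fair account of the heuristic: the Borel--Serre/Matsushima/Borel--Wallach argument gives the concentration of cuspidal (equivalently, non-Eisenstein-localized) cohomology in degrees $[r_1+r_2,r_1+2r_2]$ over $E$, and the genuine obstacle is the possible existence of non-Eisenstein $\ell$-torsion outside that range, which is open in general.

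One point of contrast worth noting. The only case the paper verifies is $F$ imaginary quadratic, and there the argument is far more elementary than the machinery you invoke: $Y_K$ is then a $3$-manifold, the interval is $[1,2]$, and one only needs that $\hh_0(Y_K,k)$ and $\hh_3(Y_K,k)$ are supported on the trivial/Eisenstein Hecke eigensystem (constants and fundamental classes), hence die after localizing at a non-Eisenstein $\fm$. This argument works directly with $k$-coefficients and does not pass through characteristic zero at all, which is exactly why it succeeds where the Matsushima/Borel--Wallach route does not: the latter only controls $E$-cohomology and gives no handle on torsion. So your sketch is correct as an explanation of why the conjecture is plausible and why it is hard, but it would not by itself recover even the imaginary quadratic case with $k$-coefficients; the paper's observation there is of a different, purely topological, nature.
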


This  conjecture is known when $F$ is an imaginary quadratic field  as then $X_K$ is a 3-dimensional manifold and  $\hh_i(X_K,k)_{\fm} = 0$ for $i=0,3$ (as $\fm$ is a non-Eisenstein maximal ideal). 

\subsection{Level raising and Ihara's Lemma}\label{sec:Ihara}

\begin{conj}\label{Ihara conj}
Let $K$ and $v$ be as in Section \ref{ssec: level lowering}, and assume that $\fm\subseteq \TT^S(K)$ is a \emph{non-Eisenstein} maximal ideal. Let $\pi_{K,v}\colon (C_{K\cap K_0(v)})_\fm\to (C_K)_\fm^{\oplus 2}$ be the level lowering map.

Then the induced map 
\[\hh_{r_1+r_2}(\pi_{K,v})\colon \hh_{r_1+r_2}(Y_{K\cap K_0(v)},\mco)_\fm \to \hh_{r_1+r_2}(Y_{K},\mco)_\fm^{\oplus 2}\]
is surjective.
\end{conj}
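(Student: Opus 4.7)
The plan is to follow the classical strategy for Ihara's lemma, adapted to our general number field setting, by analyzing the mapping cone of $\pi_{K,v}$ via the action on the Bruhat--Tits tree at $v$ and then invoking strong approximation.

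First, I would reformulate the statement as a vanishing of a cokernel. By Proposition \ref{prop: localized duality}, the adjoint $\pi_{K,v}^\dagger$ is (up to the involution $w_v$) another level-change map. Surjectivity in degree $r_1 + r_2$ is equivalent, via Conjecture \ref{vanishing conj} (which forces $\hh_{r_1+r_2-1}(Y_K,\mco)_\fm$ to be the lowest non-vanishing degree and thus torsion-free up to factors we can control), to the assertion that after localization at $\fm$, the cokernel
\[
\Coker\!\left(\hh_{r_1+r_2}(Y_{K \cap K_0(v)},\mco)_\fm \lra \hh_{r_1+r_2}(Y_K,\mco)_\fm^{\oplus 2}\right)
\]
vanishes. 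Equivalently, by Nakayama, one may reduce modulo $\varpi$ and work with $k$-coefficients.

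Next I would build the geometric exact sequence. The homogeneous space $\PGL_2(F_v)/K_0(v)$ is the vertex set of the Bruhat--Tits tree $\mathcal{T}_v$, and $\PGL_2(F_v)/(K_0(v) \cap w_v K_0(v) w_v^{-1})$ is its edge set. Consequently, $Y_{K \cap K_0(v)}$ is the quotient of $Y_K \times_{\mathrm{pt}} (\text{edges of }\mathcal{T}_v)$ by the diagonal action of an arithmetic group $\Gamma$, while $Y_K^{\sqcup 2}$ is the corresponding quotient on vertices. The chain-level Mayer--Vietoris sequence for the tree gives a long exact sequence whose connecting map is precisely (the cohomological shadow of) $\pi_{K,v}$; its cokernel in degree $r_1+r_2$ fits into an exact sequence involving $\hh_{r_1+r_2}$ of the quotient $\Gamma \backslash \mathcal{T}_v$ with coefficients in a suitable local system, after localization at $\fm$.

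Finally—and this is the step where the main obstacle lies—I would argue that strong approximation for $\mathrm{SL}_2/F$ makes $\Gamma \backslash \mathcal{T}_v$ ``connected enough'' to force the obstruction class to lie in the Eisenstein part. Concretely, strong approximation identifies the connected components with a finite abelian group on which $\GL_2(F_v)$ acts through its determinant, so any non-Eisenstein contribution to the cokernel would produce an abelian quotient of $\rhobar_\fm$, contradicting the hypothesis that $\rhobar_\fm|_{G_{F(\zeta_\ell)}}$ is absolutely irreducible. The main obstacle is that passing from the tree-theoretic/adelic statement to the assertion about $\mco$-coefficient homology requires either a torsion-free result on the quotient (available for $F$ imaginary quadratic because the $Y_K$ are $3$-manifolds and $\hh_0,\hh_3$ are controlled) or a substitute for the congruence subgroup property in higher rank. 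For imaginary quadratic $F$ this can be pushed through unconditionally using the explicit geometry of Bianchi manifolds and direct calculation with the tree action; in general, controlling the $\ell$-torsion in $\hh_*(\Gamma \backslash \mathcal{T}_v, \mco)_\fm$ is precisely what keeps this statement conjectural.
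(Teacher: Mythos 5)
The statement you are trying to prove is stated in the paper as a \emph{conjecture}: the paper offers no proof of Conjecture~\ref{Ihara conj} in general, and indeed cannot, since it remains open beyond the imaginary quadratic case. What the paper does record, in Theorem~\ref{Ihara's Lemma}, is that the conjecture holds when $F$ is imaginary quadratic, citing \cite[\S 4.1]{CV} and \cite{Klosin}, with the one-line justification that it follows from the congruence subgroup property of $\SL_2(\mco_F[\tfrac{1}{v}])$.

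Your proposal correctly recognizes the conjectural status of the statement, correctly identifies the Bruhat--Tits tree action and a CSP-style input as the engine of the argument, and correctly observes that the argument closes unconditionally only in the imaginary quadratic case. In that sense your approach is aligned with the paper's citation for Theorem~\ref{Ihara's Lemma}. Two remarks on precision. First, your diagnosis of the remaining obstruction is slightly mislocated: the CSP for $\SL_2(\mco_F[\tfrac{1}{v}])$ holds for essentially all number fields $F$ (Serre's theorem applies once the $S$-unit group is infinite, which holds with one finite place in $S$), so the obstacle is not the CSP itself. Rather, the classical deduction of Ihara's lemma from CSP proceeds via identifying $\hh_1$ of the arithmetic quotient with an abelianization and controlling that via congruence quotients; when $r_2 > 1$, the target degree $r_1 + r_2$ is no longer degree one, and the abelianization argument does not apply directly. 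This, not torsion in the tree quotient $\Gamma\backslash\mathcal{T}_v$ (which is just a finite graph), is what keeps the general statement conjectural. Second, your reduction modulo $\varpi$ via Nakayama implicitly uses Conjecture~\ref{vanishing conj} to assert that $\hh_{r_1+r_2}$ sits in the bottom degree; this is fine since the paper also assumes Conjecture~\ref{vanishing conj} throughout Part III, but the dependence is worth flagging explicitly.
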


Note that by Conjecture \ref{vanishing conj}, $\hh_{i}(Y_{K},\mco)_\fm=0$ for $i<r_1+r_2$, and so Conjecture \ref{Ihara conj} concerns the smallest degree in which the homology $\hh_{*}(Y_{K},\mco)_\fm$ does not vanish.

As the map $\pi_v\colon C_{\Sigma\cup\{v\},Q}\to C_{\Sigma,Q}$ constructed above is a localization of the map $\pi_{K_{\Sigma,Q},v}$, we get:

\begin{lemma}\label{lem: Ihara U_v}
If Conjecture \ref{Ihara conj} holds for $K = K_{\Sigma,Q}$ then the map 
\[
\hh_{r_1+r_2}(\pi_v)\colon \hh_{r_1+r_2}(C_{\Sigma\cup\{v\},Q})\to \hh_{r_1+r_2}(C_{\Sigma,Q})
\]
 is surjective.
\end{lemma}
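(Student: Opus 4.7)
The plan is to deduce the lemma directly from Conjecture \ref{Ihara conj} by localizing the surjective map it provides. By construction (see the discussion preceding Proposition \ref{prop: localized duality}), the map $\pi_v$ is obtained from the level-lowering map
\[
\pi_{K_{\Sigma,Q},v} \colon (C_{K_{\Sigma\cup\{v\},Q}})_{\fm_{\Sigma,Q}} \lra (C_{K_{\Sigma,Q}})_{\fm_{\Sigma,Q}}^{\oplus 2}
\]
by localizing the target at the maximal ideal $\fm'_{\Sigma,Q} = (\fm_{\Sigma,Q}, U_v' - \epsilon_v)$ of the ring $\TT_{\Sigma,Q}[U_v']/(U_v'^2 - T_v U_v' + \Nm(v))$, where $U_v'$ is the companion-matrix operator from Lemma \ref{lem: pi U_q}, and simultaneously localizing the source at $(\fm_{\Sigma,Q}, U_v - \epsilon_v)$. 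The equivariance $\pi_{K_{\Sigma,Q},v}\circ U_v = U_v'\circ \pi_{K_{\Sigma,Q},v}$ of Lemma \ref{lem: pi U_q} ensures these localizations are compatible.

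First, I would identify the localizations explicitly. Because $v\in T\sm\Sigma$, one has $q_v\not\equiv 1\pmod\ell$ and the eigenvalues $\epsilon_v, q_v\epsilon_v$ of $\rhobar_\fm(\Frob_v)$ are distinct in $k$. Hence the polynomial $x^2 - T_v x + \Nm(v)$ factors modulo $\fm_{\Sigma,Q}$ into two coprime linear factors, and so localization of $(C_{K_{\Sigma,Q}})_{\fm_{\Sigma,Q}}^{\oplus 2}$ at $\fm'_{\Sigma,Q}$ picks out one eigenspace, which is canonically isomorphic to $C_{\Sigma,Q}$ as a $\TT_{\Sigma,Q}$-module. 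On the source, localizing $(C_{K_{\Sigma\cup\{v\},Q}})_{\fm_{\Sigma,Q}}$ at $U_v - \epsilon_v$ recovers $C_{\Sigma\cup\{v\},Q}$ (using Lemma \ref{lem: localization}).

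Next I would apply $\hh_{r_1+r_2}$ to the Ihara map. Conjecture \ref{Ihara conj}, applied to $K = K_{\Sigma,Q}$, yields a surjection
\[
\hh_{r_1+r_2}(\pi_{K_{\Sigma,Q},v}) \colon \hh_{r_1+r_2}(Y_{K_{\Sigma\cup\{v\},Q}},\mco)_\fm \twoheadrightarrow \hh_{r_1+r_2}(Y_{K_{\Sigma,Q}},\mco)_\fm^{\oplus 2}.
\]
Since localization at a multiplicative set is exact, localizing both sides at $\fm'_{\Sigma,Q}$ preserves surjectivity. By the identifications in the previous paragraph, the localized source is $\hh_{r_1+r_2}(C_{\Sigma\cup\{v\},Q})$ and the localized target is $\hh_{r_1+r_2}(C_{\Sigma,Q})$, and the localized map is $\hh_{r_1+r_2}(\pi_v)$, which is therefore surjective.

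The only step requiring any care is checking that the two localizations---on source and target---really do give the complexes $C_{\Sigma\cup\{v\},Q}$ and $C_{\Sigma,Q}$ respectively, and I do not expect any serious obstacle here: this is essentially bookkeeping with the Hecke action, for which the equivariance formula in Lemma \ref{lem: pi U_q} and the fact that the two roots of $x^2 - T_v x + \Nm(v)$ are distinct mod $\fm$ are exactly what is needed.
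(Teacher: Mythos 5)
Your proposal is correct and is essentially the paper's own argument: the paper states the lemma immediately after observing that $\pi_v$ is obtained from $\pi_{K_{\Sigma,Q},v}$ by localization (at $\fm'_{\Sigma,Q}=(\fm_{\Sigma,Q},U_v'-\epsilon_v)$ on the target, compatibly with $U_v-\epsilon_v$ on the source via Lemma~\ref{lem: pi U_q}), and surjectivity then follows since localization is exact. You have simply made explicit the bookkeeping that the paper leaves implicit, including the use of $q_v\not\equiv 1\pmod\ell$ to guarantee that $x^2-T_vx+\Nm(v)$ has distinct roots mod $\fm$ so that the $\fm'_{\Sigma,Q}$-localization of $(C_{K_{\Sigma,Q}})_{\fm_{\Sigma,Q}}^{\oplus 2}$ recovers a single copy $C_{\Sigma,Q}$.
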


\begin{theorem}\label{Ihara's Lemma}
When $F$ is an imaginary quadratic field and $\ell>3$, Conjecture \ref{Ihara conj} is true. Namely  the map $\hh_1(\pi_{K,v})\colon \hh_{1}(Y_{K\cap K_0(v)},\mco)_\fm \to \hh_{1}(Y_{K},\mco)_\fm^{\oplus 2}$  is surjective.
\end{theorem}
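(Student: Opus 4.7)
The plan is to apply Bass--Serre theory to the diagonal action of an $S$-arithmetic subgroup on the product of hyperbolic 3-space with the Bruhat--Tits tree $\mathcal T_v$ of $\PGL_2(F_v)$, thereby reducing the surjectivity in degree $r_1+r_2 = 1$ to a vanishing result on $S$-arithmetic group homology established by Calegari--Venkatesh \cite{CV}.

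Concretely, write $K = K^v \cdot K_v$ with $K_v = \PGL_2(\mco_{F,v})$, set $L = K \cap K_0(v)$, and let $\Gamma_S \subset \PGL_2(F)$ be the $S$-arithmetic subgroup consisting of elements whose image in $\PGL_2(\mathbb A_F^{\infty, v})$ lies in $K^v$.  The group $\Gamma_S$ acts on $\mathcal T_v \times \mathbb H^3$, where $\mathbb H^3 = \PGL_2(\mathbb C)/\PU(2)$.  Because the level structure at the auxiliary prime $t$ of Lemma \ref{trivial} makes $K$ sufficiently small, all the relevant stabilizers act freely on $\mathbb H^3$.  On $\mathcal T_v$ the two $\PGL_2(F_v)$-orbits of vertices both have stabilizers conjugate to $\Gamma_K := K \cap \PGL_2(F)$ (via the elements $1$ and $\bigl(\begin{smallmatrix}\varpi_v & 0 \\ 0 & 1\end{smallmatrix}\bigr)$), while the edges have stabilizer conjugate to $\Gamma_L$; taking the quotient of $\mathbb H^3$ by these yields two copies of $Y_K$ and one copy of $Y_L$, respectively.

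The Mayer--Vietoris long exact sequence for a group acting on a tree (Serre, \emph{Trees}, II.2.8), applied after taking the product with $\mathbb H^3$, then gives an exact sequence of $\TT^S(K^v)$-modules
\[
\cdots \to \hh_i(Y_L, \mco) \xrightarrow{\;p_{1*} \oplus p_{2*}\;} \hh_i(Y_K, \mco)^{\oplus 2} \to \hh_i(\Gamma_S, \mco) \to \hh_{i-1}(Y_L, \mco) \to \cdots ,
\]
and unwinding definitions identifies the first map with the Ihara map $\pi_{K,v}$.  Localizing at $\fm$ and taking $i = 1$, the surjectivity of $\hh_1(\pi_{K,v})_\fm$ is equivalent to the vanishing of the image of $\hh_1(Y_K, \mco)_\fm^{\oplus 2}$ in $\hh_1(\Gamma_S, \mco)_\fm$, and in particular follows from
\[
\hh_1(\Gamma_S, \mco)_\fm \;=\; 0.
\]

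The main obstacle is establishing this last vanishing; the formalism above is otherwise essentially mechanical.  This is exactly the content of Ihara's lemma for Bianchi groups, proved by Calegari--Venkatesh by analyzing the contributions to $\hh_1(\Gamma_S, \mco)$ coming from the Borel--Serre compactification of the (non-finite-volume) quotient $\Gamma_S\backslash(\mathcal T_v \times \mathbb H^3)$: cuspidal contributions would produce Galois representations unramified at $v$, and the remaining boundary (Eisenstein) contributions force the residual Galois representation to be reducible.  Since $\rhobar_\fm|_{G_{F(\zeta_\ell)}}$ is absolutely irreducible (in particular $\fm$ is non-Eisenstein), both types of contributions die after localization at $\fm$, giving the required vanishing.
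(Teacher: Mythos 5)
Your Bass–Serre/Mayer–Vietoris reduction is the correct skeleton and is exactly what underlies \cite[\S 4.1]{CV}: identifying $\Gamma_S = \PGL_2(F)\cap K^v$ acting on $\mathcal T_v \times \mathbb H^3$, reading off the two vertex stabilizers and the edge stabilizer, and producing the exact sequence
\[
\hh_1(Y_L,\mco)\xrightarrow{\;\pi_{K,v}\;}\hh_1(Y_K,\mco)^{\oplus 2}\longrightarrow \hh_1(\Gamma_S,\mco)\longrightarrow \hh_0(Y_L,\mco)\longrightarrow\cdots
\]
which reduces the claim to $\hh_1(\Gamma_S,\mco)_\fm=0$. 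This part is right and is what the paper elides by citation.

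The gap is the justification of $\hh_1(\Gamma_S,\mco)_\fm=0$. You attribute it to a Borel–Serre compactification of $\Gamma_S\backslash(\mathcal T_v\times\mathbb H^3)$ and an analysis of cuspidal versus boundary contributions, with the claim that cuspidal classes would yield Galois representations unramified at $v$. That is not the argument, and as stated it does not obviously work: $\Gamma_S$ has no level structure at $v$, so any Galois representation extracted from a cuspidal class in $S$-arithmetic homology would \emph{a priori} be allowed to be (e.g.\ Steinberg) ramified at $v$, not forced to be unramified; and the quotient is not a finite-volume arithmetic manifold, so the usual cuspidal/Eisenstein dichotomy does not apply off the shelf. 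The actual input — named explicitly in the paper and central to \cite[\S 4.1]{CV} and \cite{Klosin} — is Serre's \emph{congruence subgroup property} for $\SL_2(\mco_F[\tfrac{1}{v}])$ when $F$ is imaginary quadratic. CSP gives that the profinite completion of $\Gamma_S$ is (up to a finite central congruence kernel) its congruence completion, so the abelianization $\hh_1(\Gamma_S,\Z)$ is a congruence abelianization; the Hecke action on it is therefore Eisenstein, and it dies after localizing at the non-Eisenstein $\fm$. You should replace the Borel–Serre paragraph with this CSP argument; everything else in your proposal stands.
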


\begin{proof} This is well known; see \cite[\S 4.1]{CV} or \cite{Klosin}. It is a consequence of the congruence subgroup property of $\SL_2(\mco_F[\frac{1}{v}])$ for $F$ an imaginary quadratic field.
\end{proof}

\section{Hecke algebras for weight one modular forms}
\label{sec:Hecke wt1}

In this section we work exclusively in Case \ref{case:wt1}, so restrict to the case $F=\QQ$ and fix a finite set $\disc$ of primes with even cardinality. We consider weight one modular forms defined on a Shimura curve $X^\disc_K$ (defined below) of discriminant $\disc$.  Frequently, we use $\disc$ also to denote the product of all the primes in the set $\disc$. The context will make clear which meaning is intended.

Theorem \ref{th:intro-5b} from the introduction was proved by Calegari \cite{Calegari}  when $\disc = \es$, i.e. the modular curve case, so  (in the proofs below) we assume  $\disc\ne \es$ for convenience. We also fix a prime $\ell>3$ which is not in $\disc$ and for all $v\in\disc$, $v\not\equiv 1 \pmod{\ell}$. Let $E$, $\mco$, $\varpi$ and $k$ be as before.

\subsection{Weight one sheaves and complexes on Shimura curves}
\label{shimura-weight one}
Let $D_\disc$ be the quaternion algebra over $\QQ$, ramified precisely at the primes in $\disc$. As $\disc$ has even cardinality, $D_\disc$ is indefinite.

Let $K = \prod_q K_q \subseteq D_\disc^\times(\A_\QQ^\infty)$ be a compact open subgroup such that: 
\begin{itemize}
	\item $K_q$ is the maximal compact subgroup of $D_{\disc}^\times(\QQ_q)$ when $q\in\disc$;
	\item $K_\ell$ is a maximal compact subgroup of $\PGL_2(\QQ_\ell)$;
	\item $K$ is \emph{sufficiently small}, in the sense that $gKg^{-1}\cap D_{\disc}^\times(\QQ)$ is torsion free for all $g\in D_{\disc}^\times(\AA_F^\infty)$;
	\item and the determinant map $K$ to  $\Pi_q \ZZ_q^*$ is surjective. 
\end{itemize} 
A typical such $K$ that we use is the subgroup $K_{H,\Delta}^\disc(N_\Sigma,Q)$ defined below.

Consider the (compact) Riemann surface 
\[
X^\disc_K \colonequals D^\times(\QQ)\backslash \left(D^\times(\A_{\QQ}^\infty)\times \half^{\pm}\right)/K
\]
where $\half^{\pm} := \CC\setminus \RR$ is the complex upper and lower half planes. Give $X^\disc_K$ its canonical structure as an algebraic curve over $\QQ$. Let $\X^\disc_K$ be a minimal integral model for $X^\disc_K$ over $\mco$, and for any $\mco$-algebra $A$, let $\X^\disc_{K,A}$ be the base change of $\X^\disc_{K}$ to $A$.

For $A=\mco,E,\mco/\varpi^n\mco$, 
let $\omega_A$ be the sheaf on $X^\disc_K$ arising from (applying an idempotent $e$ as in \cite[\S 4]{DT2}   which satisfies the condition $e^*=e$ for a Rossati involution $*$  to)   $ \pi_* \Omega_{\mathcal A/\X^\disc_{K,A}}$ for the universal abelian surface $\pi\colon \mca \to \X^\disc_{K,A}$. For any $n\ge 1$ define the \emph{coherent homology group} as in \cite[\S 7.2]{Calegari/Geraghty:2018} by
\[
\hh_i(X^\disc_K,\omega_{\mco/\varpi^n\mco}) := \Hom_{\mco}(\hh^i(X^\disc_K,\omega_{\mco/\varpi^n\mco}),E/\mco)
\]
and define
\[
\hh_i(X^\disc_K,\omega_{\mco}) := \varprojlim_n\hh_i(X^\disc_K,\omega_{\mco/\varpi^n\mco}).
\]
Note that if we define the sheaf $\omega_{E/\mco}$ to be the direct limit  over $n$ of the sheaves $\omega_{\varpi^{-n}\mco/\mco}$ then 
\[
\hh_i(X^\disc_K,\omega_{\mco}) = \Hom_\mco(\hh^i(X^\disc_K,\omega_{E/\mco}), E/\mco)
\]

By the work of \cite[\S 7.2]{Calegari/Geraghty:2018}, one can construct perfect complexes $C_K^{\disc}$ in $\dcat{\mco}$ computing $\hh_i(X^\disc_K,\omega_{\mco})$, which will play a similar in our argument to the complexes $C_K$ from Section \ref{sec:Hecke}.
\begin{proposition}\label{prop:C^D_K wt 1}
Let $K\subseteq D^\times_{\disc}(\A^\infty_{\QQ})$ be a compact open subgroup satisfying the above properties. Then 
\begin{enumerate}[\quad\rm(1)]
	\item Then there is a perfect complex $C_K^{\disc}$ in $\dcat{\mco}$ such that for all $i$:
	\begin{enumerate}[\quad\rm(a)]
		\item $\hh_i(C_K^{\disc}\lotimes_{\mco}k) = 0$ for $i\not\in [0,1]$;
		\item $\hh_i(C_K^{\disc}\lotimes_{\mco}\mco/\varpi^n)\cong \hh_i(X^\disc_K,\omega_{\mco/\varpi^n\mco})$ for all $n\ge 0$;
		\item $\hh_i(C_K^{\disc})\cong \hh_i(X^\disc_K,\omega_{\mco})$.
	\end{enumerate}
	\item Let $K_\Delta\subseteq D^\times_{\disc}(\A^\infty_{\QQ})$ be another compact open subgroup satisfying the above properties we with $K_\Delta\unlhd K$ and $\Delta:= K/K_\Delta$ a finite abelian group of $\ell$-power order. Then there is a perfect complex $C^{\disc}_{K_\Delta}$ in $\dcat{\mco[\Delta]}$ such that for all $i$:
	\begin{enumerate}[\quad\rm(a)]
		\item $\hh_i(C_{K_\Delta}^{\disc}\lotimes_{\mco[\Delta]}k) = 0$ for $i\not\in [0,1]$;
		\item $\hh_i(C_{K_\Delta}^{\disc}\lotimes_{\mco[\Delta]}\mco/\varpi^n[\Delta])\cong \hh_i(X^\disc_{K_\Delta},\omega_{\mco/\varpi^n\mco})$ as $\mco[\Delta]$-modules for all $n\ge 0$;
		\item $\hh_i(C_{K_\Delta}^{\disc})\cong \hh_i(X^\disc_{K_\Delta},\omega_{\mco})$ as $\mco[\Delta]$-modules;
		\item There is a quasi-isomorphism $C_{K_\Delta}^{\disc}\lotimes_{\mco[\Delta]}\mco\cong C_K^\disc$ in $\dcat{\mco}$,
	\end{enumerate}
	where the $\mco[\Delta]$-module structures on $\hh_i(X^\disc_{K_\Delta},\omega_{\mco/\varpi^n\mco})$ and $\hh_i(X^\disc_{K_\Delta},\omega_{\mco})$ are induced by the action of $\Delta$ on $X_{K_\Delta}^{\disc}$.
\end{enumerate}
\end{proposition}

\begin{proof}
Note that (1) is simply the special case of (2) with $K_\Delta = K$, so it is enough to prove (2). The assumption that $K$ is sufficiently small implies that the quotient map $X^{\disc}_{K_\Delta}\to X^{\disc}_K$ is {\'e}tale with Galois group $\Delta$. Thus the work of \cite[\S 7.2]{Calegari/Geraghty:2018} constructs (using \v{C}ech complexes associated to an affine open cover of $X^{\disc}_K$) for each $n\ge 1$ a perfect complex $C^{\disc}_{K_{\Delta},n}$ of $(\mco/\varpi^n\mco)[\Delta]$-modules with:
\begin{align*} 
\hh_i(C^{\disc}_{K_{\Delta},n}) &\cong \hh_i(X^\disc_{K_\Delta},\omega_{\mco/\varpi^n\mco}),\\ C^{\disc}_{K_{\Delta},n+1}\lotimes_{(\mco/\varpi^{n+1}\mco)[\Delta]}(\mco/\varpi^{n}\mco)[\Delta]&\cong C^{\disc}_{K_{\Delta},n},\\
C^{\disc}_{K_{\Delta},n}\lotimes_{(\mco/\varpi^{n}\mco)[\Delta]}(\mco/\varpi^{n}\mco) &\cong C^{\disc}_{K,n}.
\end{align*}
Now \cite[Lemmas 2.13, 2.14]{KT} produce a perfect complex $C^{\disc}_{K_{\Delta}}$ in $\dcat{\mco[\Delta]}$ with $C^{\disc}_{K_{\Delta}}\lotimes_{\mco[\Delta]}(\mco/\varpi^n\mco)[\Delta]\cong C^{\disc}_{K_{\Delta},n}$ and
\[
\hh_i(C^{\disc}_{K_{\Delta}}) \cong  \varprojlim_n\hh_i(X^\disc_K,\omega_{\mco/\varpi^n\mco}) = \hh_i(X^\disc_K,\omega_{\mco}).
\]
It is now straightforward to verify that $C^{\disc}_{K_{\Delta}}$ satisfies all of the listed properties. Note that property (a) follows as $\hh_i(C_{K_\Delta}^{\disc}\lotimes_{\mco[\Delta]}k) = \hh_i(C_{K_\Delta,1}^{\disc}) = \hh_i(X^\disc_{K_\Delta},\omega_{k})$ vanishes for $i\ne 0,1$.
\end{proof}

One can now define actions of double coset operators on the complexes $C^{\disc}_K$ by the procedure described in \cite[\S 7.2]{Calegari/Geraghty:2018} (combined with the work of \cite[\S 2.4]{KT} to lift the actions of the double coset operators on $C^{\disc}_{K,n}$ to an action on $C^{\disc}_K$).

We will now use this to define Hecke algebras analogously to Section \ref{sse:Hecke}. Let $S$ be a set of primes including the primes in $\disc$, the primes $v$ for which $K_v$ is not maximal compact, and the prime $\ell$. For $v\not\in S$, let $T_v\in\End_{\dcat{\mco}}(C_K^\disc)$ denote the double coset operator
\[
T_v = \left[K\begin{pmatrix}\varpi_v&0\\0&1\end{pmatrix}K\right]
\]
also for $v \in S\setminus \disc$ with $v\ne \ell$, $K_1(v)\le K_v\le K_0(v)$ and $d\in K_0(v)/K_v\into (\mco_F/v)^\times$, let $U_v,\langle d\rangle_v\in \End_{\dcat{\mco}}(C_K^\disc)$ denote the double coset operators
\begin{align*}
	U_v &= \left[K\begin{pmatrix}\varpi_v&0\\0&1\end{pmatrix}K\right]&
	&\text{and}&
	\langle d\rangle_v &= \left[K\begin{pmatrix}\widetilde{d}&0\\0&1\end{pmatrix}K\right]
\end{align*}
for any lift $\widetilde{d}\in \ZZ_{v}^\times$ of $d$. Set
\[
 \TT^{S}(K) \colonequals \mco[T_v|v\not\in S ]\subseteq \End_{\dcat{\mco}}(C^\disc_{K})
 \]
which is a finite, commutative $\mco$-algebra. 

Before going on, we should address a subtlety with the definition of $\TT^S(K)$. We have defined $\TT^{S}(K)$ as a subring of $\End_{\dcat{\mco}}(C^\disc_{K})$, whereas most sources (including \cite{Calegari/Geraghty:2018}, which we rely on heavily in the proof of Theorem \ref{rhobar conj-weight one}) define it as a subring of $\End_{\mco}(\hh^0(X^\disc_K, \omega_{E/\mco})) = \End_{\mco}(\hh_0(X^\disc_K, \omega_{\mco})) = \End_{\mco}(\hh^0(C^\disc_{K}))$.  
A priori, this means that the classical Hecke algebra will only be a quotient of $\TT^{S}(K)$, which  makes it difficult to apply available results in our setting.

Fortunately the  result below shows that our definition of $\TT^S(K)$ agrees with the classical definition, and so we may freely use results proved using the classical one.

\begin{proposition}\label{prop:H0 wt 1}
The ring $\TT^S(K)\subseteq \End_{\dcat{\mco}}(C^\disc_{K})$ acts faithfully on $\hh^0(C^\disc_{K})$.
\end{proposition}

For the remainder of this section we  let define $(-)^\dagger\colon \rdcat{\mco}{A}\to \rdcat{\mco}{A}$ by 
\[
C^{\dagger} \colonequals \RHom_\mco(C,\mco)[1].
\]
(more specifically, will take $d=0$ and $l_0=1$ in the results of Sections \ref{se:patching} and \ref{se:duality}). Proposition \ref{prop:H0 wt 1} is a consequence of the following analogue of Proposition \ref{prop: Verdier}:

\begin{proposition}
	\label{prop: duality-weight one}
	For each $K$, there is a natural derived $\TT^S(K)$-equivariant isomorphism, 
	\[
	C^\disc_{K}\cong \RHom_{\mco}(C^\disc_{K},\mco)[1] = (C^\disc_{K})^{\dagger}
	\]
	in $\rdcat{\mco}{\TT^S(K)}$.
\end{proposition}

\begin{proof}
	The existence of this isomorphism follows from the arguments  of \cite[Section 3.2]{Boxer/Pilloni:2020}, using Serre duality applied to the sheaves $\omega_A$ and the Kodaira-Spencer isomorphism, which gives $\omega_A^{\otimes 2}=\Omega_A$ where $\Omega_A$ is the canonical sheaf on the Shimura curve $X^\disc_{\Sigma,Q}$.    After this the Proposition \ref{prop: duality-weight one} follows from  arguments analogous to the ones given in Section \ref{sec:Hecke} (see proof of Proposition \ref{prop: Verdier}).
\end{proof}

In particular, this implies that if $f\in \TT^S(K)$ acts trivially on $\hh^0(C^\disc_{K})$ then its dual $f^{\dagger}$ acts trivially on $\hh^0((C^\disc_{K})^{\dagger}) \cong \hh^0(C^\disc_{K})$ as well. Thus Proposition \ref{prop:H0 wt 1} follows immediately from Lemma \ref{lem:faithful-H0}.

We  also need the existence of a Hasse invariant in this setting.

\begin{lemma}\label{lem: Hasse}
	Let $X$ be  the Shimura curve  $X^\disc_K$. There is an element ${\rm Ha}$ in  $\hh^0(X_k,\omega^{\otimes (\ell-1)})$ (called a Hasse invariant)   which has simple zeros at all the supersingular points of $X_k$, and such that the map  
	\[
	\hh^0(X_k,\omega^{\otimes k}) \to \hh^0(X_k,\omega^{\otimes (k+\ell-1)})
	\] 
	given by multiplication by ${\rm Ha}$ is equivariant for Hecke operators $T_r$ and $U_r$ for all primes $r \neq \ell$ and all weights $k\ge 1$, and is also equivariant for $U_\ell$ if $k\ge 2$.
	
	Further this section lifts to a section $\widetilde{\rm Ha} \in  \hh^0(X_\mco,\omega^{\otimes (\ell-1)})$ such that for all $m\ge 1$ and all weights $k\ge 1$, the map
	\[
	\hh^0(X_{\mco/\varpi^m},\omega_{\mco/\varpi^m}^{\otimes k}) \to \hh^0\left(X_{\mco/\varpi^m},\omega_{\mco/\varpi^m}^{\otimes \left(k+(\ell-1)\ell^{m-1}\right)}\right)
	\] 
	given by multiplication by $\left(\widetilde{\rm Ha}\right)^{\ell^{m-1}}$ is equivariant for Hecke operators $T_r$ for all $r\not\in S$.
\end{lemma}

\begin{proof}
	The section ${\rm Ha}$ exists by \cite[Section 5]{Buzzard:1997}. The integral lift is constructed in \cite[Section 7]{Kassaei:2004}; this relies on the assumption that $\ell>3$.
	
	The final statement about Hecke equivariance follows by the argument given in \cite[Proposition 7.2.1]{Boxer:2015}. The key point is that while the lift $\widetilde{\rm Ha} \in  \hh^0(X_\mco,\omega^{\otimes (\ell-1)})$ is not canonical, the image of $\left(\widetilde{\rm Ha}\right)^{\ell^{m-1}}$ in $\hh^0\left(X_{\mco/\varpi^m},\omega_{\mco/\varpi^m}^{\otimes (\ell-1)\ell^{m-1}}\right)$ will be independent of the choice of $\widetilde{\rm Ha}$, which allows one to mimic the proof that multiplication by ${\rm Ha}$ itself if Hecke equivariant.
\end{proof}

\subsection{The conjectures \ref{rhobar conj} and \ref{R->T conj}  in weight one}
Here is the analog of Conjecture \ref{rhobar conj} in this setting; we thank George Boxer for explaining the argument to us.

\begin{theorem}
\label{rhobar conj-weight one}
The following statements hold.
	\begin{enumerate}[\quad\rm(i)]
		\item
		For every maximal ideal $\fm$ of $\TT^{S}(K)$, there is a  semisimple  Galois representation $\rhobar_{\fm}\colon G_{\QQ}\to \GL_2(\TT^{S}(K)/\fm)$ such that for all $v\not\in S$, $v \ne \ell$, $\rhobar_{\fm}|_{G_{F_v}}$ is unramified with $\tr \rhobar_{\fm}(\Frob_v) = T_v\pmod{\fm}$, and $\rhobar_{\fm}|_{G_{\ell}}$ is unramified.
		\item
		Furthermore there is a lift of $\rhobar_{\fm}$ to a representation
		\[
		\rho_K\colon G_{\QQ}\to \GL_2(\TT^{S}(K)_\fm)
		\]
		such that  for all $v\not\in S$,  $v \ne \ell$, $\rho_{K}|_{G_{F_v}}$ is unramified and $\rho_{K}(\Frob_v)$ has characteristic polynomial $x^2-T_vx+\psi(\Frob_v)$, where $\psi:G_{\QQ}\to \mco^\times$ is the Teichmuller lift of $\det \rhobar_{\fm}$, and $\rho_K|_{G_\ell}$ is unramified.
	\end{enumerate}
\end{theorem}

\begin{proof}
	The proof follows the ``doubling'' strategy used in the proof of \cite[Theorem 3.11]{Calegari/Geraghty:2018} in the case of modular curves. All but one step of proof in loc. cit. works for Shimura curves with no modifications.
	
Abbreviate $X^\disc_K$ by $X$. Take any integer $m\ge 1$. By Lemma \ref{lem: Hasse} there exits some power, $A$, of $\widetilde{\rm Ha}$ such that the map 
\[
\hh^0(X_{\mco/\varpi^m},\omega_{\mco/\varpi^m}) \to \hh^0\left(X_{\mco/\varpi^m},\omega_{\mco/\varpi^m}^{\otimes  n}\right)
\]
given by multiplication by $A$ is equivariant for the Hecke operators $T_r$ for all $r\not\in S$, where $n-1$ is the weight of $A$.

This implies that the ``doubling map''
\[\hh^0(X,\omega^{}_{E/\mco})^2[\varpi^m]  \to \hh^0\left(X,\omega^{\otimes n}_{E/\mco}\right)\]
given by $(f,g) \mapsto Af + AT_\ell(g)-U_\ell(Ag)$ is also equivariant for all of the Hecke operators for all $r\not\in S$. Here, $T_\ell$ is the Hecke operator acting on $\hh^0(X,\omega^{}_{E/\mco})$. This can be defined exactly as in \cite[Section 3.1]{Boxer/Pilloni:2020}.\footnote{Note that while that argument is written for the modular curve, it does not rely on the $q$-expansion principle, and in fact holds without modification for Shimura curves.}

The proof in \cite{Calegari/Geraghty:2018} reduces (by an argument that still works for Shimura curves) to showing that this doubling map is injective. Moreover, as in loc. cit. it is sufficient to prove this claim in the case $m=1$. Indeed, as $\hh^0(X,\omega^{}_{E/\mco})^2[\varpi^m]$ is a finite length $\mco$-module, it suffices to prove that the map
\[\hh^0(X,\omega^{}_k)^2  = \hh^0(X,\omega^{}_{E/\mco})^2[\varpi]  \to \hh^0\left(X,\omega^{\otimes n}_{E/\mco}\right)[\varpi] = \hh^0(X,\omega^{\otimes n}_k)\]
is injective. Moreover, as the multiplication by ${\rm Ha}$ map 
\[\hh^0\left(X,\omega^{\otimes k}_k\right)\to \hh^0\left(X,\omega^{\otimes (k+\ell-1)}_k\right)\]
commutes with $U_\ell$ for $k\ge 2$, it will actually suffice to show that the map 
\[ \hh^0(X,\omega^{}_k)^2  \to \hh^0(X,\omega^{\otimes \ell}_k)\]  
given by $(f,g) \mapsto {\rm Ha}\cdot f + {\rm Ha}\cdot T_\ell(g)-U_\ell({\rm Ha}\cdot g)$ is injective.

In loc. cit.  this is proven via $q$-expansions, which are not available for Shimura curves. However the arguments in \cite[Section 5.1]{BCGP} give an alternative proof of this injectivity which does not rely on $q$-expansions, which completes the proof.
	
In the specific case of Shimura curves however, one can substantially simplify the proof given in \cite[Section 5.1]{BCGP}. If $(f,g)$ is in the kernel of the doubling map, then letting $f_0 = f+T_\ell(g)$, one gets $U_\ell({\rm Ha}\cdot g) = {\rm Ha}\cdot f_0$ for $f_0,g\in \hh^0(X,\omega_k)$. As in \cite[(5.1.1)]{BCGP}\footnote{Again, while \cite[\S 5.1]{BCGP} only considers weight one forms on a modular curve, the construction of this diagram does not rely on the $q$-expansion principle, and works without modification in the case of Shimura curves}, one can now consider the following commutative diagram:
	\[
	\begin{tikzcd}
		\hh^0(X,\omega_k) \arrow{d} \arrow{r}{f\mapsto U_\ell({\rm Ha}\cdot f)} &[3em] \hh^0(X,\omega^{\otimes \ell}_k) \arrow{d}  \\
		\hh^0(SS,\omega_k)  \arrow{r}{\sim} & \hh^0(SS,\omega^{\otimes \ell})
	\end{tikzcd}
	\]
	where $SS\subseteq X$ is the supersingular locus. Here, the vertical maps are restriction maps and the lower horizontal map is an isomorphism. As ${\rm Ha}$ vanishes on $SS$, so does $U_\ell({\rm Ha}\cdot g) = {\rm Ha}\cdot f_0$. Thus $U_\ell({\rm Ha}\cdot g)$ maps to $0$ in $\hh^0(SS, \omega^{\otimes \ell})$, and so the commutative diagram implies that the restriction of $g$ to $SS$ must vanish. Thus $ g = {\rm Ha} \cdot h$ for some $g \in  \hh^0(X,\omega^{2-\ell})$. But as $\ell>2$, this cohomology group vanishes, so $g = 0$, and hence $(f,g) = (0,0)$.
	\end{proof} 

From now on we  fix a Galois representation $\rhobar\colon G_\QQ\to \GL_2(k)$ for which $\rhobar|_{\QQ(\zeta_\ell)}$ is absolutely irreducible and $\rhobar|_{G_\ell}$ is unramified. Let $N_\es = N(\rhobar)$ denote the conductor of $\rhobar$. We take $\psi\colon G_\QQ\to \mco^\times$ to be the Teichmuller lift of $\det \rhobar$.

We now make more particular choices of the compact open subgroups $K \subseteq D_\disc(\A_\QQ^\infty)$. Let $T$ and $Q$ be finite sets of primes, disjoint from each other and from $\disc$, satisfying the conditions from Section \ref{ssec:global def}. Let $t$ be  an auxiliary  prime  that that is not in $T\cup Q\cup\disc$ and satisfies the conditions of Lemma \ref{trivial} above for our choice of $\rhobar$.

As in Section \ref{sec:Ihara}, for any integer $N\ge 1$ with $(N,\disc) = 1$ we define (identifying $D_\disc(\ZZ_v) = \GL_2(\ZZ_v)$ for $v\not\in \disc$):
\begin{align*}
	K_0^\disc(N) &= \left\{
	\begin{pmatrix}
		a&b\\c&d
	\end{pmatrix}\in D_\disc^\times(\Zhat)\middle|c\equiv 0\pmod{N t^2}, d \equiv 1 \pmod{ t^2 }\right\}\subseteq D_\disc^\times(\Zhat)\\
	K_1^\disc(N) &= \left\{
	\begin{pmatrix}
		a&b\\c&d
	\end{pmatrix}\in D_\disc^\times(\Zhat)\middle|c\equiv 0\pmod{N t^2}, d \equiv 1 \pmod{N t^2 }\right\}\subseteq D_\disc^\times(\Zhat)
\end{align*}
so that again $K_1^\disc(N)\unlhd K_0^\disc(N)$ with $K_0^\disc(N)/K_1^\disc(N)\cong (\ZZ/N\ZZ)^\times$, and $K_0^\disc(N)$ and $K_1^\disc(N)$ are sufficiently small. Also define $K^\disc_\Delta(N)$ and $K^\disc_H(N)$ to be the smallest intermediate subgroups
\[
K^\disc_1(N)\le K^\disc_\Delta(N), K^\disc_H(N)\le K^\disc_0(N)
\]
for which $|K^\disc_0(N)/K^\disc_\Delta(N)|$ is an $\ell^{th}$ power and $|K^\disc_0(N)/K^\disc_H(N)|$ is prime to $\ell$. For any $M$ and $N$ define $K_{H,\Delta}^{\disc}(N,M) = K_H^\disc(N)\cap K^\disc_\Delta(M)$. 

 \label{pg:intermediate-subgroups} 

For any subset $\Sigma\subseteq T$, set $N_\Sigma= N_\phi\Sigma$, where by  $\Sigma$ we mean the product of the primes in $\Sigma$. 

For convenience, write $K_{\Sigma,Q}^\disc=K_{H,\Delta}^{\disc}(N_\Sigma,Q)$, and $K_{\Sigma}^\disc = K_{\Sigma,\es}^\disc = K_H^\disc(N_\Sigma)$. Let $X^\disc_{\Sigma,Q} = X^\disc_{K^\disc_{\Sigma,Q}}$, $\rho^\disc_{\Sigma,Q} = \rho_{K^\disc_{\Sigma,Q}}$, $X^\disc_\Sigma = X^\disc_{\Sigma,\es}$ and $\rho^\disc_{\Sigma} = \rho^\disc_{\Sigma,\es}$. 

By Proposition \ref{prop:C^D_K wt 1}, $C^{\disc}_{K_{\Sigma,Q}^\disc}$ can be given the structure of a perfect complex of $\mco[\Delta_Q]$-modules. By construction, the action of $\mco[\Delta_Q]$ defined by this structure coincides with the action of $\mco[\Delta_Q]$ on $C^{\disc}_{K_{\Sigma,Q}^\disc}$ defined using the diamond operators $\langle d\rangle_v$ for $v\in Q$.

Let $S$ be a finite set of primes containing $T\cup Q\cup \disc$, all primes dividing $N_\es$ and the prime $t$. From now on we  assume that $\rhobar$ arises (in the sense of Theorem \ref{rhobar conj-weight one}) from a maximal ideal $\fm$ of the Hecke algebra $\TT^{S}(K^\disc_\es)$ acting on $\hh^1(X^\disc_\es,\omega_\mco)$ (which is isomorphic to  $\hh^0(X^\disc_\es, \omega_{E/\mco})^*$).  

Let  $\TT^\disc_{\Sigma,Q}$ be the localization $\TT^{S}(X^\disc_{\Sigma,Q})_\fm$. We  observe now that the analog of Conjecture \ref{R->T conj}  can be proved  in this setting by the variant of doubling method used in the proof of Theorem \ref{rhobar conj-weight one}.

\begin{theorem}
\label{R->T conj-weight one}
	For any $\Sigma$, $Q$ and $\disc$, the representations $\rho^\disc_{\Sigma}\colon G_F\to \GL_2(\TT^\disc_{\Sigma})$  stemming from Theorem  \ref{rhobar conj-weight one}    arise from  the corresponding universal representation  $G_F \to \GL_2( R^\disc_{\Sigma,Q})$ via maps $R^\disc_{\Sigma,Q}\onto \TT^\disc_{\Sigma,Q}$  and satisfy the following properties:
	\begin{enumerate}[\quad\rm(1)]
		\item 
		$\tr\rho^\disc_{\Sigma,Q}(\Frob_v) = T_v$ for all $v\not\in S$.
		\item There is a natural $\mco[\Delta_Q]$-algebra structure on $\TT^\disc_{\Sigma,Q}$ defined analogously to as in Section \ref{sec:Hecke} (via the full Hecke algebra) which makes the map $R^\disc_{\Sigma,Q}\onto \TT^\disc_{\Sigma,Q}$ into a $\mco[\Delta_Q]$-algebra homomorphism. 
		\item For $v\in\Sigma$, writing $B_v\in \TT_\Sigma^{\disc,\square}$ for the image of $B_v\in R_v^\square$   under the map 
		\[
		R_v^\square\into R_{\Sigma,\loc}\to R_\Sigma^\square \onto \TT_\Sigma^{\disc,\square}
		\] 
		we have $(B_v) = (\epsilon_v\sqrt{q_v\psi(\varphi_v)^{-1}}U_v-1)$ as ideals of $\TT_\Sigma^{\disc,\square}$. 
			\item For $\Sigma_1\subseteq \Sigma_2\subseteq T$, the diagram below commutes:
			\begin{center}

		$
		\begin{tikzcd}
			R_{\Sigma,Q}^\disc \arrow[twoheadrightarrow]{d} \arrow[twoheadrightarrow]{r} & \TT_{\Sigma,Q}^\disc \arrow[twoheadrightarrow]{d}  \\
			R_{\Sigma}^\disc \arrow[twoheadrightarrow]{r} & \TT_{\Sigma}^\disc 
		\end{tikzcd}
		$
		and
		$
		\begin{tikzcd}
			R_{\Sigma_2,Q}^\disc \arrow[twoheadrightarrow]{d} \arrow[twoheadrightarrow]{r} & \TT_{\Sigma_2,Q}^\disc \arrow[twoheadrightarrow]{d}  \\
			R_{\Sigma_1,Q}^\disc \arrow[twoheadrightarrow]{r} & \TT_{\Sigma_1,Q}^\disc 
		\end{tikzcd}
		$ 
\end{center}
	\end{enumerate} 
\end{theorem}

For any $\Sigma\subseteq T$,  $Q$ and $v\in T\sm \Sigma$, we can now define the complex $C_{\Sigma,Q}^\disc\in \rdcat{\mco}{\TT_{\Sigma,Q}}$ and the level lowering map $\pi_v\colon C_{\Sigma\cup\{v\},Q}^\disc\to C_{\Sigma,Q}^\disc$ completely analogously to the definition given in Section \ref{sec:Hecke} (that is, by localizing at an appropriate maximal ideal of an extension of $\TT_{\Sigma,Q}^\disc$).

Note that $C_{\Sigma,Q}^\disc$ again has the structure of a perfect complex of $\mco[\Delta_Q]$-modules, and we have $C_{\Sigma,Q}^\disc \lotimes_{\mco[\Delta_Q]}\mco\cong C_\Sigma^\disc$ in $\rdcat{\mco}{\TT_{\Sigma}}$.

\begin{proposition}
	\label{prop: localized duality-weight one}
There is a family of isomorphisms 
\[
\varphi_{\Sigma,Q}^\disc \colon C_{\Sigma,Q}^\disc \isomto { (C_{\Sigma,Q}^{\disc} )}^\dagger
\]
in $\rdcat{\mco}{\TT_{\Sigma,Q}}$ compatible with the isomorphisms $C_{\Sigma,Q}^\disc \lotimes_{\mco[\Delta_Q]}\mco\cong C_{\Sigma}^\disc$.

Moreover for each $v\in T\sm\Sigma$, if 
\[
\pi_v^\dagger\colon C_{\Sigma,Q}^\disc \cong  { (C_{\Sigma,Q}^{\disc} ) }^\dagger \to  { (C_{\Sigma\cup \{v\},Q}^\disc)}^\dagger \cong C_{\Sigma\cup\{v\},Q}^\disc
\]
is the adjoint of $\pi_v\colon C_{\Sigma\cup\{v\},Q}^\disc \to C_{\Sigma,Q}^\disc$ upon using the isomorphisms  $\varphi_{\Sigma,Q}$ and $\varphi_{\Sigma\cup\{v\},Q}$, then one has
\begin{align*}
	&\pi_v\circ \pi_v^\dagger \simeq  q_vU_v^2-\psi(\varphi_v)\in \End_{\dcat{\mco}}(C_{\Sigma,Q}^\disc)  \\
	&\pi_v^\dagger\circ \pi_v  \simeq  q_vU_v^2-\psi(\varphi_v)\in \End_{\dcat{\mco}}(C_{\Sigma\cup\{v\},Q}^\disc),
\end{align*} where $\simeq$  means up to units in the corresponding Hecke algebras. 
\end{proposition}

\begin{proof}
Using the arguments  of \cite[Section 3.2]{Boxer/Pilloni:2020}, we deduce the existence of  isomorphisms  
\[
\varphi^\disc_{\Sigma,Q}\colon C^\disc_{\Sigma,Q}\isomto  {   (C^\disc_{\Sigma,Q})   }^\dagger
\]
 using Serre duality applied to the sheaves $\omega_A$ and using the Kodaira-Spencer isomorphism that gives $\omega_A^{\otimes 2}=\Omega_A$ where $\Omega_A$ is the canonical sheaf on the Shimura curve $X^\disc_{\Sigma,Q}$.    After this the Proposition \ref{prop: localized duality-weight one} follows from  arguments analogous to the ones given in Section \ref{sec:Hecke} (see proof of Proposition \ref{prop: localized duality}).
\end{proof}

\subsection{The conjectures \ref{vanishing conj} and \ref{Ihara conj}  in weight one}

For any curve $X$, the cohomology groups $\hh^i(X,\mathcal F)$ with coefficients in a coherent sheaf $\mathcal F$  are non-zero only for $i=0,1$. It follows from the definition of $C_{K}^\disc$ that we deduce that $\hh_i(C_K^\disc\lotimes_\mco k) = 0$ for $i\not\in[0,1]$. Thus the analogue of Conjecture \ref{vanishing conj} holds in this setting.

The analogue of conjecture \ref{Ihara conj} follows from  the following proposition:

\begin{proposition}\label{prop: Ihara wt 1} 
	The map 
\[
\hh^0(\pi_v)\colon \hh^0(X^\disc_{\Sigma},\omega_{E/\mco})^*_{\fm_\Sigma} \to   \hh^0(X^\disc_{\Sigma\sm \{v\}},\omega_{E/\mco})^*_{\fm_{\Sigma \sm\{v\}}}
\] 
for $v \in \Sigma$  is surjective.
\end{proposition}

\begin{proof}
	By duality it suffices to prove that the kernel of the  map  $\hh^0(X^\disc_{\Sigma\sm \{v\}},\omega_k)^2 \to  \hh^0(X^\disc_{\Sigma},\omega_k)$ is Eisenstein (and in particular vanishes on localizing at $\fm_\Sigma$).  This follows from \cite[Proposition 5]{DT2}.
\end{proof}

\subsection{$R=\TT$ theorems and the torsion Jacquet--Langlands correspondence in the weight one case}

In Section \ref{sec:main} we  prove the following Theorem (as a consequence of Theorem \ref{R=T theorem} in Case \ref{case:wt1}):

\begin{theorem}\label{weightone:thm}
	The surjective maps   $R^\disc_\Sigma \to \TT^\disc_\Sigma$ are isomorphisms.
\end{theorem}

As a corollary of this, we deduce  a  torsion Jacquet--Langlands correspondence for weight one modular forms.

\begin{theorem}\label{JLcorr}
	Consider  a residual representation $\rhobar: G_\QQ 
	\to \GL_2(k)$ that  arises from a maximal ideal $\fm_\disc$ of the Hecke algebra acting on $\hh^1(X^\disc_\Sigma,\omega_\mco)$.  We assume that $\rhobar|_{\QQ(\zeta_\ell)}$ is irreducible. Then:
\begin{enumerate}[\quad\rm(1)]	
\item
$\rhobar$ also arises from a maximal ideal $\fm_\es$ of the Hecke algebra acting on $\hh^1(X^\es_{\Sigma\cup\disc},\omega_\mco)$.
\item	
For any set $\Sigma$ of level raising primes satisfying the conditions from Section \ref{ssec:global def}, if $\TT^\es_{\Sigma\cup\disc}$ and $\TT^\disc_\Sigma$ are the Hecke algebras acting on $\hh^1(X^\es_{\Sigma\cup \disc},\omega_\mco)$ and $\hh^1(X^\disc_{\Sigma},\omega_\mco)$, localized at $\fm_\es$ and $\fm_\disc$ respectively, then there is a natural surjective map $\TT_{\Sigma\cup\disc}^{\es}  \onto \TT_\Sigma^{\disc}$ with kernel generated by $q_vU_v^2 -\psi(\Frob_v)$ for $v \in \disc$.
\end{enumerate}
\end{theorem}

\begin{proof}
	  We first observe that by  using the duality   $\hh^0(X^\disc_\Sigma, \omega_{E/\mco})^*=\hh^1(X^\disc_\Sigma,\omega_\mco)$, and  that  $\hh^0(X^\disc_{\Sigma},\omega_k)=\hh^0(X^\disc_\Sigma, \omega_{E/\mco})[\varpi]$, we deduce that
 $\rhobar$ arises from $\hh^0(X^\disc_{\Sigma},\omega_k)$. By multiplying by the Hasse invariant we  then get  that  $\rhobar$ also arises from  $\hh^0(X^\disc_{\Sigma},\omega_k^{\otimes \ell})$. 
 
 Let $X= X^\disc_{\Sigma}$. Consider the long exact sequence of cohomology arising from the exact sequence of sheaves on $X$:
	\[ 0 \rightarrow \omega_\mco^{\otimes{\ell}} \rightarrow \omega_\mco^{\otimes{\ell}} \rightarrow \omega_k^{\otimes{\ell}} \rightarrow 0\] (where the second arrow is multiplication by  a uniformizer $\varpi$ of $\mco$). We claim that   $\hh^1(X_\mco, \omega_\mco^{\otimes{\ell}})[\varpi]=0$. Indeed  $\hh^1(X_\mco, \omega_\mco^{\otimes{\ell}})$ vanishes as  using  Serre duality,  and the Kodaira-Spencer  isomorphism $\omega^{\otimes 2}\simeq \Omega_X$,  this follows from the vanishing of   $\hh^0(X_k, \omega_\mco^{\otimes{(2-\ell)}})$ (for $\ell>2$).  
	
	From the claim,  and  the fact that   $\rhobar$  arises from  $\hh^0(X^\disc_{\es},\omega_k^{\otimes \ell})$, we deduce that $\rhobar$ arises from a classical weight $\ell$  form $f$  that is a section of  $\hh^0(X^\disc_{\es},\omega_\mco^{\otimes \ell})$.
	  
	  To prove the existence of $\fm_\es$,  we can now apply the classical Jacquet--Langlands correspondence to $f$ and get a  classical weight $\ell$ form $f_\es$ on $X^\es_\disc$ with residual representation $\rhobar$. One can then apply level and weight optimization on the modular curve  to get the desired maximal ideal $\fm_\es$ producing $\rhobar$. Here one uses crucially the companion forms results of  \cite{Gross:1990} and \cite{Coleman/Voloch:1992}   to lower the weight from $\ell$ to 1 using  that  the mod $\ell$ representation $\rhobar$ that $f_\es$ gives rise to is unramified at $\ell$. This proves (1). 
	
	  To prove (2), using Theorem \ref{weightone:thm}  (for $\disc\ne\es$) and the results in \cite{Calegari}  (for $\disc=\es$) and part (1), we deduce   that for any set $\Sigma$ of level raising primes for $\rhobar$ that  satisfy  our earlier conditions,  we have   isomorphisms $R^\es_{\Sigma\cup\disc}\cong \TT_{\Sigma\cup\disc}^{\es}$ and $R^\disc_\Sigma \cong \TT_\Sigma^{\disc}$, so the claim  follows from the analogous statement for the map  $R^\es_{\Sigma\cup\disc}  \to R^\disc_\Sigma$, which is clear from the definitions.
\end{proof}

\begin{remark}\label{JLremark}
	
	If one had available results about  companion forms  for mod $\ell$ representations on  Shimura curves one could  show that if  $\rhobar$ arises from   $\hh^1(X^\disc_{\Sigma},\omega_\mco)$ then it also arises from  $\hh^1(X^{\disc’}_{{\Sigma}\cup {\disc}\backslash {\disc’}},\omega_\mco)$ for  all $\disc’ \subseteq \disc$, and not just $\disc’=\es$.    This will need results for companion forms on Shimura curves (to lower the weight from $\ell$ to 1 when the mod $\ell$ representation is unramified at $\ell$) that would be the analogs of the results of Gross and Coleman--Voloch  \cite{Gross:1990} and \cite{Coleman/Voloch:1992}  on modular curves.  This also is the obstruction to going from a maximal ideal  $\fm_\Sigma$ of the Hecke algebra acting on  $\hh^1(X^\es_{\Sigma \cup \disc},\omega_\mco)$ to a  maximal ideal 
  $\fm_\disc$ of the Hecke algebra acting on  $\hh^1(X^\disc_{\Sigma},\omega_\mco)$ (under the necessary assumption that the primes in $\disc$ are level raising primes for $\rhobar$ arising from $\fm_\es$).  Toby Gee has informed us that the possibility of proving a result like Theorem \ref{JLcorr} had been considered earlier  by him in work with Boxer and  Calegari. \end{remark}

\section{Applications to integral modularity lifting}
\label{sec:main}

In this section we apply the results from Part~\ref{part:ca} in order to prove $R=\TT$ theorems in non-minimal cases. The essential strategy is the same as the arguments given in \cite[Chapter 2]{Wiles:1995} and \cite[Theorem 3.4]{Diamond:1997}. Namely, one first proves the result in the minimal level case via patching --- in our case the relevant patching result is already known, see \cite[Theorem 1.3]{Calegari/Geraghty:2018}. One then proceeds by induction on the level, using Ihara's Lemma to control the growth of congruence modules, and then using the numerical criterion to prove the isomorphism $R_\Sigma=\TT_\Sigma$. 

The key difference in our approach is that we apply the inductive argument and the numerical criterion directly to the patched modules $M_{\Sigma,\infty}$, rather than applying them to the global deformation rings and Hecke algebras. This allows us to sidestep many of the complications that arise by working over non totally real fields or with weight $1$ modular forms (the ``$\ell_0>0$'' case) as the patched situation in these cases behaves quite similarly to that for weight $2$ modular forms over $\mbb Q$. This modification to the argument is what necessitates working with congruence modules in higher codimension.

In both minimal and non-minimal cases the patched modules $M_{\Sigma,\infty}$ are known to be maximal Cohen--Macaulay over the corresponding patched deformation rings $R_{\Sigma,\infty}$. In the minimal case (i.e. $\Sigma=\es$) $R_{\es,\infty}$ is a power series ring, which is well-known to imply $M_{\es,\infty}$ is free, giving our base case. However in the non-minimal case our patched rings $R_{\Sigma,\infty}$ are not power series rings, and have multiple components (although they are quite nice and easy to describe, and in particular are complete intersections in our case). This causes difficulty in analyzing the structure of patched modules in the non-minimal case purely via patching.

We show by induction on $\Sigma$, using our version of the numerical criterion, Theorem \ref{th:diamond} (although for our purposes Theorem \ref{th:gorenstein} is sufficient, as we already know that relevant local deformation rings are complete intersections), that each $M_{\Sigma,\infty}$ has a free direct summand. The desired statement about global deformation rings and Hecke algebras (Theorem \ref{R=T theorem}) follows immediately from this.  

Proposition \ref{pr:change-modules}, together with Theorem \ref{th:invariance-of-domain}, provides the necessary generalization of the growth of congruence modules argument of \emph{loc. cit.}, provided one assumes the version of Ihara's Lemma given in Conjecture \ref{Ihara conj} (which is known in the case of Bianchi manifolds by Theorem \ref{Ihara's Lemma}) or Proposition \ref{prop: Ihara wt 1}.

One should note that the augmentation $\lambda\colon R_{\Sigma,\infty}\onto \mco$ which we define below in order to apply our numerical criterion is picked essentially arbitrarily, and need have no relation to the global deformation rings and Hecke algebras. This means in particular that our method does not need to distinguish between the cases where the ring $\TT$ has characteristic $0$ points and the case when it does not.

One slight complication that arises in our argument, compared to that of \emph{loc. cit.}, is that we have no direct control over the generic rank of the patched modules $M_{\Sigma,\infty}$, unless the Hecke algebra $\TT_\Sigma$ has at least one characteristic $0$ point lying on each component of $\operatorname{Spec} R_{\Sigma,\infty}$. Fortunately Theorem \ref{th:gorenstein} needs no assumption about the generic rank of the module (in contrast to \cite[Theorem 2.4]{Diamond:1997}), so this does not present an issue for our main application to proving $R=\TT$. This does however prevent us from proving an analogue of the `multiplicity one' result of \cite[Theorem 3.4]{Diamond:1997}---instead we can only prove the weaker statement in Theorem \ref{R=T theorem}. Note that this issue is already present in \cite[Theorem 1.3]{Calegari/Geraghty:2018}, although it is somewhat more severe in our case as $\operatorname{Spec} R_{\Sigma,\infty}$ has multiple components and one cannot conclude that $M_{\Sigma,\infty}$ has the same generic rank on each component.

We  treat the two cases \ref{case:PGL2} and \ref{case:wt1} in parallel here. To fix notation:
\begin{itemize}
	\item In case \ref{case:PGL2} we freely use all notation introduced in Section \ref{sec:Hecke}, and assume conjectures \ref{rhobar conj}, \ref{R->T conj}, \ref{vanishing conj} and \ref{Ihara conj}. In particular we consider a non-Eisenstein maximal ideal $\fm\subseteq \TT^S(K_0(\mcn_\es))$ with residue field $k$, and the corresponding Galois representation $\rhobar_{\fm}\colon G_F\to \GL_2(k)$. We  assume that $\fm$ was chosen so that $N(\rhobar_\fm) = \mcn_\es$. 
	
	For each $\Sigma\subseteq T$ we then have rings $R_{\Sigma,Q}$ and $\TT_{\Sigma,Q}$, a surjective map $R_{\Sigma,Q}\onto \TT_{\Sigma,Q}$ from Conjecture \ref{R->T conj} and a complex $C_{\Sigma,Q}\in\rdcat{\mco}{\TT_{\Sigma,Q}}$. These will again be denoted $R_\Sigma$, $\TT_\Sigma$ and $C_\Sigma$ when $Q = \es$.
	
	We  again let $r_1$ and $r_2$ be the number of real and complex places of $F$ respectively, and let $d = r_1+r_2$ and $l_0 = r_2$. Note that by Conjecture \ref{vanishing conj}, $\hh_i(k\lotimes_{\mco}C_{\Sigma}) = 0$ for $i\not\in [d,d+\ell_0]$ and $\hh_d(C_{\Sigma}) = \hh_{r_1+r_2}(Y_0(\mcn_\Sigma),\mco)_{\fm_\Sigma}$.
	
	\item In case \ref{case:wt1} we freely use all notation introduced in Section \ref{sec:Hecke wt1}. Again, we consider a non-Eisenstein maximal ideal $\fm\subseteq\TT^{S}(K^\disc_\es)$ with residue field $k$, and the corresponding Galois representation $\rhobar_{\fm}\colon G_\QQ\to \GL_2(k)$. We  assume that $\fm$ was chosen so that $N(\rhobar_\fm) = N_\es$. 
	
	The discriminant $\disc$ is fixed throughout this section, and so omit it from our notation. 
	 Thus we consider rings $R_{\Sigma,Q}$ and $\TT_{\Sigma,Q}$, a surjective map $R_{\Sigma,Q}\onto \TT_{\Sigma,Q}$ and a complex $C_{\Sigma,Q}\in\rdcat{\mco}{\TT_{\Sigma,Q}}$. These will again be denoted $R_\Sigma$, $\TT_\Sigma$ and $C_\Sigma$ when $Q = \es$.
	
	Here we  let $d=0$ and $l_0 = 1$. Then we again have $\hh_i(k\lotimes_{\mco}C_{\Sigma}) = 0$ for $i\not\in [d,d+\ell_0]$ and $\hh_d(C_{\Sigma}) = \hh^0(X^{\disc}_\Sigma,\omega_{E/\mco})^*_{\fm_\Sigma}$.
\end{itemize}

Our main result is the following, which is Theorem \ref{th:intro-5} in Case \ref{case:PGL2} and Theorem \ref{th:intro-5b} in Case \ref{case:wt1}:

\begin{theorem}
\label{R=T theorem}
Assume that $\rhobar_\fm|_{G_{F(\zeta_\ell)}}$ is absolutely irreducible. Then there exists an integer $\mu\ge 1$ such that for each $\Sigma\subseteq T$ there exists an $R_\Sigma$-module $W_\Sigma$ and an isomorphism of $R_\Sigma$-modules
\[
\hh_d(C_{\Sigma})\cong R_\Sigma^\mu \oplus W_\Sigma\,.
\]
In particular, $R_\Sigma$ acts faithfully on $\hh_d(C_{\Sigma})$ and so the map $R_\Sigma\onto \TT_\Sigma$ is an isomorphism for all $\Sigma$.
\end{theorem}

This result is deduced from Theorem~\ref{thm:free summand} below. 
As in Section \ref{ssec:global def}, let $S$ be the set of all finite places of $F$ where either $\rhobar_\fp|_{G_{F_v}}$ ramifies or $v|\ell$. Let $j = 4|S\cup T\cup \disc|-1$, so that for all $\Sigma$ and $Q$ one has
\[
R_\Sigma^\square = R_\Sigma\pos{w_1,\ldots,w_j} \quad\text{and}\quad R_{\Sigma,Q}^\square = R_{\Sigma,Q}\pos{w_1,\ldots,w_j}\,.
\]
The  result below that is by now standard  (see for instance \cite[Proposition 5.2]{Calegari/Geraghty:2018})  allows us to construct sets of Taylor--Wiles primes $Q_n$:

\begin{proposition}
\label{prop: TW primes}
Assume that $\rhobar_\fm|_{F(\zeta_\ell)}$ is absolutely irreducible. Then there exist integers $r,g\ge 1$ such that for all integers $n\ge 1$ there is a set of finite places $Q_n$ of $F$ for which:
\begin{itemize}
	\item $|Q_n| = r$;
	\item $Q_n\cap (S\cup T\cup\disc) = \es$;
	\item $\Nm(v)\equiv 1\pmod{\ell^n}$ for each $v\in Q_n$;
	\item $\rhobar_\fm$ is unramified at $v$ and $\rhobar_\fm(\Frob_v)$ has distinct eigenvalues in $k$ for each $v\in Q_n$;
	\item The map $R_{\loc,T}\to R_{T,Q_n}^\square$ extends to a surjection
	\[
	R_{\loc,T}\pos{x_1,\ldots,x_g}\onto R_{T,Q_n}^\square
	\]
	 thus inducing compatible surjections $R_{\loc,\Sigma}\pos{x_1,\ldots,x_g}\onto R_{\Sigma,Q_n}^\square$ for  $\Sigma\subseteq T$.
\end{itemize}
Moreover we have 
\[
\dim R_{T,\loc}\pos{x_1,\ldots,x_g} = \dim R_{T,\loc} + g = (1+j+r)-l_0\,. 
\]
\end{proposition}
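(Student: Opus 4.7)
The plan is to adapt the standard Taylor--Wiles choice of auxiliary primes to the positive-defect setting, following closely the approach of \cite{Calegari/Geraghty:2018}. The key input is a Greenberg--Wiles / Poitou--Tate analysis of the Selmer groups attached to the adjoint representation $\operatorname{ad}^0\rhobar_\fm$. For any admissible finite set $Q$, let $H^1_{\mathcal{L},Q} \subseteq H^1(G_{F,S\cup T\cup Q},\operatorname{ad}^0\rhobar_\fm)$ be the Selmer group whose local conditions correspond to the quotients of $R^\square_v$ appearing in $R_{\loc,T}$ at places in $S\cup T$ (and unrestricted at $Q$), and let $H^1_{\mathcal{L}^\perp,Q}$ be the dual Selmer group for $\operatorname{ad}^0\rhobar_\fm(1)$. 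The global Euler characteristic formula gives an identity of the shape
\[
\dim H^1_{\mathcal{L},Q} - \dim H^1_{\mathcal{L}^\perp,Q} = |Q| - r_2 + (\text{local terms}),
\]
where the $-r_2$ contribution comes from the complex places and is precisely the source of the defect $\ell_0 = r_2$.

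The first step is to set $r\colonequals \dim_k H^1_{\mathcal{L}^\perp,\varnothing}$. The hypothesis that $\rhobar_\fm|_{G_{F(\zeta_\ell)}}$ is absolutely irreducible guarantees, via a Chebotarev argument applied to the extension cut out by $\rhobar_\fm$ and $\mu_{\ell^n}$, that for any non-zero class in the dual Selmer group one can find infinitely many primes $v$ with $v\notin S\cup T$, $\Nm(v)\equiv 1\pmod{\ell^n}$, $\rhobar_\fm(\Frob_v)$ possessing distinct eigenvalues in $k$, and such that the given class has non-trivial restriction at $v$. Iterating (as in \cite[\S2.6]{CHT} or \cite[Proposition 4.4]{Calegari/Geraghty:2018}) produces a set $Q_n$ of size exactly $r$ for which the restriction map annihilates the whole dual Selmer group, so that $H^1_{\mathcal{L}^\perp,Q_n}=0$. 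The Greenberg--Wiles identity then forces $\dim H^1_{\mathcal{L},Q_n}$ to be $r-r_2$ plus the contribution of local Euler factors.

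The second step is purely formal: the bound on $H^1_{\mathcal{L},Q_n}$ translates into a bound on the number of generators of $R_{T,Q_n}$ as an $R_{\loc,T}$-algebra (after framing), so choosing $g$ to be precisely this number of generators gives the claimed surjection $R_{\loc,T}\pos{x_1,\dots,x_g}\twoheadrightarrow R^\square_{T,Q_n}$, compatibly as $\Sigma$ varies over subsets of $T$ since the local conditions at $v\in T\setminus \Sigma$ are obtained as quotients of those at $v\in \Sigma$. The dimension computation then comes from the known individual dimensions $\dim R^{\fl}_v = 4+[F_v\!:\!\Q_\ell]$, $\dim R^{\min}_v = 4$, $\dim R^{\square}_v = 4$, and $\dim R^{\uni(\epsilon_v)}_v=4$, giving $\dim R_{\loc,T} = 1 + [F:\Q] + 3|S\cup T|$; combined with $j = 4|S\cup T|-1$ and the value of $g$ dictated by the Selmer group bound, this yields the asserted identity $\dim R_{\loc,T}\pos{x_1,\dots,x_g} = 1+j+r-r_2$.

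The main obstacle in this argument is verifying the Chebotarev selection step with the two simultaneous constraints $\Nm(v)\equiv 1\pmod{\ell^n}$ and non-vanishing of a given cohomology class at $v$; this rests crucially on the absolute irreducibility of $\rhobar_\fm|_{G_{F(\zeta_\ell)}}$ so that the relevant Galois extension has non-trivial restriction of $\operatorname{ad}^0\rhobar_\fm(1)$. Once the Chebotarev step is in hand, the rest of the argument is routine bookkeeping with Euler characteristic formulas that is formally identical to the totally real case, with the single difference that the contribution of the complex places alters the final dimension by $-r_2$ rather than $0$.
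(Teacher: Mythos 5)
Your proposal is essentially the standard Taylor--Wiles prime construction in the Calegari--Geraghty setting, which is exactly what the paper invokes: the paper gives no proof of Proposition \ref{prop: TW primes} and instead cites \cite{Calegari/Geraghty:2018} (the argument there in turn is the one of \cite{CHT}, modified to tolerate the defect $\ell_0=r_2$ coming from the archimedean places). Your two-step structure --- Chebotarev selection of primes annihilating the dual Selmer group under the hypothesis that $\rhobar_\fm|_{G_{F(\zeta_\ell)}}$ is absolutely irreducible, followed by the Greenberg--Wiles bookkeeping to pin down $g$ and the dimension of $R_{\loc,T}\pos{x_1,\dots,x_g}$ --- is the same strategy, so there is no genuine divergence to report. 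One small caution: in the display you label ``Greenberg--Wiles identity'' the summand $-r_2$ is a shorthand; the actual Euler characteristic contribution from archimedean places involves $\dim \hh^0(G_{F_v},\operatorname{ad}^0\rhobar_\fm)$ at each infinite place, and the net effect of $r_2$ complex places is what produces the defect $\ell_0=r_2$ only after the rest of the bookkeeping (framing variables $j$, local tangent-space dimensions, and the global $\hh^0$ terms) is carried through; writing it as a single $-r_2$ elides that cancellation. Since this is a sketch and the paper itself defers to \cite{Calegari/Geraghty:2018}, this imprecision is acceptable, but if you intend to expand the proof you should carry out the full local-global Euler characteristic computation rather than asserting the $-r_2$ directly.
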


Now for each $\Sigma\subseteq T$ let $R_{\Sigma,\infty} = R_{\Sigma,\loc}\pos{x_1,\ldots,x_g}$. Also let
\[
S_\infty = \mco\pos{{\bs y},{\bs w}}=\mco\pos{y_1,\ldots,y_r,w_1,\ldots,w_j}
\]
so that $\dim R_{\Sigma,\infty} = \dim S_\infty - l_0$ for all $\Sigma$. For $v\in \Sigma$, let $A_v,B_v\in R_v^{{\rm uni}(\epsilon_v)}\subseteq R_{\Sigma,\infty}$ be as in Proposition \ref{prop:unipotent-deformations}, so that $R_{\Sigma\cup\{v\},\infty}/(A_v) = R_{\Sigma,\infty}$.

Write $c = j+r-l_0$, so that $\dim R_{\Sigma,\infty} = c+1$ for all $\Sigma$.

For any $n\ge 1$, let $\Lambda_n=\mco\pos{\Delta_{Q_n}}$. Since $|Q_n| = r$, $\Delta_{Q_n}$ is isomorphic to $\prod_{i=1}^r (\mbb Z/\ell^{e(i,n)}\mbb Z)$ for some integers $e(i,n)\ge n$. Thus we may write $\Lambda_n = \mco\pos{{\bs y}}/I_n$ for some ideal $I_n\subseteq \mco\pos{{\bs y}}$ as in Section \ref{ch:oy}.

Now let $R_{\Sigma,n} = R_{\Sigma,Q_n}$ and $C_{\Sigma,n} = C_{\Sigma,Q_n}$. Also let $R_{\Sigma,0} = R_\Sigma$ and $C_{\Sigma,0} = C_{\Sigma}$. Note that we may regard $C_{\Sigma,n}$ as an element of $\rdcat{\Lambda_n}{R_{\Sigma,n}}$ via the composition $R_{\Sigma,n}\onto \TT_{\Sigma,Q_n}\into \End_{\dcat{\Lambda_n}}(C_{\Sigma,n})$. Let $\alpha_{\Sigma,n}$ be the isomorphism 
\[
C_{\Sigma,n}\lotimes_{\Lambda_n}\mco\isomto C_{\Sigma,0}
\]
from Proposition \ref{prop: O[Delta]->T}.
Recalling that $\hh_*(k\lotimes_{\Lambda_n}C_{\Sigma,n})$ vanishes outside of $[d,d+\ell_0]$, the work of Section \ref{sec:Hecke} (Propositions \ref{prop: Verdier}, \ref{prop: twisted duality},  \ref{prop: O[Delta]->T}) in Case \ref{case:PGL2} and Section \ref{sec:Hecke wt1} (Proposition \ref{prop: localized duality-weight one}) in Case \ref{case:wt1} gives the following:

\begin{lemma}\label{lem:Patching System}
For each $\Sigma\subseteq T$ the tuple ${\perfs C}_\Sigma= \left(C_{\Sigma,n},\alpha_{\Sigma,n}\right)_{n\geqslant 0}$ is a patching system. Moreover there is an isomorphism ${\perfs C}_\Sigma^\vee\cong {\perfs C}_\Sigma$ of patching systems. \qed
\end{lemma}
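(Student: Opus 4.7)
The plan is to verify separately that ${\perfs C}_\Sigma = (C_{\Sigma,n}, \alpha_{\Sigma,n})_{n\geqslant 0}$ meets each of the four defining conditions of a patching system in \ref{ch:pasy}, and then to promote the duality isomorphisms constructed in Section \ref{sec:Hecke} to a morphism of patching systems. First I would check (1), perfectness of $C_{\Sigma,n}$ over $\Lambda_n$, which is supplied directly by Proposition \ref{prop: O[Delta]->T}. For (2), applying $k\lotimes_{\Lambda_n}(-)$ and using the isomorphism $C_{\Sigma,n}\lotimes_{\Lambda_n}\mco\isomto C_{\Sigma,0}$ from the same proposition, one obtains a quasi-isomorphism $k\lotimes_{\Lambda_n}C_{\Sigma,n}\simeq k\lotimes_{\mco}C_{\Sigma,0}$. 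Since $C_{\Sigma,0}$ is a summand of $C_{K_\Sigma}$ cut out by a non-Eisenstein maximal ideal, Conjecture \ref{vanishing conj} gives the required vanishing outside the range $[r_1+r_2,\,r_1+2r_2]=[d,d+\ell_0]$.

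Next, for (3), Conjecture \ref{R->T conj} provides the surjection $R_{\Sigma,n}\onto\TT_{\Sigma,n}$ and, combined with Lemma \ref{lem: localization}, shows that $\TT_{\Sigma,n}$ acts on $C_{\Sigma,n}$ inside $\End_{\dcat{\Lambda_n}}(C_{\Sigma,n})$. The composite supplies the derived $R_{\Sigma,n}$-action. For (4), the map $\alpha_{\Sigma,n}$ is produced in Proposition \ref{prop: O[Delta]->T} as an isomorphism in $\dcat{\mco}$; its compatibility with the derived $R_{\Sigma,n}$-actions (where $R_{\Sigma,n}$ acts on $C_{\Sigma,0}$ via the quotient $R_{\Sigma,n}\to R_\Sigma$) is equivalent to the commutativity of the first square in Conjecture \ref{R->T conj}(4), with $Q=Q_n$. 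This establishes that ${\perfs C}_\Sigma\in\PatchSys$.

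For the self-duality assertion, the construction in Section \ref{sec:Hecke} (Proposition \ref{prop: localized duality}) already produces, for each $n$, a $\TT_{\Sigma,Q_n}$-equivariant isomorphism $\varphi_{\Sigma,Q_n}\colon C_{\Sigma,n}\isomto C_{\Sigma,n}^\dagger$ in $\dcat{\mco}$, and these isomorphisms are compatible with the base-change identifications $C_{\Sigma,n}\lotimes_{\Lambda_n}\mco\cong C_{\Sigma,0}$. To check that the family $(\varphi_{\Sigma,Q_n})_n$ defines a morphism ${\perfs C}_\Sigma\to{\perfs C}_\Sigma^\dagger$ in $\PatchSys$, I would verify (a) $R_{\Sigma,n}$-equivariance of each $\varphi_{\Sigma,Q_n}$ and (b) compatibility with $\alpha_{\Sigma,n}$ and $\alpha_{\Sigma,n}^\dagger$. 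Point (b) is precisely the compatibility clause of Proposition \ref{prop: localized duality}. Point (a) reduces to $\TT_{\Sigma,n}$-equivariance, which is also in Proposition \ref{prop: localized duality}, because by Conjecture \ref{R->T conj} the $R_{\Sigma,n}$-action on both $C_{\Sigma,n}$ and $C_{\Sigma,n}^\dagger$ factors through the Hecke algebra $\TT_{\Sigma,n}=\fullT^S(K_{\Sigma,Q_n})_{\fm_{\Sigma,Q_n}}$ (Lemma \ref{lem: localization}).

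The main obstacle is really just the $R_{\Sigma,n}$-equivariance, since a priori the Verdier-duality isomorphism $\varphi_{\Sigma,Q_n}$ only respects the Hecke operators that appear explicitly in Proposition \ref{prop: twisted duality}, namely the $T_v$ for $v\notin S$ and the $U_v$, $\langle d\rangle_v$ at auxiliary primes; to reach equivariance for the full deformation ring one must invoke Conjecture \ref{R->T conj} to see that $R_{\Sigma,n}$ acts through the Hecke algebra. Once this reduction is in place, both (a) and (b) are formal, and the self-dual patching system ${\perfs C}_\Sigma\cong{\perfs C}_\Sigma^\dagger$ is obtained.
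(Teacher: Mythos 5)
Your proposal is correct and is the argument the paper intends: the lemma is stated with a bare \qed because it is a direct assembly of Proposition \ref{prop: O[Delta]->T} (conditions (1), (2), (4)), Conjecture \ref{R->T conj} together with Lemma \ref{lem: localization} (condition (3) and the reduction of $R_{\Sigma,n}$-equivariance to $\TT_{\Sigma,Q_n}$-equivariance), and Proposition \ref{prop: localized duality} (the self-duality isomorphism and its compatibility with the $\alpha_{\Sigma,n}$), all of which you invoke in the appropriate places. The only cosmetic point is that the paper's $(-)^\vee$ in the statement should be read as the duality functor $(-)^\dagger$ on $\PatchSys$ from \ref{ch:patch-duality}, as you correctly do.
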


Now for any $\Sigma\subseteq T$ let $M_{\Sigma,\infty} = \patch({\perfs C}_\Sigma)$. By Theorems \ref{th:ultrapatching} and \ref{th:duality} each $M_{\Sigma,\infty}$ is a self-dual maximal Cohen--Macaulay $R_{T,\infty}$-module. Also by construction the action of $R_{T,\infty}$ on $M_{\Sigma,\infty}$ factors through $R_{\Sigma,\infty}$.

Moreover (after picking an appoperiate homomorphism $S_\infty\to R_{T,\infty}$ as in Theorem \ref{th:ultrapatching}) we have $R_{\Sigma,\infty}\otimes_{S_\infty}\mco\cong R_\Sigma$ and $M_{\Sigma,\infty}\otimes_{S_\infty}\mco\cong \hh_d(C_{\Sigma})$.

Theorem \ref{R=T theorem} will thus follow from the following:

\begin{theorem}\label{thm:free summand}
There is an integer $\mu\ge 1$ such that for each $\Sigma\subseteq T$ there is an isomorphism $M_{\Sigma,\infty}\cong R_{\Sigma,\infty}^\mu\oplus W_{\Sigma,\infty}$ for some $R_{\Sigma,\infty}$-module $W_{\Sigma,\infty}$.
\end{theorem}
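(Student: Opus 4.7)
The plan is to induct on $|\Sigma|$ and apply the freeness criterion of Theorem~\ref{th:diamond}. For the base case $\Sigma = \es$, every factor of $R_{\es,\loc}$ (the flat deformation rings at $v\mid\ell$ and the minimal deformation rings elsewhere) is a power series ring over $\mco$, so $R_{\es,\infty}$ is regular.  The maximal Cohen--Macaulay module $M_{\es,\infty}$ is then free by Auslander--Buchsbaum, and I set $\mu \colonequals \rank_{R_{\es,\infty}} M_{\es,\infty} \ge 1$.  Fix once and for all a compatible system of augmentations by choosing $\Lambda\colon R_{T,\infty}\to\mco$ with $\Lambda(A_v) = 0$ and $\Lambda(B_v) = b_v \in \varpi\mco\sm\{0\}$ for each $v\in T$, and generic values on all remaining variables, so that the induced $\lambda_\Sigma\colon R_{\Sigma,\infty}\to\mco$ lands at a smooth point of $R_{\Sigma,\infty}$ for every $\Sigma$; write $\fp_\Sigma = \ker\lambda_\Sigma$.

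For the inductive step, fix $v \in T\sm\Sigma$ and set $\Sigma' = \Sigma\cup\{v\}$, $A = R_{\Sigma',\infty}$.  By Proposition~\ref{prop:unipotent-deformations}, $A$ is a complete intersection and $A/(A_v) = R_{\Sigma,\infty}$, where $A_v, B_v$ are generators of the local factor at $v$ subject to $A_vB_v = 0$.  Since $B_v$ is a unit in $A_{\fp_{\Sigma'}}$ (as $\lambda_{\Sigma'}(B_v) = b_v \ne 0$), the element $A_v$ vanishes in the localization, so $A_{\fp_{\Sigma'}} = (R_{\Sigma,\infty})_{\fp_\Sigma}$ is regular of dimension $c$; thus both $A$ and $R_{\Sigma,\infty}$ lie in $\acat(c)$.

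The second ingredient is patched Ribet-style level raising.  Ihara's Lemma (Conjecture~\ref{Ihara conj}, applied to each $K_{\Sigma',Q_n}$ via Lemma~\ref{lem: Ihara U_v}) gives a surjection on $\hh_d$ at every finite level; the compatible family $(\pi_v)_n$ is a morphism of patching systems and so patches via Theorem~\ref{th:ultrapatching} to a map $\pi\colon M_{\Sigma',\infty} \to M_{\Sigma,\infty}$, surjective by Nakayama applied to the reduction mod $\fn$ from Theorem~\ref{th:ultrapatching}(2).  Patched self-duality (Theorem~\ref{th:duality} together with Lemma~\ref{lem:Patching System}) combined with Proposition~\ref{prop: localized duality} provides an adjoint $\pi^\vee$ satisfying $\pi\circ\pi^\vee \simeq U_v^2 - 1 \simeq \pi^\vee\circ\pi$, up to units.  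Conjecture~\ref{R->T conj}(3) identifies $B_v = \epsilon_v U_v - 1$, so $U_v^2 - 1 = B_v(B_v+2)$, which is a unit in $A_{\fp_{\Sigma'}}$ since $b_v\ne 0$ and $b_v+2$ is an $\mco$-unit.  Hence $\pi$ is an isomorphism at $\fp_{\Sigma'}$, and so $\rank_{A_{\fp_{\Sigma'}}}(M_{\Sigma',\infty})_{\fp_{\Sigma'}} = \rank_{A_{\fp_{\Sigma'}}}(M_{\Sigma,\infty})_{\fp_{\Sigma'}} = \mu$ by the inductive hypothesis.

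The final step matches the growth of cotangent and congruence modules.  The relation $A_vB_v = 0$ forces $b_v \cdot A_v \in \fp_{\Sigma'}^2$, so $\con_A$ acquires an $\mco/b_v\mco$ summand not present in $\con_{A/(A_v)}$, giving $\length\con_A - \length\con_{A/(A_v)} = \nu_\mco(b_v)$.  Theorem~\ref{th:invariance-of-domain} applied to $A \twoheadrightarrow A/(A_v)$ gives $\cmod_A(M_{\Sigma,\infty}) \cong \cmod_{A/(A_v)}(M_{\Sigma,\infty})$, while Proposition~\ref{pr:change-modules} applied to $\pi$ (between self-dual Cohen--Macaulay $A$-modules of equal rank $\mu$ at $\fp_{\Sigma'}$) yields
\[
\length\cmod_A(M_{\Sigma',\infty}) = \length\cmod_A(M_{\Sigma,\infty}) + \mu\cdot\nu_\mco(b_v)\,,
\]
since $U_v^2 - 1 = B_v(B_v+2)$ acts on the rank-$\mu$ free $\mco$-module $\ecoh^c_A(M_{\Sigma',\infty})$ as multiplication by $b_v(b_v+2)$.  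These two contributions cancel in $\delta_A(M_{\Sigma',\infty})$, leaving $\delta_A(M_{\Sigma',\infty}) = \delta_{R_{\Sigma,\infty}}(M_{\Sigma,\infty}) = 0$; the latter follows from the inductive hypothesis via Theorem~\ref{th:gorenstein} together with $\delta_{R_{\Sigma,\infty}}(R_{\Sigma,\infty}) = 0$ (Theorem~\ref{th:wiles}, since $R_{\Sigma,\infty}$ is a complete intersection).  Theorem~\ref{th:diamond} then gives $M_{\Sigma',\infty} \cong A^\mu \oplus W_{\Sigma',\infty}$ with $(W_{\Sigma',\infty})_{\fp_{\Sigma'}} = 0$, completing the induction.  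The main obstacle is this final bookkeeping step: arranging Ihara's Lemma, patched self-duality, and the local complete-intersection structure of the factor at $v$ so that $\length\con$ and $\length\cmod$ grow by exactly matching amounts.
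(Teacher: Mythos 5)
Your proof is correct and follows essentially the same strategy as the paper's: establish freeness at $\Sigma=\es$ via regularity of $R_{\es,\infty}$, then propagate by tracking how $\con$ and $\cmod$ grow under adding a level-raising prime $v$, using patched Ihara's Lemma, Theorem~\ref{th:duality}, Theorem~\ref{th:invariance-of-domain}, and Proposition~\ref{pr:change-modules} to show the growths match so that $\delta_{R_{\Sigma,\infty}}(M_{\Sigma,\infty})=0$ for all $\Sigma$, and then invoking the freeness criterion. The only cosmetic difference is that the paper sums the contributions over all $v\in\Sigma$ in one pass and separately computes $\length_\mco\con{R_{\Sigma,\infty}}$ directly from the presentation of $\widehat\bigotimes_{v\in\Sigma}\mco\pos{A_v,B_v}/(A_vB_v)$, whereas you run the comparison one prime at a time as a genuine induction on the defect itself; both are fine, and your use of Theorem~\ref{th:diamond} in place of the paper's Theorem~\ref{th:gorenstein} is harmless since the complete intersection property of $R_{\Sigma',\infty}$ is known a priori.
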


We need the following consequence of  Theorem \ref{th:duality},  Proposition \ref{prop: localized duality} and  Conjecture \ref{Ihara conj} (in particular, its consequence Lemma \ref{lem: Ihara U_v}) in Case \ref{case:PGL2}, and Propositions \ref{prop: localized duality-weight one} and \ref{prop: Ihara wt 1} in Case \ref{case:wt1}.

\begin{proposition}\label{prop:maps}
For each $\Sigma\subseteq T$ and each $v\in T\sm\Sigma$ the family of maps $\pi_v\colon C_{\Sigma\cup\{v\},n}\to C_{\Sigma,n}$ induces a morphism ${\perfs C}_{\Sigma\cup\{v\}}\to {\perfs C}_\Sigma$ and hence a $R_{T,\infty}$-module homomorphism $\pi_v\colon M_{\Sigma\cup\{v\},\infty}\to M_{\Sigma,\infty}$.

Moreover the map $\pi_v\colon M_{\Sigma\cup\{v\},\infty}\to M_{\Sigma,\infty}$ is surjective, and if $\pi_v^\vee\colon M_{\Sigma,\infty}\to M_{\Sigma\cup\{v\},\infty}$ is the dual map, induced by the self-duality of $M_{\Sigma,\infty}$ and  $M_{\Sigma\cup\{v\},\infty}$, then  for some units $u_v,u'_v\in R_{T,\infty}^\times$ we have 
\[
\pi_v\circ \pi_v^\vee = u_vB_v\in R_{T,\infty}\quad\text{and}\quad \pi_v^\vee\circ\pi_v = u'_vB_v\in R_{T,\infty}\,.
\]
\end{proposition}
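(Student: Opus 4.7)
The proposition splits into two essentially independent assertions: surjectivity of $\pi_v$, and the composition identities. Both are verified by pushing finite-level statements through the patching functor $\patch$ of Theorem~\ref{th:ultrapatching}.

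\textbf{Surjectivity.} I would reduce to Ihara's Lemma at level zero via Nakayama. Since $M_{\Sigma,\infty}$ and $M_{\Sigma\cup\{v\},\infty}$ are finitely generated over the complete local noetherian ring $S_\infty$ (Theorem~\ref{th:ultrapatching}(1)), the $S_\infty$-linear morphism $\pi_v$ is surjective iff its reduction modulo the maximal ideal $\fn+(\varpi)$ is. Theorem~\ref{th:ultrapatching}(2) and functoriality of $\patch$ identify this reduction with
\[
\hh_{r_1+r_2}(\pi_v)\otimes_{\mco}k\colon \hh_{r_1+r_2}(C_{\Sigma\cup\{v\}})\otimes_\mco k\lra \hh_{r_1+r_2}(C_\Sigma)\otimes_\mco k,
\]
which is surjective since Lemma~\ref{lem: Ihara U_v} (invoking Conjecture~\ref{Ihara conj}) gives surjectivity already on integral homology. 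Nakayama concludes.

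\textbf{Composition identities.} I would apply Theorem~\ref{th:ultrapatching}(5), which promotes a uniform finite-level identification to an identification inside $\End_{R_{T,\infty}}(M_{\Sigma,\infty})$. At each level $n$, Proposition~\ref{prop: localized duality} gives
\[
\pi_v\circ\pi_v^\dagger \;=\; \beta_n\,(U_v^2-1)\quad\text{in }\End_{\dcat{\Lambda_n}}(C_{\Sigma,n}),
\]
where $\beta_n$ is a unit coming from the Hecke-theoretic $2\times 2$ matrix identities of Lemmas~\ref{lem: 2x2 matrix}--\ref{lem: pi U_q}, hence polynomial in $T_v$ and $\Nm(v)$. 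On the Galois side, Conjecture~\ref{R->T conj}(3) identifies the image of $B_v\in R_{T,\infty}$ in $\TT^\square_{\Sigma,n}$ with $\epsilon_v U_v-1$; solving for $U_v=\epsilon_v(B_v+1)$ yields
\[
U_v^2-1 \;=\; B_v(B_v+2).
\]
Since $\ell>2$ and $B_v\in\fm_{R_{T,\infty}}$, the factor $B_v+2$ is already a unit in $R_{T,\infty}$. The Hecke factor $\beta_n$ in turn lifts to a global unit $\beta\in R_{T,\infty}^\times$ because $T_v$ and $\Nm(v)$ live intrinsically in the universal local deformation rings $R_v^\square$ or $R_v^{\mathrm{uni}(\epsilon_v)}$ (Proposition~\ref{prop:unipotent-deformations}), hence in $R_{T,\infty}$ before patching. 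Setting $u_v = \beta(B_v+2)\in R_{T,\infty}^\times$, Theorem~\ref{th:ultrapatching}(5) gives $\pi_v\circ\pi_v^\vee = u_v B_v$ in $\End_{R_{T,\infty}}(M_{\Sigma,\infty})$. The identity $\pi_v^\vee\circ\pi_v=u'_v B_v$ is handled identically, using the second half of Proposition~\ref{prop: localized duality}.

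\textbf{Expected main obstacle.} The subtle point is to check that the family $\{\beta_n\}$ really descends to a single global unit in $R_{T,\infty}^\times$ rather than merely existing level-by-level, as this compatibility is precisely what Theorem~\ref{th:ultrapatching}(5) requires. Resolving this amounts to verifying that the Hecke-matrix identities of Lemmas~\ref{lem: 2x2 matrix}--\ref{lem: pi U_q} are equivariant under the maps $R_{T,\infty}\twoheadrightarrow R^{\square}_{\Sigma,Q_n}\twoheadrightarrow \TT^\square_{\Sigma,n}$ furnished by Conjecture~\ref{R->T conj}(2)--(4); granted this equivariance—which is essentially built into the local-global compatibility of the construction—the plan goes through, and then Theorem~\ref{thm:free summand} (and hence Theorem~\ref{R=T theorem}) follows by applying the inductive argument with Theorem~\ref{th:diamond} and Proposition~\ref{pr:change-modules}.
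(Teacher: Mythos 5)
Your proposal is correct and follows essentially the same route as the paper: surjectivity reduces to Ihara's Lemma (Lemma~\ref{lem: Ihara U_v}) plus Nakayama, and the composition identities come from Proposition~\ref{prop: localized duality} combined with the algebraic identity $U_v^2-1 = B_v(B_v+2)$ derived from Conjecture~\ref{R->T conj}(3), with $B_v + 2$ a unit because $\ell > 2$ and $B_v \in \fm_{R_{T,\infty}}$. The one place you are more explicit than the paper is in flagging the need for the unit correction $\beta_n$ from the level-$n$ identity $\pi_v\circ\pi_v^\dagger \simeq U_v^2-1$ to descend to a single element of $R_{T,\infty}^\times$ in order to invoke Theorem~\ref{th:ultrapatching}(5); the paper leaves this implicit, asserting directly that the identities ``follow from'' the finite-level duality statement. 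Your observation that the unit is polynomial in $T_v$ and $\Nm(v)$, both of which already lift to $R_{T,\infty}$ via the local deformation ring presentation in Proposition~\ref{prop:unipotent-deformations}, is exactly the content that justifies the paper's shorter claim, so your added care is a feature rather than a deviation.
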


\begin{proof}
 The surjectivity of $\pi_v$ follows from Lemma \ref{lem: Ihara U_v} and Nakayama's Lemma. Next we note that under the map 
\[
R_v^{{\rm uni}(\epsilon_v)}\into R_{T,\infty}\onto R_\Sigma^\square \onto \TT_\Sigma^\square\,,
\]
the element $B_v$ maps to $w_v(\epsilon_v \sqrt{q_v\psi(\varphi_v)^{-1}}U_v-1)$ for some unit $w_v\in (\TT_\Sigma^\square)^\times$ by Conjecture \ref{R->T conj}(3) and Theorem \ref{R->T conj-weight one}(3) which means that $w_v^{-1}B_v(2+w_v^{-1}B_v)$ maps to
\begin{align*}
(\epsilon_v \sqrt{q_v\psi(\varphi_v)^{-1}}U_v-1)(\epsilon_v \sqrt{q_v\psi(\varphi_v)^{-1}}U_v+1) 
	&= (\epsilon_v \sqrt{q_v\psi(\varphi_v)^{-1}}U_v)^2-1 \\
	&= q_v\psi(\varphi_v)^{-1}U_v^2-1 
\end{align*}
(as $\epsilon_v=\pm 1$). Recall that in Case \ref{case:PGL2} this equals $U_v^2-1$ since $\psi(\varphi_v)=q_v$. 

Note that  $\psi(\varphi_v)\in \mco^\times$ and $2+w_v^{-1}B_v\in R_{T,\infty}^\times$, as $B_v\in \fm_{R_{T,\infty}}$ and $2\in \mco^\times$.   
 Using  the formulas  for maps  $\pi_v\circ \pi_v^\dagger$ and $\pi_v^\dagger\circ\pi_v$  of the complexes  ${\perfs C}_\Sigma, {\perfs C}_{\Sigma \cup \{v\}}$ in Proposition \ref{prop: localized duality} in Case \ref{case:PGL2} and Proposition \ref{prop: localized duality-weight one}  in  Case \ref{case:wt1}, and   Theorem \ref{th:duality},   which gives that  $
\patch(\pi_v^{\dagger})= \patch(\pi_v)^{\vee}$,  we deduce  the  formulas  for  $\pi_v\circ \pi_v^\vee$ and $\pi_v^\vee\circ\pi_v$.
\end{proof}

Now pick an augmentation $\lambda\colon R_{T,\infty}\onto R_{\es,\infty}\onto \mco$ such that $\lambda(A_v) = 0$ and $\lambda(B_v)\ne 0$ for all $v\in T$. It follows that each $R_{\Sigma,\infty}$ is regular at $\fp\colonequals \Ker\lambda$, and so $(R_{\Sigma,\infty},\lambda) \in \acat(c)$. Let $\mu = \rank_\fp M_{\es,\infty}$, and note that $M_{\es,\infty}\cong R_{\es,\infty}^{\mu}$, by the Auslander--Buchsbaum formula. (Note that $M_{\es,\infty}\ne 0$ as $C_{\es}\ne 0$ by the choice of $\fm$, and so $\mu\ge 1$.)

\begin{lemma}\label{lem:constant rank}
For each $\Sigma\subseteq T$ and each $v\in T\sm\Sigma$ the map 
\[
\pi_v\colon M_{\Sigma\cup\{v\},\infty}\lra M_{\Sigma,\infty}
\]
is an isomorphism at $\fp$. In particular, $\rank_\fp M_{\Sigma,\infty} = \mu$ for all $\Sigma$.
\end{lemma}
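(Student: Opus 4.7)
The plan is to exploit the two relations
\[
\pi_v\circ\pi_v^\vee = u_vB_v\qquad\text{and}\qquad \pi_v^\vee\circ\pi_v = u'_vB_v
\]
in $R_{T,\infty}$, together with the defining property of the augmentation $\lambda$. Since $\lambda$ was chosen so that $\lambda(B_v)\ne 0$ in $\mco$ for every $v\in T$, the element $B_v$ does not lie in $\fp=\Ker\lambda$, and hence becomes a unit after localizing at $\fp$. The units $u_v,u'_v$ are of course units in the localization as well. Consequently, after localizing at $\fp$, both compositions
\[
(\pi_v)_\fp\circ(\pi_v^\vee)_\fp\qquad\text{and}\qquad (\pi_v^\vee)_\fp\circ(\pi_v)_\fp
\]
are multiplication by a unit of $R_{T,\infty,\fp}$, hence isomorphisms. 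This forces $(\pi_v)_\fp$ to be both split injective and split surjective, so it is an isomorphism. This proves the first assertion of the lemma.

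For the equality $\rank_\fp M_{\Sigma,\infty}=\mu$ we induct on $|\Sigma|$. The base case $\Sigma=\es$ is the definition of $\mu$. For the induction step, given any $\Sigma\subseteq T$ and $v\in T\setminus\Sigma$, assuming the statement for $\Sigma$, the first part of the lemma gives
\[
(M_{\Sigma\cup\{v\},\infty})_\fp\;\cong\;(M_{\Sigma,\infty})_\fp
\]
as modules over the regular local ring $R_{T,\infty,\fp}$. In particular their ranks agree, and hence $\rank_\fp M_{\Sigma\cup\{v\},\infty}=\mu$. Since any subset of $T$ can be obtained from $\es$ by successively adjoining one prime at a time, the conclusion holds for every $\Sigma\subseteq T$.

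The argument is essentially a direct application of the self-duality of the patched modules and the Ihara-type relation from Proposition~\ref{prop: localized duality}; the only subtle point is the choice of $\lambda$, which was engineered precisely so that each $B_v$ avoids $\fp$ while each $A_v$ lies in $\fp$. There is no serious obstacle here once the duality and Ihara inputs are in place.
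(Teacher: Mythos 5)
Your proof is correct and gives a genuinely shorter route than the paper's. The paper does not use the relation $\pi_v\circ\pi_v^\vee=u_vB_v$ at all; it instead works only with $\pi_v^\dagger\circ\pi_v$ at the level of the finite-stage complexes $C_{\Sigma,n}$, shows that $B_v$ annihilates the kernel $K_n$ of $\hh_d(\pi_v)$ at each finite stage, takes a careful inverse limit over open ideals $\fa\subseteq S_\infty$ to conclude that $B_v$ annihilates $\Ker(\pi_v\colon M_{\Sigma\cup\{v\},\infty}\to M_{\Sigma,\infty})$, and then combines this with the surjectivity of $\pi_v$ (a consequence of Ihara's Lemma) to deduce bijectivity of $(\pi_v)_\fp$. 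Your argument instead localizes both adjunction relations on the patched modules at $\fp$: since $\lambda(B_v)\ne 0$, the element $B_v$ becomes a unit in $(R_{T,\infty})_\fp$, so $(\pi_v\circ\pi_v^\vee)_\fp$ and $(\pi_v^\vee\circ\pi_v)_\fp$ are both automorphisms, and $(\pi_v)_\fp$ is then an isomorphism because it admits a two-sided inverse. This bypasses the inverse-limit bookkeeping entirely and, for this particular step, does not even use the surjectivity of $\pi_v$. The one caveat is that your argument leans more heavily on the preceding Proposition's assertion that the adjunction relations descend to honest equalities on the patched modules $M_{\Sigma,\infty}$; the paper's longer argument can be read as a self-contained re-derivation of the annihilation of $\Ker(\pi_v)$ by $B_v$, hedging against the fact that the Proposition's proof is only lightly sketched. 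Taking that Proposition as established, your route is correct and cleaner.
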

\begin{proof}
By the choice of $\lambda$, $B_v\not\in \fp$, and so $B_v$ is a unit in $(R_{T,\infty})_\fp$. Thus by Proposition \ref{prop:maps}, the maps $\pi_v\circ \pi_v^\vee = u_vB_v$ and $\pi_v^\vee\circ\pi_v = u'_vB_v$ are both multiplication by units in $(R_{T,\infty})_\fp$, after localizing at $\fp$. The claim follows.
\end{proof}

\begin{remark}
 To prove the  lemma it suffices to  have an element $\alpha_v \in R_{\Sigma \cup \{v\},\infty}$ which annihilates $\Ker \pi_v$ and maps to an element of $R_{\es,\infty}$ that  is not in $\Ker \lambda$; the computation $\pi_v^\vee\circ\pi_v = u'_vB_v$ shows that we may take $\alpha_v=B_v$. On the other hand for  the proof of  Theorem~\ref{thm:free summand} below it is  crucial to have the precise computation  that $(\pi_v\circ \pi_v^\vee) = (B_v)$ as ideals of $R_{\Sigma,\infty}$  to  compute the  change of congruence modules $\length_\mco \Psi_{R_{\Sigma\cup\{v\},\infty}}(M_{\Sigma\cup\{v\},\infty}) 
= \length_\mco \Psi_{R_{\Sigma\cup\{v\},\infty}}(M_{\Sigma,\infty}) +\mu\cdot\nu_\mco(\lambda(B_v))$.
\end{remark}

\begin{proof}[Proof of Theorem~\ref{thm:free summand}]
For any $\Sigma\subseteq T$ and $v\in T\sm\Sigma$, as $\pi_v\circ\pi_v^\vee = u_vB_v$ we get  using Proposition \ref{prop:maps} that the composition
\begin{align*}
F_{R_{T,\infty}}^{c}(M_{\Sigma,\infty}) 
&\cong F_{R_{T,\infty}}^{c}(M_{\Sigma,\infty}^\vee)
\xrightarrow{\pi_v^\vee}F_{R_{T,\infty}}^{c}(M_{\Sigma\cup\{v\},\infty}^\vee)
\cong F_{R_{T,\infty}}^{c}(M_{\Sigma\cup\{v\},\infty})\\
&\xrightarrow{\pi_v}F_{R_{T,\infty}}^{c}(M_{\Sigma,\infty})
\end{align*}
is  multiplication by $\lambda(u_vB_v)$ and so as $F_{R_{T,\infty}}^{c}(M_{\Sigma,\infty})\cong \mco^\mu$, its determinant is $\lambda(u_v)^\mu\lambda(B_v)^\mu$.  As $u_v$ is a unit, Proposition \ref{pr:change-modules} yields 
\begin{align*}
\length_\mco \Psi_{R_{\Sigma\cup\{v\},\infty}}(M_{\Sigma\cup\{v\},\infty}) 
&= \length_\mco \Psi_{R_{\Sigma\cup\{v\},\infty}}(M_{\Sigma,\infty})+\nu_\mco(\lambda(u_v)^\mu\lambda(B_v)^\mu)\\
&= \length_\mco \Psi_{R_{\Sigma,\infty}}(M_{\Sigma,\infty})+\mu\cdot\nu_\mco(\lambda(B_v)).
\end{align*}
We note that the hypotheses of Proposition \ref{pr:change-modules}  are satisfied as Lemma  \ref{lem:constant rank} yields that  $M_{\Sigma\cup\{v\},\infty}$ and  $M_{\Sigma,\infty}$ have the same rank $\mu$ at $\lambda$ and,  as noted after Lemma \ref{lem:Patching System}, we know that both these modules are maximal Cohen-Macaulay over  $R_{\Sigma\cup\{v\},\infty}$ and self-dual.

Since $R_{\es,\infty}$ is formally smooth and hence regular, we get that $\Phi_{R_{\es,\infty}} = 0$ and
\[
\Psi_{R_{\es,\infty}}(M_{\es,\infty}) =  \Psi_{R_{\es,\infty}}(R_{\es,\infty}^\mu)= \Psi_{R_{\es,\infty}}(R_{\es,\infty})^\mu=\Phi_{R_{\es,\infty}}^\mu = 0\,.
\]
by Theorem \ref{th:wiles}. It now follows by induction that
\[
\length_\mco \Psi_{R_{\Sigma,\infty}}(M_{\Sigma,\infty}) = \mu\sum_{v\in\Sigma}\nu_\mco(\lambda(B_v))
\]
for any $\Sigma\subseteq T$.

On the other hand, by \S \ref{ssec:global def} and Proposition \ref{prop: TW primes}, one gets that $R_{\Sigma,\infty}$ is a power series ring over 
$\displaystyle\widehat{\bigotimes_{{v\in\Sigma}}}R_v^{{\rm uni}(\epsilon_v)}$. By Proposition \ref{prop:unipotent-deformations} it is thus a power series ring over 
\[
\widehat{\bigotimes_{v\in\Sigma}}\mco\pos{A_v,B_v}/(A_vB_v)
\]
An easy computation now gives that
\[
\Phi_{R_{\Sigma,\infty}}\cong \prod_{v\in \Sigma}\left(\mco/\lambda(B_v)\mco\right)
\]
and so
\[
\length_\mco \Phi_{R_{\Sigma,\infty}} = \sum_{v\in\Sigma}\nu_\mco(\lambda(B_v)).
\]
Hence $\delta_{R_{\Sigma,\infty}}(M_{\Sigma,\infty}) = 0$ for all $\Sigma\subseteq T$.

As each $R_{\Sigma,\infty}$ is a complete intersection (and hence Gorenstein), Theorem \ref{thm:free summand} now follows from Theorems \ref{th:gorenstein} and \ref{th:wiles}. 
\end{proof}

\section*{Acknowledgements}
This work is partly supported by National Science Foundation grant DMS-200985 (SBI), and  by  National Science Foundation grant  DMS-2200390  and  Simons Fellowship (CBK). It was written while the third author (JM) was at the Max Planck Institute for Mathematics in Bonn, and he thanks them for their support. We are grateful to Gebhard B\"ockle, and an anonymous referee, for detailed comments on an earlier version of this manuscript. We also thank Patrick Allen, George Boxer, Fred Diamond, Toby Gee and Jack Thorne for helpful discussions and correspondence.

\newpage

\section*{Glossary}

\begin{longtable}{lp{0.75\textwidth}}
$\Art$ & category of Artinian $\mco$-algebras, \pageref{ssec:global def} \\
$c$& the height of $\fp_A$, also known as codimension, \pageref{ch:congruence-module}\\
$\ecoh^i_A(M)$ & the torsion-free quotient of $\Ext^i_A(\mco,M)$, \pageref{pg:ecoh}\\
$\cmod A(M)$& the congruence module attached to an $A$-module $M$, \pageref{pg:cmod}\\
$\CNLO$ & category of complete local noetherian $\mco$-algebras, \pageref{ssec:global def} \\
$\acat$ & category of pairs $(A,\lambda)$ with $\lambda\colon A\to\mco$ a surjective map in $\CNLO$ such that $A$ is regular at $\Ker(\lambda)$, \pageref{pg:acat}\\
$\acat(c)$ & pairs $(A,\lambda)$ in $\acat$ with $\height \Ker(\lambda)=c$, \pageref{pg:acat}\\
$\con A$ & the torsion part of the cotangent module of $\lambda\colon A\to \mco$, \pageref{eq:conormal}\\
$\delta_A(M)$ & the Wiles defect of an $A$-module $M$, \pageref{eq:defect} \\
$\depth_AM$ & the depth of an $A$-module $M$, \pageref{se:depth}\\
$\dcat A$ &the derived category of $\rmod A$, \pageref{pg:ecoh}\\
$\dbcat A$ &the bounded derived category of $\rmod A$, \pageref{pg:ecoh}\\
$\rdcat {\Lambda}R$ & a category of $\Lambda$-complexes with derived $R$-action, \pageref{ch:derived-action} \\
$\mcd$ & deformation conditions, \pageref{ssec:global def} \\
$\varepsilon_\ell$ &the cyclotomic character, \pageref{se:Galois}\\
$\grade(I,M)$& the length of the longest $M$-regular sequence in $I$, \pageref{se:depth}\\
$\lambda_A(M)$ &the map $M\to M\otimes_A\mco$, \pageref{pg:cmod}\\
$\rmod A$ &the category of finitely generated $A$-modules, \pageref{pg:ecoh} \\
$I_{F_v}$ &the inertia subgroup of prime $v$, \pageref{ssec:local def} \\
$P_{F_v}$ &the wild inertia subgroup of a prime $v$, \pageref{ssec:local def}\\
$\Nm(v)$ & norm of a prime $v$ in a number field, \pageref{ssec:local def}\\
$N(\rhobar)$ & Artin conductor of $\rhobar$, \pageref{R=T theorem} \\
$\vf(v)$ & lift of the Frobenius, \pageref{ssec:local def} \\
$\sigma_v$ & a lift of the topological generator of $I_{F_v}/P_{F_v}$, \pageref{ssec:local def}\\
$\mco$ & a discrete valuation ring, typically complete, \pageref{se:congruence module} \\
$\PatchSys$ &a category of patching systems, \pageref{ch:pasy}\\
$\eta_M$ & torsion-free quotient of a derived trace map, \pageref{sse:defect}\\
$\fp^{(2)}$ &the second symbolic power of a prime ideal $\fp$, \pageref{pg:symbolic}\\
$\tors(U)$ & the torsion submodule of an $\mco$-module $U$, \pageref{ch:tors-remark}\\
$\tfree U$ & the torsion-free quotient of an $\mco$-module $U$, \pageref{ch:tors-remark}\\
$\omega_A$ &the dualizing complex of $A$, suitably normalized, \pageref{sse:dualizing}\\
$\omega_A(M)$ &the dual with respect of $\omega_A$ of a complex $M$, \pageref{sse:dualizing}\\
${\perfs C}^{\dagger}$ &the dual of a patching system $\perfs C$, \pageref{ch:patch-duality}\\
$M^\vee$ & the highest degree homology module of $\omega_A(M)$, \pageref{eq:dual-cm}\\
$M[I]$ &the submodule of $M$ annihilated by the ideal $I$, \pageref{se:intro}\\
$[-]$ &the suspension functor in any triangulated category, \pageref{pg:ecoh} \\
$\Sigma$ & level raising primes, \pageref{ssec:global def} \\
$Q$ & sets of  Taylor--Wiles primes, \pageref{ssec:global def} \\
$R_\Sigma$ & global deformation ring with ramification at  places in $S$ and $\Sigma$, \pageref{ssec:global def}\\
$R_v^\square$ & framed deformation ring at $v$, \pageref{ssec:local def}\\
$R^{\fl}_v$ &  finite flat deformation ring at $v|\ell$, \pageref{ssec:local def}\\
$R_v^{\ur}$ & quotient of $R_v^\square$ parameterizing unramified lifts of $\rhobar_v$, \pageref{ssec:local def}\\
$R_{\Sigma}^{\disc}$ &global Galois deformation ring, \pageref{ssec:global def}\\
$R_{\loc,\Sigma}$ & completed tensor product of local deformation rings, \pageref{R_loc}\\
$R_{\infty,\Sigma}$ & power series ring over $R_{\loc,\Sigma}$, \pageref{prop: TW primes}\\
$\tau_M(R) $ &the trace map on an $R$-module $M$, \pageref{le:summands}\\
$Y_K$ & congruence manifold of level $K$, \pageref{sse:manifolds}\\
$C_K$ & complex of singular chains on $Y_K$, \pageref{sse:manifolds}\\
$[K_2\alpha K_1]$ & double coset operator, \pageref{sse:double coset}\\
$T_v,U_v,\langle d\rangle_v$ & Hecke operators, \pageref{sse:Hecke}\\
$K_0(\mcn), K_1(\mcn), K_\Delta(\mcn)$ & congruence subgroups of level $\mcn t^2$, \pageref{sse:K_0(N)}\\
$K^\disc_\Delta(N), K^\disc_H(N)$ &  subgroups intermediate  between $K^\disc_1(N)$ and $K^\disc_0(N)$, \pageref{pg:intermediate-subgroups},\\
$Y_0(\mcn),C_0(\mcn)$ & congruence manifolds and chain complexes at level $K_0(\mcn)$, \pageref{sse:K_0(N)}\\
$Y_{0,\Delta}(\mcn,Q),C_{0,\Delta}(\mcn,Q)$ & congruence manifolds and chain complexes at level $K_0(\mcn)\cap K_\Delta(Q)$, \pageref{sse:K_0(N)}\\
$\TT^S(K)$ & Hecke algebra at level $K$, \pageref{sse:Hecke}\\
$\rhobar_\fm$ & residual Galois representation corresponding to $\fm\subseteq \TT^S(K)$, \pageref{rhobar conj}\\
$\pi_{K,v}$ & level lowering map, \pageref{ssec: level lowering}\\
$\pi_v$ & localized level lowering map, \pageref{sse:Sigma}\\
$\TT_\Sigma,\TT_{\Sigma,Q}$ & localizations of Hecke algebras, \pageref{sse:Sigma}\\
$K_\Sigma,K_{\Sigma,Q}$ & shorthands for $K_0(\mcn_\Sigma)$ and $K_0(\mcn_\Sigma)\cap K_\Delta(Q)$, \pageref{sse:Sigma}\\
$\fullT^S(K_\Sigma),\fullT^S(K_{\Sigma,Q})$ & full Hecke algebras, \pageref{sse:Sigma}\\
$C_{\Sigma},C_{\Sigma,Q}$ & localizations of $C_{K_\Sigma}$ and $C_{K_{\Sigma,Q}}$, \pageref{sse:Sigma}\\
$X^\disc_K$ &Shimura curve, \pageref{sec:Hecke wt1} \\
$\hh^i(X^\disc_K, \mathcal{F})$ &coherent cohomology of $X^{\disc}_K$ with values in a sheaf $\mathcal F$, \pageref{shimura-weight one}\\
${\rm Ha}$ &Hasse invariant attached to a Shimura curve, \pageref{shimura-weight one}\\
\end{longtable}

\newpage

\bibliographystyle{amsplain}
\begin{bibdiv}
\begin{biblist}

\bib{ACC+:2018}{article}{
      author={Allen, Patrick~B.},
      author={Calegari, Frank},
      author={Caraiani, Ana},
      author={Gee, Toby},
      author={Helm, David},
      author={Le~Hung, Bao~V.},
      author={Newton, James},
      author={Scholze, Peter},
      author={Taylor, Richard},
      author={Thorne, Jack~A.},
       title={Potential automorphy over {CM} fields},
        date={2023},
        ISSN={0003-486X,1939-8980},
     journal={Ann. of Math. (2)},
      volume={197},
      number={3},
       pages={897\ndash 1113},
         url={https://doi.org/10.4007/annals.2023.197.3.2},
      review={\MR{4564261}},
}

\bib{Avramov:1998}{incollection}{
      author={Avramov, Luchezar~L.},
       title={Infinite free resolutions},
        date={1998},
   booktitle={Six lectures on commutative algebra ({B}ellaterra, 1996)},
      series={Progr. Math.},
      volume={166},
   publisher={Birkh\"{a}user, Basel},
       pages={1\ndash 118},
      review={\MR{1648664}},
}

\bib{Avramov/Henriques/Sega:2013}{article}{
      author={Avramov, Luchezar~L.},
      author={Henriques, In\^{e}s Bonacho Dos~Anjos},
      author={\c{S}ega, Liana~M.},
       title={Quasi-complete intersection homomorphisms},
        date={2013},
        ISSN={1558-8599},
     journal={Pure Appl. Math. Q.},
      volume={9},
      number={4},
       pages={579\ndash 612},
         url={https://doi.org/10.4310/PAMQ.2013.v9.n4.a1},
      review={\MR{3263969}},
}

\bib{Avramov/Iyengar:2000}{article}{
      author={Avramov, Luchezar~L.},
      author={Iyengar, Srikanth},
       title={Finite generation of {H}ochschild homology algebras},
        date={2000},
        ISSN={0020-9910},
     journal={Invent. Math.},
      volume={140},
      number={1},
       pages={143\ndash 170},
         url={https://doi.org/10.1007/s002220000051},
      review={\MR{1779800}},
}

\bib{Avramov/Iyengar:2013}{article}{
      author={Avramov, Luchezar~L.},
      author={Iyengar, Srikanth~B.},
       title={Bass numbers over local rings via stable cohomology},
        date={2013},
        ISSN={1939-0807},
     journal={J. Commut. Algebra},
      volume={5},
      number={1},
       pages={5\ndash 15},
         url={https://doi.org/10.1216/jca-2013-5-1-5},
      review={\MR{3084119}},
}

\bib{Blanco/Majadas/Rodicio:1998}{article}{
      author={Blanco, Amalia},
      author={Majadas, Javier},
      author={Rodicio, Antonio~G.},
       title={On the acyclicity of the {T}ate complex},
        date={1998},
        ISSN={0022-4049},
     journal={J. Pure Appl. Algebra},
      volume={131},
      number={2},
       pages={125\ndash 132},
         url={https://doi.org/10.1016/S0022-4049(97)00103-5},
      review={\MR{1637519}},
}

\bib{Bockle/Khare/Manning:2021b}{article}{
      author={B\"{o}ckle, Gebhard},
      author={Khare, Chandrashekhar~B.},
      author={Manning, Jeffrey},
       title={Wiles defect for {H}ecke algebras that are not complete
  intersections},
        date={2021},
        ISSN={0010-437X},
     journal={Compos. Math.},
      volume={157},
      number={9},
       pages={2046\ndash 2088},
         url={https://doi.org/10.1112/S0010437X21007454},
      review={\MR{4301563}},
}

\bib{Bockle/Khare/Manning:2021a}{article}{
      author={{Boeckle}, Gebhard},
      author={{Khare}, Chandrashekhar~B.},
      author={{Manning}, Jeffrey},
       title={{Wiles defect of Hecke algebras via local-global arguments}},
        date={2024},
     journal={Journal of the Institute of Mathematics of Jussieu},
       pages={1\ndash 81},
      eprint={2108.09729},
}

\bib{Boxer:2015}{article}{
      author={{Boxer}, George},
       title={{Torsion in the Coherent Cohomology of Shimura Varieties and
  Galois Representations}},
        date={2015-07},
     journal={arXiv e-prints},
       pages={arXiv:1507.05922},
      eprint={1507.05922},
}

\bib{BCGP}{article}{
      author={Boxer, George},
      author={Calegari, Frank},
      author={Gee, Toby},
      author={Pilloni, Vincent},
       title={Abelian surfaces over totally real fields are potentially
  modular},
        date={2021},
        ISSN={0073-8301},
     journal={Publ. Math. Inst. Hautes {\'E}tudes Sci.},
      volume={134},
       pages={153\ndash 501},
}

\bib{Boxer/Pilloni:2020}{article}{
      author={Boxer, George},
      author={Pilloni, Vincent},
       title={Higher {H}ida and {C}oleman theories on the modular curve},
        date={2022},
        ISSN={2491-6765},
     journal={\'Epijournal G\'eom. Alg\'ebrique},
      volume={6},
       pages={Art. 16, 33},
      review={\MR{4482376}},
}

\bib{Brochard/Iyengar/Khare:2021b}{article}{
      author={Brochard, Sylvain},
      author={Iyengar, Srikanth~B.},
      author={Khare, Chandrashekhar~B.},
       title={Wiles defect for modules and criteria for freeness},
        date={2023},
        ISSN={1073-7928,1687-0247},
     journal={Int. Math. Res. Not. IMRN},
      number={8},
       pages={6901\ndash 6923},
         url={https://doi.org/10.1093/imrn/rnac052},
      review={\MR{4574391}},
}

\bib{Bruns/Herzog:1998}{book}{
      author={Bruns, Winfried},
      author={Herzog, J{\"u}rgen},
       title={Cohen-macaulay rings},
     edition={2},
      series={Cambridge Studies in Advanced Mathematics},
   publisher={Cambridge University Press},
        date={1998},
}

\bib{Buchweitz:2021}{book}{
      author={Buchweitz, Ragnar-Olaf},
       title={Maximal {C}ohen-{M}acaulay modules and {T}ate cohomology},
      series={Mathematical Surveys and Monographs},
   publisher={American Mathematical Society, Providence, RI},
        date={[2021] \copyright 2021},
      volume={262},
        ISBN={978-1-4704-5340-4},
      review={\MR{4390795}},
}

\bib{Buzzard:1997}{article}{
      author={Buzzard, Kevin},
       title={Integral models of certain {S}himura curves},
        date={1997},
        ISSN={0012-7094},
     journal={Duke Math. J.},
      volume={87},
      number={3},
       pages={591\ndash 612},
         url={https://doi.org/10.1215/S0012-7094-97-08719-6},
      review={\MR{1446619}},
}

\bib{Calegari}{article}{
      author={Calegari, Frank},
       title={Non-minimal modularity lifting in weight one},
        date={2018},
        ISSN={0075-4102},
     journal={J. Reine Angew. Math.},
      volume={740},
       pages={41\ndash 62},
         url={https://doi.org/10.1515/crelle-2015-0071},
      review={\MR{3824782}},
}

\bib{Calegari/Geraghty:2018}{article}{
      author={Calegari, Frank},
      author={Geraghty, David},
       title={Modularity lifting beyond the {T}aylor-{W}iles method},
        date={2018},
        ISSN={0020-9910},
     journal={Invent. Math.},
      volume={211},
      number={1},
       pages={297\ndash 433},
         url={https://doi.org/10.1007/s00222-017-0749-x},
      review={\MR{3742760}},
}

\bib{CV}{article}{
      author={Calegari, Frank},
      author={Venkatesh, Akshay},
       title={A torsion {J}acquet-{L}anglands correspondence},
        date={2019},
        ISSN={0303-1179},
     journal={Ast\'{e}risque},
      number={409},
       pages={x+226},
         url={https://doi.org/10.24033/ast},
      review={\MR{3961523}},
}

\bib{Christensen/Iyengar:2021}{incollection}{
      author={Christensen, Lars~Winther},
      author={Iyengar, Srikanth~B.},
       title={Dimension of finite free complexes over commutative {N}oetherian
  rings},
        date={[2021] \copyright 2021},
   booktitle={Commutative algebra---150 years with {R}oger and {S}ylvia
  {W}iegand},
      series={Contemp. Math.},
      volume={773},
   publisher={Amer. Math. Soc., [Providence], RI},
       pages={11\ndash 17},
         url={https://doi.org/10.1090/conm/773/15529},
      review={\MR{4321387}},
}

\bib{CHT}{article}{
      author={Clozel, Laurent},
      author={Harris, Michael},
      author={Taylor, Richard},
       title={Automorphy for some {$l$}-adic lifts of automorphic mod {$l$}
  {G}alois representations},
        date={2008},
        ISSN={0073-8301},
     journal={Publ. Math. Inst. Hautes \'Etudes Sci.},
      number={108},
       pages={1\ndash 181},
         url={https://doi.org/10.1007/s10240-008-0016-1},
        note={With Appendix A, summarizing unpublished work of Russ Mann, and
  Appendix B by Marie-France Vign\'eras},
      review={\MR{2470687}},
}

\bib{Coleman/Voloch:1992}{article}{
      author={Coleman, Robert~F.},
      author={Voloch, Jos\'{e}~Felipe},
       title={Companion forms and {K}odaira-{S}pencer theory},
        date={1992},
        ISSN={0020-9910},
     journal={Invent. Math.},
      volume={110},
      number={2},
       pages={263\ndash 281},
         url={https://doi.org/10.1007/BF01231333},
      review={\MR{1185584}},
}

\bib{Darmon/Diamond/Taylor:1997}{incollection}{
      author={Darmon, Henri},
      author={Diamond, Fred},
      author={Taylor, Richard},
       title={Fermat's last theorem},
        date={1997},
   booktitle={Elliptic curves, modular forms \& {F}ermat's last theorem ({H}ong
  {K}ong, 1993)},
   publisher={Int. Press, Cambridge, MA},
       pages={2\ndash 140},
      review={\MR{1605752}},
}

\bib{Diamond:1997}{article}{
      author={Diamond, Fred},
       title={The {T}aylor-{W}iles construction and multiplicity one},
        date={1997},
        ISSN={0020-9910},
     journal={Invent. Math.},
      volume={128},
      number={2},
       pages={379\ndash 391},
         url={http://dx.doi.org/10.1007/s002220050144},
      review={\MR{1440309}},
}

\bib{DFG}{article}{
      author={Diamond, Fred},
      author={Flach, Mathias},
      author={Guo, Li},
       title={The tamagawa number conjecture of adjoint motives of modular
  forms},
        date={2004},
     journal={Ann. Scient. Éc. Norm. Sup.},
      volume={37},
       pages={663\ndash 727},
}

\bib{DT2}{article}{
      author={Diamond, Fred},
      author={Taylor, Richard},
       title={Nonoptimal levels of mod {$l$} modular representations},
        date={1994},
        ISSN={0020-9910},
     journal={Invent. Math.},
      volume={115},
      number={3},
       pages={435\ndash 462},
         url={http://dx.doi.org/10.1007/BF01231768},
      review={\MR{1262939}},
}

\bib{FKR}{article}{
      author={Fakhruddin, Najmuddin},
      author={Khare, Chandrashekhar},
      author={Ramakrishna, Ravi},
       title={Quantitative level lowering for {G}alois representations},
        date={2021},
        ISSN={0024-6107},
     journal={J. Lond. Math. Soc. (2)},
      volume={103},
      number={1},
       pages={250\ndash 287},
         url={https://doi.org/10.1112/jlms.12373},
      review={\MR{4203049}},
}

\bib{Gross:1990}{article}{
      author={Gross, Benedict~H.},
       title={A tameness criterion for {G}alois representations associated to
  modular forms (mod {$p$})},
        date={1990},
        ISSN={0012-7094},
     journal={Duke Math. J.},
      volume={61},
      number={2},
       pages={445\ndash 517},
         url={https://doi.org/10.1215/S0012-7094-90-06119-8},
      review={\MR{1074305}},
}

\bib{Gulliksen/Levin:1969}{book}{
      author={Gulliksen, Tor~H.},
      author={Levin, Gerson},
       title={Homology of local rings},
      series={Queen's Papers in Pure and Applied Mathematics, No. 20},
   publisher={Queen's University, Kingston, Ont.},
        date={1969},
      review={\MR{0262227}},
}

\bib{Hansen:2012}{article}{
      author={{Hansen}, David},
       title={{Minimal modularity lifting for GL2 over an arbitrary number
  field}},
        date={2012Sep},
     journal={arXiv e-prints},
       pages={arXiv:1209.5309},
      eprint={1209.5309},
}

\bib{Hida}{article}{
      author={Hida, Haruzo},
       title={Congruence of cusp forms and special values of their zeta
  functions},
        date={1981},
        ISSN={0020-9910},
     journal={Invent. Math.},
      volume={63},
      number={2},
       pages={225\ndash 261},
         url={https://doi.org/10.1007/BF01393877},
      review={\MR{610538}},
}

\bib{Huneke/Ulrich:1996}{article}{
      author={{Huneke}, Craig},
      author={{Ulrich}, Bernd},
       title={A criterion for complete intersections (after {Wiles} and
  {Lenstra})},
        note={unpublished},
}

\bib{Iyengar:2001b}{article}{
      author={Iyengar, Srikanth},
       title={Free summands of conormal modules and central elements in
  homotopy {L}ie algebras of local rings},
        date={2001},
        ISSN={0002-9939},
     journal={Proc. Amer. Math. Soc.},
      volume={129},
      number={6},
       pages={1563\ndash 1572},
         url={https://doi.org/10.1090/S0002-9939-01-05565-4},
      review={\MR{1707520}},
}

\bib{Iyengar/Khare/Manning:2022b}{article}{
      author={Iyengar, Srikanth~B.},
      author={Khare, Chandrashekhar~B.},
      author={Manning, Jeffrey},
       title={Freeness of Hecke modules at non-minimal levels},
        date={2022},
     journal={Math. Res. Letters},
      eprint={2208.13097},
      status={to appear},
}

\bib{Iyengar/Khare/Manning/Urban:2024}{article}{
      author={Iyengar, Srikanth~B.},
      author={Khare, Chandrashekhar~B.},
      author={Manning, Jeffrey},
      author={Urban, Eric},
       title={Congruence modules in higher codimension and zeta lines in
  {G}alois cohomology},
        date={2024},
        ISSN={0027-8424,1091-6490},
     journal={Proc. Natl. Acad. Sci. USA},
      volume={121},
      number={17},
       pages={Paper No. e2320608121, 13},
      review={\MR{4755716}},
}

\bib{Jar99}{article}{
      author={Jarvis, Frazer},
       title={Mazur's principle for totally real fields of odd degree},
        date={1999},
        ISSN={0010-437X},
     journal={Compositio Math.},
      volume={116},
      number={1},
       pages={39\ndash 79},
         url={https://doi.org/10.1023/A:1000600311268},
      review={\MR{1669444}},
}

\bib{Kaplansky:1974}{book}{
      author={Kaplansky, Irving},
       title={Commutative rings},
     edition={Revised},
   publisher={University of Chicago Press, Chicago, Ill.-London},
        date={1974},
      review={\MR{0345945}},
}

\bib{Kassaei:2004}{article}{
      author={Kassaei, Payman~L.},
       title={{$\scr P$}-adic modular forms over {S}himura curves over totally
  real fields},
        date={2004},
        ISSN={0010-437X},
     journal={Compos. Math.},
      volume={140},
      number={2},
       pages={359\ndash 395},
         url={https://doi.org/10.1112/S0010437X03000150},
      review={\MR{2027194}},
}

\bib{KT}{article}{
      author={Khare, Chandrashekhar~B.},
      author={Thorne, Jack~A.},
       title={Potential automorphy and the {L}eopoldt conjecture},
        date={2017},
        ISSN={0002-9327},
     journal={Amer. J. Math.},
      volume={139},
      number={5},
       pages={1205\ndash 1273},
         url={https://doi.org/10.1353/ajm.2017.0030},
      review={\MR{3702498}},
}

\bib{Kisin}{article}{
      author={Kisin, Mark},
       title={Moduli of finite flat group schemes, and modularity},
        date={2009},
        ISSN={0003-486X},
     journal={Ann. of Math. (2)},
      volume={170},
      number={3},
       pages={1085\ndash 1180},
         url={http://dx.doi.org/10.4007/annals.2009.170.1085},
      review={\MR{2600871}},
}

\bib{Klosin}{article}{
      author={Klosin, Krzysztof},
       title={Ihara's lemma for imaginary quadratic fields},
        date={2008},
        ISSN={0022-314X},
     journal={J. Number Theory},
      volume={128},
      number={8},
       pages={2251\ndash 2262},
         url={https://doi.org/10.1016/j.jnt.2008.02.015},
      review={\MR{2394819}},
}

\bib{Lenstra}{incollection}{
      author={Lenstra, H.~W., Jr.},
       title={Complete intersections and {G}orenstein rings},
        date={1995},
   booktitle={Elliptic curves, modular forms, \& {F}ermat's last theorem
  ({H}ong {K}ong, 1993)},
      series={Ser. Number Theory, I},
   publisher={Int. Press, Cambridge, MA},
       pages={99\ndash 109},
      review={\MR{1363497}},
}

\bib{Lescot:1983}{incollection}{
      author={Lescot, Jack},
       title={La s\'{e}rie de {B}ass d'un produit fibr\'{e} d'anneaux locaux},
        date={1983},
   booktitle={Paul {D}ubreil and {M}arie-{P}aule {M}alliavin algebra seminar,
  35th year ({P}aris, 1982)},
      series={Lecture Notes in Math.},
      volume={1029},
   publisher={Springer, Berlin},
       pages={218\ndash 239},
         url={https://doi.org/10.1007/BFb0098933},
      review={\MR{732477}},
}

\bib{Lindo:2017}{article}{
      author={Lindo, Haydee},
       title={Trace ideals and centers of endomorphism rings of modules over
  commutative rings},
        date={2017},
        ISSN={0021-8693},
     journal={J. Algebra},
      volume={482},
       pages={102\ndash 130},
         url={https://doi.org/10.1016/j.jalgebra.2016.10.026},
      review={\MR{3646286}},
}

\bib{Manning}{article}{
      author={Manning, Jeffrey},
       title={Patching and multiplicity {$2^k$} for {S}himura curves},
        date={2021},
        ISSN={1937-0652},
     journal={Algebra Number Theory},
      volume={15},
      number={2},
       pages={387\ndash 434},
         url={https://doi.org/10.2140/ant.2021.15.387},
      review={\MR{4243652}},
}

\bib{NewtonThorne}{article}{
      author={Newton, James},
      author={Thorne, Jack~A.},
       title={Torsion {G}alois representations over {CM} fields and {H}ecke
  algebras in the derived category},
        date={2016},
     journal={Forum Math. Sigma},
      volume={4},
       pages={Paper No. e21, 88},
         url={https://doi.org/10.1017/fms.2016.16},
      review={\MR{3528275}},
}

\bib{RibCong}{inproceedings}{
      author={Ribet, Kenneth~A.},
       title={Congruence relations between modular forms},
        date={1984},
   booktitle={Proceedings of the {I}nternational {C}ongress of
  {M}athematicians, {V}ol. 1, 2 ({W}arsaw, 1983)},
   publisher={PWN, Warsaw},
       pages={503\ndash 514},
      review={\MR{804706}},
}

\bib{Roberts:1980}{book}{
      author={Roberts, Paul},
       title={Homological invariants of modules over commutative rings},
      series={S\'{e}minaire de Math\'{e}matiques Sup\'{e}rieures [Seminar on
  Higher Mathematics]},
   publisher={Presses de l'Universit\'{e} de Montr\'{e}al, Montreal, Que.},
        date={1980},
      volume={72},
        ISBN={2-7606-0499-3},
      review={\MR{569936}},
}

\bib{Scholze}{article}{
      author={Scholze, Peter},
       title={On the {$p$}-adic cohomology of the {L}ubin-{T}ate tower},
        date={2018},
        ISSN={0012-9593},
     journal={Ann. Sci. \'{E}c. Norm. Sup\'{e}r. (4)},
      volume={51},
      number={4},
       pages={811\ndash 863},
         url={https://doi.org/10.24033/asens.2367},
        note={With an appendix by Michael Rapoport},
      review={\MR{3861564}},
}

\bib{Shotton}{article}{
      author={Shotton, Jack},
       title={Local deformation rings for {${\rm GL}_2$} and a
  {B}reuil-{M}\'ezard conjecture when {$\ell\ne p$}},
        date={2016},
        ISSN={1937-0652},
     journal={Algebra Number Theory},
      volume={10},
      number={7},
       pages={1437\ndash 1475},
         url={http://dx.doi.org/10.2140/ant.2016.10.1437},
      review={\MR{3554238}},
}

\bib{Shotton2}{article}{
      author={Shotton, Jack},
       title={The {B}reuil-{M}\'{e}zard conjecture when {$l\neq p$}},
        date={2018},
        ISSN={0012-7094},
     journal={Duke Math. J.},
      volume={167},
      number={4},
       pages={603\ndash 678},
         url={https://doi.org/10.1215/00127094-2017-0039},
      review={\MR{3769675}},
}

\bib{stacks-project}{misc}{
      author={{Stacks Project Authors}, The},
       title={{Stacks Project}},
         how={\url{http://stacks.math.columbia.edu}},
        date={2019},
}

\bib{Tate:1957}{article}{
      author={Tate, John},
       title={Homology of {N}oetherian rings and local rings},
        date={1957},
        ISSN={0019-2082},
     journal={Illinois J. Math.},
      volume={1},
       pages={14\ndash 27},
         url={http://projecteuclid.org/euclid.ijm/1255378502},
      review={\MR{86072}},
}

\bib{Taylor/Wiles:1995}{article}{
      author={Taylor, Richard},
      author={Wiles, Andrew},
       title={Ring-theoretic properties of certain {H}ecke algebras},
        date={1995},
        ISSN={0003-486X},
     journal={Ann. of Math. (2)},
      volume={141},
      number={3},
       pages={553\ndash 572},
         url={http://dx.doi.org/10.2307/2118560},
      review={\MR{1333036}},
}

\bib{TiUr}{article}{
      author={Tilouine, Jacques},
      author={Urban, Eric},
       title={Integral period relations and congruences},
        date={2022},
        ISSN={1937-0652,1944-7833},
     journal={Algebra Number Theory},
      volume={16},
      number={3},
       pages={647\ndash 695},
         url={https://doi.org/10.2140/ant.2022.16.647},
      review={\MR{4449395}},
}

\bib{Urban:2021}{article}{
      author={Urban, Eric},
       title={On euler systems for adjoint hilbert modular galois
  representations},
        date={2021},
        ISSN={1246-7405,2118-8572},
     journal={J. Th\'{e}or. Nombres Bordeaux},
      volume={33},
      number={3},
       pages={1115\ndash 1141},
         url={https://doi.org/10.1007/s00165-021-00557-0},
      review={\MR{4402393}},
}

\bib{Wiles:1995}{article}{
      author={Wiles, Andrew},
       title={Modular elliptic curves and {F}ermat's last theorem},
        date={1995},
        ISSN={0003-486X},
     journal={Ann. of Math. (2)},
      volume={141},
      number={3},
       pages={443\ndash 551},
         url={https://doi.org/10.2307/2118559},
      review={\MR{1333035}},
}

\bib{Zarzuela:1996}{article}{
      author={Zarzuela, S.},
       title={Complete intersection augmented algebras},
        date={1996},
        ISSN={0025-5831},
     journal={Math. Ann.},
      volume={306},
      number={1},
       pages={159\ndash 168},
         url={https://doi.org/10.1007/BF01445245},
      review={\MR{1405322}},
}

\end{biblist}
\end{bibdiv}

\end{document}